\numberwithin{equation}{section}
\newtheorem{theorem}[equation]{Theorem}
\newtheorem{lemma}[equation]{Lemma}
\newtheorem{prop}[equation]{Proposition}
\newtheorem{corollary}[equation]{Corollary}
\theoremstyle{definition}
\newtheorem{example}[equation]{Example}
\theoremstyle{remark}
\newtheorem{remark}[equation]{Remark}
\newtheorem*{remark*}{Remark}
\newcommand{\abs}[1]{\lvert#1\rvert}
\newcommand\norm[1]{\left\lVert#1\right\rVert}
\newcommand{\ie}{\emph{i.e.} }
\newcommand{\eg}{\emph{e.g.} }
\newcommand{\cf}{\emph{cf.} }
\newcommand{\beq}{\begin{equation}}
\newcommand{\eeq}{\end{equation}}
\newcommand{\bea}{\begin{eqnarray}}
\newcommand{\eea}{\end{eqnarray}}
\newcommand{\C}{\mathbb{C}}
\newcommand{\R}{\mathbb{R}}
\newcommand{\Z}{\mathbb{Z}}
\newcommand{\N}{\mathbb{N}}
\newcommand{\T}{\mathbb{T}}
\newcommand{\CP}{\mathbb{CP}}
\newcommand{\Sph}{\mathbb{S}}
\newcommand{\ra}{\rightarrow}
\newcommand{\vol}{\operatorname{Vol}}
\newcommand{\Real}{\operatorname{Re}}
\newcommand{\Imag}{\operatorname{Im}}
\newcommand{\Ric}{\operatorname{Ric}}
\newcommand{\hot}{{\text{h.o.t.}}}
\newcommand{\sech}{\operatorname{sech}}
\newcommand{\Aut}{\text{Aut}}
\newcommand{\aut}{\mathfrak{aut}}
\newcommand{\killing}{\mathcal{K}}
\newcommand{\tu}[1]{\textup{#1}}
\newcommand{\gtwo}{\ensuremath{\textup{G}_2}}
\newcommand{\texorpdfgtwo}{\texorpdfstring{\ensuremath{\textup{G}_2}}{G\_2}}
\newcommand{\gtstr}{\gtwo--structure}
\newcommand{\texorpdfgtstr}{\texorpdfstring{\gtwo}{G\_2}--structure}
\newcommand{\gtmfd}{\gtwo--manifold}
\newcommand{\gtmetric}{\gtwo--metric}
\newcommand{\gthol}{\gtwo--holonomy\ }
\newcommand{\hk}{hyperK\"ahler}
\newcommand{\wt}[1]{\widetilde #1}
\newcommand{\unitary}[1]{\textup{U$(#1)$}}
\newcommand{\sunitary}[1]{\textup{SU$(#1)$}}
\newcommand{\texorpdfsunitary}[1]{\texorpdfstring{\textup{SU$(#1)$}}{SU(#1)}}
\newcommand{\suthreestr}{\sunitary{3}--structure}
\newcommand{\sorth}[1]{\textup{SO$(#1)$}}
\newcommand{\Sp}[1]{\textup{Sp$(#1)$}}
\newcommand{\texorpdfSp}[1]{\texorpdfstring{\textup{Sp$(#1)$}}{Sp(#1)}}
\newcommand{\orth}[1]{\textup{O$(#1)$}}
\newcommand{\flag}{\mathbb{F}_{1,2}}
\newcommand{\Rrep}[1]{[\![#1]\!]}
\newcommand{{\isomgtc}}{\ensuremath{\sunitary{2}^3 \rtimes S_3}}
\newcommand{\tbar}{\overline{\mathsf{T}}}
\newcommand{\tpi}{\mathsf{T}_\pi}
\newcommand{\tb}{\overline{\tau}}
\newcommand{\fb}{\overline{f^2}}
\newcommand{\fbf}{\overline{f^4}}
\newcommand{\sit}{\mathscr{T}}
\newcommand{\sis}{\mathscr{S}}
\newcommand{\sif}{\mathscr{F}}
\newcommand{\sie}{\mathscr{E}}
\def\co{\colon\thinspace}
\newcounter{mtheorem}
\newtheoremstyle{mystyle}%
  {}%
  {}%
  {\itshape}%
  {}%
  {\bfseries}%
  {.}%
  { }%
  {}%
\theoremstyle{mystyle}
\newtheorem{mtheorem}[mtheorem]{Theorem}
\begin{document}

\title[Cohomogeneity-one Laplacian solitons]{Cohomogeneity-one solitons in Laplacian flow: \\local, smoothly-closing and steady solitons}

\author{
Mark Haskins
\and
Johannes Nordstr\"om}

\date{}

\maketitle

\begin{abstract}
\vspace{-1.2\baselineskip}
We initiate a systematic study of cohomogeneity-one solitons in Bryant's Laplacian flow of closed \gtstr s on a $7$-manifold, motivated by the problem of understanding finite-time singularities of that flow. %
Here we focus on solitons with symmetry groups \Sp{2} and \sunitary{3}; 
in both cases we prove the existence of continuous families of local cohomogeneity-one gradient Laplacian solitons 
and characterise which of these local solutions extend smoothly 
over their unique singular orbits. The main questions are then to determine which of these smoothly-closing solutions extend
to complete solitons and furthermore to understand the asymptotic geometry of these complete solitons.

We provide complete answers to both questions in the case of steady solitons.
Up to the actions of scaling and discrete symmetries, we show that the set of all
smoothly-closing~\sunitary{3}-invariant steady Laplacian solitons defined on a neighbourhood of the zero-section of  $\Lambda^2_-\CP^2$
is parametrised by~$\R_{\ge 0}$, the set of nonnegative reals. 
We then determine precisely which of these solutions extend to a complete soliton 
defined on the whole of $\Lambda^2_-\CP^2$.
An open interval $I=(0,c_*) \subset \R_{\ge 0}$ corresponds to complete nontrivial gradient solitons that are asymptotic to the unique~\sunitary{3}-invariant 
torsion-free~\gtwo-cone. The point $0 \in \partial I$ corresponds to the well-known Bryant--Salamon asymptotically conical torsion-free structure on $\Lambda^2_-\CP^2$ viewed as a trivial steady soliton, while the other point
$c_* \in \partial I$ corresponds to an explicit complete gradient steady soliton with exponential volume 
growth and novel asymptotic geometry. The open interval $(c_*,\infty)$ consists entirely 
of incomplete solutions. 

In addition, we find an explicit complete gradient shrinking soliton on 
 $\Lambda^2_-\Sph^4$ and $\Lambda^2_-\CP^2$. Both these shrinkers are
asymptotic to closed but non-torsion-free~\gtwo-cones. Like the
nontrivial AC gradient steady solitons on $\Lambda^2_-\CP^2$,
these shrinkers appear to be potential singularity models for finite-time
singularities of Laplacian flow.
We also compare the behaviour of the Laplacian solitons we construct to solitons in Ricci flow.
\end{abstract}

\maketitle

\tableofcontents

\section{Introduction and main results}

\subsection{Laplacian flow and torsion-free~\texorpdfgtstr s}
Bryant's Laplacian flow is  a weakly parabolic geometric flow of closed positive \mbox{$3$-forms}
(closed \mbox{\gtstr s}) on a $7$-manifold which evolves a closed $3$-form in the direction of its Hodge-Laplacian. Laplacian flow arises as an upward gradient flow for Hitchin's volume functional: its stationary points
are torsion-free~\gtstr s and these are necessarily local maxima of the volume functional.
Some further basic geometric and analytic features of Laplacian flow and its solitons will be reviewed in Section~\ref{S:LF:LS}.

Any torsion-free~\mbox{\gtstr}~induces a Ricci-flat Riemannian metric whose holonomy group reduces to a subgroup of 
the compact exceptional simple Lie group~\gtwo. Currently the only known source of Ricci-flat metrics on compact simply 
connected odd-dimensional manifolds are such~\mbox{\gtwo-holonomy} metrics. 
Several methods are now known for constructing compact~\gtwo-holonomy
manifolds~\cites{Joyce:holonomy:book,Kovalev:TCS,CHNP:II,Joyce:Karigiannis}, 
all based on methods of nonlinear elliptic PDEs and, more specifically, on variations on 
the degeneration/gluing/perturbation method pioneered by Joyce in his construction of the first compact ~\gtwo-holonomy manifolds~\cite{Joyce:G2:JDG}. Although several obstructions to the existence of~\gtwo-holonomy metrics are known, 
currently we know rather little about which compact orientable spin $7$-manifolds can admit such metrics.

A long-term goal in Laplacian flow would be to use the flow to give a parabolic approach to construct
torsion-free~\gtstr s on compact $7$-manifolds, potentially shedding some light on which manifolds can admit such structures. 
To produce torsion-free~\gtstr s via Laplacian flow one needs to establish long-time existence and convergence results for the flow. 
A major obstruction to proving long-time existence results for the flow is that in general finite-time singularities are expected to develop. However, the study of singularity formation in Laplacian flow is still in its infancy, especially
compared to the high level of development now achieved in understanding singularities in various better-known geometric flows, \eg Ricci flow and K\"ahler--Ricci flow, mean curvature flow (especially in the codimension one and Lagrangian settings), Yamabe flow, Yang--Mills flow and harmonic map heat flow. 

\subsection{Laplacian solitons}
Given a smooth $7$-manifold $M$ (compact or noncompact), a triple $(\varphi,X,\lambda)$ consisting of a \gtstr~$\varphi$, 
a vector field $X$ and a constant $\lambda \in \R$ is called a \emph{Laplacian soliton} if the triple satisfies the 
following system of partial differential equations
\[\left\{
  \begin{aligned}
  \label{eq:LS}
    d \varphi &= 0,\\
\Delta_\varphi \varphi&= \lambda \varphi + \mathcal{L}_X \varphi.
      \end{aligned}\right.\tag{LSE}
\]
The second equation is nonlinear in $\varphi$ because $\Delta_\varphi$ depends on the Hodge-star operator 
of the metric $g_\varphi$ induced by $\varphi$ and $g_\varphi$ depends nonlinearly on $\varphi$. 
Any Laplacian soliton $(\varphi,X,\lambda)$ gives rise to a self-similar solution to the Laplacian flow.
We call the soliton shrinking if $\lambda<0$, steady if $\lambda=0$ and expanding if $\lambda>0$; 
we call $\lambda$ the dilation constant of the soliton.

The Laplacian soliton equations constitute a diffeomorphism-invariant overdetermined system of nonlinear PDEs for the pair $(\varphi,X)$.
Given its overdetermined nature, a priori, it is unclear how plentiful solutions of~\eqref{eq:LS} are even locally. 
However, Bryant~\cite{Bryant:LS:local:generality} has used the methods of overdetermined PDEs to prove that locally there are in fact many solutions of~\eqref{eq:LS}
(though questions remain about the local generality of solutions when the vector field $X$ is assumed to be gradient).
The question therefore is how to produce global solutions to~\eqref{eq:LS}, by which we mean solitons on compact manifolds or 
on complete noncompact manifolds
(in the compact case the only nontrivial solitons could be expanders, though no such examples are currently known). 

There are no general analytic methods currently available that can produce global solutions to~\eqref{eq:LS}, 
a problem that one also faces in the study of Ricci solitons (except in the K\"ahler setting where recently 
complex Monge--Amp\`ere methods have enabled general analytic constructions of K\"ahler--Ricci solitons~\cites{Conlon:Deruelle:KRexpanders,Conlon:Deruelle:Sun:Kahler:solitons,conlon2020steady}). 
Therefore we are led to study solutions of~\eqref{eq:LS} on which we impose additional geometric structures that makes their 
construction more tractable. For many nonlinear geometric PDEs a natural approach is to impose a continuous group of symmetries 
on the problem. In particular, in Ricci flow most of the known solitons arise by considering cohomogeneity-one group actions, \ie 
where the generic orbit of the symmetry group has codimension one. 
Imposing such a group of symmetries reduces the system of nonlinear PDEs governing solitons 
to a system of nonlinear ODEs. 
Some of the best-known examples include: 
Hamilton's cigar soliton \cite{Hamilton:cigar}; Bryant's rotationally-invariant
steady soliton and his $1$-parameter family of expanders
\cite{Bryant:expanders}; Cao's $\unitary{n}$-invariant K\"ahler expanders and
steady solitons on $\C^n$ and the Feldman--Ilmanen-Knopf
$\unitary{n}$-invariant K\"ahler shrinkers~\cite{FIK};
the noncollapsed steady non-K\"ahler solitons found recently by Appleton~\cite{Appleton:steady:Ricci:soliton}, and the infinite discrete 
family of  complete gradient asymptotically conical shrinkers and expanders on $\R^{p} \times S^{q-1}$ (with $p,q \ge 3$ and $p+q \le 10$) 
found recently by Angenent--Knopf \cite{Angenent:Knopf:AC:RS}. 
General features of cohomogeneity-one Ricci solitons were studied in a whole series of papers by Dancer and Wang (and collaborators)~\cite{Dancer:Wang:Ricci:solitons:coh1,Dancer:Hall:Wang:cohom1:Ricci:shrinkers,Dancer:Wang:cohom:1:Ricci:solitons:nonKahler,Dancer:Wang:cohom:1:Ricci:expanders:nonKahler,BDGW:steady,BDW:Ricci:soliton:Hamiltonian}.

\subsection{Cohomogeneity-one Laplacian solitons}
The problem of understanding cohomogeneity-one Laplacian solitons can naturally be divided up into two distinct steps, 
each of which has a different character. 
The first step is to understand which Lie groups can possibly act with cohomogeneity one on closed~\gtwo-manifolds: 
for compact groups this problem has already been solved by  Cleyton--Swann~\cite{Cleyton:Swann} (see also Cleyton's thesis~\cite{Cleyton:thesis}). The methods needed here 
are (not surprisingly) mainly of a Lie-theoretic nature. The second step is 
to derive the equations for $G$-invariant Laplacian solitons  for each possible group action $G$ and then to study the resulting nonlinear system of ODEs. 

The second step is much more involved than the first.  The fundamental difficulty is that for a typical group $G$ the general 
solution of the nonlinear ODEs governing $G$-invariant Laplacian solitons does not correspond to a complete soliton 
and therefore one has to find a way to recognise which (if any) of the local solutions represent complete solutions. 
The difficulties are further compounded by two factors: 
in most cases the general solutions to the ODE systems are not explicitly available; also
we do not know in advance what types of asymptotic geometry complete noncompact solitons may exhibit. 
As a result of these issues there are no general systematic approaches to determining which of the local solutions extend to complete ones. 

Such completeness issues have also been faced in the context of previous work on cohomogeneity-one Einstein metrics or special holonomy metrics.
For compact cohomogeneity-one Einstein metrics we mention B\"ohm's pioneering work on inhomogeneous Einstein metrics on spheres~\cite{Bohm:cohom1:Einstein} 
and the work of Foscolo--Haskins~\cite{Foscolo:Haskins:NK} that constructed the first complete inhomogeneous nearly K\"ahler 6-manifolds; 
both of these papers introduced important ideas to deal with complete metrics on compact spaces. 
Closer to the issues faced in the current paper is previous work on complete noncompact cohomogeneity-one Ricci-flat or special holonomy metrics, 
especially B\"ohm's work~\cite{Bohm:BSMF} and previous work of the current authors on cohomogeneity-one~\gthol~metrics~\cite{FHN:JEMS}.

In this paper we begin a detailed study of the geometry of 
cohomogeneity-one Laplacian solitons with principal orbit type
$\CP^3$ and $\flag:= \sunitary{3}/\T^2$; these have symmetry groups \Sp{2} and \sunitary{3} respectively.
We have several reasons for singling out these two among the seven types of simply connected principal orbit that can occur for cohomogeneity-one~\gtstr s~\cite[Theorem 3.1]{Cleyton:Swann}. We explain those reasons in detail 
in our review of the basics of cohomogeneity-one~\gtstr s in Section~\ref{ss:closed:coh1:G2}.
However, the ultimate justification for our choice of principal orbit types is that they do indeed lead
to the existence of interesting complete shrinking, steady and expanding gradient Laplacian solitons. 

After developing the basic local theory of the ODE systems governing
\sunitary{3} and \Sp{2}-invariant cohomogeneity one $G_2$ solitons
(the latter can in fact be viewed as a special case of the former,
with an extra $\Z_2$ symmetry), the main focus of this paper is 
on complete~\sunitary{3}-invariant steady gradient Laplacian solitons
on~$\Lambda^2_-\CP^2$.
The sequel~\cite{HJN} focusses mainly on the \Sp{2}-invariant case,
in particular finding a $1$-parameter family of complete~\Sp{2}-invariant
expanding gradient Laplacian solitons on $\Lambda^2_-\Sph^4$
(and a related family of $\sunitary{3} \times \Z_2$-invariant expanders
on $\Lambda^2_- \CP^2$).

\subsection{Main results of the paper}
We now describe the main results of this paper. 

\subsubsection*{ODE systems and local solutions} 
On the face of it, the nonlinear ODE system governing \sunitary{3}-invariant
Laplacian solitons consists of five mixed-order differential equations  for
four unknown functions
$(f_1,f_2,f_3,u)$ defined on some interval $I \subset \R$. The triple $f=(f_1,f_2,f_3)$ determines an~\sunitary{3}-invariant~\gtstr~while $u$ determines 
the invariant vector field $X= u\, \partial_t$. 
At a given value of $t \in I$, the triple $f$ determines an $\sunitary{3}$-invariant Riemannian metric $g_f$ on the flag manifold $\sunitary{3}/\T^2$.
There are three distinct homogeneous $\Sph^2$-fibrations of $\sunitary{3}/\T^2$  
\begin{equation}
\label{eq:fibrations}
\Sph^2_j = \unitary{2}_j/\T^2 \longrightarrow \sunitary{3}/\T^2 \longrightarrow \sunitary{3}/\unitary{2}_j = \CP^2_j
\end{equation}
that arise from three different $\unitary{2}_j$ subgroups of $\sunitary{3}$ all of which contain the diagonal subgroup $\T^2 \subset \sunitary{3}$; 
the three fibres $\Sph^2_1$, $\Sph^2_2$ and $\Sph^2_3$ are mutually orthogonal with respect to any homogeneous metric $g$ on $\sunitary{3}/\T^2$,
and the restriction of $g$ to the $j$th fibre determines a homogeneous metric $g_j$ on $\Sph^2$. Moreover, 
$g$ is determined by the size of these three fibres at a single point, \ie by three positive parameters.
With appropriate normalisations each of the invariant metrics $g_j$  is just the standard round metric of sectional curvature $1$.
In other words, the geometric interpretation of the components of the triple $f$ is that the $j$th component $f_j$ determines the ``size'' of the $j$th spherical fibre $\Sph^2_j$ with respect to the homogeneous metric $g_f$.

For most purposes it is more convenient not to work directly with the mixed-order system~\eqref{eq:ODEs:SU3}
but instead to work with an equivalent real-analytic first-order system.
The variables in this first-order reformulation are the previous triple of functions  $f=(f_1, f_2, f_3)$ along with a further triple of functions $(\tau_1, \tau_2, \tau_3)$ 
that describes   the torsion $2$-form $\tau$ of the~\sunitary{3}-invariant closed\mbox{~\gtstr}~determined by the triple $f$.
The algebraic constraints on the torsion of a closed~\gtstr, \ie that the torsion $\tau$ is a $2$-form of type $14$ \eqref{eq:tau_14}, 
imply a single algebraic constraint on $(f,\tau)$. 

An \sunitary{3}-invariant Laplacian soliton 
can therefore be interpreted as an integral curve of an explicit vector field (depending on the dilation constant $\lambda$) 
on a $5$-dimensional smooth noncompact phase space $\mathcal{P}^5 \subset \R^3_> \times \R^3$.
By Remark \ref{rmk:involution},
an \Sp{2}-invariant Laplacian soliton can be interpreted as an integral curve of the restriction of this vector field 
to the $3$-dimensional invariant submanifold $\mathcal{P}^3 \subset \mathcal{P}^5$ obtained by imposing the conditions 
$f_2=f_3$ and $\tau_2=\tau_3$. 
These first-order reformulations of the ODE system for $G$-invariant Laplacian solitons immediately imply the following result about the space of all local $G$-invariant Laplacian solitons.

\begin{mtheorem}[Local $G$-invariant Laplacian solitons]
\label{mthm:local:solitons}
Consider the $G$-invariant soliton ODE system with a fixed
dilation constant $\lambda \in \R$.
Then the space of solutions to the ODE system that exist on $I \times P$ for
some interval $I$ has
\begin{enumerate}[left=0em]
\item dimension 2 for $G = \Sp{2}$ and $P = \CP^3$.
\item dimension 4 for $G = \sunitary{3}$ and $P = \sunitary{3}/T^2$;
\end{enumerate}
\end{mtheorem}

\begin{remark}
\label{rmk:scale_param_count}
Often it is more relevant to ask how many $G$-invariant solitons there are
\emph{up to scale}. Because rescaling a soliton also scales the dilation
constant (see Remark \ref{rmk:scaling}), Theorem \ref{mthm:local:solitons}(ii)
means that the spaces of $\sunitary{3}$-invariant local expanders and local
shrinkers up to scale have dimension 4, while (because the space of solitons
with fixed dilation constant 0 is already scale-invariant) the space of
$\sunitary{3}$-invariant local steady solitons up to scale has dimension 3.
Similarly, there is only a 1-parameter family of $\Sp{2}$-invariant local
steady solitons up to scale.
\end{remark}

\smallskip
Next we want to understand which of these local $G$-invariant Laplacian solitons extends to a complete Laplacian soliton. 
One aspect of the completeness issue can be studied systematically: the problem
of extending a $G$-invariant Laplacian soliton smoothly over a so-called
singular orbit, to obtain a local solution defined near the zero section of a vector bundle.
For the actions we are considering, Cleyton--Swann's work on the structure of cohomogeneity-one~\gtstr s~\cite{Cleyton:Swann} implies that 
any complete $G$-invariant Laplacian soliton must
possess a unique singular orbit, \ie a nongeneric $G$-orbit,  which in our cases is necessarily of lower dimension.
 Moreover, the structure of this lower-dimensional singular orbit is determined by $G$: 
$\CP^2 \cong \sunitary{3}/\unitary{2}$ for $G=\sunitary{3}$ and $\Sph^4 \cong \Sp{2}/\Sp{1} \times \Sp{1}$ for $G= \Sp{2}$.
To understand which of the local $G$-invariant Laplacian solitons extend to complete Laplacian solitons
(which would necessarily be defined on $\Lambda^2_-\CP^2$ or on $\Lambda^2_-\Sph^4$ respectively) 
our first task is therefore to understand which of them extend smoothly over the (unique) singular orbit. 
There is a relatively systematic way to understand such so-called smooth closure problems and applying these methods leads to the following result.

\begin{mtheorem}[Smoothly-closing $G$-invariant Laplacian solitons] Fix any $\lambda \in \R$. 
\hfill
\label{mthm:smooth:closure}
\begin{enumerate}[left=0.25em]
\item
For $G=\Sp{2}$,  among the $2$-dimensional space of local $G$-invariant Laplacian solitons with dilation constant $\lambda$ defined on 
$I \times \CP^3$ for some interval $I \subset \R$, there is a $1$-parameter family of distinct smoothly-closing $G$-invariant Laplacian solitons, \ie a $G$-invariant Laplacian soliton that extends smoothly over the zero-section $\Sph^4 \subset \Lambda^2_-\Sph^4$.
\item
For $G=\sunitary{3}$, among the $4$-dimensional space of local $G$-invariant Laplacian solitons 
with dilation constant $\lambda$ defined on 
$I \times \sunitary{3}/\T^2 $ for some interval $I \subset \R$, there is a $2$-parameter family of distinct smoothly-closing $G$-invariant Laplacian solitons,  \ie a $G$-invariant Laplacian soliton that extends smoothly over the zero-section $\CP^2 \subset \Lambda^2_-\CP^2$.
\end{enumerate}
\end{mtheorem}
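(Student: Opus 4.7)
The strategy is to apply the standard framework for smoothly extending $G$-invariant tensor fields across the singular orbit of a cohomogeneity-one action, adapted to the Laplacian soliton setting. The underlying principal-orbit ODE becomes singular at $t=0$ (the parameter value on the singular orbit) because the collapse of an $\Sph^2_j$ fibre forces $f_j(0)=0$.

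The first step is to identify the necessary and sufficient smooth-closure conditions on $(f,\tau)$ at $t=0$. For $G=\sunitary{3}$ the singular orbit is $\CP^2\cong\sunitary{3}/\unitary{2}_j$ for one of the three $\unitary{2}_j\supset\T^2$; without loss of generality take $j=1$. The isotropy jump from $\T^2$ to $\unitary{2}_1$ forces: (a) the collapsing-fibre condition $f_1(0)=0$ with $f_1'(0)=1$ (the latter fixing the scale of $t$ to be transverse arc-length); (b) the $\unitary{2}_1$-equivariance conditions $f_2(0)=f_3(0)=:a>0$ and $\tau_2(0)=\tau_3(0)$; and (c) parity conditions on higher Taylor coefficients of odd-parity combinations such as $f_2-f_3$ and $\tau_2-\tau_3$, coming from the requirement that the full \gtstr\ (not only the induced metric) extend smoothly across the zero-section of $\Lambda^2_-\CP^2$. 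For $G=\Sp{2}$ acting with singular orbit $\Sph^4$, the stronger symmetry makes (b) automatic, as it is already built into the invariant submanifold $\mathcal{P}^3\subset\mathcal{P}^5$.

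The second step is to produce a smoothly-closing solution from each admissible initial datum, and to verify uniqueness. Although the ODE is singular at $t=0$, a change of variables adapted to the collapse of $\Sph^2_1$ regularises it, and a Cauchy--Kowalevski-style argument then yields a real-analytic family of smoothly-closing solitons depending real-analytically on the admissible data. At this stage one must also check that the type-$14$ torsion constraint, together with the soliton equations, imposes no further algebraic conditions at $t=0$ beyond those already dictated by $G$-equivariance.

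Finally, one counts the residual free parameters. For $G=\sunitary{3}$ and each fixed $\lambda$, the smooth-closure locus inside $\mathcal{P}^5$ is cut out from $\{f_1=0\}$ by the two equations $f_2=f_3$ and $\tau_2=\tau_3$, yielding a $2$-dimensional family parametrised by $a>0$ together with one residual torsion parameter. For $G=\Sp{2}$ the analogous analysis inside $\mathcal{P}^3$ yields a $1$-dimensional family parametrised essentially by $a>0$ alone. The main obstacle will be Step 1: enumerating and proving sufficiency of the higher-order parity/equivariance conditions needed for genuine smoothness of the full \gtstr\ (rather than only the metric) across the singular orbit. This requires a careful analysis of the normal bundle $\Lambda^2_-\CP^2$ (resp.\ $\Lambda^2_-\Sph^4$) as a $G$-representation and the matching of the Taylor expansion of $\varphi$ to that representation, and it is ultimately the source of the codimension-$2$ (resp.\ codimension-$1$) drop in the parameter count relative to Theorem~\ref{mthm:local:solitons}.
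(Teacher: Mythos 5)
Your overall strategy---Eschenburg--Wang-type smooth-closure conditions at the singular orbit followed by a regular-singular initial value analysis---is the same route the paper takes, but your closure conditions on the torsion are wrong, and the error sits exactly where the second parameter in the \sunitary{3} case comes from. The reflection symmetry forced by the enlarged isotropy $\unitary{2}_1$ makes $f_2-f_3$ odd but makes $\tau_2+\tau_3$ odd (not $\tau_2-\tau_3$): the correct conditions at $t=0$ are $\tau_1(0)=0$ and $\tau_2(0)=-\tau_3(0)=:c$ with $c\in\R$ arbitrary, and this antisymmetric value $c$ is precisely the second parameter of the family $\mathcal{S}_{b,c}$. (Note also that once $f_2(0)=f_3(0)$ and $\tau_1/f_1^2\to 0$, the limit of the type-$14$ constraint gives $\tau_2(0)+\tau_3(0)=0$, so $\tau_2(0)=-\tau_3(0)$ is consistent, whereas $\tau_2(0)=\tau_3(0)\neq 0$ is not even in the closure of the constraint locus.) Your condition $\tau_2(0)=\tau_3(0)$, imposed on top of $f_2(0)=f_3(0)$, therefore forces $\tau_2(0)=\tau_3(0)=0$, i.e.\ $c=0$; the solutions it selects are exactly the $\Sp{2}$-invariant ones ($f_2\equiv f_3$, $\tau_2\equiv\tau_3$), so as written your argument yields only a $1$-parameter family in the \sunitary{3} case and part (ii) of the theorem fails. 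Your final count of ``$a>0$ together with one residual torsion parameter'' does not follow from the conditions you state: the locus cut out by $f_2=f_3$ and $\tau_2=\tau_3$ carries no admissible residual torsion parameter.

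Separately, the analytic step is where the parameter count is actually proved, and you leave it unverified. Rewriting the first-order system in the hatted variables gives a system of the form $\dot y=t^{-1}M_{-1}(y)+M(t,y)$; admissible data are those with $M_{-1}(y_0)=0$, and it is this vanishing (not equivariance alone) that pins down $\hat f_1(0)$, $\hat f_2(0)=-\hat f_3(0)$ and $\hat\tau_2(0)=\hat\tau_3(0)$ in terms of $(b,c,\lambda)$ and shows that no further algebraic conditions arise at $t=0$. Existence, uniqueness and real-analyticity then require checking that $n\,\textup{Id}-d_{y_0}M_{-1}$ is invertible for every positive integer $n$ (in the paper this determinant is $(n+3)^3(n+1)^2$, and $(n+3)^2(n+1)$ in the torsion-free warm-up). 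Without identifying $M_{-1}$ and verifying this spectral condition, the assertion that each admissible datum produces a unique smoothly-closing real-analytic soliton---hence that the smoothly-closing family is exactly $2$-dimensional for \sunitary{3} and $1$-dimensional for \Sp{2}---is not established.
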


\begin{remark}
In terms of the functions $f_1, f_2, f_3$, the family in (i) is simply the subfamily of the solutions in (ii) with $f_2 = f_3$. Alternatively, these solutions can be thought of as corresponding to the subset of local
\sunitary{3}-invariant solitons on $\Lambda^2_- \CP^2$ that are anti-invariant under the involution defined by multiplying fibres by $-1$, see
Remark \ref{rmk:involution_asd}.
(In fact, solutions to these equations yield solitons on the
anti-self-dual bundle of any self-dual positive Einstein 4-manifold,
see Remark~\ref{rmk:involution}. However, it is well-known that
$\Sph^4$ and $\CP^2$ are the only closed examples, though many such $4$-orbifolds exist.)
\end{remark}

Theorem~\ref{mthm:smooth:closure} holds for shrinkers, expanders and steady solitons (and the proof 
turns out to be insensitive to the sign of the dilation constant $\lambda$). 
However, because of the scaling invariance of steady solitons discussed
in Remark \ref{rmk:scale_param_count}, Theorem~\ref{mthm:smooth:closure}
implies: \\
[0.2em] \emph{Up to rescaling there is a unique smoothly-closing \Sp{2}-invariant steady soliton
and a $1$-parameter family of distinct smoothly-closing \sunitary{3}-invariant steady solitons. }

\smallskip

\begin{remark}
In the \Sp{2}-invariant case the Bryant--Salamon asymptotically conical~\gthol metric on $\Lambda^2_-\Sph^4$ 
already provides a smoothly-closing (trivial) steady soliton. It follows that any 
smoothly-closing $\Sp{2}$-invariant steady soliton must be trivial, \ie torsion-free with vanishing soliton vector field.
Therefore to find complete nontrivial steady solitons we must look at the~\sunitary{3}-invariant setting.
\end{remark}

\smallskip

The next step in searching for complete solitons is to understand which of
the local smoothly-closing solutions constructed in
Theorem~\ref{mthm:smooth:closure} are forward complete. While we do settle
this question fully in the steady case below, it is a harder problem to
tackle systematically. The existence of a large family of forward-incomplete
solutions, established by solving a singular initial value problem of a similar
class to that in Theorem \ref{mthm:smooth:closure}, demonstrates that the
forward-completeness of a given solution is certainly not automatic.

\begin{mtheorem}
\label{mthm:extinction}
Fix any $\lambda \in \R$.

\begin{enumerate}
\item There is a 4-parameter family of %
\sunitary{3}-invariant Laplacian
solitons with dilation constant $\lambda$ defined on
$(-\epsilon, 0) \times \sunitary{3}/\T^2$ such that $f_k \to 0$ as $t \to 0$
with ${f_k = O(\sqrt{-t})}$, while $\frac{f_i}{f_j}$ and $f_1f_2f_3$ converge to
positive limits as $t \to 0$.

\item
A 2-parameter subfamily has $f_2 = f_3$ and thus defines \Sp{2}-invariant
Laplacian solitons with dilation constant $\lambda$ on
$(-\epsilon, 0) \times \CP^3$.
\end{enumerate}
\end{mtheorem}

These are again local solutions, in this case defined near the extinction time,
and it is not easy to decide how far back in $t$ they can be extended or
whether they close smoothly on a singular orbit. 
Note that there are enough free parameters for the flow lines to fill an open
region in the phase space, so we should expect this type of
forward-incompleteness to be a stable property.

\subsubsection*{Asymptotically conical solitons}

One type of complete infinite ends that appears are asymptotically conical (AC)
ones.
In the shrinker case we can identify a particular solution among the smoothly-closing ones from Theorem \ref{mthm:smooth:closure} that gives rise to
an explicit complete $G$-invariant shrinking soliton of this kind.

\begin{mtheorem}
\label{mthm:shrinkers}
There exists an explicit complete noncompact \Sp{2}-invariant gradient shrinking soliton on $\Lambda^2_-\Sph^4$ with principal 
orbit $\CP^3$ and an explicit complete noncompact \sunitary{3}-invariant gradient shrinking soliton on $\Lambda^2_-\CP^2$ with principal orbit $\sunitary{3}/\T^2$. Both shrinking solitons are asymptotic to closed but non-torsion-free $G$-invariant~\gtwo-cones.
\end{mtheorem}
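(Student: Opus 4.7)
The plan is to prove both claims by explicit ansatz and direct verification within the first-order reformulation of the $\sunitary{3}$-invariant soliton ODE on the phase space $\mathcal{P}^5 \subset \R^3_> \times \R^3$, together with its $\Sp{2}$-invariant subvariety $\mathcal{P}^3$ cut out by $f_2=f_3$ and $\tau_2=\tau_3$. After normalising the dilation constant $\lambda$ to a convenient negative value, the asserted cone asymptotics at infinity dictate the leading-order behaviour of any such shrinker: one should have $f_i(t)\sim a_i t$ with torsion components $\tau_i$ growing quadratically in $t$ and soliton vector coefficient $u(t)\sim\gamma t$. This scaling, combined with smooth closure at the singular orbit (where one $\Sph^2$ fibre collapses smoothly while the $\Sph^4$ or $\CP^2$ base persists), motivates trying a closed-form algebraic ansatz in which each $f_i^2$, each $\tau_i$, and $u$ is polynomial of low degree in $t$ — modelled on the closed-form expressions of the Bryant--Salamon metrics on $\Lambda^2_-\Sph^4$ and $\Lambda^2_-\CP^2$. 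Substituting such an ansatz into the first-order system together with the algebraic type-$14$ torsion constraint reduces the problem to a finite system of polynomial equations in finitely many coefficients, which I expect to admit essentially one nontrivial solution in each of the $\Sp{2}$- and $\sunitary{3}$-invariant settings.

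Once an explicit candidate $(f,\tau,u)$ has been found, the remaining steps are direct verifications. Smooth closure at $t=0$ is checked by matching the ansatz against the $1$- or $2$-parameter smoothly-closing families characterised in Theorem~\ref{mthm:smooth:closure}; completeness then follows because the $f_i$ are bounded below by a positive constant and grow linearly at infinity, making $g_f$ non-degenerate on $[0,\infty)$ with divergent normal arclength. The gradient condition reduces, on the symmetry reduction, to integrating a single first-order ODE for a potential $v$ with $v'$ determined by $u$; this integration is elementary on any closed-form ansatz of the above type. Finally, rescaling by arclength as $t\to\infty$ extracts the asymptotic $\gtwo$-cone structure $\varphi_\infty$: closedness of $\varphi_\infty$ is inherited from closedness along the flow by a limiting argument, while non-torsion-freeness is read off from the fact that the leading torsion coefficients are nonzero in the explicit solution, so that the link of the asymptotic cone carries a closed but not nearly K\"ahler $G$-invariant~\gtstr.

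The main obstacle is the first step of pinning down the correct ansatz, since the type-$14$ torsion constraint coupled to the soliton equations is highly overdetermined and a generic algebraic ansatz will simply fail to close up. A useful guide is that the Bryant--Salamon torsion-free~\gtstr s on $\Lambda^2_-\Sph^4$ and $\Lambda^2_-\CP^2$ sit inside precisely this closed algebraic family with $\tau\equiv 0$ and $u\equiv 0$, so the expected shrinkers should arise as rigid algebraic deformations within the same family, turning on a nonzero torsion and a linear soliton potential. A secondary subtlety is to ensure that the resulting asymptotic cone is genuinely non-torsion-free rather than an accidental rescaling of a Bryant--Salamon-type trivial soliton; this is confirmed by checking that the asymptotic torsion constants do not all vanish in the explicit solution.
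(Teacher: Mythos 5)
Your overall strategy — find an explicit closed-form ansatz, verify it solves the first-order system, then read off completeness and cone asymptotics — matches the paper's approach. But the specifics of your proposed ansatz and your discovery mechanism both have problems worth flagging.

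First, the proposed polynomial ansatz is not quite right as stated. In the actual explicit shrinker (written in the first-order variables of Proposition~\ref{prop:1st:order:tau2}) one has $f_1=t$, $f_2^2=f_3^2 = b^2+\tfrac14 t^2$ and $\tau_2=-\tfrac12 t$, which are indeed polynomial — but the quantities you propose to make polynomial include $\tau_1$ and $u$, and these come out \emph{rational}: $\tau_1 = -2f_1^2\tau_2/f_2^2 = t^3/(b^2+\tfrac14 t^2)$ and $u = \tfrac{3t}{4b^2}+\tfrac{4t}{4b^2+t^2}$. A search constrained to have all six quantities polynomial would return nothing. You need to work in the reduced variable set of the first-order system, where $\tau_1$ and $u$ are recovered algebraically from $(f_1,f_2,\tau_2)$, and impose polynomiality only there. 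Relatedly, the Bryant--Salamon model you invoke does not actually support the polynomial heuristic: on $\Lambda^2_-\CP^2$ the BS coefficients are $f_1=r(r^2-\mu^2)^{-1/4}$, $f_2=f_3=(r^2-\mu^2)^{1/4}$, which are not polynomial in the arclength parameter $t$ either.

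Second, you leave the discovery step essentially unjustified (``I expect to admit essentially one nontrivial solution''). The rational soliton ODE is quadratic-over-quadratic, so a generic low-degree polynomial ansatz yields an overdetermined algebraic system with no reason to be consistent. The paper's actual route is different in a way that matters: it first computes the Taylor expansion of the smoothly-closing solution at the singular orbit as a function of $(b,\lambda)$, and then \emph{observes} that the low-order coefficients of $f_1$ and $\tau_2$ all carry the common factor $(4\lambda b^4 + 9b^2)$ — so that they vanish precisely when $\lambda b^2 = -9/4$. This pins down not only the form of the ansatz ($f_1=t$, $\tau_2=-\tfrac12 t$, hence $f_1+2\tau_2\equiv 0$) but also the distinguished value of $\lambda$ at which a closed-form solution can exist. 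Without this input your polynomial equations would not close. Your closing steps (verification, $f_i$ bounded below and growing linearly giving completeness, $u\,\partial_t$ being automatically gradient since $u$ depends only on $t$, and the asymptotic cone being non-torsion-free because the scale-invariant torsion limits are nonzero) are all correct and match the paper; it is the search step you need to make concrete.
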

In terms of the triple $f$ of functions described in~\eqref{eq:fibrations} above (see also Lemmas \ref{lem:SU3str:invt} and \ref{lem:Sp2:invt:SU3:strs}
for more precise definitions) and the function $u$ determining the soliton vector field $X=u\, \partial_t$, the precise statement is that for any $b>0$ the quadruple $(f_1,f_2,f_3,u)$
\[
f_1 = t, \quad f_2 = f_3 = \sqrt{b^2 + \tfrac{1}{4}t^2}, \quad u = \frac{3t}{4b^2} + \frac{4t}{4b^2+t^2},
\]
is a complete AC shrinker with dilation constant $\lambda = -\frac{9}{4b^2}$.
The AC end behaviour is encoded by each of the $f_i$ being asymptotically
linear and the asymptotic cone is determined by the limiting values of the ratios $\frac{f_i}{f_j}$. 

Shrinking solitons are usually the rarest type of soliton, reflecting the hope/expectation that 
finite-time singularities  of a `nice' geometric flow beginning with smooth initial data on a compact manifold cannot be arbitrarily bad. 

There also turn out to exist complete $G$-invariant AC
Laplacian solitons of the other two types, \ie expanders and
steady solitons. 
However, there are important differences in the study of AC ends between the
three classes of soliton. 

\bigskip

In the steady case, one can decouple an overall scale from the soliton ODEs
to obtain an autonomous first-order system in 4 scale-normalised variables.
In this simpler system, the torsion-free cone over $\sunitary{3}/\T^2$ is the unique fixed point.
The stability of this unique fixed point, established in Lemma \ref{lem:steady:crit:pt},
has some immediate consequences.

\begin{mtheorem} For~\sunitary{3}-invariant steady solitons
\label{mthm:steady_ac_end}
\begin{enumerate}[left=0em]
\item The only possible limit cone of an AC end is the torsion-free cone.
The asymptotic rate is $-1$, except for static solutions on torsion-free
AC ends with rate $-4$.
\item There exists a 3-parameter family of such AC steady ends up to scale and time
translation (and a 1-parameter subfamily of static solutions).
\item
Any small perturbation of an initial condition that leads to an AC end
still gives an AC end.
\end{enumerate}
\end{mtheorem}

For both expanders and shrinkers, ends asymptotic to a fixed closed cone can
be expressed as solutions to an irregular singular initial value problem.
In both cases a solution always exists (so \emph{any} closed cone arises
as the limit of an AC end solution), but the sign of $\lambda$ affects
the qualitative behaviour, and in particular, the number of free
parameters in the general solution \cite[\S9.2]{HJN}.
In the shrinker case AC ends are rigid, in the sense that two AC ends with the
same limit cone must coincide;
this AC shrinker rigidity does not even rely on assuming cohomogeneity one
by the main result of Haskins--Khan--Payne \cite{HKP}.
In the expander case, however, cohomogeneity-one AC ends turn out to be
stable, analogously to what
Theorem \ref{mthm:steady_ac_end}(iii) asserts in the steady case
(see \mbox{\cite[Theorem E]{HJN}} for the \Sp{2}-invariant case).

\subsubsection*{Steady solitons}
In the steady case we can understand all possible behaviours under
forward-evolution from any initial condition. Above we have seen two types of
behaviour: incomplete ends from Theorem \ref{mthm:extinction} and AC ends from
Theorem \ref{mthm:steady_ac_end}(ii). 
There turns out to be only one more possibility, namely a certain type of
forward-complete end with exponential growth.

\begin{mtheorem}
\label{mthm:trichotomy}
Any~\sunitary{3}-invariant steady soliton satisfies exactly one of the following:
\begin{enumerate}[left=0em]
\item
It is AC (generically with rate $-1$) with asymptotic cone the torsion-free cone over the standard~\sunitary{3}-invariant nearly K\"ahler structure on $\sunitary{3}/\T^2$.
\item
It has infinite forward existence time (and therefore it gives rise to
metrically-complete end), and as $t \to \infty$ one variable $f_k$ has a finite positive limit while the other two grow exponentially, with ratio
$f_i/f_j \to 1$. 
\item
It has a finite forward maximal existence time $t_*$ (and therefore it is metrically incomplete). 
As $t \to t_*$, its smallest variable $f_k \to 0$ with $f_k = O(\sqrt{t_*-t})$.
\end{enumerate}
\end{mtheorem}

In fact, Corollary \ref{cor:exist_exp_end} gives a 2-parameter family of end solutions
of type (ii) up to scale, whose flow lines therefore form a codimension one set in the phase space. We therefore expect the solutions of type (ii) to form a ``wall'' between the two stable types (i) and (iii).

The asymptotic geometry in case (ii) can be described as an $\Sph^2$-fibration with fibres of constant size over the sinh-cone of $\CP^2$.
Recall that the sinh-cone over a compact Einstein manifold $E$
with positive Einstein constant  is the product $\R_{\ge 0} \times E$ endowed with the Riemannian metric $g= dr^2 + \sinh^2{r} \,g_E$. 
(The sinh-cone over the round $n-1$ sphere of radius $1$ yields the standard warped product description of the hyperbolic metric on $\R^n$.)
In general, the sinh-cone over $E$ is a mildly-singular Einstein space with negative Einstein constant:
it has a single isolated conical singularity at $r=0$ modelled on the Ricci-flat cone $g_C=dr^2 + r^2 g_E$, and a complete end 
with exponential volume growth as $r \to \infty$.

Now recall from \eqref{eq:fibrations} that
the flag variety $\flag=\sunitary{3}/\T^2$ can be viewed as a homogeneous
$2$-sphere fibration over $\CP^2 = \sunitary{3}/\unitary{2}$, where the base
metric
on $\CP^2$ is controlled by the coefficients $f_1$ and $f_2$ of the triple $f$,
whereas $f_3$ controls the scale of the round $2$-sphere fibre.
For the corresponding fibration of $\R_{>0} \times \flag \to \R_{>0} \times \CP^2$ of the forward-complete end metrics in Theorem \ref{mthm:trichotomy}(ii),
the base metric is well approximated by the complete end of the
sinh-cone over $\CP^2$, because 
$f_1 \simeq f_2 \simeq \sinh{t}$ for $t$ sufficiently large.

\bigskip
Theorem~\ref{mthm:smooth:closure} yields a $2$-parameter family $\mathcal{S}_{b,c}$ of smoothly-closing steady solitons
(where the parameters $b >0$, $c \in \R$ are described in
Theorem \ref{thm:SU3:smooth:closure}).
The elements of the family are pairwise non-isomorphic (although there is
an orientation-reversing isometry between $\mathcal{S}_{b,c}$ and
$\mathcal{S}_{b,-c}$, see Remark \ref{rmk:non_isom}), but 
by rescaling we may normalise so that $b=1$. 
Because $c = 0$ corresponds to the static soliton on the Bryant--Salamon
AC \gtmfd, the stability of AC steady ends asserted in
Theorem \ref{mthm:steady_ac_end}(iii) immediately implies
that $\mathcal{S}_{1,c}$ is AC for $c$ sufficiently small.
However, we can do much better than that, and decide which of the cases in the
trichotomy from Theorem \ref{mthm:trichotomy} occurs for each value of $c$.

\begin{mtheorem}
\label{mthm:complete:steady}
Among the $1$-parameter family $\mathcal{S}_{1,c}$ of smoothly-closing $\sunitary{3}$-invariant steady gradient solitons 
the $1$-parameter subfamily with $c^2 \le \tfrac{9}{2}$ consists of complete solitons all defined on $\Lambda^2_-\CP^2$, while
the $1$-parameter subfamily with $c^2>\tfrac{9}{2}$ consists entirely of
incomplete solitons (case \textup{(iii)} in Theorem \ref{mthm:trichotomy}). 
Moreover we have the following additional properties.
\begin{enumerate}[left=0.25em]
\item
Any complete $\sunitary{3}$-invariant steady soliton with principal orbit $\sunitary{3}/\T^2$ 
belongs to this family (up to scaling and discrete symmetries).
\item
$\mathcal{S}_{1,0}$ is 
the trivial steady soliton on 
the Bryant--Salamon~\gthol metric on $\Lambda^2_-\CP^2$. It is asymptotic with rate $-4$  to the cone $C_{\tu{tf}}$, the unique $\sunitary{3}$-invariant 
torsion-free~\gtwo-cone over $\sunitary{3}/\T^2$.
\item
For $0<c^2<\tfrac{9}{2}$, $\mathcal{S}_{1,c}$ is a nontrivial steady soliton on $\Lambda^2_-\CP^2$
asymptotic with rate $-1$ to the cone $C_{\tu{tf}}$ (case \textup{(i)} in Theorem
\ref{mthm:trichotomy}).
\item
For $c^2=\tfrac{9}{2}$, $\mathcal{S}_{1,c}$ is
a complete nontrivial steady soliton on $\Lambda^2_-\CP^2$ with
exponential volume growth, and asymptotically constant negative scalar
curvature (case \textup{(ii)} in Theorem \ref{mthm:trichotomy}).
\end{enumerate}
\end{mtheorem}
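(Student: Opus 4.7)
The proof is a global phase-plane analysis of the steady-soliton flow on the $5$-dimensional phase space $\mathcal{P}^5$ introduced before Theorem~\ref{mthm:local:solitons}, restricted to the $1$-parameter family $\{\mathcal{S}_{1,c} : c \in \R_{\ge 0}\}$ of smoothly-closing initial conditions produced by Theorem~\ref{mthm:smooth:closure}. Since the steady equation ($\lambda=0$) is scale-invariant, I would work on the $4$-dimensional projectivised phase space; there the smoothly-closing family becomes an arc parametrised by $c$ on the singular-orbit locus, and the question becomes whether the forward trajectory through each point of this arc remains in $\mathcal{P}^5$ for all positive time, and if so what its asymptotic behaviour is.

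The first substantive step is to identify the unique $\sunitary{3}$-invariant torsion-free $\gtwo$-cone $C_{\tu{tf}}$ as a fixed point of the projectivised flow and to compute the eigenvalues of the linearisation there. The stable manifold of $C_{\tu{tf}}$ should carry two distinguished convergence rates: a generic slow eigenvalue $-1$, and a faster eigenvalue $-4$ along the ``torsion-free'' invariant subvariety $\{\tau = 0\}$. This dichotomy will produce (ii) for $c=0$ (the Bryant--Salamon metric, which lies in the torsion-free locus and hence converges at the faster rate $-4$) and the rate statement in (iii) for $0 < c^2 < \tfrac{9}{2}$ (convergence along the generic stable direction at rate $-1$, since the torsion does not vanish).

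The central analytic step is to find a monotone or conserved quantity $Q(f,\tau)$ for the steady-soliton flow that detects the numerical threshold $c^2 = \tfrac{9}{2}$. Exploiting the scale-invariance first integral, the type-$14$ algebraic constraint on $\tau$, and the closed-$\gtstr$ identity $R_\varphi = -\tfrac{1}{2}|\tau|^2$, one expects a natural $Q$ whose critical level separates trajectories into three regimes: for $c^2 < \tfrac{9}{2}$ the trajectory enters the stable manifold of $C_{\tu{tf}}$ and yields a complete AC soliton (completeness and rate $-1$); for $c^2 > \tfrac{9}{2}$ the trajectory exits $\mathcal{P}^5$ in finite time through the boundary $\{f_j = 0\}$, forcing finite-distance incompleteness; and at the sharp value $c^2 = \tfrac{9}{2}$ the trajectory follows a separatrix running off to a distinct non-AC asymptotic regime. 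For this critical trajectory, direct integration on the separatrix (or exploitation of an extra symmetry at the critical parameter) should yield a closed-form solution; the exponential volume growth of (iv) then follows from the asymptotic form of the volume density $f_1 f_2^2 f_3^2$, and the asymptotically constant negative scalar curvature from the asymptotic constancy of $|\tau|^2$ via $R_\varphi = -\tfrac{1}{2}|\tau|^2$.

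Uniqueness statement (i) is then immediate from Theorem~\ref{mthm:smooth:closure}, since any complete $\sunitary{3}$-invariant steady soliton with principal orbit $\sunitary{3}/\T^2$ must close smoothly over its (necessarily unique) singular orbit $\CP^2$, and hence lies in $\{\mathcal{S}_{1,c}\}_{c \ge 0}$ up to rescaling and discrete symmetries. The main obstacle is identifying the correct Lyapunov quantity $Q$ and verifying that it produces precisely the threshold $\tfrac{9}{2}$: as in Dancer and Wang's analysis of cohomogeneity-one Ricci solitons, such monotone quantities are not canonical and typically emerge only after careful manipulation of the full ODE system, exploiting both the scale invariance of the steady equation and the precise algebraic structure of $\sunitary{3}$-invariant closed $\gtform$s. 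A secondary difficulty is establishing the sharp $-1$ convergence rate at $C_{\tu{tf}}$, since this is a resonant rate (matching the scaling weight of the cone), so stable-manifold and normal-form methods must be applied with care to the coupled nonlinear system.
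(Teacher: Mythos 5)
Your high-level architecture is essentially the paper's: part (i) does follow from the orbit-structure lemma (a complete solution has a unique singular orbit, necessarily $\CP^2$) together with the smooth-closure theorem; the scale-invariant flow has the torsion-free cone $C_{\tu{tf}}$ as a stable fixed point with two decay rates, $-1$ generic and $-4$ along the torsion-free locus, which accounts for (ii) and the rate in (iii); and the critical solution in (iv) is indeed explicit. But the heart of the theorem --- that the threshold is \emph{exactly} $c^2=\tfrac{9}{2}$, with completeness on one side and incompleteness on the other --- rests in your write-up on an unidentified ``monotone or conserved quantity $Q$'' which you only conjecture to exist and whose construction you yourself flag as the main obstacle. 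As written this is a proof plan, not a proof: nothing in your sketch produces the number $\tfrac92$, establishes that the subcritical trajectories actually enter the stable manifold of $C_{\tu{tf}}$ (rather than, say, drifting off or blowing up), or shows that supercritical trajectories fail to be complete. Moreover, a pure separatrix picture would still need an argument that exactly one parameter value is critical (e.g.\ monotonicity in $c$ or a shooting/connectedness argument), which you do not supply.

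What closes the gap in the paper is a concrete chain of reductions you do not have. For steady solitons the three quantities $\tau_i-u f_i^2$ are conserved; this turns the soliton equations into a self-contained first-order system for $(f_1,f_2,f_3)$ alone and, in the variables $F_i=f_i^2/(f_1f_2f_3)$, into a polynomial system. Normalising $c=3$ (so $c^2/b^2=\tfrac92$ becomes $b^2=2$), the dichotomy is detected not by one Lyapunov function but by the pair $\Lambda:=F_1(F_2+F_3)=f_2^{-2}+f_3^{-2}$ and $D:=F_2-F_3$, whose initial values on $\mathcal{S}_{b,3}$ are $\Lambda(0)=D(0)=2/b^2$. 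The condition $\Lambda\le 1$ is preserved forward in time; since an upper bound on $\Lambda$ bounds $f_3$ below and the blow-up criterion says a finite maximal time forces $f_3\to 0$, this gives completeness for $b^2\ge 2$. For $b^2>2$ one further shows $\Lambda\to 0$, hence $f_3$ is unbounded, all $f_i$ grow linearly, and the scale-invariant trajectory enters the basin of attraction of $c_{\tu{tf}}$, yielding the rate $-1$ asymptotics of (iii). For $b^2<2$ the ordering $\Lambda>D>1$ is preserved, which forces $F_1>\tanh\tfrac{t}{2}$ and exponential growth of $\Lambda$, contradicting the lower bound on $f_3$ valid on any complete solution; incompleteness follows (your ``exit through $\{f_j=0\}$'' is the conclusion, not an input). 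The critical solution is exactly the one with $\Lambda\equiv D\equiv 1$, given in closed form by $f_1=2\sinh\tfrac{t}{2}$, $f_2^2=1+e^t$, $f_3^2=1+e^{-t}$, and its end lies on the $2$-dimensional stable manifold of the hyperbolic fixed point $(1,1,0)$ of the polynomial system --- this hyperbolic boundary fixed point, which is where the exponential volume growth and the asymptotically constant scalar curvature $-3$ come from, does not appear anywhere in your sketch.
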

For $c=3/\sqrt{2}$ the solution in (iv) has the explicit expression
\begin{equation}
\label{eq:exp_steady}
f_1 = 2 \sinh{\tfrac{t}{2}}, \;f_2 = \sqrt{1 + e^t}, \;f_3  = \sqrt{1 + e^{-t}}, \quad u=\tanh{\tfrac{t}{2}}.
\end{equation}
(The Bryant--Salamon torsion-free~\gtstr~in (ii) is described in Example
\ref{ex:BS}.)

Steady solitons, being eternal solutions of the flow have features of both ancient solutions (like shrinking solitons) 
and of immortal solutions (like expanding solitons).
Compared to the well-known steady solitons in Ricci flow and K\"ahler--Ricci flow, \eg Hamilton's cigar soliton \cite{Hamilton:cigar},
Bryant's unique rotationally-invariant steady soliton in each dimension $n\ge 3$~\cite{Bryant:expanders} and
Cao's $\unitary{n}$-invariant steady K\"ahler solitons on $\C^n$~\cite{Cao:GKRS}, 
the existence of such asymptotically conical steady solitons is a distinctive feature of Laplacian flow.  
In fact, it is impossible to have a nontrivial steady soliton in Ricci flow 
that is asymptotic to a Ricci-flat cone whose cross-section is smooth
(there are of course various well-known shrinking and expanding Ricci solitons 
asymptotic to regular cones). 

The asymptotic geometry of the explicit steady soliton \eqref{eq:exp_steady},
with its exponential volume growth and 
asymptotically constant negative scalar curvature, is further removed yet from the asymptotic behaviour of steady Ricci solitons. 

\pagebreak[3]

\subsection{Organisation of the paper} 
Here we describe in some detail the structure of the paper, explain some of its overall logic and roughly how the proofs of our main theorems 
proceed.
\\[0.5em]
Section~\ref{S:LF:LS} gives a rapid account of Laplacian flow and some of its basic features; it describes some results that have been proven about it and also other aspects of Laplacian flow that remain open. This material is intended, in part, for readers familiar
with other geometric flows, like Ricci flow, but not with Laplacian flow or with solitons in Laplacian flow, 
and also in part to motivate our study of solitons in Laplacian flow. 
\\[0.5em]
Sections~\ref{s:closed:coh1:G2} and~\ref{sec:su3sp2} summarise the facts that we need about 
cohomogeneity-one closed~\gtwo-structures. 
This material is used mainly to derive the fundamental ODEs governing $G$-invariant Laplacian solitons
in Section~\ref{s:ODE:cohom1:LS}.
Much of this material appears in some 
form already in Cleyton's thesis~\cite{Cleyton:thesis} and in Cleyton--Swann~\cite{Cleyton:Swann}, but the perspective adopted in Section~\ref{ss:G2:evolve:closed} gives us a slightly different viewpoint. The main point is that
we have chosen to make systematic use of the description of a cohomogeneity-one closed~\gtwo-structure 
in terms of a $1$-parameter family of \sunitary{3}-structures (with some constraints on their intrinsic torsion). 
We have also chosen to describe in detail some aspects related to the discrete symmetries of these structures:
these play an important role in a couple of places later in the paper. 
However, our main reason for treating this material in detail, rather than simply quoting more extensively from Cleyton--Swann, 
is our desire to make the paper more accessible (and self-contained)
for those familiar with Ricci solitons, but who are perhaps (much) less familiar with~\gtwo-geometry.
\\[0.5em]
Section~\ref{s:ODE:cohom1:LS} derives the (mixed order) systems of nonlinear ODEs~\eqref{eq:ODEs:SU3} and ~\eqref{eq:ODEs:Sp2} governing \sunitary{3}-invariant and \Sp{2}-invariant Laplacian solitons. However, 
as previously mentioned, rather than work directly with these mixed-order systems, we instead prefer to work with equivalent real-analytic first-order systems that we
derive in Section~\ref{ss:order1:su3}. The variables in this reformulation are a pair of triples $f=(f_i)$ and $\tau = (\tau_i)$ defined on some 
interval $I \subset \R$. 
Any such triple $f$ (subject to satisfying one scalar differential equation) determines a closed~\sunitary{3}-invariant~\gtstr~and $\tau$ represents its torsion $2$-form. In this first-order reformulation the soliton vector field $X=u \,\partial_t$ is no longer explicit in the ODE problem, but is recovered algebraically 
from $f$, $\tau$ and the dilation constant $\lambda$. Theorem~\ref{mthm:local:solitons} follows easily from our first-order reformulation 
of the soliton ODEs.

Section~\ref{s:smooth:closure:solns} deals with the so-called smooth closure problem: understanding which of the local $G$-invariant 
solitons extend smoothly over a singular orbit.
Using the aforementioned first-order reformulations of the soliton ODEs  we show that
the initial conditions guaranteeing that a $G$-invariant Laplacian soliton close smoothly
constitute a singular initial value problem.
Its singularities  are of so-called regular type. This implies that  any formal power series solution has a nonzero 
radius of convergence and hence defines a real-analytic solution.
Analysing this initial value problem leads to the statements claimed in Theorem~\ref{mthm:smooth:closure}. 
The power series solutions for the smoothly-closing $G$-invariant Laplacian solitons can in theory be computed algorithmically in terms of 
the admissible initial data and the dilation constant $\lambda$.
The first several terms of these power series solutions are detailed in Appendix~\ref{app:sc:power:series}.
The specific forms of the first several nonzero terms of these power series solutions turn out to play important roles 
at a couple of key points in this paper. The same techniques also allow us to prove the existence of 
many incomplete $G$-invariant Laplacian solitons with prescribed singular behaviour as the solution 
approaches its maximal existence time.

In Sections~\ref{sec:scale_eqs}--\ref{S:steady} we specialise to~\sunitary{3}-invariant steady solitons.  In the steady case we 
can determine precisely which smoothly-closing steady solitons extend to complete solutions, and for every complete steady soliton
we determine its precise asymptotic geometry. Within these three sections we encounter four reformulations of the steady ODE system, 
each of which leads to a clear understanding of one or more aspects of the behaviour of steady solitons. 

Section~\ref{sec:scale_eqs} explores the consequences of the fact that in the steady case one can decouple an overall scale 
to obtain an autonomous first-order system~\eqref{eq:steady:normal} in four scale-normalised variables.
In this version of the steady system, the torsion-free cone over $\sunitary{3}/\T^2$ is the unique fixed point.
The stability of this fixed point, established in Lemma \ref{lem:steady:crit:pt}, leads readily to the proof of Theorem~\ref{mthm:steady_ac_end}, 
on the existence and qualitative properties of AC steady ends. 

The main goal of Section~\ref{s:forward:steady} is to establish Theorem~\ref{mthm:trichotomy}, which proves
that the eventual forward-time behaviour of any steady soliton must be one of three (disjoint) types.
Its proof uses a different reformulation of the steady ODE system  exploiting the fact that
the steady system possesses three conserved quantities (not present in either the shrinker or expander systems).
We use this fact to reduce the steady ODE system to a first-order ODE system~\eqref{eq:fi:steady} 
involving only the triple $f$ and a constant zero-sum triple $c_1,c_2$ and $c_3$ 
(recall that in general our soliton ODE system is a first-order system involving both the triple $f$ and its torsion $\tau$, and the 
latter involves first derivatives of $f$). 

One of the potential solution classes in Theorem~\ref{mthm:trichotomy} involves forward-complete solutions that are not AC; 
for these non-AC forward-complete solutions, instead of linear growth of all $f_i$ as in the AC case, 
the largest two coefficients $f_i$ and $f_j$ grow exponentially at the same rate, while the smallest coefficient $f_k$ is asymptotically a positive constant. 
To establish the existence and qualitative properties of these exponentially-growing steady ends, we find a  variable change
that transforms the rational first-order ODE system~\eqref{eq:fi:steady} into a polynomial (in fact, cubic) one~\eqref{eq:steady:poly},
whose coefficients depend on the constant triple $c$. In these new variables, an exponentially-growing end is transformed into 
a trajectory that is asymptotic to a (boundary) critical point of the cubic ODE system. 
Since these critical points turn out to be hyperbolic, the Stable Manifold Theorem, allows us to prove 
the existence and understand the local generality of such exponentially-growing ends. 
In particular, the hyperbolic nature of the critical point also implies that, unlike AC steady ends and forward-incomplete steady solitons, such exponentially-growing ends
form a hypersurface inside phase space, that therefore can potentially act as a wall between the previous two stable solution types.

Section~\ref{S:steady} applies the results proven in Sections~\ref{s:smooth:closure:solns},~\ref{sec:scale_eqs} and~\ref{s:forward:steady}
to prove Theorem~\ref{mthm:complete:steady} which determines the eventual forward-behaviour of every member of the $1$-parameter (up to scale) 
family of smoothly-closing~\sunitary{3}-invariant steady solitons produced by Theorem~\ref{mthm:smooth:closure}. We prove that there is a critical value of the parameter: all solutions below this value
are AC; the solution at the critical value is exponentially-growing; all solutions beyond this value are forward-incomplete. 
This vitiates our previous expectation that exponentially-growing ends can form walls between AC solutions and incomplete ones. 

The proof itself rests on an explicit determination of the critical value (informed by numerical work) and then finding an explicit 
smoothly-closing exponentially-growing steady soliton at this critical value (whose form we found by contemplation of the small-$t$ power series solutions). 
In the polynomial form of the steady ODE system this explicit solution takes a particularly simple form 
and this description motivates the final variant of the steady ODE system~\eqref{eq:steady:poly:v2}. 
One of the variables in this system, $\Lambda>0$, turns out to be central to our analysis and on the explicit solution $\Lambda$ is identically equal to $1$.  
It follows from the steady trichotomy Theorem~\ref{mthm:trichotomy} 
that only three end-time behaviours of $\Lambda$ are possible: either it tends
to~$0$ (then the solution has an AC end); it tends to~$1$
(then the solution has an exponentially-growing end); or, it tends to infinity (in which case the solution is forward-incomplete). 
The ODE system~\eqref{eq:steady:poly:v2} shows that $\Lambda$ is strictly decreasing when $\Lambda<1$ 
and smoothly-closing solutions initially have $\Lambda<1$ precisely when the parameter is below the critical value. 
By the steady trichotomy this is enough to conclude that all such solutions are AC. 
To prove incompleteness for solutions above the critical value we instead show that in our setting when $\Lambda>1$ 
then $\Lambda$ is strictly increasing and hence by the steady trichotomy $\Lambda \to \infty$ and such solutions are all forward-incomplete. 

Finally in Section~\ref{S:comparison} we compare some of the Laplacian solitons we have constructed with analogous known Ricci solitons, 
describing some key differences. 

\smallskip
\noindent
\textbf{Acknowledgements.}
Haskins's work was partially supported by Simons Collaboration grant 488620. Nordstr\"om's work was partially supported by Simons Collaboration grant 488631. 
The authors would like to thank Gavin Ball and Anna Fino for helpful discussions related to this paper.

\section{The Laplacian flow and Laplacian solitons}
\label{S:LF:LS}

\subsection{Bryant's closed Laplacian flow}
For a parabolic approach to the problem of finding torsion-free~\gtstr s on a compact oriented spin $7$-manifold it is natural to seek a geometric flow 
on positive $3$-forms. Although a number of different flows on positive $3$-forms have been considered, in this paper we discuss only what is widely considered to be the 
most promising of these flows with the nicest geometric and analytic features: Bryant's closed~\gtwo-Laplacian flow
\cite{Bryant:remarks:g2}. A $1$-parameter family of closed~\gtstr s $\varphi(t)$ evolves according to the Laplacian flow if it satisfies
\begin{equation}
\label{eq:LF}
\frac{\partial \varphi}{\partial t} = \Delta_{\varphi} \varphi
\end{equation}
where $\Delta_\varphi$ is the Hodge Laplacian on $3$-forms determined by the evolving metric $g_{\varphi(t)}$. 
Clearly any torsion-free~\gtstr~gives rise to a fixed point of Laplacian flow and on a compact manifold integration by parts shows 
that these are the only fixed points. 
For any closed~\gtstr~$\varphi$ there is a unique $2$-form $\tau$ of type $14$ with the property that $d(\ast \varphi) = \tau \wedge \varphi$. 
$\tau$ is called the torsion of $\varphi$ and it encodes all the first-order local invariants of a closed~\gtstr. Using the algebraic properties of 
$2$-forms of type $14$ it is readily seen that $\Delta_{\varphi} \varphi = d \tau$ and so in particular under Laplacian flow the cohomology class 
of $\varphi(t)$ remains constant. 
The flow of $\varphi(t)$ induces a flow of metrics $g_t:=g_{\varphi(t)}$ which has the form\footnote{The coefficient $\tfrac{1}{6}$ of the second-last term
disagrees with that in \cite[(6.15)]{Bryant:remarks:g2}, which Bryant informs us is an error.}
\begin{equation}
\label{eq:g:evolve:LF}
\frac{\partial g}{\partial t} = - 2 \Ric(g) + \frac{1}{6} \abs{\tau}^2 g + \frac{1}{4} j_\varphi (\ast_\varphi(\tau \wedge \tau)),
\end{equation}
where the map $j_\varphi: \Omega^3(M) \to S^2(T^*M)$ sends a $3$-form $\alpha$ to the symmetric covariant 2-tensor defined by $j_\varphi(V,W)(\alpha) = \ast (V \lrcorner \varphi \wedge W \lrcorner \varphi \wedge \alpha)$.
In particular, the metric $g_t$ evolves by Ricci flow with the addition of two quadratic correction terms involving the torsion $\tau$.
Since the Ricci curvature of any closed~\gtstr~is determined by $\varphi, \tau$ and $d \tau$ \cite[(4.37)]{Bryant:remarks:g2}
one can view these additional terms quadratic in $\tau$ as `lower-order' corrections. 

However, these `correction 
terms' have a profound impact on certain geometric features of the flow. For instance, a standard computation \cite[(6.14)]{Bryant:remarks:g2}
shows that the induced volume form $\vol_\varphi$ evolves via
\[
\frac{\partial}{\partial t} \vol_\varphi= \frac{1}{3} \abs{\tau_\varphi}^2 \vol_\varphi,
\]
\ie the induced volume form is pointwise increasing in $t$. This implies that for a non-torsion-free\mbox{~\gtstr} on a compact manifold the total volume $\vol_{\varphi}(M)$ 
is increasing in $t$. This is clearly very different from Ricci flow where for instance every compact Einstein manifold with positive scalar curvature
shrinks homothetically to a point in finite time. 
Hitchin provided a more geometric understanding of why the total volume should be increasing by exhibiting a gradient structure for Laplacian flow. 
More specifically,  Hitchin considered the volume functional
\[
\mathcal{V}(\varphi) := \frac{1}{7} \int_{M}{\varphi \wedge \ast \varphi} = \int_{M}{\vol_\varphi} = \vol(M,g_\varphi)
\]
and proved that for an appropriate Riemannian metric on the space of all closed~\mbox{\gtstr s} in a fixed cohomology class 
the Laplacian flow is the (upward) gradient flow of $\mathcal{V}$. Moreover, Hitchin proved that any critical point of $\mathcal{V}$ on $[\varphi]$ is a strict maximum (modulo the action of diffeomorphisms). This suggests that if one could prove 
long-time existence for solutions of Laplacian flow and if the volume is bounded above for all $t$ that 
perhaps the solution should converge as $t \to \infty$ to a torsion-free~\gtstr~$\varphi_\infty$ on $M$ 
in the original cohomology class.

Like Ricci flow, because of its diffeomorphism invariance the Laplacian flow is not strictly parabolic. 
However, by making a suitable gauge-fixing in the spirit of DeTurck and appealing to some of 
Hamilton's Nash--Moser-type methods, Bryant and Xu \cite{Bryant:Xu}, proved short-time existence (and uniqueness) for Laplacian flow on any compact manifold with any smooth closed~\gtstr~as initial data. The extra technical difficulties arise because the linearisation is parabolic only in the direction of exact forms: see also the recent note by Bedulli and Vezzoni \cite{Bedulli:Vezzoni:AGAG} observing that short-time existence can also be proven using that the fact that solutions to the gauged Laplacian flow 
fit into a general framework introduced by Hamilton in his original 1982 paper on Ricci flow \cite{Hamilton:JDG:1982}.

More recently, Laplacian flow analogues of various analytic results well known
in Ricci flow were proven by Lotay and Wei in a series of three
papers~\cites{lotay:wei:shi,Lotay:Wei:analytic,Lotay:Wei:stability}.
These include
long-time existence criteria based on curvature and torsion estimates along the flow, Shi-type estimates, uniqueness and compactness theory (the analogue of Hamilton's compactness theorem for Ricci flows), real analyticity of the the flow, and stability of critical points, \ie when the initial data $\varphi(0)$ is sufficiently close to 
a torsion-free~\gtstr~$\varphi_{\tu{tf}}$ in the same cohomology class then the solution to Laplacian flow 
exists for all time and converges modulo diffeomorphisms to $\varphi_{\tu{tf}}$.
For short introductions to many of these analytic results we refer the reader 
to the recent Fine--Yao survey article~\cite{Fine:Yao:HS:report} on hypersymplectic flow
(which can be viewed as a dimensional reduction of Laplacian flow when $M^7 = \mathbb{T}^3 \times N^4$).

\subsection{Singularity models, ancient solutions and solitons}
Recall that an ancient solution to Ricci flow is a smooth solution that exists on a time interval $(-\infty,b]$ (where $b$ could be finite or infinite; in the latter case the solution is said to be eternal). Ancient solutions to Ricci flow are fundamental to finite-time singularity analysis
because performing the natural parabolic blow-up procedure for such a singularity produces an
ancient solution. Any ancient solution of Ricci flow that arises as the blow-up limit of a finite-time singularity 
of smooth Ricci flow is called a singularity model. One also knows that any singularity model is necessarily $\kappa$-noncollapsed. 
The $\kappa$-noncollapsed condition already enables one to prove that many ancient solutions are not (finite-time) singularity models (\eg the product of Hamilton's cigar soliton with a Euclidean space).

A special class of ancient solutions to Ricci flow are provided by shrinking and steady Ricci solitons. 
By now there is a very extensive literature on (complete gradient) Ricci solitons. 
Broadly speaking, one can divide these results into three categories: 
\begin{enumerate}[label=(\alph*)]
\item
the construction of various gradient Ricci solitons;  
\item
general structural results about gradient Ricci solitons  (of both topological and geometric natures);
\item 
classification results about gradient Ricci solitons. 
\end{enumerate}
Even though gradient Ricci solitons are easier to study directly than general ancient solutions there are still many challenges
on the construction side. 
Ricci solitons satisfy an elliptic (modulo diffeomorphism) system of PDEs that generalises the Einstein equations; 
since we lack any general analytic methods to produce Einstein metrics (except in the setting of special holonomy) it is not too surprising that it has proven difficult to construct solitons by analytic methods.  Rather, as with Einstein metrics, many constructions are based on a symmetry assumption or 
other special metric ansatz (\eg a warped product structure or bundle structure) that reduces the system of PDEs to ODEs or even to algebraic equations (as for homogeneous Einstein metrics or solitons).

One can then try to leverage results about steady or shrinking solitons either to construct or prove structural or classification results for more general ancient solutions:
Perelman proved that any complete nonflat $3$-dimensional ancient $\kappa$-solution has a rescaled backward time
limit which is a nonflat gradient shrinking Ricci soliton; he also
constructed a compact rotationally-invariant ancient solution to $3$-dimensional Ricci flow 
that at large negative times resembles two steady Bryant solitons glued together to obtain a $3$-sphere 
with a long neck and which close 
to its extinction time approaches a shrinking round $3$-sphere; 
Brendle's recent proof of Perelman's conjecture on the complete classification of noncompact ancient $\kappa$-solutions in three dimensions \cite{Brendle:ancient:kappa:nc}
builds on his earlier classification result for 3-dimensional $\kappa$-noncollapsed steady gradient solitons with positive curvature~\cite{Brendle:3d:Bryant:uniqueness}.

\subsection{Finite-time singularity formation in Laplacian flow}
Currently significantly less is known about finite-time singularity formation in Laplacian flow than in Ricci flow (or 
various other better-studied geometric flows like mean curvature flow or harmonic map heat flow). 
The first difficulty is that as in Hamilton's compactness theorem for Ricci flows, the Lotay--Wei compactness results~\cite[Theorems 7.1 and 7.2]{lotay:wei:shi} assume that a uniform lower bound on the injectivity radius holds. 
In Ricci flow, Perelman's $\kappa$-noncollapsing theorem guarantees this holds at any finite-time singularity. 
Gao Chen extended Perelman's $\kappa$-noncollapsing theorem to perturbations of Ricci flow by a symmetric two-tensor $h$ under some boundedness assumptions on $h$ along the flow~\cite[Theorem 4.2]{Chen:G2:noncollapse}.
Since by~\eqref{eq:g:evolve:LF} under Laplacian flow the induced metric indeed evolves by such a perturbation 
of Ricci flow, under some assumptions on the behaviour of the torsion (need to give the required control of the perturbation term $h$) one can  
find a singularity model that is a complete nonflat torsion-free~\gtstr~with Euclidean volume growth. 
But without making such a priori assumptions on the behaviour of the torsion along Laplacian flow it is not yet known
how to pass to a singularity model at finite-time singularities.

An analogous result in Ricci flow to Chen's result in Laplacian flow is that under uniform (upper and lower) bounds on the 
scalar curvature any finite-time singularity model is a smooth nonflat complete Ricci-flat manifold with Euclidean volume growth. 
Finite-time singularities of this kind are now known to occur in $\unitary{2}$-invariant non-K\"ahler Ricci flow 
(on certain simple asymptotically cylindrical $4$-manifolds and where the singularity model is the Eguchi--Hanson metric) \cite{Appleton:U2:RF}.
However, such a uniform bound on scalar curvature is not always satisfied along a Ricci flow:
for K\"ahler-Ricci flow on closed manifolds it is known that the scalar curvature 
must blow up at any finite-time singularity. In some cases one can prove that finite-time singularities must occur 
and even identify the singularity model that appears: in a compact $\unitary{2}$-invariant K\"ahler setting
Maximo \cite{Maximo} proved that an embedded $(-1)$-sphere can collapse to a point in finite time and that the 
associated singularity model is the $\unitary{2}$-invariant Feldman--Ilmanen--Knopf K\"ahler shrinker on the one-point blowup of~$\C^2$~\cite{FIK}. 
\subsection{Ancient and eternal solutions to Laplacian flow}
Rather little work has been done so far to understand ancient solutions of the Laplacian flow. 
Homogeneous solitons in Laplacian flow are 
by far the simplest solitons to study  and the resulting problems have a Lie-theoretic flavour. 
There is now a growing literature on homogeneous Laplacian solitons 
and more generally on the evolution of homogeneous metrics under Laplacian flow~\cite{Lauret:JLMS}
using some of the techniques developed for the study of Ricci solitons and Ricci flows of homogeneous metrics.
Note that there are no nontrivial \emph{gradient} homogeneous Ricci solitons~\cite[Theorem 2.3]{Petersen:Wylie:RS:symmetry}.
We refer the reader to~\cite{Lauret:JLMS} for further references on homogeneous Laplacian solitons.

Outside the homogeneous setting 
Ball~\cite[\S 6]{Ball} has found complete nontrivial steady gradient Laplacian solitons on topological cylinders $\R \times N$, 
where $N^6$ is either the twistor space of an anti-self-dual Ricci-flat $4$-manifold $B$ or a particular $T^2$-bundle
over certain~\hk~$4$-manifolds. 
Note that a nontrivial complete noncompact steady gradient Ricci soliton must be connected at infinity~\cite[Corollary 1.1]{Munteanu:Wang:SMM} so these nontrivial steady gradient Laplacian solitons on topological cylinders 
are a new feature of Laplacian flow. 
In the first case Ball exhibits a $2$-parameter family of explicit solutions 
and when $B$ is compact (a $K3$ surface or a $4$-torus for instance) on one end the~\gtstr~is asymptotic to the product torsion-free~\gtstr~on $\R^3 \times B$ while the other end has finite volume. 
(Note that a finite volume end is not possible for a complete steady gradient Ricci soliton because by~\cite[Theorem 5.1]{Munteanu:Sesum:GRS} any end must have at least linear volume growth).
In the second case the solutions have linear volume growth at one end and cubic volume growth at the other end.

\subsection{Laplacian solitons}
Given a smooth $7$-manifold $M$ (compact or noncompact), a triple $(\varphi,X,\lambda)$ consisting of a \gtstr~$\varphi$, 
a vector field $X$ and a constant $\lambda \in \R$ is called a \emph{Laplacian soliton} if the triple satisfies the 
following system of equations
\[\left\{
  \begin{aligned}
    d \varphi &= 0,\\
\Delta_\varphi \varphi&= \lambda \varphi + \mathcal{L}_X \varphi.
      \end{aligned}\right.\tag{LSE}
\]
Basic facts about the torsion of a closed \gtstr~$\varphi$ imply that $\Delta_\varphi \varphi = d\tau$ 
where $\tau$ is the torsion $2$-form of type $14$ determined by $d(\ast \varphi) = \tau \wedge \varphi$ (see \eqref{eq:Laplacian:dtau2}). Hence 
an alternative formulation of the Laplacian soliton system is
\begin{equation}
\left\{
  \begin{aligned}
d \varphi &= 0, \\
d (\tau - \iota_X\varphi)&= \lambda \varphi.
      \end{aligned}\right.
      \label{eq:lap:soliton:v2}
\end{equation}

\begin{remark}
\label{rmk:soliton:exact}
A simple but important observation is that for any nonsteady Laplacian soliton the closed $3$-form $\varphi$ 
must actually be exact. 
\end{remark}

Any Laplacian soliton $(\varphi,X,\lambda)$ gives rise to a self-similar solution to the Laplacian flow as follows:
for any time $t\in \R$ satisfying $2\lambda t + 3>0$ we define a $1$-parameter family of closed \gtstr s~$\varphi_t$ 
with $\varphi_0 = \varphi$ that evolves by Laplacian flow by defining
\[
\varphi_t = \left(\frac{3+2\lambda t}{3}\right)^{\tfrac{3}{2}} \phi_t^* \varphi,
\]
where $\phi_t$ is the $1$-parameter family of diffeomorphisms of $M$ generated by the time-dependent vector field
$X(t) = \left(\frac{3}{3+2\lambda t} \right) X$ such that $\phi_0$ is the identity, 
\eg see~\cite[Section 9]{lotay:wei:shi}. The proof that $\varphi_t$ evolves via Laplacian flow is an elementary calculation.

Based on the behaviour of the scaling factor that appears in the definition of $\varphi_t$ one says that 
a Laplacian soliton is \emph{steady} if $\lambda=0$, \emph{expanding} if $\lambda>0$ and 
\emph{shrinking} if $\lambda<0$. 
For a shrinking soliton with $\lambda$ normalised to be $-1$ we therefore have an ancient solution to Laplacian flow 
defined on the time interval $(-\infty,\tfrac{3}{2})$, whereas for an expanding soliton with $\lambda$ normalised to be $1$ 
we have an immortal solution to Laplacian flow defined on the time interval $(-\tfrac{3}{2},\infty)$. 
Steady solitons give rise to eternal solutions to Laplacian flow defined for all $t\in \R$.

\section{Cohomogeneity-one closed \texorpdfgtstr s}
\label{s:closed:coh1:G2}

In this section we discuss generalities about cohomogeneity-one actions
and closed \gtstr s, before restricting attention to the cases of closed
\gtstr s with $\sunitary{3}$ and $\Sp{2}$ symmetries in Section \ref{sec:su3sp2}.

Section~\ref{ss:closed:g2} recalls key facts about closed~\gtstr s
and Section~\ref{ss:symmetries:g2} collects some basic facts about the automorphism group of a closed~\gtstr.
Section~\ref{ss:G2:evolve:closed} explains how to pass from certain $1$-parameter families 
of \sunitary{3}-structures to a closed~\gtstr:  we use this construction in our discussion of 
cohomogeneity-one closed~\gtstr s later. 
Section~\ref{ss:closed:coh1:G2} recalls the facts that we will 
need from the general theory of cohomogeneity-one spaces and the work of Cleyton and Swann on 
cohomogeneity-one closed~\gtstr s.

\subsection{Closed \texorpdfgtstr s}
\label{ss:closed:g2}

We recall some basic facts about closed~\gtstr s following Bryant  \cite{Bryant:remarks:g2} (to which we refer the reader for further details and proofs). For readers looking for a thorough introduction to the linear algebra associated with~\gtwo~we also recommend the notes of Salamon--Walpuski \cite{Salamon:Walpuski}.
\\[0.3em]
The first-order local invariants of a~\gtstr~$\varphi$ are all encoded in a terms of a quadruple of differential forms called the torsion forms of $\varphi$.
These torsion forms arise as components of the decomposition of the exterior derivatives of $\varphi$ and $\ast \varphi$ into their \gtwo-irreducible components. Recall that the exterior powers of the standard $7$-dimensional representation $V$ of~\gtwo~decompose as 
\begin{subequations}
\begin{align}
\Lambda^2(V^*) & = \Lambda^2_{14} \oplus \Lambda^2_{7}, \\
\Lambda^3(V^*) & = \Lambda^3_{27}\oplus  \Lambda^3_{7} \oplus \Lambda^3_{1},\\
\Lambda^4(V^*) & = \Lambda^4_{27} \oplus \Lambda^4_{7} \oplus \Lambda^4_{1},\\
\Lambda^5(V^*) & = \Lambda^5_{14} \oplus \Lambda^5_{7},
\end{align}
\end{subequations}
where the subscript denotes the dimension of the irreducible module and
\begin{subequations}
\begin{align}
\Lambda^2_7&= \{X \lrcorner \varphi\, |\, X \in V\} = \{ \omega \in \Lambda^2(V^*)\, | \ast(\varphi \wedge \omega) = 2\omega\} \cong V, \\
\Lambda^2_{14} &=\{ \omega \in \Lambda^2(V^*)\, | \,\omega \wedge\! \ast\varphi=0\} = \{ \omega \in \Lambda^2(V^*)\, | \ast(\varphi \wedge \omega) = -\omega\} \cong \mathfrak{g}_2\\
\Lambda^3_1 &= \{ r \varphi\, |\, r \in \R\} \cong \R,\\
\Lambda^3_7 &= \{ X \lrcorner \ast \!\varphi\, | \, X \in V\} \cong V,\\
\Lambda^3_{27} &= \{\gamma \in \Lambda^3(V^*) \, | \, \gamma \wedge \varphi=0, \gamma \wedge \ast \varphi =0\} \cong \tu{Sym}^2_0(V) .
\end{align}
\end{subequations}
The Hodge star gives isomorphisms $\Lambda^p_j \cong \Lambda^{7-p}_j$. In particular this gives us the 
irreducible decomposition of $\Lambda^4$ from that of $\Lambda^3$.
\begin{lemma}
For any~\gtstr~$\varphi$ on $M^7$ there exist unique differential forms $\tau_1 \in \Omega^0(M)$, $\tau_7 \in \Omega^1(M)$, $\tau_{14} \in \Omega^2_{14}(M,\varphi)$ 
and $\tau_{27} \in \Omega^3_{27}(M,\varphi)$  such that
\begin{gather*}
d \varphi = \tau_1 \ast \! \varphi + 3 \tau_7 \wedge \varphi + \ast \tau_{27},\\
d (\ast \varphi) = 4 \tau_7 \wedge \ast \varphi + \tau_{14} \wedge \varphi.
\end{gather*}
\end{lemma}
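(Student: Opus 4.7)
My plan is to obtain each torsion form as the unique component of $d\varphi$ or $d(\ast\varphi)$ in one of the $G_2$-irreducible summands listed just above the statement, and then to couple the two decompositions through a single 1-form $\tau_1$. Existence and uniqueness of each individual component form will be automatic from the orthogonal direct-sum decompositions of $\Lambda^4$ and $\Lambda^5$.

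I would first decompose $d\varphi \in \Omega^4(M)$ pointwise according to $\Lambda^4 = \Lambda^4_1 \oplus \Lambda^4_7 \oplus \Lambda^4_{27}$. Since $\Lambda^4_1 = \R\ast\varphi$ is one-dimensional, its projection defines a unique smooth function $\tau_0$. The isomorphism $\alpha \mapsto \alpha \wedge \varphi$ writes the $\Lambda^4_7$-component uniquely as $\sigma \wedge \varphi$ for some 1-form $\sigma$, and the Hodge-star isomorphism $\Lambda^3_{27} \to \Lambda^4_{27}$ produces a unique $\tau_3 \in \Omega^3_{27}(M,\varphi)$ whose Hodge dual is the $\Lambda^4_{27}$-component. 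Analogously, decomposing $d(\ast\varphi) \in \Omega^5(M)$ according to $\Lambda^5_7 \oplus \Lambda^5_{14}$ via the isomorphisms $\beta \mapsto \beta \wedge \ast\varphi$ and $\eta \mapsto \eta \wedge \varphi$ yields a unique 1-form $\rho$ and a unique $\tau_2 \in \Omega^2_{14}(M,\varphi)$ with $d(\ast\varphi) = \rho \wedge \ast\varphi + \tau_2 \wedge \varphi$.

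The remaining and only nontrivial step is the universal identity $4\sigma = 3\rho$, after which setting $\tau_1 := \tfrac{1}{3}\sigma = \tfrac{1}{4}\rho$ matches the stated form. I would argue this representation-theoretically: the intrinsic torsion $\nabla^{\tu{LC}}\varphi$ is a section of $V^* \otimes \mathfrak{g}_2^\perp \cong V^* \otimes V$, whose $G_2$-irreducible decomposition $\R \oplus V \oplus \mathfrak{g}_2 \oplus \tu{Sym}^2_0 V$ contains exactly one copy of $V$. The $G_2$-equivariant assignments $\nabla^{\tu{LC}}\varphi \mapsto d\varphi$ and $\nabla^{\tu{LC}}\varphi \mapsto d(\ast\varphi)$ project the torsion onto $\Lambda^4$ and $\Lambda^5$ respectively, and Schur's lemma then forces both $\sigma$ and $\rho$ to be scalar multiples of the same 1-form, namely the $V$-isotypic part of the torsion. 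The explicit ratio $3:4$ I would fix once and for all by a pointwise computation in a standard $G_2$-adapted orthonormal frame on a model profile with only the $V$-component of the torsion nonzero.

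The main obstacle is precisely the last bookkeeping step: while the proportionality of $\sigma$ and $\rho$ is automatic from the representation theory, the specific numerical ratio depends on the normalisation convention (here $\varphi \wedge \ast\varphi = 7\,\vol_\varphi$) and requires a direct frame calculation. Everything else is routine linear algebra in the decompositions already listed above the statement.
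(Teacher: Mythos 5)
Your argument is correct, and it is essentially the standard one: the paper states this lemma without proof, deferring to Bryant's \emph{Some remarks on $\mathrm{G}_2$-structures}, where the derivation is exactly your route --- type-decompose $d\varphi$ and $d(\ast\varphi)$, observe that the intrinsic torsion lies in $V^*\otimes\mathfrak{g}_2^\perp\cong V\otimes V\cong\R\oplus V\oplus\mathfrak{g}_2\oplus \mathrm{Sym}^2_0V$ with a single copy of $V$, and invoke Schur's lemma to couple the two $7$-dimensional components through one $1$-form. For the step you defer to a frame computation, the cleanest check is the conformal family: if $\varphi$ is torsion-free and $\tilde\varphi=e^{3f}\varphi$, then $g_{\tilde\varphi}=e^{2f}g_\varphi$ and $\ast_{\tilde\varphi}\tilde\varphi=e^{4f}\ast_\varphi\varphi$, so $d\tilde\varphi=3\,df\wedge\tilde\varphi$ and $d(\ast_{\tilde\varphi}\tilde\varphi)=4\,df\wedge\ast_{\tilde\varphi}\tilde\varphi$, which simultaneously shows both Schur scalars are nonzero and fixes the ratio $4\sigma=3\rho$; note the ratio is forced by the conformal weights of $\varphi$ and $\ast\varphi$ and does not depend on the normalisation $\varphi\wedge\ast\varphi=7\,\mathrm{vol}_\varphi$.
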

\noindent
The quadruple $(\tau_1,\tau_7,\tau_{14},\tau_{27})$ defined above can be identified with the intrinsic torsion of $\varphi$. 
We will only be interested in \emph{closed}~\gtstr s, \ie $d \varphi=0$, which by the previous lemma is equivalent to the vanishing of the torsion forms $\tau_1$, $\tau_7$, and $\tau_{27}$. Hence 
the intrinsic torsion of a closed \gtstr{} $\varphi$ can be identified with $\tau_{14} \in \Omega^2_{14}(M,\varphi)$.
In the rest of the paper, we will simply denote this by $\tau$.
It satisfies
\begin{equation}
\label{eq:tau2:closed}
d (\ast\varphi) = \tau \wedge \varphi.
\end{equation}
Since $\tau$ is of type $14$, $\ast(\tau \wedge \varphi) = -\tau$ and so $d^*_\varphi \varphi = -\!\ast \!d \!\ast \!\varphi = \tau$. Hence
\begin{equation}
\label{eq:Laplacian:dtau2}
\Delta_\varphi \varphi = d d^*_\varphi \varphi = d\tau.
\end{equation}
Since $\tau$ is of type $14$ it also satisfies $\tau \wedge \ast \varphi=0$. 
Taking the exterior derivative of both sides of this equation and using again the characterisation of $\Lambda^2_{14}$ 
as the $-1$  eigenspace of $\ast(\varphi \wedge \cdot)$ yields
\[
d \tau \wedge \ast \varphi = \abs{\tau}^2 \ast \!1.
\]
Taking the exterior derivative of~\eqref{eq:tau2:closed} implies that
\begin{equation}
\label{eq:d:tau2:7}
d \tau \wedge \varphi=0
\end{equation}
and therefore the $3$-form $d\tau$ has no type 7 component. Hence we can write 
\begin{equation}
\label{eq:d:tau2}
\Delta_\varphi \varphi = d\tau = \frac{1}{7}\abs{\tau}^2 \varphi + \gamma_{27}
\end{equation}
for some $3$-form $\gamma_{27} \in \Omega^3_{27}(M,\varphi)$.

The scalar curvature of $g_\varphi$ for a closed~\gtstr~$\varphi$ is given by
\begin{equation}
\label{scalar:curv:closed}
\tu{S}(g_\varphi) = - \tfrac{1}{2} \abs{\tau}^2.
\end{equation}
In particular its scalar curvature is nonpositive and vanishes if and only if $\varphi$ is torsion free. 

\subsection{Symmetries of closed \texorpdfgtstr s}
\label{ss:symmetries:g2}
For any \gtstr~$\varphi$ on a $7$-manifold $M$ we define its automorphism group to be
\[
\Aut_{\varphi}(M):= \{ f \in \tu{Diff}(M) |\, f^*\varphi = \varphi \}.
\]
$\Aut_{\varphi}(M)$ is a closed subgroup of $\tu{Iso}_{g_\varphi}(M)$ and 
therefore it is compact whenever $\tu{Iso}_{g_\varphi}(M)$ is, \eg when $M$ is compact.
The Lie algebra to the identity component $\Aut^0_{\varphi}(M)$ of $\Aut_{\varphi}(M)$ defined by 
\[
\aut_{\varphi}(M):= \{ X \in \mathfrak{X}(M) |\, \mathcal{L}_X\varphi =0\}
\]
is then a Lie subalgebra of the algebra of Killing fields $\killing_{\varphi}$ of $g_\varphi$.

If $M$ is compact and $\varphi$ is closed the Lie algebra $\aut_{\varphi}(M)$ must be abelian; 
if moreover $\varphi$ is exact then $\aut_{\varphi}(M)=(0)$.
The first statement was proven by Podest\`a and Raffero~\cite[\S 2]{Podesta:Raffero}, where further details about 
the possible dimensions of $\aut_\varphi(M)$ are given. 
The second statement was proven by Fowdar~\cite[Prop. 4.13]{Fowdar:S1:invariant:LF}.
For completeness and because these results do not seem to be that well known we recall their proofs; 
these results are not however used in the remainder of the paper.
\begin{lemma}
\label{lem:autg2:closed}
The Lie algebra $\aut_\varphi(M) \subseteq \killing_\varphi$ of a closed~\gtstr~ on a compact manifold $M$ is abelian
and satisfies $\dim{\aut_\varphi(M)} \le b^2(M)$. If $\varphi$ is exact then $\aut_\varphi(M)=(0)$.
\end{lemma}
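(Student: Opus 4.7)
The plan is to use the $2$-form $\alpha_X := \iota_X \varphi$ associated to each $X \in \aut_\varphi(M)$. The key observation will be that $\alpha_X$ is harmonic with respect to $g_\varphi$: closedness follows from Cartan's magic formula, $d\alpha_X = \mathcal{L}_X \varphi - \iota_X d\varphi$, both terms on the right vanishing since $X$ preserves $\varphi$ and $\varphi$ is closed. For coclosedness I will use that $\alpha_X \in \Omega^2_7$, so by the characterisation of $\Lambda^2_7$ as the $+2$-eigenspace of $\ast(\varphi \wedge \cdot)$ we have $\ast \alpha_X = \tfrac{1}{2}\varphi \wedge \alpha_X$, and then $d \ast \alpha_X = \tfrac{1}{2}(d\varphi \wedge \alpha_X + \varphi \wedge d\alpha_X) = 0$.

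First, for the dimension bound, I would observe that the map $\aut_\varphi(M) \to \mathcal{H}^2(M,g_\varphi)$ defined by $X \mapsto \alpha_X$ is injective: the pointwise linear map $V \to \Lambda^2_7$, $X \mapsto X \lrcorner \varphi$ is a \gtwo-equivariant isomorphism, so $\iota_X \varphi \equiv 0$ forces $X \equiv 0$. Since by Hodge theory $\dim \mathcal{H}^2(M,g_\varphi) = b^2(M)$, the inequality $\dim \aut_\varphi(M) \le b^2(M)$ follows at once.

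Next, for the abelian assertion, the natural approach is to show that for $X, Y \in \aut_\varphi(M)$ the $2$-form $\alpha_{[X,Y]}$ is both harmonic and exact, hence zero. Harmonicity follows by applying the first step to $[X,Y]$, which lies in $\aut_\varphi$ because $\mathcal{L}_{[X,Y]} = [\mathcal{L}_X,\mathcal{L}_Y]$. Exactness will come from the computation $\alpha_{[X,Y]} = \iota_{\mathcal{L}_X Y}\varphi = \mathcal{L}_X(\iota_Y \varphi) - \iota_Y(\mathcal{L}_X \varphi) = \mathcal{L}_X \alpha_Y = d(\iota_X \alpha_Y)$, using $d\alpha_Y = 0$. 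Injectivity of $X \mapsto \alpha_X$ then gives $[X,Y] = 0$.

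Finally, for the case $\varphi = d\psi$, the plan is a single integration by parts. Using $\ast \alpha_X = \tfrac{1}{2} \varphi \wedge \alpha_X$ and $d\alpha_X = 0$, I compute
\[
2 \int_M |\alpha_X|^2\, \dvol = \int_M \varphi \wedge \alpha_X \wedge \alpha_X = \int_M d\psi \wedge \alpha_X \wedge \alpha_X = \int_M d\bigl(\psi \wedge \alpha_X \wedge \alpha_X\bigr) = 0,
\]
by Stokes on the closed manifold $M$. Hence $\alpha_X \equiv 0$ and injectivity gives $X = 0$. The only subtlety I anticipate is keeping the sign and degree conventions straight in the identity $\ast \alpha_X = \tfrac{1}{2}\varphi \wedge \alpha_X$ and in the Leibniz rule calculation of $d(\psi \wedge \alpha_X \wedge \alpha_X)$; once those are verified, the three claims fall out of the single harmonicity statement for $\alpha_X$.
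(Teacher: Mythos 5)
Your proof is correct and takes essentially the same route as the paper: identify $\alpha_X = \iota_X\varphi$ as a harmonic $2$-form of type $7$, deduce the dimension bound from injectivity of $X\mapsto\alpha_X$ into harmonic $2$-forms, and use Hodge theory plus the identity $\alpha_{[X,Y]}=\mathcal L_X\alpha_Y=d(\iota_X\alpha_Y)$ for the abelian claim and integration by parts of $\varphi\wedge\alpha_X\wedge\alpha_X$ for the exact case.

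One small point of difference, in your favour: where you get harmonicity of $\alpha_{[X,Y]}$ directly by re-applying the harmonicity lemma to $[X,Y]\in\aut_\varphi(M)$ (since $\mathcal L_{[X,Y]}=[\mathcal L_X,\mathcal L_Y]$ annihilates $\varphi$), the paper instead observes that $\mathcal L_X\alpha_Y$ is harmonic because $\alpha_Y$ is harmonic and the Laplacian commutes with isometries. Your route dispenses with that auxiliary fact and is marginally cleaner. The normalisation in your final step, $2\int_M|\alpha_X|^2\,\dvol=\int_M\varphi\wedge\alpha_X\wedge\alpha_X$, is consistent with the paper's $6\|X\|^2_\varphi=\int_M(X\lrcorner\varphi)\wedge(X\lrcorner\varphi)\wedge\varphi$ via $|X\lrcorner\varphi|^2=3|X|^2$, so there is no discrepancy there.
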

\begin{proof}
When $\varphi$ is closed the image of $\aut_{\varphi}$ under the isomorphism between $\mathfrak{X}(M)$ and $\Omega^2_{7}(M)$ given by $X \mapsto  X \lrcorner \varphi$ consists of closed and closed $2$-forms:
$X \lrcorner \varphi$ is closed because, $0=\mathcal{L}_X\varphi = d(X \lrcorner \varphi)$ 
(the latter equality holding because $\varphi$ is closed)  for any $X \in \aut_\varphi(M)$. 
Then any closed $2$-form $\alpha$ of type $7$ is also coclosed, because $\ast_\varphi \alpha = 2 \alpha \wedge \varphi$ and 
$
d (\alpha \wedge \varphi) = d\alpha  \wedge \varphi + \alpha \wedge d\varphi =0.
$
Hence the $2$-form $X \lrcorner \varphi$ is $\Delta_\varphi$-harmonic.

For any harmonic form $\alpha$ and $X \in \aut_\varphi(M)$, the Lie derivative $\mathcal{L}_X\alpha$ is also harmonic (since the Laplacian commutes with isometries). 
But since $\alpha$ is closed the harmonic form $\mathcal{L}_X \alpha=d(X\lrcorner \alpha)$ is also exact. 
Hence if $M$ is compact then by the Hodge decomposition $\mathcal{L}_X\alpha=0$.
In particular for any $X, Y \in \aut_\varphi(M)$ we have $\mathcal{L}_X (Y \lrcorner \varphi)=0$ and therefore also
\[
[X,Y] \lrcorner \varphi  = \mathcal{L}_X (Y \lrcorner \varphi) - Y \lrcorner (\mathcal{L}_X\varphi) =0.
\]
Suppose now that $\varphi = d \vartheta$. Then if $M$ is compact without boundary for any $X \in \aut_\varphi(M)$ we have
\[
6 \norm{X}^2_\varphi = 6 \int_M g_\varphi (X,X) \vol_{\varphi }= \int_M {(X \lrcorner \varphi) \wedge (X \lrcorner \varphi) \wedge \varphi} = \int_M {d\left((X \lrcorner \varphi) \wedge (X \lrcorner \varphi) \wedge \vartheta\right)}=0
\]
and hence the vector field $X$ must vanish identically. 
\end{proof}
\begin{remark}
Lemma~\ref{lem:autg2:closed} implies (a) that a $7$-manifold that admits a \emph{closed}~\mbox{\gtstr}  with a nonabelian symmetry algebra $\aut_\varphi(M)$ must be noncompact and (b) that Laplacian expanders on a compact manifold must have $ \aut_\varphi(M)=(0)$.
The hypothesis that $M$ be compact in both (a) and (b) is necessary, \eg 
there are noncompact complete cohomogeneity-one torsion-free examples with nonabelian symmetries
and there are noncompact homogeneous expanders \cite{Lauret:JLMS}.
Since our interest is in constructing highly-symmetric Laplacian solitons we are therefore forced to consider noncompact manifolds $M$. 
(Recall also that there are no non-torsion free steady and no shrinking Laplacian solitons when $M$ is compact, regardless of the symmetry question.)
Clearly the assumption that $\varphi$ is closed is also necessary, since the $7$-sphere admits homogeneous~\gtstr s.
\end{remark}

\subsection{Closed \texorpdfgtstr s~from 1-parameter families of~\texorpdfsunitary{3}-structures}
\label{ss:G2:evolve:closed}
Any smooth oriented hypersurface in a $7$-manifold $M$ with a~\gtstr~$\varphi$ inherits an~$\sunitary{3}$-structure. 
Recall that an {\suthreestr} on a $6$-manifold is a pair $(\omega,\Omega)$,
where $\omega$ is a real $2$-form and $\Omega$ is a complex-valued 3-form,
pointwise equivalent to the pair
($\frac{i}{2}(dz^1 \wedge d\bar z^1 +dz^2 \wedge d\bar z^2
+ dz^3 \wedge d\bar z^3, dz^1 \wedge dz^2 \wedge dz^3)$ on $\C^3$.
This condition implies that $\omega$ and $\Omega$ are both non-degenerate
(\ie $\omega^3$ and $\Omega \wedge \overline{\Omega}$ vanish nowhere
and $\Omega$ is locally decomposable) and
satisfy the algebraic constraints 
\begin{subequations}\label{eq:SU(3):structure:Constraints}
\begin{gather}
\label{eq:omega:wedge:Omega}
\omega\wedge \Omega=0, \\
\label{eq:SU3:vol}
\tfrac{1}{6}\omega^3 = \tfrac{1}{4}\Real\Omega\wedge\Imag\Omega.
\end{gather}
\end{subequations}
(Conversely, non-degeneracy of $\Omega$ implies that it defines a canonical
almost complex structure with respect to which $\Omega$ has type (3,0)
(see Hitchin \cite{Hitchin:3forms}),
\eqref{eq:omega:wedge:Omega} implies that $\omega$ has type (1,1),
non-degeneracy of $\omega$ that the associated hermitian form is
non-degenerate, and \eqref{eq:SU3:vol} that the hermitian form has signature
$(3,0)$ or $(1,2)$.)

If we choose a family of equidistant hypersurfaces $P_t$ in $(M,\varphi)$ then for $t$ sufficiently small we can view this as giving us a $1$-parameter family of~\sunitary{3}-structures on a fixed $6$-manifold $P$. 
We can also reverse this procedure and recover a~\gtstr~from a $1$-parameter family of~\sunitary{3}-structures on $P$. 
If we start with a $1$-parameter family of homogeneous~\sunitary{3}-structures on $P$ we will obtain 
a cohomogeneity-one~\gtstr~on $I \times P$, but the method applies more generally. 
If additionally we impose some conditions on the torsion of the~\gtstr~then the torsion of the $1$-parameter family of~\sunitary{3}-structures induced
on its equidistant hypersurfaces will also satisfy some constraints on its torsion. 
This idea was popularised by Hitchin~\cite[Theorem 8]{Hitchin:stable:forms:special:metrics} in the setting of torsion-free~\gtstr s, in which case the induced~\suthreestr~is a so-called half-flat structure. 
Hitchin viewed his equations as the Hamiltonian flow of a certain functional. 
It has proven to be a useful formalism for understanding some properties of the
systems of ODEs governing cohomogeneity-one torsion-free~\gtstr s
\cites{Brandhuber,FHN:JEMS,Madsen:Salamon} and a similar idea proved to be
useful in the study of cohomogeneity-one nearly K\"ahler
$6$-manifolds~\cite{Foscolo:Haskins:NK}. 
We will be interested in the case that $\varphi$ is closed. 

Let $P$ be a fixed $6$-manifold and suppose that $(\omega,\Omega)$ is a $1$-parameter family of~\sunitary{3}-structures on $P$ depending on $t\in I \subset \R$.
Consider the \gtstr~ on $I \times P$ defined by
\begin{subequations}
\begin{align}
\label{eq:phi:evolve}
\varphi &= dt\wedge\omega + \Real\Omega,\\
\ast\varphi &=\frac{1}{2}\omega^2  -dt\wedge\Imag\Omega.
\end{align}
\end{subequations}

The exterior derivatives of $\varphi$ and $\ast \varphi$ are given by
\begin{subequations}
\begin{align}
d \varphi &=  d \Real{\Omega} + (d \omega - \partial_t \Real{\Omega}) \wedge dt,\\
d(\ast \varphi) &= \omega \wedge d \omega + (d \Imag{\Omega} + \omega \wedge \partial_t \omega) \wedge dt.
\end{align}
\end{subequations}
Hence the condition that $\varphi$ be closed is equivalent to 
\begin{subequations}\label{eq:closed:Evolution}
\begin{align}
\label{eq:closure:static}
d \Real{\Omega}&=0, \\
\label{eq:closure:dynamic}
\partial_t\Real\Omega &= d\omega.
\end{align}
\end{subequations}
We call~\eqref{eq:closure:static} the static closure condition: it imposes a restriction on the torsion of the $\sunitary{3}$-structure $(\omega,\Omega)$ 
that holds for every $t \in I$. We call~\eqref{eq:closure:dynamic} the dynamic closure condition: it imposes a condition on how $(\omega,\Omega)$ evolves with $t$.
\begin{remark*}
The torsion of a general $\sunitary{3}$-structure $(\omega,\Omega)$ takes values in a $42$-dimensional space. The   
static closure condition imposes $15$ conditions on this torsion: 
in the notation of \cite[Prop. 2.10]{ALC:from:AC} it implies that the function $\hat{w}_1=0$, the $1$-form $w_5=0$ and the primitive $(1,1)$-form $w_2=0$. 
Hence the exterior derivatives of $\omega$ and $\Omega$ satisfy
\[
\begin{aligned}
&d\omega = 3w_1 \Real\Omega  + w_3 + w_4 \wedge\omega,\\
&d\Imag\Omega =-2w_1\omega^2 + \hat{w}_2\wedge\omega,
\end{aligned}
\]
where $w_1$ is a function, $\hat{w}_2$ is a primitive $(1,1)$-form, $w_3$ is a $3$-form of type $12$ and $w_4$ is a $1$-form. 
Note that an~\suthreestr~which in addition satisfies $d\omega^2=0$ is called \emph{half-flat}: this imposes a further $6$ conditions on the torsion, namely the $1$-form $w_4$ also vanishes.
\end{remark*}

\begin{remark}
\label{rmk:discrete:sym:G2}
An important point to notice is that the Klein four-group $\Z_2 \times \Z_2$ acts naturally on the space of $\sunitary{3}$-structures satisfying the static closure condition~\eqref{eq:closure:static}.
For any $\sunitary{3}$-structure $(\omega,\Omega)$ define a pair of involutions $\tbar$ and $\tpi$ by
\begin{equation}
\label{eq:tbar:tpi}
\tbar(\omega,\Omega) = (-\omega,\overline{\Omega}), \quad \tpi(\omega,\Omega) = (\omega,-\Omega).
\end{equation}
Both $\tbar$ and $\tpi$ preserve the set of $\sunitary{3}$-structures; 
$\tbar$ and $\tpi$ commute and therefore $\tbar \circ \tpi$ defines another involution on the space of 
$\sunitary{3}$-structures. The group generated by $\tbar$ and $\tpi$ is therefore isomorphic to the Klein four-group $\Z_2 \times \Z_2$.
$\tpi(\omega,\Omega)$ induces the same orientation as  $(\omega,\Omega)$ whereas $\tbar (\omega,\Omega)$ induces the opposite orientation. All three involutions preserve the set of $\sunitary{3}$-structures satisfying the static closure 
condition~\eqref{eq:closure:static}. 

Note that the $3$-form $\varphi$ defined in~\eqref{eq:phi:evolve}  is invariant under $(t,\omega,\Omega)  \mapsto  (-t,\tbar(\omega,\Omega))$. 
In other words, if we reverse the sense of the interval $I$ then we get the same~\gtstr~$\varphi$ but parametrised in the opposite sense. 
Under $(t,\omega,\Omega) \mapsto (-t,\tpi(\omega,\Omega))$,  the ~\gtstr~ satisfies $\varphi \mapsto - \varphi$; 
but clearly this still sends a closed~\gtstr~to another closed~\gtstr.
\end{remark}

\subsection{Cohomogeneity-one closed \texorpdfgtstr s}
\label{ss:closed:coh1:G2}
In preparation for our review of the work of Cleyton and Cleyton--Swann~\cite{Cleyton:Swann} on cohomogeneity-one (closed) \gtstr s
we now present a brief summary of the requisite background from the theory of cohomogeneity-one manifolds. 
For general references on cohomogeneity-one theory we refer the 
reader to~\cite[Chapter IV]{Bredon} for the smooth aspects of the theory and to~\cite{AA} for its more Riemannian aspects.

\enlargethispage{0.5\baselineskip}

\subsubsection*{Basic cohomogeneity-one theory} 
Recall that the orbit space $M/G$ of a cohomogeneity-one isometric action of a compact Lie group $G$ is a connected Riemannian $1$-manifold (potentially) with boundary,  \ie it is either  $\Sph^1$ or an interval $I$ and there are 3 kinds of intervals: $\R$, $[0,\infty)$ or $[0,\ell]$.
Interior points of $M/G$ correspond to principal orbits and any boundary points of $I$ correspond 
to singular orbits. The isotropy group of any principal orbit is conjugate to a fixed Lie subgroup $K \subset G$, 
and the isotropy subgroup $H$ of any singular orbit has the properties that $K \subset H \subset G$ and that $H/K$ 
is diffeomorphic to a sphere. 
Moreover, there is an orthogonal representation $\rho: H \ra \orth{V}$ such that a neighbourhood of the singular orbit $G/H$ in $M$ is $G$--equivariantly diffeomorphic to a neighbourhood of the zero section of the vector bundle $G\times _{H}V \ra G/H$.
There are at most two singular orbits $G/H_1$ and $G/H_2$.  When $I=[0,\ell]$, so that there are two singular orbits, then $M$ is necessarily compact:
it is obtained by identifying the two disc bundles $G\times_{H_i}D_i$, $D_i \subset V_i$ over the singular orbits along their common boundary $G/K$.
When the orbit space is the circle $\Sph^1$ there are no singular orbits and $M$ is compact with infinite fundamental group. In the remaining two cases $I=\R$ or $I=[0,\infty)$, $M$ is noncompact.

Given any point $p \in M^o$, the open dense set of principal points in $M$, there exists a unique unit-speed geodesic $\gamma$ through $p$ that is orthogonal to every principal orbit $G/K$.
When $M/G$ is an interval $I$ the map $I \times G/K \to M$ given by $(t,gK) \mapsto g \cdot \gamma(t)$ is surjective and 
the restriction of this map to the interior $I^0$ of $I$ is a diffeomorphism onto $M^0$ with the property that the composition $\pi \circ \gamma: I^o \to M^o/G$ 
is an isometry with respect to the standard metric $dt^2$ on $I^o$ and the quotient metric on $M^o/G$ (here $\pi: M \to M/G$ denotes the orbit 
projection).
Therefore we can identify smooth $G$-invariant tensors on $M^o$ with smooth $t$-dependent $G$-invariant tensors on $G/K$. 
The latter can be determined by standard methods in representation theory.

It is a separate matter to analyse when a smooth $G$-invariant tensor on $M^o$ extends to a smooth $G$-invariant tensor on $M$.
Since this extension question depends only on the geometry of $M$ in a neighbourhood of its (at most two) singular orbits and
a neighbourhood of any singular orbit $G/H$ is diffeomorphic to a neighbourhood of the zero section of the vector bundle $G\times _{H}V \ra G/H$
one can again reduce to problems of a representation-theoretic nature: see Eschenburg--Wang~\cite[\S 1]{eschenburg:wang} for details.
In the cases of interest to us for a given principal orbit type $G/K$ there will be a unique singular orbit type $G/H$ 
and also there must be precisely one singular orbit, \ie the orbit space $M/G$ is the interval $I=[0,\infty)$
and $M$ is diffeomorphic to a homogeneous vector bundle $G\times _{H}V \ra G/H$ over the unique singular orbit.

\subsubsection*{Principal orbits of cohomogeneity-one closed \gtstr s}
Cleyton in his thesis \cite{Cleyton:thesis} and Cleyton--Swann \cite[Theorem 3.1]{Cleyton:Swann} analysed the possible principal orbits for \mbox{\gtstr s} (not necessarily closed) that admit an isometric cohomogeneity-one action of a compact connected Lie group $G$. 
The requirement that $G$ preserve the $3$-form $\varphi$ implies that the representation of the isotropy 
group $K \subset G$ on the tangent space of a principal orbit $G/K$ must occur as a subgroup of $\sunitary{3}$ on its standard \mbox{$6$-dimensional} representation on $\C^3$, 
\ie $\mathfrak{k}$ must be $\mathfrak{su}(3)$, 
$\mathfrak{u}(2)$, $\mathfrak{su}(2)$, $\mathfrak{u}(1)\oplus\mathfrak{u}(1)$, $\mathfrak{u}(1)$ or $\{0\}$.
For each of these six possible $\mathfrak{k}$ they determine what the possible $\tu{Ad}_G(K)$-invariant complements to the isotropy representation of a principal orbit. They prove that (up to finite quotients) there are seven possibilities for the topology of a principal orbit: 
$\Sph^6$, $\CP^3$, $\flag$, $\Sph^3 \times \Sph^3$, $\Sph^5 \times \Sph^1$, $\Sph^3 \times \T^3$
and $\T^6$. Except for $\Sph^3 \times \Sph^3$, which admits three homogeneous space
structures with different isotropy groups, the homogeneous space structure
$G/K$ on each topological orbit type is unique. 
Moreover, any cohomogeneity-one $7$-manifold with such principal orbits admits some cohomogeneity-one \gtstr.

In the first three cases the group $G$ is simple---$\gtwo, \Sp{2}$ or $\sunitary{3}$ respectively---with principal isotropy subgroup $K$ being \sunitary{3}, $\Sp{1} \times \unitary{1}$ or $\unitary{1} \times \unitary{1}$ respectively. 
(For the remaining four cases see~ \cite[Theorem 3.1]{Cleyton:Swann} for the list of $G$ and $K$ that arise.)
The case with isometry group~$G=\gtwo$~and principal isotropy $K=\sunitary{3}$ (which acts irreducibly on the tangent space of a principal orbit) is easy to analyse: any cohomogeneity-one closed~\gtstr~is necessarily torsion-free 
and the associated metric must be flat \cite[Theorem 8.1]{Cleyton:Swann}).
We will discuss in detail the two remaining cases where $G$ is simple:  $\Sp{2}$ or $\sunitary{3}$. 
Some of our motivations for considering these two cases are the following:
\begin{enumerate}[left=0em]
\item
The two symmetry groups $\Sp{2}$ and $\sunitary{3}$ should naturally be considered together: as pointed out by Cleyton--Swann~\cite{Cleyton:Swann} 
the ODEs governing \Sp{2}-invariant closed~\gtstr s can be regarded as specialisations of the ODEs governing 
\sunitary{3}-invariant closed~\gtstr s when some of the coefficients are set equal. An equivalent 
way of saying the same thing is that the ODEs satisfied by \sunitary{3}-invariant closed~\gtstr s on $I \times \flag$ whose induced 
Riemannian metrics possess a certain additional free orientation-reversing
isometric $\Z_2$-action are the same ODEs 
satisfied by  \Sp{2}-invariant closed~\gtstr s, see Remark \ref{rmk:involution}.
\item
Four of the seven principal orbit types arise as the principal orbit of a complete cohomogeneity-one torsion-free~\gtstr.
The standard constant~\gtstr~on $\R^7$ is cohomogeneity one with respect to the action of $\gtwo \subset \sorth{7}$ and 
clearly has $\Sph^6$ as its principal orbit type. However the induced metric is the Euclidean metric on $\R^7$ and so has trivial holonomy;  this is the only way $\Sph^6$ arises as the principal orbit of a closed cohomogeneity-one~\gtstr.
$\CP^3$, $\sunitary{3}/\T^2$ and $S^3 \times S^3$ arise as the principal orbits of the (irreducible holonomy) 
Bryant--Salamon metrics on  $\Lambda^2_-\Sph^4$, $\Lambda^2_-\CP^2$ and on the spinor bundle of $\Sph^3$ respectively.
\item
The $3$-form underlying a nonsteady closed Laplacian soliton is necessarily exact and in general it is expected that 
finite-time singularity models for Laplacian flow must be exact. For topological reasons
complete cohomogeneity-one~\gtstr s with our principal orbits are necessarily exact, whereas
the AC torsion-free~\gtstr~on the spinor bundle of $S^3$ (with principal orbit $S^3 \times S^3$) fails to be exact 
(the zero-section is a nontrivial compact associative $3$-fold).
\item
Atiyah and Witten's physics-inspired description of the potential relations between well-known 
complete noncompact torsion-free~\gtstr s and certain noncompact special Lagrangian $3$-folds in $\C^3$
admits a natural extension that
suggests the existence of some links between Laplacian solitons and  
solitons of Lagrangian mean curvature flow (LMCF) in $\C^3$. Consideration of some of the known cohomogeneity-one solitons in LMCF naturally leads one to the study of our two principal orbit types. 
Some of the Laplacian solitons we construct can therefore  be viewed as $7$-dimensional (M-theory uplifts in physics terms) analogues of known LMCF solitons in $\C^3$.
\end{enumerate}

\subsubsection*{Singular orbits of cohomogeneity-one \gtstr s}
Since $G$ is nonabelian, Lemma~\ref{lem:autg2:closed} implies that we cannot obtain compact $G$-invariant manifolds which admit closed \gtstr s.
This implies that the orbit space cannot be $\Sph^1$ or $[0,1]$ and hence there can be at most one singular orbit. 
In fact in both of our cases there must be a singular orbit (see for instance Lemma~\ref{lem:closed:su3:vol}), \ie we must have $I=[0,\infty)$. $M$ is therefore diffeomorphic to some $G$-equivariant vector bundle over the singular orbit $G/H$.

Cleyton--Swann analysed the possible singular orbits that can appear. 
It follows from their analysis \cite[Tables 2 \&3]{Cleyton:Swann} that the only spaces admitting a cohomogeneity-one action of $G= \Sp{2}$ or~$G=\sunitary{3}$
on which there exist any (metrically complete) closed cohomogeneity-one~\gtstr s are the total spaces of the vector bundles $\Lambda^2_-\Sph^4$ or $\Lambda^2_-\CP^2$ respectively, with the zero section of the vector bundle being the unique singular orbit of the $G$-action. 
Cleyton--Swann \cite[\S 9]{Cleyton:Swann} understood the conditions under which a smooth closed $G$-invariant\mbox{~\gtstr}~on the open dense set 
of principal orbits $M^0$ extends to a smooth $G$-invariant~\gtstr~on $M$. We will use these smooth extension conditions later in the paper, 
where their results will be recalled.
The asymptotically conical \mbox{\gthol~metrics} on  $\Lambda^2_-\Sph^4$ and $\Lambda^2_-\CP^2$ constructed by  Bryant--Salamon~\cite{Bryant:Salamon}
provide instances of such closed~\gtstr s.  

In this paper we will also construct complete cohomogeneity-one Laplacian shrinkers and steady solitons   
on both these vector bundles (and in the sequel we also construct complete expanders on them). For most of the examples we construct, 
 the metrics underlying these solitons will be asymptotically conical, like the well-known \mbox{\gthol} examples; unlike for AC torsion-free~\gtstr s, however,  the asymptotic cones 
of these solitons need not be torsion-free~\gtwo-cones. Indeed the complete shrinkers we construct are asymptotic to 
$G$-invariant \emph{closed but non-torsion-free} \gtwo-cones (with cross-section $\CP^3$ or $\flag$ respectively). 
We also construct complete steady solitons on  $\Lambda^2_-\CP^2$ that are not asymptotically conical: instead they have exponential volume growth.

\section{\texorpdfsunitary{3}-invariant and~\texorpdfSp{2}-invariant closed~\texorpdfgtstr s}
\label{sec:su3sp2}

In this section we write down the most general cohomogeneity-one
\sunitary{3}-invariant (respectively \Sp{2}-invariant)
closed~\gtstr s. These results are used in Section~\ref{s:ODE:cohom1:LS} to
derive the ODE system satisfied by invariant Laplacian solitons, which is
central to the rest of the paper. 

To proceed further we need to recall the description from Cleyton--Swann~\cite{Cleyton:Swann} of the invariant forms of interest on the principal orbit $G/K$ in the cases $G/K=\sunitary{3}/ \T^2$ and $G/ K=\Sp{2} / \Sp{1} \times \unitary{1}$.
We refer the reader to their paper and also to Cleyton's thesis for further details if needed.

\subsection{The flag manifold}

We begin with the description of 
the principal orbit $G/K = \sunitary{3}/\T^2$
and its  $\sunitary{3}$-invariant tensors.

$\sunitary{3}$ acts transitively on the set of ordered triples $V$ of pairwise
orthogonal lines in $\C^3$. The stabiliser of the standard triple is the
maximal torus $\T^2 \subset \sunitary{3}$ of diagonal elements, so we can
identify $\sunitary{3}/\T^2$ with $V$. Picking one of the three lines of an
element of $V$ defines a map $V \to \CP^2$. This yields three different
ways to write $V$ as an $\Sph^2$-fibre bundle over $\CP^2$.
Equivalently, we could consider the following three diagonal $\sunitary{3}$
matrices
\begin{equation}
\label{eq:U2:i}
I_1=
\begin{pmatrix}
1 & 0 & 0\\
0 & \hspace{-0.5em}-1 & 0\\ 
0 & 0 & \hspace{-0.5em}-1
\end{pmatrix}, 
\quad 
I_2 = \begin{pmatrix}
-1 & 0 & 0\\
0 & 1 & 0\\ 
0 & 0 & \hspace{-0.5em}-1
\end{pmatrix}, 
\quad
I_3 = 
\begin{pmatrix}
-1 & 0 & 0\\
0 & -1 & 0\\ 
0 & 0 & 1
\end{pmatrix}.
\end{equation}
Since all three matrices have two coincident eigenvalues their centralisers in $\sunitary{3}$,  which we denote by $\unitary{2}_i$, are each subgroups containing the (diagonal) maximal torus $\T^2 \subset \sunitary{3}$ and isomorphic to $\unitary{2}$.
These three different $\unitary{2}$ subgroups give rise to three distinct homogeneous $\Sph^2$-fibrations of $\sunitary{3}/\T^2$  
\[
\Sph^2_j = \unitary{2}_j/\T^2 \longrightarrow \sunitary{3}/\T^2 \longrightarrow \sunitary{3}/\unitary{2}_j = \CP^2_j.
\]
Each of these bundles can alternatively be described as the unit sphere
bundle in $\Lambda^2_- \CP^2$.

Let $W$ be the Weyl group of $\sunitary{3}$, \ie $W=N(\T^2)/\T^2$ where $N(\T^2)$ denotes the normaliser of~$\T^2$ in $\sunitary{3}$. 
By standard Lie theory $W$ is isomorphic to the symmetric group on $3$ letters:
we can take the cosets that contain 
\begin{equation}
\label{eq:genW}
a_{231}=
\begin{pmatrix}
0 & 0 & 1\\
1 & 0 & 0\\
0 & 1 & 0
\end{pmatrix}
\quad \text{and}\quad 
a_{132} = 
\begin{pmatrix}
-1 & 0 & 0\\
0 & 0 & 1\\
0 & 1 & 0
\end{pmatrix}
\end{equation}
as order $3$ and order $2$ generators respectively of $W$.
The natural action of the normaliser $N(\T^2)$ on~$\sunitary{3}/\T^2$, \ie $(n,g\T^2) \mapsto  g\T^2 \cdot n^{-1} = gn^{-1}\T^2$, induces a free action of $W$ on $\sunitary{3}/\T^2$ (commuting with the \sunitary{3} action) by
diffeomorphisms and hence also on all invariant tensors on $\sunitary{3}/\T^2$. 
In terms of $V$, this is simply the action of changing the ordering of a
triple of orthogonal lines. Note that each involution in $W$ preserves
exactly one of the three maps to $\CP^2$, acting as the antipodal map on
each fibre.

When $G=\sunitary{3}$ the principal isotropy group $K=\T^2=\Sph^1_1 \times \Sph^1_2$ acts on the standard representation on $\C^3$ 
as $L_1 + L_2 + \overline{L}_1 \overline{L}_2$ where $L_i$ are the standard (complex) representations of $\Sph^1_i \simeq \unitary{1}$.
The isotropy representation is $\Rrep{L_1 \overline{L}_2}+\Rrep{L_1 L_2^2} + \Rrep{L_1^2L_2}$, where
$\Rrep{L}$ denotes the real representation such that $\Rrep{L} \otimes_\R \C = L \oplus \overline{L}$.
Each of the three irreducible submodule of the isotropy representation carries an invariant metric $g_i$ and an invariant $2$-form $\omega_i$ for $i=1,2,3$, while
the space of invariant $3$-forms is $2$-dimensional. 
To be more concrete we identify $\T^2$ with the diagonal matrices in \sunitary{3} and fix the following basis $E_1, \ldots ,E_6$ of the tangent space at the origin
\[
\begin{aligned}
E_1 &= \frac{1}{2}\begin{pmatrix}
0 & 0 & 0\\
0 & 0 & -1\\
0 & 1 & 0
\end{pmatrix}, 
\qquad 
& E_3 &= \frac{1}{2}\begin{pmatrix}
0 & 0 & 1\\
0 & 0 & 0\\
-1 & 0 & 0
\end{pmatrix}, 
\qquad 
& E_5 = \frac{1}{2}\begin{pmatrix}
0 & -1 & 0\\
1 & 0 & 0\\
0 & 0 & 0
\end{pmatrix}, \\
E_2 &= \frac{1}{2i}\begin{pmatrix}
0 & 0 & 0\\
0 & 0 & 1\\
0 & 1 & 0
\end{pmatrix}, 
\quad 
& E_4 &= \frac{1}{2i}\begin{pmatrix}
0 & 0 & 1\\
0 & 0 & 0\\
1 & 0 & 0
\end{pmatrix}, 
\qquad 
& E_6 = \frac{1}{2i}\begin{pmatrix}
0 & 1 & 0\\
1 & 0 & 0\\
0 & 0 & 0
\end{pmatrix}.
\end{aligned}
\]
Then the invariant bilinear forms and $2$-forms are spanned by
\begin{gather*}
g_1= e_1^2+e_2^2, \quad g_2 = e_3^2+e_4^2, \quad g_3 = e_5^2 + e_6^2, \qquad\\
\intertext{and}
\omega_1 = e_{12}, \quad \omega_2  = e_{34}, \quad \omega_3 = e_{56}
\end{gather*}
respectively where $\{e_1, \ldots ,e_6\}$ denotes the dual basis to $\{E_i\}$.
In particular an arbitrary $\sunitary{3}$-invariant Riemannian metric on $\sunitary{3}/\T^2$ takes the form 
\begin{equation}
\label{eq:gf:SU3}
g_f = f_1^2 g_1 + f_2^2 g_2 + f_3^2 g_3
\end{equation}
for some triple $f=(f_1,f_2,f_3)$ with $f_1f_2f_3 \neq 0$.

For any homogeneous metric $g$ on $\sunitary{3}/\T^2$, the three fibres $\Sph^2_1$, $\Sph^2_2$ and $\Sph^2_3$ are mutually orthogonal
 and the restriction of $g$ to the $j$th fibre determines a homogeneous metric on $\Sph^2$. Moreover, 
$g$ is determined by the size of these three fibres at a single point, \ie by three positive parameters.
Each of the invariant metrics $g_j$ can therefore be regarded as a homogeneous metric on $\Sph^2_j$ which
with our conventions is the standard round metric of sectional curvature $1$.
In other words, the parameter $f_j^2$ determines the ``size'' of the $j$th spherical fibre $\Sph^2_j$ with respect to the homogeneous metric $g_f$.

The $2$-dimensional space of invariant $3$-forms is spanned by
\[
\alpha = e_{246}- e_{235}-e_{145} - e_{136}, \quad \beta = e_{135}-e_{146}-e_{236}-e_{245}.
\]
There are no nontrivial invariant $1$-forms or $5$-forms; in particular the wedge product of any invariant $2$-form with any invariant $3$-form vanishes
(as one can also check directly).
We set $\vol_0 = e_{123456}$.

The exterior derivatives of these invariant forms satisfy the structure equations
\begin{subequations}
\label{eq:str:su3}
\begin{gather}
d \omega_1 = d \omega_2 = d \omega_3= \tfrac{1}{2}\alpha, \\
\quad d\alpha =0, \qquad d \beta = -2(\omega_1 \wedge \omega_2 +\omega_2 \wedge \omega_3 + \omega_3 \wedge \omega_1).
\end{gather}
\end{subequations}

\begin{remark}
\label{rmk:su3_taut}
$e_{34} - e_{56}$ is invariant under the adjoint action of not
just the diagonal $\T^2$ but all of $\unitary{2}_1$. Thus the corresponding
2-form $\omega_2 - \omega_3$ on $\sunitary{3}/\T^2$
is the pull-back of a 2-form from $\CP^2_1 := \sunitary{3}/\unitary{2}_1$.
This closed 2-form is the K\"ahler form of the Fubini-Study metric.
Note in particular that the canonical orientation on $\CP^2_1$
corresponds to $-e_{3456}$.

Hence at each point of $\sunitary{3}/\T^2$, the 2-form $\omega_2 + \omega_3$
corresponds to an anti-self-dual 2-form on the tangent space of $\CP^2_1$
at the image. We can identify $\sunitary{3}/\T^2$ with the unit sphere
in $\Lambda^2_- \CP^2_1$ in such a way that $\omega_2 + \omega_3$ is the
tautological 2-form.
\end{remark}

Now we consider the action of the Weyl group $W$ on the cone of left-invariant metrics and on the invariant $2$-forms and $3$-forms. 
Using the explicit generators of $W \cong S_3$ specified in \eqref{eq:genW}
one can verify (see also \cite[p. 214]{Cleyton:Swann}) the following:  

\begin{lemma}
\label{lem:Weyl:forms}
The Weyl group $W \cong S_3$ acts on \sunitary{3}-invariant tensors on $\sunitary{3}/\T^2$ as follows. 
\begin{enumerate}[left=0.25em]
\item $S_3$ acts as the standard representation on our chosen basis of
invariant symmetric 2-tensors 
$(g_1,g_2,g_3)$; 
\item
$S_3$ leaves the $3$-form $\beta$ invariant and acts as the sign representation on $\alpha$;
\item
A transposition $(ij) \in S_3$ acts on invariant $2$-forms by sending 
$(\omega_i,\omega_j,\omega_k) \mapsto - (\omega_j,\omega_i,\omega_k)$;  
hence the subgroup $A_3$ acts via cyclic permutations of $(\omega_1,\omega_2,\omega_3)$.
\end{enumerate}
\end{lemma}

\subsection{\texorpdfsunitary{3}-invariant \texorpdfsunitary{3}-structures}
\label{ss:closed:SU3}
We want to describe the $\sunitary{3}$-invariant $\sunitary{3}$-structures on $\sunitary{3}/\T^2$. In fact, 
since our main interest in them is as a tool to describe cohomogeneity-one closed~\gtstr s, ~\eqref{eq:closure:static} implies that 
we really want to understand $\sunitary{3}$-structures $(\omega,\Omega)$ that satisfy the additional condition that $d \Real{\Omega}=0$. 
(Invariant \sunitary{3}-structures on $\sunitary{3}/\T^2$ were not discussed explicitly in Cleyton's thesis or by Cleyton--Swann.)

Define an invariant (3,0)-form
\begin{equation}
\label{eq:Omega}
\Omega:= \alpha + i \beta.
\end{equation}
\begin{lemma}
\label{lem:SU3str:invt}\hfill
\begin{enumerate}[left=0.5em]
\item
Any $\sunitary{3}$-invariant \suthreestr~on $\sunitary{3}/\T^2$ can be written in the form
\begin{subequations}
\begin{gather}
\label{eq:SU3:invt}
\begin{cases}
(\omega_f,\Omega_{f,\theta}), \quad \textrm{or}\\
(-\omega_f,\overline{\Omega}_{f,\theta}) = \tbar (\omega_f,\Omega_{f,\theta})
\end{cases}
\intertext{where $\tbar$ is the involution defined in Remark~\ref{rmk:discrete:sym:G2} and}
\label{eq:SU3:f}
(\omega_f,\Omega_{f,\theta}):= \left(\,f_1^2 \omega_1 + f_2^2 \omega_2 + f_3^2 \omega_3, (f_1 f_2 f_3) \,e^{-i\theta}\Omega\right)
\end{gather}
\end{subequations}
for some $e^{i\theta} \in \Sph^1$ and some triple of real numbers $(f_1,f_2,f_3)$ satisfying $f_1 f_2 f_3 \neq 0$. 
\item
In the first case in~\eqref{eq:SU3:invt} the induced orientation is $\vol_0$, while the second case induces the opposite orientation.
\item
The induced invariant metric is $g_f$ as defined in~\eqref{eq:gf:SU3}.
\item
$\Omega_{f,\theta}$ as in ~\eqref{eq:SU3:f} satisfies $d \Real{\Omega_{f,\theta}}=0$ if and only if $\sin{\theta}=0$.
\end{enumerate}
\end{lemma}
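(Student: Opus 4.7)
The plan is to parameterise all invariant \suthreestr s by direct expansion in the bases of invariant forms exhibited above, then impose the compatibility axioms \eqref{eq:SU(3):structure:Constraints} together with positivity of the induced bilinear form. Writing $\omega = a_1\omega_1 + a_2\omega_2 + a_3\omega_3$ and $\Omega = A\alpha + B\beta$ with $a_i\in\R$ and $A,B\in\C$, axiom \eqref{eq:omega:wedge:Omega} is automatic since every invariant $2$-form wedged with every invariant $3$-form vanishes (there are no invariant $5$-forms). A direct expansion using $\omega_i\wedge\omega_i=0$ and $\omega_1\wedge\omega_2\wedge\omega_3 = e_{123456} = \vol_0$ gives $\tfrac16\omega^3 = a_1a_2a_3\,\vol_0$, while matching each term of $\alpha$ with the unique disjoint term of $\beta$ yields $\alpha\wedge\beta = 4\,\vol_0$, and hence $\tfrac14\Real\Omega\wedge\Imag\Omega = \Imag(\bar A B)\,\vol_0$. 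Thus \eqref{eq:SU3:vol} reduces to the single scalar relation $a_1a_2a_3 = \Imag(\bar A B)$.

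The central step is to determine which invariant almost complex structures $J$ are compatible with invariant complex volume forms. Since the three $\T^2$-isotypic summands $\mathfrak{m}_i$ of the isotropy representation are pairwise inequivalent, any invariant $J$ decomposes as $J|_{\mathfrak{m}_i} = \epsilon_i J_i^0$ for signs $\epsilon_i = \pm1$, giving $2^3 = 8$ candidates. Expressing the resulting $(3,0)$-form $\eta^1\wedge\eta^2\wedge\eta^3$ using the dual $(1,0)$-basis $\eta^i = \tfrac12(e_{2i-1} + i\epsilon_i e_{2i})$ and comparing monomials against $\vspan_\C(\alpha,\beta)$ shows that this $3$-form lies in the invariant span if and only if $\epsilon_1 = \epsilon_2 = \epsilon_3$ (the mixed monomials $e_{146}, e_{236}, e_{245}$ give exactly the conditions $\epsilon_j\epsilon_k = 1$). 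In the two admissible cases the $(3,0)$-form is proportional to $\alpha + i\beta$ (for $\epsilon_i \equiv +1$) or to $\alpha - i\beta$ (for $\epsilon_i \equiv -1$). Positivity $\omega(X,JX)>0$ restricted to $\mathfrak{m}_i$ yields $a_i\epsilon_i > 0$, so all $a_i$ share the common sign of $\epsilon_i$. In the first branch, setting $a_i = f_i^2$ and solving the normalisation $|A|^2 = (f_1f_2f_3)^2$ gives $A = f_1f_2f_3\,e^{-i\theta}$ with $B = iA$, producing $(\omega_f,\Omega_{f,\theta})$; in the second, $a_i = -f_i^2$ and $\Omega = C(\alpha - i\beta) = \overline{\Omega_{f,\theta}}$ realises $\tbar(\omega_f,\Omega_{f,\theta})$. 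This establishes (i).

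Parts (ii)-(iv) follow by direct computation from the explicit form. For (ii), $\tfrac16\omega_f^3 = (f_1f_2f_3)^2\,\vol_0 > 0$ gives orientation $\vol_0$, and $\tbar$ negates $\omega$ and so reverses orientation. For (iii), the induced metric $g(X,Y) = \omega(X,JY)$ restricts on each $\mathfrak{m}_i$ to $a_i\omega_i(\cdot,J_i^0\cdot) = f_i^2 g_i$, so $g = g_f$. For (iv), expanding $\Real\Omega_{f,\theta} = f_1f_2f_3(\cos\theta\,\alpha + \sin\theta\,\beta)$ and invoking the structure equations \eqref{eq:str:su3} yields $d\Real\Omega_{f,\theta} = f_1f_2f_3\sin\theta\,d\beta$, since $d\alpha = 0$; the $4$-forms $\omega_1\wedge\omega_2 = e_{1234}$, $\omega_2\wedge\omega_3 = e_{3456}$, $\omega_3\wedge\omega_1 = e_{1256}$ are manifestly linearly independent so $d\beta \neq 0$, and hence $d\Real\Omega_{f,\theta} = 0$ if and only if $\sin\theta = 0$. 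The main obstacle is the $(3,0)$-form calculation in the middle paragraph, which isolates the two admissible invariant $J$'s by an explicit expansion in the chosen basis; once that is in hand, everything else is essentially mechanical.
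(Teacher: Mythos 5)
Your proof is correct, and it routes through the problem differently from the paper in one substantive way. The paper simply asserts, without further argument, that any invariant complex volume form on $\sunitary{3}/\T^2$ must be a nonzero real multiple of $e^{-i\theta}\Omega$ or $e^{i\theta}\overline{\Omega}$, and then determines the normalisation via $\omega^3 = \tfrac32\Real\Omega\wedge\Imag\Omega$. You instead supply an actual proof of that classification by first enumerating the $2^3$ invariant almost complex structures $J|_{\mathfrak{m}_i}=\epsilon_i J_i^0$ (the factor-of-eight count follows because each summand $\mathfrak{m}_i$ is irreducible with commutant $\C$, so only $\pm J_i^0$ arise), and then showing via the $\eta^1\wedge\eta^2\wedge\eta^3$ expansion that the resulting $(3,0)$-form lies in the invariant span $\vspan_\C(\alpha,\beta)$ if and only if $\epsilon_1=\epsilon_2=\epsilon_3$. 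This buys you a self-contained argument for a step the paper glosses over, at the cost of an extra explicit monomial computation. Both routes agree on the mechanical parts: condition \eqref{eq:omega:wedge:Omega} is vacuous (no invariant $5$-forms), the normalisation $a_1a_2a_3 = \Imag(\bar AB)$ from $\alpha\wedge\beta = 4\vol_0$, and parts (ii)--(iv) by direct expansion and the structure equations \eqref{eq:str:su3}. One small presentational note: you invoke positivity of $\omega(\cdot,J\cdot)$ to fix the signs $a_i\epsilon_i>0$, which is the right thing but relies on the (standard) convention that an \suthreestr{} includes compatibility and positivity of the induced metric; the paper reaches the same sign constraint by declaring the coefficients to be squares $f_i^2$ and relying on nondegeneracy of $\omega_f$, which amounts to the same thing.
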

\begin{proof}
Note that the two cases in~\eqref{eq:SU3:invt} are clearly exchanged by the involution $\tbar$ defined in Remark~\ref{rmk:discrete:sym:G2}. 
They are never in the same connected component of the space of $\sunitary{3}$-structures because they induce opposite orientations.

Since there are no invariant $5$-forms on $\sunitary{3}/\T^2$ the condition~\eqref{eq:omega:wedge:Omega} holds for any invariant
$2$-form and $3$-form.
Any invariant complex volume form can be written as either
$\mu e^{-i\theta}\Omega$ or $\mu e^{i\theta} \overline{\Omega}$
for some $e^{i\theta} \in \Sph^1$ and $\mu > 0$. The induced almost complex
structure is $J : E_1 \mapsto E_2$, $E_3 \mapsto E_4$, $E_5 \mapsto E_6$
in the former case and $-J$ in the latter case. An invariant 2-form $\omega$
that is positive-definite with respect to $\pm J$ must be of the form
$\pm \omega_f$ for some triple $f$. Regardless of the sign, \eqref{eq:SU3:vol} becomes equivalent
to $\mu = f_1 f_2 f_3$.

The condition on $\theta$ required for $d \Real{\Omega_{f,\theta}}=0$ 
follows immediately from the structure equations~\eqref{eq:str:su3}, more specifically that $d \alpha =d \Real{\Omega}=0$ and $d \beta = d\Imag{\Omega} \neq 0$.
\end{proof}

\begin{remark*}[The invariant nearly K\"ahler structure on $\sunitary{3}/\T^2$]
The invariant $\sunitary{3}$-structure 
\begin{equation}
\label{eq:nK:str}
\omega_{nK}:= \tfrac{1}{4} \omega = \tfrac{1}{4} (\omega_1+\omega_2+\omega_3) \quad \text{and} \quad \Omega_{nK}:=\tfrac{1}{8}\Omega
\end{equation}
satisfies
\[
d \omega_{nK} = 3 \Real{\Omega_{nK}}, \qquad d \Imag{\Omega}_{nK} = -2 \omega_{nK}^2.
\]
Hence $(\omega_{nK},\Omega_{nK})$ is the unique $\sunitary{3}$-invariant nearly K\"ahler structure on $\sunitary{3}/\T^2$. 
It corresponds to taking $f_1 = f_2 = f_3 = \tfrac{1}{2}$ and $\theta=0$ in~\eqref{eq:SU3:f}.
\end{remark*}

The next result tells us that the Klein four-group $\Z_2 \times \Z_2$ action defined in Remark~\ref{rmk:discrete:sym:G2} acts simply transitively on the connected components of the 
space of invariant $\sunitary{3}$-structures on $\sunitary{3}/\T^2$ satisfying the static constraint~$d \Real{\Omega}=0$.
\begin{corollary}[The connected components of  the space of invariant \suthreestr s~satisfying the static constraint]\hfill
\label{cor:components:SU3}
\begin{enumerate}[left=0.25em]
\item
The space of $\sunitary{3}$-invariant $\sunitary{3}$-structures on $\sunitary{3}/\T^2$ satisfying the static constraint $d \Real{\Omega}=0$ is a smooth noncompact $3$-manifold with four connected components each diffeomorphic to the positive octant in $\R^3$. The Klein four-group described in Remark~\ref{rmk:discrete:sym:G2} acts simply transitively on these connected components 
\item
The connected component containing the nearly K\"ahler \suthreestr~$(\omega_{nK},\Omega_{nK})$ defined in ~\eqref{eq:nK:str} consists of all 
\suthreestr s of the form $(\omega_f, \Omega_{f,0})$ where $(f_1,f_2,f_3)$ is any positive triple and we use the notation of~\eqref{eq:SU3:f}.
\item
The smooth map that sends an invariant $\sunitary{3}$-structure on $\sunitary{3}/\T^2$ satisfying $d \Real{\Omega}=0$ to its induced invariant metric is a smooth covering map of degree $4$.
\end{enumerate}
\end{corollary}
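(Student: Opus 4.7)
The plan is to exploit Lemma \ref{lem:SU3str:invt} and carefully analyse the redundancy in the parametrisation it provides. By parts (i), (ii), (iv) of that lemma, every $\sunitary{3}$-invariant \suthreestr~on $\sunitary{3}/\T^2$ satisfying the static constraint belongs to exactly one of two orientation classes (inducing $\vol_0$ or $-\vol_0$), and structures in the $\vol_0$-class are exhausted by $(\omega_f, \Omega_{f,\theta})$ as $f$ ranges over $(\R^*)^3$ and $\theta$ over $\{0, \pi\}$ (the restriction on $\theta$ enforcing the static constraint); the $-\vol_0$-class is the image of this parametrisation under the involutive diffeomorphism $\tbar$.

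To count components and establish the group action, I would first restrict to the $\vol_0$-class. Because $\omega_f$ depends only on the squares $f_i^2$ and $\Omega_{f,\theta} = (f_1 f_2 f_3)\, e^{-i\theta}\, \Omega$ is a real nonzero multiple of $\Omega$ (since $e^{-i\theta} = \pm 1$), the structure is determined by the triple $(f_1^2, f_2^2, f_3^2) \in \R_{>0}^3$ together with the sign $\mu := \sgn\!\bigl((f_1 f_2 f_3)\cos\theta\bigr) \in \{\pm 1\}$, and every such combination is realised by some $(f,\theta)$. Hence this class is identified smoothly with two disjoint copies of $\R_{>0}^3$ indexed by $\mu$; combined with the $-\vol_0$-class via $\tbar$, the total space is a smooth noncompact $3$-manifold with four components, each diffeomorphic to the positive octant. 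The Klein four-group $\langle \tbar, \tpi \rangle$ acts on these components since $\tbar$ swaps orientation classes by construction while $\tpi(\omega, \Omega) = (\omega, -\Omega)$ preserves orientation but flips $\mu$, swapping the two components within each class. Because both the group and the set of components have order four, this action must be free and transitive, completing (i).

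Parts (ii) and (iii) should then follow quickly. The nearly K\"ahler structure has $f_i = \tfrac{1}{2}$ and $\theta = 0$, so $\mu = +1$ in the $\vol_0$-class, and its component is exactly the image of $\R_{>0}^3$ under $f \mapsto (\omega_f, \Omega_{f, 0})$, giving (ii). For (iii), the induced metric $g_f = \sum f_i^2 g_i$ depends only on $(f_1^2, f_2^2, f_3^2)$ and is blind to $\mu$ and the orientation, so its fibres are exactly the Klein-four-group orbits of size four; restricted to any single component (identified with $\R_{>0}^3$ by the parameters $f_i^2$) the metric map is the identity of $\R_{>0}^3$, hence a diffeomorphism. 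Globally the map is therefore a smooth covering of degree four. The main point of care is the sign-bookkeeping in the identification of the components: different choices of sign for the $f_i$ yield the same underlying structure only when accompanied by the appropriate change of $\theta$, and this redundancy must be tracked correctly to arrive at the number four rather than some naive larger count.
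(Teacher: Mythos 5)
Your proposal is correct and follows essentially the same route as the paper: reduce to Lemma~\ref{lem:SU3str:invt}, track the redundancy in the $(f,\theta)$ parametrisation, and identify the two discrete invariants (orientation and the sign of the coefficient of $\Omega$) that separate the four components. Your explicit formulation of the second invariant as $\mu=\sgn\bigl((f_1f_2f_3)\cos\theta\bigr)$ is in fact slightly cleaner than the paper's phrase ``the sign of $f_1f_2f_3$ is well-defined'', which only makes sense after implicitly normalising $\theta$; otherwise the arguments match, including the observation that $\tbar$ swaps orientation classes while $\tpi$ swaps the two components within each class, and that the metric map forgets exactly these two $\Z_2$'s.
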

\begin{proof}
By Lemma~\ref{lem:SU3str:invt} we know that any invariant~\suthreestr~can be written in the form~\eqref{eq:SU3:invt}.
We already observed that the two cases in~\eqref{eq:SU3:invt} are exchanged by $\tbar$, induce different orientations and so are necessarily in different connected components
of the space of $\sunitary{3}$-structures. Hence it suffices to consider the case that $\omega_f=\omega_{f'}$ and $\Omega_{f,\theta} = \Omega_{f',\theta'}$ for some $(\theta,f)$ and $(\theta',f')$.
(The second case in~\eqref{eq:SU3:invt} can be analysed the same way).
These two equalities hold if and only if there exist $k_1,k_2,k_3 \in \Z_2$ such that
\[f_i = (-1)^{k_i} f_i' \quad \text{and} \quad \tu{sgn}\left(\frac{f_1'f_2'f_3'}{f_1f_2f_3}\right) = (-1)^{k_1+k_2+k_3}=e^{i(\theta-\theta')}.\]
Hence $\theta'=\theta \mod{2\pi}$ when $k_1+k_2+k_3$ is even and $\theta'-\theta=\pi \mod{2\pi} $
when $k_1+k_2+k_3$ is odd.
In the space of all invariant $\sunitary{3}$-structures where we are free to vary the parameter $\theta$ continuously,  
by shifting $\theta$ by $\pi$ we could change the sign of $f_1 f_2 f_3$ remaining within the 
same connected component. However, once we impose the condition that $\sin{\theta}=0$ this is no longer possible; hence 
the sign of $f_1 f_2 f_3$ is well-defined on each connected component of invariant $\sunitary{3}$-structures satisfying the static constraint~$d \Real{\Omega}=0$.
The involution $\tpi$ exchanges the two connected components that share the same orientation but on which $f_1 f_2 f_3$ 
has have different signs.
Hence there are four connected components of the space of invariant $\sunitary{3}$-structures satisfying the static constraint~$d \Real{\Omega}=0$ and they correspond to the four elements of $\Z_2 \times \Z_2$.
\end{proof}

\pagebreak[3]

The action of the Weyl group on \sunitary{3}-structures is immediate from
Lemma \ref{lem:Weyl:forms}.

\begin{lemma}[The Weyl group action]\hfill
\label{lem:Weyl:SU3}
\begin{enumerate}[left=0.25em]
\item
$S_3$ acts on invariant $\sunitary{3}$-structures: 
any transposition $\sigma \in S_3$ acts via \[(\omega_f,\Omega_{f,\theta}) \mapsto (-\omega_{\sigma(f)},- e^{-2i\theta} \overline{\Omega}_{f,\theta}) . \]
and hence
$\sigma \in A_3$ acts via $(\omega_f,\Omega_{f,\theta}) \mapsto (\omega_{\sigma(f)},\Omega_{f,\theta})$;
\item
$S_3$ preserves the set of invariant  $\sunitary{3}$-structures satisfying the static constraint $d \Real{\Omega}=0$.
\end{enumerate}
\end{lemma}
Combining the previous Lemma with our earlier work on the connected components of the space of invariant~\suthreestr s we obtain 
the following normal form for any invariant~\suthreestr~on $\sunitary{3}/\T^2$~satisfying the static constraint.
\begin{corollary}
\label{cor:SU3:normal}
Up to the actions of $\Z_2 \times \Z_2$ described in Remark~\ref{rmk:discrete:sym:G2} and the Weyl group $W\cong S_3$
any invariant \suthreestr~on $\sunitary{3}/\T^2$ satisfying the static constraint can be written uniquely in the form 
\begin{equation}
\label{eq:omf:Omf}
(\omega_f,\Omega_f)=\left(f_1^2 \omega_1+f_2^2 \omega_2 + f_3^2 \omega_3,  f_1 f_2 f_3\, \Omega\right)
\end{equation}
for some positive real triple $(f_1,f_2,f_3)$ satisfying $f_1^2 \le f_2^2 \le f_3^2$.
\end{corollary}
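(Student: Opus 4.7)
The plan is to combine Corollary \ref{cor:components:SU3} and Lemma \ref{lem:Weyl:SU3} to produce the stated normal form, in two steps: first use the Klein four-group action to pin down a preferred connected component, then use the Weyl group action to order the parameters $f_i$.

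For the first step, Corollary \ref{cor:components:SU3}(i) states that the Klein four-group $\tbar, \tpi, \tbar \circ \tpi$ acts simply transitively on the four connected components of the space of invariant \suthreestr s satisfying $d\Real{\Omega}=0$. Therefore every such structure is $\Z_2 \times \Z_2$--equivalent to one in the connected component containing the nearly K\"ahler structure $(\omega_{nK}, \Omega_{nK})$. By Corollary \ref{cor:components:SU3}(ii), every structure in that component admits a unique expression of the form $(\omega_f, \Omega_{f,0}) = (\omega_f, f_1 f_2 f_3\, \Omega)$ for a positive triple $(f_1, f_2, f_3)$. This gives the normal form in \eqref{eq:omf:Omf}; only the ordering condition $f_1 \le f_2 \le f_3$ remains to be achieved.

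For the second step, I would use the Weyl group action to permute the entries of the positive triple $f$. By Lemma \ref{lem:Weyl:SU3}(iv), the cyclic subgroup $A_3 \subset S_3$ acts on NK-component representatives by $(\omega_f, f_1f_2f_3\,\Omega) \mapsto (\omega_{\sigma(f)}, f_{\sigma(1)}f_{\sigma(2)}f_{\sigma(3)}\,\Omega) = (\omega_{\sigma(f)}, f_1f_2f_3\,\Omega)$, since the product $f_1 f_2 f_3$ is permutation-invariant. Thus $A_3$ directly realises all cyclic permutations of $(f_1, f_2, f_3)$ within the NK-component. The main subtlety is that a transposition $\sigma \in S_3 \setminus A_3$ sends a NK-component structure to $(-\omega_{\sigma(f)}, -f_1 f_2 f_3 \overline{\Omega})$, which sits in a different connected component; however, by composing with $\tpi \circ \tbar$ one computes
\[
\tpi \circ \tbar \bigl( -\omega_{\sigma(f)}, -f_1 f_2 f_3 \overline{\Omega}\bigr) = \tpi\bigl(\omega_{\sigma(f)}, -f_1f_2f_3\,\Omega\bigr) = \bigl(\omega_{\sigma(f)}, f_1 f_2 f_3\,\Omega\bigr),
\]
which once again lies in the NK-component, and realises the transposition $\sigma$ acting by permutation of $(f_1, f_2, f_3)$. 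Hence modulo the combined action of $\Z_2\times \Z_2$ and $S_3$, every invariant \suthreestr~satisfying the static constraint is equivalent to a structure of the form \eqref{eq:omf:Omf} with the entries of $f$ in non-decreasing order.

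Uniqueness then follows by unwinding the two steps. Within the NK-component the triple $(f_1, f_2, f_3)$ is uniquely determined by Corollary \ref{cor:components:SU3}(ii), and the monotone ordering $f_1^2 \le f_2^2 \le f_3^2$ fixes a unique representative of the $S_3$-orbit on positive triples. The only potentially delicate point is to confirm that the composed action $\tpi\circ\tbar \circ \sigma$ above really does correspond to a bona fide symmetry (i.e.\ an element of $(\Z_2\times\Z_2)\times S_3$ acting on the space of closed invariant structures), which is immediate since each of $\tpi$, $\tbar$, and the Weyl element $\sigma$ individually preserves the static closure condition by Remark \ref{rmk:discrete:sym:G2} and Lemma \ref{lem:Weyl:SU3}(v). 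This completes the proof plan; no hard analytic input is needed beyond the structural results already established.
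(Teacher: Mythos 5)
Your proof is correct and follows the same route as the paper: reduce to the nearly K\"ahler component using the simply transitive $\Z_2\times\Z_2$ action from Corollary~\ref{cor:components:SU3}, then use the Weyl group to put the triple in non-decreasing order, with uniqueness read off from the explicit parametrisation of that component. In fact your handling of the odd permutations is more careful than the paper's own one-line conclusion, which invokes only $A_3\subset W$: cyclic permutations alone cannot order an arbitrary positive triple (e.g.\ $(1,3,2)$ has no non-decreasing cyclic rearrangement), so realising a transposition by composing the Weyl element with $\tpi\circ\tbar$ to return to the nearly K\"ahler component --- exactly as you do, and as the ``up to $\Z_2\times\Z_2$ and $S_3$'' in the statement permits --- is precisely the step needed to achieve $f_1^2\le f_2^2\le f_3^2$ in general.
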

\begin{proof}
By Corollary~\ref{cor:components:SU3}, after acting with some element of  the group $\Z_2 \times \Z_2$ described in Remark~\ref{rmk:discrete:sym:G2} 
we can assume that our invariant
\suthreestr~satisfying the static constraint belongs to the connected component containing $(\omega_{nK},\Omega_{nk})$; 
this component is parametrised by
\[
(\omega_f,\Omega_f)=\left(f_1^2 \omega_1+f_2^2 \omega_2 + f_3^2 \omega_3,  f_1 f_2 f_3\, \Omega\right)
\]
where the real triple $(f_1,f_2,f_3)$ is positive.
By acting with $A_3 \subset W$ we can further arrange that $f_1^2 \le f_2^2 \le f_3^2$.
\end{proof}

\begin{remark}
\label{rmk:weyl:asd}
Using the Weyl group to arrange that $f_1^2 \leq f_2^2 \leq f_3^2$ makes sense
in a context where we are working just on the $\sunitary{3}/\T^2$
principal orbits. However, when we consider tensors extending smoothly
from $\R_{>0} \times \sunitary{3}/\T^2$ to $\Lambda^2_-\CP^2$ then the only
symmetry in the Weyl group that it makes sense to use is the involution that
preserves the chosen fibration over $\CP^2$.
\end{remark}

\begin{remark}
\label{rmk:weyl}
Finally we remark that $\sunitary{3}$-invariant objects on $\sunitary{3}/\T^2$ invariant under a nontrivial subgroup of $W$ enjoy extra discrete symmetries.
\begin{itemize}[left=0.25em]
\item
Any invariant metric $g_f$ for which $f_i^2=f_j^2$ for unique $i \neq j$ admits
an additional free isometric involution corresponding
to the fibrewise antipodal map of one of three $\Sph^2$-fibrations over
$\CP^2$. 
However, this $\Z_2$-action cannot preserve any $\sunitary{3}$-structure
since it is orientation-reversing. (The involution acts on
$(\omega_f,\Omega_f)$ by $\tbar$). 
\item
The invariant metrics with $f_1^2=f_2^2=f_3^2$  possess an additional free isometric action of $S_3$; these are precisely the metrics that arise from multiples of the Cartan--Killing form on $\sunitary{3}$.
The corresponding invariant $\sunitary{3}$-structures possess an additional $A_3$ symmetry, but not the full $S_3$ 
symmetry. Note that the nearly K\"ahler structure $\sunitary{3}$-structure $(\omega_{NK},\Omega_{NK})$ defined in~\eqref{eq:nK:str}
corresponds to $f_1=f_2=f_3=\tfrac{1}{2}$ and hence possesses this additional $A_3$ symmetry.
\end{itemize}
\end{remark}

\subsection{Closed invariant \texorpdfgtstr s from 1-parameter families of
invariant \texorpdfsunitary{3}-structures}

We are ready now to apply the general method described in Section~\ref{ss:G2:evolve:closed} to understand $\sunitary{3}$-invariant closed\mbox{~\gtstr s}
on $I \times \sunitary{3}/\T^2$.

\begin{prop}
\label{prop:closed:gtstr:SU3}
Up to the $\Z_2 \times \Z_2$-action defined in Remark~\ref{rmk:discrete:sym:G2}
any smooth closed $\sunitary{3}$-invariant \gtstr~ on $I \times \sunitary{3}/\T^2 = I \times \flag$ can be written uniquely in the form
\begin{subequations}
\label{eq:su3}
\begin{align}
\label{eq:phi:su3}
\varphi_f  & = \omega_f\wedge dt +\Real{\Omega_f} = (f_1^2 \omega_1 + f_2^2 \omega_2+ f_3^2 \omega_3) \wedge dt + f_1 f_2 f_3 \Real{\Omega},\\
\label{eq:*phi}
\ast \varphi_f  & =\tfrac{1}{2} \omega_f^2 - \,dt \wedge \Imag{\Omega_{f}} = f_2^2 f_3^2 \omega_2 \omega_3 + f_3^2 f_1^2 \omega_3 \omega_1 + f_1^2 f_2^2 \omega_1 \omega_2 - dt \wedge f_1f_2 f_3 \Imag{\Omega},\\
\label{eq:g_f}
g_{\varphi_f}  & = dt^2 + g_f = dt^2 + f_1^2 g_1 + f_2^2 g_2 +f_3^2 g_3,  \\ 
\vol_{\varphi_f} & = f_1^2 f_2^2 f_3^2 \vol_0 \wedge dt,\\
\intertext{where $t \in I \subset \R$ is the arclength parameter of an orthogonal geodesic,  
and $f=(f_1,f_2,f_3): I \to \R^3$ is a triple of positive smooth
real functions satisfying the ODE}
\label{eq:closed:su3}
2(f_1 f_2 f_3)' & =  f_1^2 + f_2^2 +f_3^2,
\end{align}
\end{subequations}
and where $(\omega_f,\Omega_f)$ denotes the invariant \suthreestr~defined in~\eqref{eq:omf:Omf}.
Furthermore, by using the action of the Weyl group $W\cong S_3$ we can take the triple $f$ to satisfy $f_1 \le f_2 \le f_3$.
\end{prop}
\begin{proof}
By the discussion in Section~\ref{ss:G2:evolve:closed} (and by~\eqref{eq:closed:Evolution} in particular) for any closed \sunitary{3}-invariant \gtstr~on $I \times \sunitary{3}/\T^2$ we can assume that its restriction to any constant $t$-slice 
is an invariant~\suthreestr~on $\sunitary{3}/\T^2$ satisfying the static constraint $d \Real{\Omega}=0$. 
Therefore by Corollary~\ref{cor:SU3:normal}, up to the action of the Weyl group $W$ and the Klein four-group action defined in Remark~\ref{rmk:discrete:sym:G2},
any smooth closed $\sunitary{3}$-invariant \gtstr~ on $I \times \sunitary{3}/\T^2$ can be written in the form
given in~\eqref{eq:phi:su3} where the $1$-parameter family of $\sunitary{3}$-structures 
$(\omega_f,\Omega_f)$ is defined by~\eqref{eq:omf:Omf} and must satisfy the system~\eqref{eq:closed:Evolution}. 
We have imposed the static closure condition~\eqref{eq:closure:static} throughout, so it remains only to understand the dynamic closure condition~\eqref{eq:closure:dynamic}. Using the structure equations~\eqref{eq:str:su3}~we see easily that \eqref{eq:closure:dynamic} is equivalent to~\eqref{eq:closed:su3}.
\end{proof}

Equation~\eqref{eq:closed:su3} has the following elementary but important consequences. 
\begin{lemma}[\mbox{cf. \cite[equation~(9.2)]{Cleyton:Swann}}]
Assume that $\varphi_f$ is a closed \sunitary{3}-invariant~\gtstr~on $I \times \sunitary{3}/\T^2$ in the form specified in Proposition~\ref{prop:closed:gtstr:SU3}, \ie $\varphi_f = dt \wedge \omega_f + \Real{\Omega_f}$ and $f:I \to \R^3$ is a smooth positive triple satisfying~\eqref{eq:closed:su3}. 
\label{lem:closed:su3:vol}
\begin{enumerate}[left=0em]
\item
The triple $f=(f_1,f_2,f_3)$ satisfies 
\begin{equation}
\label{eq:g:dot:lower:bound}
\frac{d}{dt}(f_1 f_2 f_3)^{1/3}\ge \frac{1}{2}
\end{equation}
with equality if and only if $f_1=f_2=f_3$.  
\item 
No $6$-dimensional orbit of $\varphi_f$ is a critical point of the orbital volume. Hence there can be no exceptional orbits and there is at most one singular orbit.
\item
If $g_\varphi$ is complete then there is a unique singular orbit; this singular orbit must be of the form $\CP^2=\sunitary{3}/\unitary{2}$ and $g_\varphi$ defines a complete Riemannian metric on $\Lambda^2_- \CP^2$ with at least Euclidean volume growth and with nonpositive scalar curvature. 
\end{enumerate}
\end{lemma}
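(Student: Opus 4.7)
The plan is to prove (i), (ii), (iii) in order, using the closure ODE \eqref{eq:closed:su3} together with AM--GM as the main analytic inputs.

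For (i), I would set $h := (f_1 f_2 f_3)^{1/3}$ and differentiate to obtain $3 h^2 h' = (f_1 f_2 f_3)' = \tfrac{1}{2}(f_1^2 + f_2^2 + f_3^2)$ via \eqref{eq:closed:su3}. Applying AM--GM to the positive triple $(f_1^2, f_2^2, f_3^2)$ gives $f_1^2 + f_2^2 + f_3^2 \ge 3 (f_1 f_2 f_3)^{2/3} = 3 h^2$, with equality iff $f_1 = f_2 = f_3$; dividing by $3 h^2$ yields $h' \ge \tfrac{1}{2}$ together with the claimed equality case.

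For (ii), the key observation is that the orbital volume of the $6$-dimensional principal orbit $\{t\} \times \sunitary{3}/\T^2$ with its induced metric $g_f$ equals $V(t) = c_0\, (f_1 f_2 f_3)^2$, where $c_0 = \vol(\sunitary{3}/\T^2, g_1 + g_2 + g_3)$ is a positive constant. Then \eqref{eq:closed:su3} gives $V'(t) = c_0\, (f_1 f_2 f_3)(f_1^2 + f_2^2 + f_3^2) > 0$, so $V$ is strictly monotone on the interior of the orbit-space interval. This prohibits interior critical points of $V$ on $6$-dimensional orbits, which by standard cohomogeneity-one theory rules out exceptional orbits (at which the slice is doubly covered, forcing a reflection symmetry of $V$ about the exceptional-orbit parameter) and prohibits two singular orbits (which would force $V \to 0$ at both endpoints of a bounded interval, again contradicting strict monotonicity).

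For (iii), the first step is to pin down the orbit-space topology. The cases $\Sph^1$ and $[0, \ell]$ both make $M$ compact, which by Lemma~\ref{lem:autg2:closed} is incompatible with a closed \gtstr~invariant under the nonabelian group $\sunitary{3}$. The case $\R$ is also impossible: integrating $h' \ge \tfrac{1}{2}$ backwards from any reference time $t_0$ yields $h(t) \le h(t_0) - \tfrac{1}{2}(t_0 - t)$, forcing $h$ to vanish at some finite $t_* \ge t_0 - 2 h(t_0)$, so the orbit-space interval has a finite left endpoint. After shifting so this endpoint sits at $t = 0$, the orbit space is $[0, \infty)$ with a unique singular orbit. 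By \cite[Tables 2 \& 3]{Cleyton:Swann} the only admissible singular orbit for an $\sunitary{3}$-action with principal orbit $\sunitary{3}/\T^2$ is $\CP^2 = \sunitary{3}/\unitary{2}$, and the corresponding equivariant normal bundle structure identifies $M$ with $\Lambda^2_-\CP^2$. Since $h(0) = 0$ and $h' \ge \tfrac{1}{2}$, integration yields $h(t) \ge t/2$, and the tubular-neighbourhood volume $\int_0^R V(t)\, dt \ge \tfrac{c_0}{64} \int_0^R t^6\, dt = c' R^7$ gives at least Euclidean volume growth. Nonpositivity of the scalar curvature is immediate from the closed~\gtstr~scalar curvature formula \eqref{scalar:curv:closed}.

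The hardest step will be the identification of the ambient manifold as $\Lambda^2_-\CP^2$, which requires invoking Cleyton--Swann's classification of admissible singular orbit types compatible with the principal orbit $\sunitary{3}/\T^2$; the remaining steps are direct consequences of \eqref{eq:closed:su3}, AM--GM, and Lemma~\ref{lem:autg2:closed}.
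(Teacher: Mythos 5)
Your proof is correct and follows essentially the same route as the paper's: the inequality in (i) is the power-means inequality $\mathcal{M}_0\le\mathcal{M}_2$, which you phrase as AM--GM applied to $(f_1^2,f_2^2,f_3^2)$; (ii) and (iii) use the strict monotonicity of the orbital volume together with Lemma~\ref{lem:autg2:closed} and the backward-in-$t$ finite-time vanishing of $(f_1f_2f_3)^{1/3}$, then appeal to the Cleyton--Swann classification, exactly as the paper does. Your writeup is slightly more explicit than the paper's in spelling out the volume-growth integral and the appeal to \eqref{scalar:curv:closed}, but these are just details the paper leaves implicit, not a different argument.
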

\begin{proof}
Recall that for $p>0$ the $p$-th power mean $\mathcal{M}_p$ of a nonnegative triple $f=(f_1,f_2,f_3)$ is defined by
$\mathcal{M}_p(f): = \left( \tfrac{1}{3}  (f_1^p + f_2^p + f_3^p) \right)^{1/p}$
and $\mathcal{M}_0(f)$ is defined to be the geometric mean $(f_1 f_2 f_3)^{1/3}$. The power means inequality states that for any real numbers $r<s$ 
\begin{equation}
\label{eq:powers:mean:ineq}
\mathcal{M}_r(f) \le \mathcal{M}_s(f)
\end{equation}
with equality if and only if all the $f_i$ are equal (or $s \le 0$ and $f_i=0$ for some $i$).
\eqref{eq:closed:su3} written in terms of power means is equivalent to 
$2 (\mathcal{M}_0^3)' = 3\mathcal{M}_2^2$.
Since we are assuming that $f_1f_2f_3 \neq 0$~\eqref{eq:powers:mean:ineq} therefore implies that
\[
(\mathcal{M}_0(f) )' = \frac{1}{2} \left(\frac{\mathcal{M}_2(f)}{\mathcal{M}_0(f)}\right)^2 \ge \frac{1}{2}
\]
with equality if and only if $f_1=f_2=f_3$. Exceptional orbits are immediately ruled out because they are necessarily critical points of 
the orbital volume. 
If there were two singular orbits then since the orbital volume goes to zero for both singular orbits 
then there would also have to be an orbit of maximal volume. 
Suppose for a contradiction that the solution is complete but contains no singular orbit, then the orbit space must be $\R$. 
Then at $t=0$ say, $(f_1 f_2 f_3)^{1/3}$ is some finite positive number, 
but $(f_1 f_2 f_3)^{1/3}$ decreases  backwards in $t$ at least as fast as $\tfrac{1}{2}t$ and hence in finite backwards time it reaches zero, which contradicts our assumptions. Once we know that the orbit space is $[0,\infty)$ it follows by  Cleyton--Swann's classification of singular orbits
that the singular isotropy group is $H=\unitary{2}$, so that $G/H = \CP^2$ and that $M$ is $G$-equivariantly diffeomorphic to $\Lambda^2_- \CP^2$. 
\end{proof}

\subsubsection*{The type decomposition on invariant forms for closed \sunitary{3}-invariant~\gtstr s}
We need the following straightforward result about the~\gtwo-type decomposition when restricted to invariant $2$-forms and $3$-forms. %
\begin{lemma}
\label{lem:type:dec:invt}
Assume that $\varphi_f$ is a closed \sunitary{3}-invariant\mbox{~\gtstr} on $I \times \sunitary{3}/\T^2$ in the form specified in Proposition~\ref{prop:closed:gtstr:SU3}, \ie 
$\varphi_f=dt \wedge \omega_f + \Real{\Omega_f}$ and $f:I \to \R^3$ is a smooth positive triple satisfying~\eqref{eq:closed:su3}.
\begin{enumerate}[left=0em]
\item
The invariant $2$-forms of type $7$ are generated by $\omega_f$; the invariant $2$-form 
\[ \beta = \beta_1 \omega_1 + \beta_2 \omega_2 + \beta_3 \omega_3\] 
is of type $14$ if and only if its coefficients satisfy the constraint
\begin{equation}
\label{eq:2form:14}
\sum{\frac{\beta_i}{f_i^2}}=0.
\end{equation}
Therefore the invariant forms of type $14$ are generated by %
$f_1^2 \omega_1 - f_2^2 \omega_2$ and  $f_1^2 \omega_1 - f_3^2 \omega_3$.
\item
The invariant $3$-forms of type $7$ are generated by the invariant 3-form $\beta$ and the invariant $3$-forms of type $27$ are generated by the triple
\[\omega_i \wedge dt - \frac{f_j f_k}{4f_i} \alpha\] 
for $i=1, 2, 3$ and $(ijk)$ a permutation of $(123)$.
\end{enumerate}
\end{lemma}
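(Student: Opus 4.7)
The plan is to exploit the characterisations of $\Lambda^p_j$ in terms of linear constraints given by wedge products with $\varphi$ and $\ast\varphi$, combined with a sharp dimension count for invariant forms. The starting observation is that there are no nontrivial invariant $1$- or $5$-forms on $\sunitary{3}/\T^2$, so the space of invariant $2$-forms on $I \times \sunitary{3}/\T^2$ is $3$-dimensional (spanned by $\omega_1,\omega_2,\omega_3$), and the space of invariant $3$-forms is $5$-dimensional (spanned by $\alpha, \beta, dt\wedge\omega_1, dt\wedge\omega_2, dt\wedge\omega_3$). The only invariant vector field on $I\times\sunitary{3}/\T^2$ is $\partial_t$, and any invariant $6$-form is a multiple of $\vol_0$.

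For (i), since $\Lambda^2_7\cong V$ via $X\mapsto X\lrcorner\varphi$, the invariant type-$7$ subspace is $1$-dimensional and generated by $\partial_t\lrcorner\varphi_f=\omega_f$; hence the invariant type-$14$ subspace is $2$-dimensional. To identify it, use $\Lambda^2_{14}=\{\eta\mid\eta\wedge\ast\varphi=0\}$ and take a general invariant $\eta=\sum\eta_i\omega_i$. The term $\eta\wedge(dt\wedge\Imag\Omega_f)$ vanishes because $\eta\wedge\Imag\Omega$ is an invariant $5$-form on $\sunitary{3}/\T^2$ (hence zero), so only $\eta\wedge\tfrac{1}{2}\omega_f^2$ contributes. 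A short computation using $\omega_i^2=0$ and $\omega_1\omega_2\omega_3=\vol_0$ shows
\[
\eta\wedge\ast\varphi_f=\bigl(\eta_1 f_2^2 f_3^2+\eta_2 f_1^2 f_3^2+\eta_3 f_1^2 f_2^2\bigr)\vol_0,
\]
which vanishes precisely when $\sum\eta_i/f_i^2=0$; this cuts out the claimed $2$-plane spanned by $f_1^2\omega_1-f_2^2\omega_2$ and $f_1^2\omega_1-f_3^2\omega_3$.

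For (ii), decompose the $5$-dimensional invariant $3$-form space via $\Lambda^3=\Lambda^3_1\oplus\Lambda^3_7\oplus\Lambda^3_{27}$. The type-$1$ invariant part is spanned by $\varphi_f$; the type-$7$ invariant part, isomorphic to $V$, is spanned by $\partial_t\lrcorner\ast\varphi_f=-f_1 f_2 f_3\,\beta$, so it is generated by $\beta$. This leaves a $3$-dimensional type-$27$ invariant subspace, which I locate using the two constraints $\gamma\wedge\varphi=0$ and $\gamma\wedge\ast\varphi=0$. Writing a general invariant $\gamma=a_0\alpha+b_0\beta+\sum a_i\,\omega_i\wedge dt$, every term of $\gamma\wedge\varphi_f$ that lives purely on $\sunitary{3}/\T^2$ vanishes either as a $7$-form on a $6$-manifold or because $\alpha\wedge\omega_j=0$; only $b_0\beta\wedge f_1 f_2 f_3\alpha$ survives, and a direct computation shows $\alpha\wedge\beta=4\vol_0$, so $\gamma\wedge\varphi_f=0$ forces $b_0=0$. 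A parallel calculation of $\gamma\wedge\ast\varphi_f$, again using $\omega_i\wedge\tfrac{1}{2}\omega_f^2=(f_1^2 f_2^2 f_3^2/f_i^2)\vol_0$ and the same identity $\alpha\wedge\beta=4\vol_0$, reduces the second constraint to
\[
4a_0+f_1 f_2 f_3\sum_i\frac{a_i}{f_i^2}=0.
\]
This cuts a $3$-plane out of the $(a_0,a_i)$-space, and a final direct check confirms that the three candidates $\omega_i\wedge dt-\tfrac{f_j f_k}{4f_i}\alpha$ satisfy both constraints and are linearly independent (their $\omega_i\wedge dt$ components alone already are), so they span $\Lambda^3_{27}$ on the invariant forms.

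The main subtlety is purely bookkeeping: keeping track of the sign when commuting $dt$ past $3$-forms in the $\gamma\wedge\ast\varphi$ computation, and verifying the single numerical identity $\alpha\wedge\beta=4\vol_0$ from the explicit expressions for $\alpha$ and $\beta$ in the chosen basis. Once these are in hand, everything else is dimension counting and solving linear constraints.
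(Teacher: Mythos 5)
Your proof is correct and follows the same strategy as the paper's: characterise each type component via the algebraic wedge-product conditions recalled in Section~\ref{ss:closed:g2}, restrict to the invariant generators $\omega_i$, $\alpha$, $\beta$, $dt$, and use the absence of invariant $1$- and $5$-forms to kill most cross terms. The only place where you silently use something beyond what you state is in the claim that ``only $b_0\beta\wedge f_1 f_2 f_3\alpha$ survives'' in $\gamma\wedge\varphi_f$: the term $a_0 f_1 f_2 f_3\,\alpha\wedge\alpha$ is a $6$-form (not a $7$-form on a $6$-manifold, and not a $2$-form wedge $3$-form), so its vanishing requires the separate identity $\alpha\wedge\alpha=0$, which is true and is used implicitly in the paper as well but deserves a one-line remark (e.g.\ it follows from $\Omega\wedge\Omega=0$). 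Otherwise your dimension counts, the identity $\alpha\wedge\beta=4\vol_0$, the computation giving $\eta\wedge\ast\varphi_f=(\sum\eta_i f_j^2 f_k^2)\vol_0$, and the constraint $4a_0 + f_1 f_2 f_3\sum a_i/f_i^2=0$ all match the paper's proof.
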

\begin{proof}
Recall that the decomposition for $2$-forms takes the form
\begin{align*}
\Lambda^2_7 &= \{ X \lrcorner \varphi_f\} = \{ \beta\,|\, \beta \wedge \varphi_f = 2\!\ast\!\beta\}, \\
\Lambda^2_{14} &= \{ \beta \,|\, \beta \wedge \ast \varphi_f=0\} = \{ \beta\, | \beta\, \wedge \varphi_f = -\!\ast\!\beta \}.
\end{align*}
The first characterisation of type $7$ shows that the invariant $2$-form $\omega_f$ generates the invariant $2$-forms of type $7$. 
For an invariant $2$-form $ \beta = \beta_1 \omega_1 + \beta_2 \omega_2 + \beta_3 \omega_3$ 
the first characterisation of type $14$ shows that $\beta$ is of type $14$ if and only if
$\sum{\beta_i f_j^2 f_k^2} = 0$ which is clearly equivalent to~\eqref{eq:2form:14}.

Recall that for $3$-forms the type decomposition takes the form 
\[
\Lambda^3_1 = \langle \varphi_f \rangle, \quad \Lambda^3_7 = \{X \lrcorner \ast \varphi_f\}, \quad \Lambda^3_{27} = \{ \eta  \in \Lambda^3 |\, \varphi_f \wedge \eta =0, \ast \varphi_f \wedge \eta = 0\}.
\]
If $X$ is an invariant vector field, \ie $X= u(t) \partial_t$ 
for some function $u$, then $X \lrcorner \ast \varphi_f = (u f_1 f_2 f_3) \beta$, \ie the invariant $3$-forms of type $7$ 
are generated by the invariant 3-form $\beta$.
If $\eta$ is an arbitrary invariant $3$-form with coefficients \[
\eta = (\eta_1 \omega_1 + \eta_2 \omega_2 + \eta_3 \omega_3) \wedge dt + \eta_\alpha \alpha + \eta_\beta \beta
\]
then $\eta \wedge \varphi =0$ 
if and only if $\eta_\beta=0$ and $\eta \wedge \ast \varphi =0$ if and only if 
\begin{equation}
\label{eq:3form:27}
\eta_\alpha = -\frac{f_1 f_2 f_3}{4} \sum{\frac{\eta_i}{f_i^2}}.
\end{equation}
Therefore the invariant $3$-forms of type $27$ are generated by the triple of invariant $3$-forms claimed.
Note also that $\eta_\alpha=0$ if and only if the invariant $2$-form $\eta_1 \omega_1 + \eta_2 \omega_2 + \eta_3 \omega_3$ is of type $14$.
\end{proof}

\subsection{The intrinsic torsion of \texorpdfsunitary{3}-invariant
closed \texorpdfgtstr s}

Recall that the intrinsic torsion of any closed~\gtstr~$\varphi$ is the unique $2$-form $\tau$ of type $14$ that satisfies~\eqref{eq:tau2:closed}, 
\ie
$d(\ast \varphi) = \tau \wedge \varphi.$
If $\varphi$~is also \sunitary{3}-invariant then so is its torsion $\tau$ and so by the results of the previous section 
we can write the torsion $2$-form as 
\[
\tau = \tau_1\, \omega_1 + \tau_2\, \omega_2 + \tau_3\, \omega_3
\] 
for a triple of functions $(\tau_1,\tau_2,\tau_3)$ satisfying the type $14$
constraint~\eqref{eq:2form:14}, \ie
\begin{equation}
\label{eq:tau_14}
\frac{\tau_1}{f_1^2} + \frac{\tau_2}{f_2^2} + \frac{\tau_3}{f_3^2} = 0 .
\end{equation}
The following lemma determines these torsion coefficients $\tau_i$ for $\varphi_f$ in terms of the triple $f=(f_1,f_2,f_3)$ and its first derivatives.
\begin{lemma}
\label{lem:torsion:SU3}
Let $\varphi_f$ be a $\sunitary{3}$-invariant closed~\gtstr~on $I \times \sunitary{3}/\T^2$ written in the normal form described in Proposition~\ref{prop:closed:gtstr:SU3}. The intrinsic $2$-form $\tau=\tau_1\, \omega_1 + \tau_2\, \omega_2 + \tau_3\, \omega_3$ of $\varphi_f$
is the unique invariant $2$-form of type $14$
whose coefficients are the triple of real-valued functions $(\tau_1, \tau_2, \tau_3): I \to \R^3$ satisfying 
\begin{subequations}
\begin{gather}
\label{eq:tau:i:SU3}
\tau_i = - \frac{f_i^2}{f_j^2 f_k^2} \left(\tau_j f_k^2 + \tau_k f_j^2 \right) = - \frac{f_i^2}{f_j^2 f_k^2} \left( (f_j^2f_k^2)' - 2f_1 f_2 f_3\right)\!,\\
\intertext{for $(ijk)$ any permutation of $(123)$ or equivalently}
\label{eq:tau:i:SU3a}
\tau_i = (f_i^2)' + \frac{f_i^2}{f_1 f_2 f_3}\left( 2f_i^2 - \fb\right),
\end{gather}
\end{subequations}
where for a more compact notation we define
\begin{equation}
\label{eq:fb}
\fb:=f_1^2+f_2^2+f_3^2.
\end{equation}
\end{lemma}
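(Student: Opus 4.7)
The plan is to read off the torsion coefficients directly from the defining identity $d(\ast\varphi_f) = \tau\wedge\varphi_f$ by expanding both sides in a basis of invariant $5$-forms. Observe first that the leftmost equality in \eqref{eq:tau:i:SU3} is just a rearrangement of the type-$14$ constraint \eqref{eq:2form:14} for $\tau$ (which is automatic since $\tau$ is by definition of type $14$), so the real content is the second equality, which after rearrangement amounts to showing
\[
\tau_j f_k^2 + \tau_k f_j^2 = (f_j^2 f_k^2)' - 2f_1 f_2 f_3
\]
for each cyclic permutation $(ijk)$ of $(123)$, together with deriving the equivalent form \eqref{eq:tau:i:SU3a}.

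For the main computation, I would differentiate the formula \eqref{eq:*phi} using the structure equations \eqref{eq:str:su3}. The spatial piece $\sum f_j^2 f_k^2\,\omega_j\omega_k$ differentiates cleanly: the factors of $f_j^2 f_k^2$ only depend on $t$ and contribute $\sum(f_j^2 f_k^2)' dt\wedge \omega_j\omega_k$, while $d(\omega_j\wedge\omega_k) = \tfrac{1}{2}(\alpha\wedge\omega_k-\omega_j\wedge\alpha)$ vanishes because $\alpha\wedge\omega_i = 0$ (there being no nonzero invariant $5$-forms on $\sunitary{3}/\T^2$). For the remaining piece $-dt\wedge f_1f_2f_3\,\beta$, the factor $dt$ kills everything except the spatial part of $d\beta = -2(\omega_1\omega_2+\omega_2\omega_3+\omega_3\omega_1)$, contributing $-2f_1 f_2 f_3 \sum dt\wedge \omega_j\omega_k$. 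Assembling, the coefficient of $dt\wedge \omega_j\omega_k$ in $d(\ast\varphi_f)$ is exactly $(f_j^2 f_k^2)' - 2f_1 f_2 f_3$.

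On the other side, $\tau\wedge\Real\Omega_f$ is a spatial invariant $5$-form and hence vanishes, so $\tau\wedge\varphi_f = dt\wedge \tau\wedge\omega_f$. Because invariant $2$-forms commute under wedge product, the coefficient of $dt\wedge \omega_j\omega_k$ in this expression is $\tau_j f_k^2 + \tau_k f_j^2$ (not the antisymmetric combination one might mistakenly write). Matching coefficients gives the three relations above, which is \eqref{eq:tau:i:SU3}.

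The equivalent expression \eqref{eq:tau:i:SU3a} is then a matter of algebra. Solving \eqref{eq:tau:i:SU3} for $\tau_i$ and using $f_j^2 f_k^2 = (f_1 f_2 f_3)^2/f_i^2$ converts the ratio $(f_j^2 f_k^2)'/(f_j^2 f_k^2)$ into $2(f_1f_2f_3)'/(f_1f_2f_3) - (f_i^2)'/f_i^2$; substituting the dynamic closure ODE \eqref{eq:closed:su3}, $2(f_1f_2f_3)' = \fb$, and simplifying yields $\tau_i = (f_i^2)' + (f_i^2/f_1f_2f_3)(2f_i^2 - \fb)$. There is no genuinely hard step here: the proof is a direct identification of coefficients, and the only care needed is in the two algebraic cancellations (2-forms commute under wedge, and $\alpha\wedge\omega_i = 0$) that make the computation collapse to the stated form.
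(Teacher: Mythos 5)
Your proof is correct and follows essentially the same route as the paper: expand $d(\ast\varphi_f)=\tau\wedge\varphi_f$ in the invariant $5$-forms $dt\wedge\omega_j\wedge\omega_k$ using the structure equations to get $(f_j^2f_k^2)'-2f_1f_2f_3=\tau_jf_k^2+\tau_kf_j^2$, combine with the type-$14$ constraint \eqref{eq:2form:14} to obtain \eqref{eq:tau:i:SU3}, and then convert to \eqref{eq:tau:i:SU3a} via the closure ODE \eqref{eq:closed:su3} (your logarithmic-derivative manipulation is the same algebra as the paper's identity $(f_i^2)'f_j^2f_k^2=(f_1^2f_2^2f_3^2)'-f_i^2(f_j^2f_k^2)'$). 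The only slip is the sign in $d(\omega_j\wedge\omega_k)=d\omega_j\wedge\omega_k+\omega_j\wedge d\omega_k$, which is immaterial since both terms vanish by $\alpha\wedge\omega_i=0$.
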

\begin{proof}
It follows from the structure equations \eqref{eq:str:su3} that
\begin{equation}
\label{eq:d*phi:su3}
d (\ast \varphi_f) = \sum_{(ij)}{ \left( (f_i^2 f_j^2)' -2 f_1 f_2 f_3)  \right)\, \omega_i \omega_j \wedge dt}, 
\end{equation}
where $(ij)$ is summed over $(12)$, $(23)$ and $(31)$.
For $(ijk)$ any permutation of $(123)$ we define 
\begin{equation}
\label{eq:Ai:def}
A_i:= (f_j^2f_k^2)' - 2f_1 f_2 f_3.
\end{equation}
Then using~\eqref{eq:phi:su3} and~\eqref{eq:d*phi:su3} it is straightforward to verify that the
condition~\eqref{eq:tau2:closed} is equivalent to 
\begin{equation}
\label{eq:Ai:torsion}
A_i = \tau_k f_j^2 + \tau_j f_k^2, \quad \text{for\ } (ijk)\ \text{any permutation of\ }(123).
\end{equation}
Multiplying this by $f_i^2$ and using~\eqref{eq:2form:14} and~\eqref{eq:Ai:def} we obtain~\eqref{eq:tau:i:SU3}.
To see the equivalence of~\eqref{eq:tau:i:SU3} and~\eqref{eq:tau:i:SU3a} note that from the equality \[
(f_i^2)' f_j^2 f_k^2 = (f_1^2 f_2^2 f_3^2)' - f_i^2 (f_j^2 f_k^2),'\]
using the closure of $\varphi_f$, \ie~\eqref{eq:closed:su3}, 
to rewrite the first term of the right-hand side, the definition of $A_i$ and \eqref{eq:tau:i:SU3}~we obtain 
\[
(f_i^2)' (f_j^2 f_k^2) = f_1 f_2 f_3 \fb - f_i^2 (A_i + 2 f_1 f_2 f_3) = f_1 f_2 f_3 (\fb - 2f_i^2) + f_j^2 f_k^2 \tau_i.
\]
Rearranging this gives~\eqref{eq:tau:i:SU3a}. 
\end{proof}

\begin{remark}
\label{rmk:involution_asd}
A consequence in this context of Remark \ref{rmk:weyl} is that
\sunitary{3}-invariant \gtstr s on
${\R_{>0} \times \sunitary{3}/\T^2}$ where two of the variables are equal, say $f_1 = f_2$, are acted on as $-1$ by a certain orientation-reversing involution
commuting with the \sunitary{3} action (thus the metric
is $\sunitary{3} \times \Z_2$-invariant).
Thinking of $\R_{>0} \times \sunitary{3}/\T^2$ as the complement to the zero
section in $\Lambda^2_- \CP^2$, this is the involution defined by multiplying
the fibres by~$-1$.
\end{remark}

\begin{example}
\label{ex:BS}
By \eqref{eq:closed:su3} and \eqref{eq:tau:i:SU3}, the
$\sunitary{3}$-invariant \gtstr{} defined by
$f_1, f_2, f_3$ is torsion-free if and only if
$2(f_1f_2f_3)' = f_1^2 + f_2^2 + f_3^2$ and each $(f_j^2f_k^2)' = 2f_1f_2f_3$.
The simplest solution to these equations is 
\[
f_1 = f_2 = f_3 = \tfrac{1}{2}t.
\]
This incomplete solution describes a conical \sunitary{3}-invariant torsion-free~\gtwo-cone 
with cross-section $\flag=\sunitary{3}/\T^2$ arising from the unique \sunitary{3}-invariant nearly
K\"ahler structure on $\flag$ that we described in~\eqref{eq:nK:str}.
To instead obtain a complete metric, the solution must satisfy the conditions
for closing smoothly over a nonprincipal orbit (described in detail in section
\ref{ss:IC:CP2}, see also Cleyton--Swann \cite[p.~217]{Cleyton:Swann}).  
The smoothly-closing conditions force two of the $f_i$ to coincide
and (up to the Weyl group action from Lemma~\ref{lem:Weyl:SU3}) any
complete solution can be written in terms of the
parameter $r = f_1f_3$ and a constant $\mu \in \R$ as
\[
f_1 = r(r^2 + \mu^2)^{-\frac{1}{4}},
\quad f_2 = f_3 = (r^2 + \mu^2)^{\frac{1}{4}}.
\]
These solutions define the asymptotically conical holonomy $G_2$ metrics
on $\Lambda^2_- \CP^2$ first found by Bryant and Salamon \cite{Bryant:Salamon}; they are
asymptotic to the torsion-free~\gtwo-cone just described. Permuting the
$f_i$ gives three AC \gtmfd s asymptotic to the same cone, but topologically different in that the special $\CP^2$ orbit fits in differently. 
In fact, one can show that any complete torsion-free~\gtstr~asymptotic to the  torsion-free~\gtwo-cone 
described above must be one of these Bryant--Salamon solutions~\cite[Corollary 6.10]{karigiannis:lotay}.

Another way of expressing the claim that the three solutions extend
across topologically different $\CP^2$s is that while the full Weyl
group $W$ acts isometrically on the cone, only the involution that
preserves the particular fibration over $\CP^2$ acts isometrically on
the AC $G_2$-metric, and the other non-trivial elements of the
Weyl group do not even extend to homeomorphisms of $\Lambda^2_- \CP^2$
(see Atiyah--Witten \cite[\S 2.3]{atiyah03}).
This gives rise to a $G_2$ geometric transition, somewhat analogous to
the well-known ``flop'' between the two topologically distinct
small resolutions of the complex 3-dimensional ordinary double point
singularity.
\end{example}

\subsection{Closed \texorpdfSp{2}-invariant~\texorpdfgtstr s}
\label{ss:Sp2:symmetry}
When $G=\Sp{2}$, the principal isotropy group $K = \unitary{1} \times \Sp{1}$ acts on the standard representation $\mathbb{H}^2 \cong \C^4$ as
$H+L+\overline{L}$ where $H \cong \mathbb{H}$ is the standard representation of $\Sp{1}$ and $L \cong \C$ is the standard representation of $\unitary{1}$.
It follows that the isotropy representation is $\Rrep{L^2} + \Rrep{H \overline{L}}$. Both of these irreducible modules admit an invariant metric $g_i$ and invariant 
$2$-form $\omega_i$ and the space of invariant $3$-forms on their sum is $2$-dimensional. 
To be more concrete we equip the isotropy representation with the basis
\begin{align*}
E_1 &= \frac{1}{2}\begin{pmatrix}
0 & 0  \\
0 & j
\end{pmatrix}, 
\qquad 
E_2 = \frac{1}{2}\begin{pmatrix}
0 & 0\\
0 & -k
\end{pmatrix}, 
\qquad 
E_3 = \frac{1}{2\sqrt{2}}\begin{pmatrix}
0 & -1\\
1 & 0
\end{pmatrix}, \\
E_4 &= \frac{1}{2\sqrt{2}}\begin{pmatrix}
0 & i\\
i & 0
\end{pmatrix}, 
\quad 
E_5 = \frac{1}{2\sqrt{2}}\begin{pmatrix}
0 & j\\
j & 0
\end{pmatrix}, 
\quad 
E_6 = \frac{1}{2\sqrt{2}}\begin{pmatrix}
0 & k\\
k & 0
\end{pmatrix},
\end{align*}
and denote the corresponding dual basis by $\{e_1, \ldots ,e_6\}$  Then $\{e_1,e_2\}$ is a basis for $\Rrep{L^2}^*$ and $\{e_3, \ldots ,e_6\}$ is a basis for $\Rrep{H \overline{L}}^*$.
We can scale $g_i$ and $\omega_i$ so that 
\begin{subequations}
\begin{gather*}
g_1 = e_1^2+e_2^2, \quad  g_2 = e_3^2+e_4^2+e_5^2 + e_6^2, \\
\omega_1  = e_{12}, \qquad  \omega_2  = e_{34}+e_{56}.
\end{gather*}
\end{subequations}
$g_1$ may be identified with an $\Sp{1}$-invariant metric on the $2$-sphere $\Sp{1} \times \Sp{1}/\Sp{1}\times \unitary{1}$. 
By computing its sectional curvature one can check that $g_1$ is the standard round metric with constant sectional curvature $1$. 
Similarly $g_2$ can be identified with an $\Sp{2}$-invariant metric on $\Sp{2}/\Sp{1} \times \Sp{1} \cong \mathbb{HP}^1 \cong \Sph^4$.
Again by computing its sectional curvature one can check that $g_2$ is the standard round metric with constant sectional curvature $\tfrac{1}{2}$.
The $2$-dimensional space of invariant $3$-forms is then spanned by
\[
\alpha = e_{246}- e_{235}-e_{145} - e_{136}, \quad \beta = e_{135}-e_{146}-e_{236}-e_{245},
\]
and we set $\vol_0 = e_{123456}$. 
The exterior derivatives of these invariant forms satisfy
\begin{equation}
\label{eq:str:eqns:sp2}
d \omega_1 = \tfrac{1}{2}\alpha, \quad d\omega_2 = \alpha, \quad d\alpha =0, \quad d \beta = -2\omega_1 \wedge \omega_2 - \omega_2 \wedge \omega_2.
\end{equation}

\begin{remark}
\label{rmk:sp2_taut}
$-\omega^2_2$ is the pull-back of a nowhere-vanishing form on $\Sph^4$.
With respect to that orientation on $\Sph^4$, the value of $\omega_2$ at
a point $x \in \CP^3$ is an anti-self-dual 2-form on the tangent space of the
image of $x$ in $\Sph^4$. We can therefore identify $\CP^3$ with the
unit sphere bundle in $\Lambda^2_- \Sph^4$ so that $\omega_2$ is the
tautological 2-form.

Meanwhile, note that $2\omega_1 - \omega_2$ is closed. Up to scale, this is the Fubini-Study 2-form on $\CP^3$; however, this is not the 2-form component of
any \sunitary{3}-structure, since the standard complex structure on $\CP^3$
has non-zero first Chern class.
\end{remark}

Using the same method as in the $\sunitary{3}$-invariant case we obtain the following description of
$\Sp{2}$-invariant $\sunitary{3}$-structures on $\CP^3$ that satisfy the static closure condition $d \Real{\Omega}=0$.

\pagebreak[2]
\begin{lemma}\hfill
\label{lem:Sp2:invt:SU3:strs}
\begin{enumerate}[left=0em]
\item
Up to the action of $\Z_2 \times \Z_2$ described in Remark~\ref{rmk:discrete:sym:G2} a general $\Sp{2}$-invariant 
$\sunitary{3}$-structure on $\Sp{2}/\Sp{1} \times \unitary{1} = \CP^3$ can be written in the form
\[
(\omega_f,\Omega_{f,\theta}) = (f_1^2 \omega_1 + f_2^2 \omega_2,f_1f_2^2 e^{-i\theta} \Omega)
\]
for $e^{i \theta} \in \Sph^1$, $\Omega:= \alpha + i \beta$ and $f=(f_1,f_2) \in \R^2_> \subset \R^2$. The induced metric is $g_f = f_1^2 g_1 + f_2^2 g_2$.
\item
$(\omega_f,\Omega_{f,\theta})$ satisfies the static closure condition  $d \Real{\Omega_{f,\theta}}=0$ if and only if $\sin{\theta}=0$.
\item
The $\Sp{2}$-invariant $\sunitary{3}$-structure with $f_1=f_2=\tfrac{1}{2}$ and $\theta=0$ is the standard $\Sp{2}$-invariant
nearly K\"ahler structure on $\CP^3$. 
\item
The space of $\Sp{2}$-invariant $\sunitary{3}$-structures on $\CP^3$ satisfying the static closure condition  has four connected components each diffeomorphic to 
the positive quadrant $\R^2_>$ in $\R^2$ and $\Z_2 \times \Z_2$ acts simply transitively on these four components.
The connected component containing the nearly K\"ahler structure on $\CP^3$ is parametrised by 
\[
(\omega_f,\Omega_{f}) = (f_1^2 \omega_1 + f_2^2 \omega_2,f_1f_2^2 \Omega)
\]
with $f=(f_1,f_2) \in \R^2_>$.
\end{enumerate}

\end{lemma}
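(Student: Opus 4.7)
The strategy is to follow closely the proof of Lemma \ref{lem:SU3str:invt} and Corollary \ref{cor:components:SU3}; the $\Sp{2}$-invariant setting is in fact simpler because there are only two invariant metric factors and the structure equations \eqref{eq:str:eqns:sp2} are in direct analogy with the $\sunitary{3}$-invariant structure equations \eqref{eq:str:su3}. The key representation-theoretic input is that the isotropy representation splits as $\Rrep{L^2} \oplus \Rrep{H\overline{L}}$, which is a sum of two inequivalent real irreducibles; consequently (as in the $\sunitary{3}/\T^2$ case) there are no nontrivial invariant $1$-forms or $5$-forms on $\Sp{2}/(\Sp{1}\times\unitary{1})=\CP^3$, so the compatibility condition \eqref{eq:omega:wedge:Omega} between $\omega$ and $\Omega$ is automatic.

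For (i), any invariant nondegenerate $2$-form is $\pm(f_1^2\omega_1 + f_2^2\omega_2)$ for some nonzero real $f_1,f_2$, and any invariant complex volume form is a nonzero real multiple $\mu$ of either $e^{-i\theta}\Omega$ or $e^{i\theta}\overline{\Omega}$. The normalisation \eqref{eq:SU3:vol} determines $\mu^2=(f_1 f_2^2)^2$ once one computes $\omega_f^3 = 6 f_1^2 f_2^4 \,\vol_0$ and $\Real\Omega\wedge\Imag\Omega = 4\,\vol_0$ from the explicit expressions for $\alpha$ and $\beta$. As in Lemma \ref{lem:SU3str:invt}, the overall sign of $\omega_f$ is absorbed by the $\tbar$-action, leaving the stated normal form; the induced metric is read off from the definition of $\omega_1,\omega_2,g_1,g_2$. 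For (ii), using the structure equations \eqref{eq:str:eqns:sp2} we have $d\alpha=0$ but $d\beta=-2\omega_1\wedge\omega_2-\omega_2^2\neq 0$, so from $\Real\Omega_{f,\theta}=f_1f_2^2(\cos\theta\,\alpha+\sin\theta\,\beta)$ we obtain $d\Real\Omega_{f,\theta}=0$ if and only if $\sin\theta=0$. For (iii), setting $f_1=f_2=\tfrac{1}{2}$, $\theta=0$ and computing $d\omega_{NK}=\tfrac{1}{4}(\tfrac{1}{2}\alpha+\alpha)=3\cdot\tfrac{1}{8}\alpha=3\Real\Omega_{NK}$ together with $d\Imag\Omega_{NK}=\tfrac{1}{8}d\beta=-2\omega_{NK}^2$ verifies the nearly K\"ahler identities.

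Part (iv) is an almost verbatim repetition of the argument in Corollary \ref{cor:components:SU3}. The two cases in the $\tbar$-dichotomy induce opposite orientations and so lie in different components; within a fixed orientation, two parameterisations $(\omega_f,\Omega_{f,\theta})$ and $(\omega_{f'},\Omega_{f',\theta'})$ coincide iff there exist signs $k_1,k_2\in\Z_2$ with $f_i=(-1)^{k_i}f_i'$ and $e^{i(\theta-\theta')}=\sgn(f_1'f_2'^{\,2}/f_1f_2^2)=(-1)^{k_1}$. In the full space of invariant \suthreestr s~the parameter $\theta$ is free and one can move continuously between sign choices for $f_1f_2^2$, but once the static constraint forces $\theta\in\{0,\pi\}$ the sign of $f_1f_2^2$ becomes a discrete invariant of the connected component; $\tpi$ exchanges the two components of a given orientation. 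Together with the orientation dichotomy from $\tbar$ this produces exactly four components, each diffeomorphic to the positive quadrant $\R^2_>$, with $\Z_2\times\Z_2=\langle\tbar,\tpi\rangle$ acting simply transitively; the component of $(\omega_{NK},\Omega_{NK})$ is the one with both $f_1,f_2>0$ and $\theta=0$.

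The only mildly delicate point is the bookkeeping in (iv) of how the sign of $f_1 f_2^2$ is linked to $e^{i\theta}$: this is the one place where careful attention to the overall sign conventions is needed, but it is entirely parallel to the $\sunitary{3}/\T^2$ argument already carried out in Corollary \ref{cor:components:SU3}.
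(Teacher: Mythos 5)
Your proposal is correct and follows precisely the approach the paper intends: the paper states this lemma without a written proof, pointing instead to the $\sunitary{3}$-invariant arguments of Lemma~\ref{lem:SU3str:invt} and Corollary~\ref{cor:components:SU3}, and your proof carries out exactly that transcription, including the volume normalisation $\omega_f^3 = 6 f_1^2 f_2^4\,\vol_0$, $\alpha\wedge\beta=4\vol_0$, the use of the structure equations~\eqref{eq:str:eqns:sp2} for part (ii), the nearly K\"ahler verification in (iii), and the component count and $\Z_2\times\Z_2$ action in (iv). The one genuinely new bookkeeping point you flag — that $\sgn(f_1' f_2'^{\,2}/f_1 f_2^2)=(-1)^{k_1}$ since $f_2$ appears squared, so only the sign of $f_1$ is coupled to $\theta$ — is handled correctly.
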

Similarly we have the following result about the `Weyl group' $W$ and its action on invariant~\suthreestr s in the $\Sp{2}$-invariant setting. 
\begin{lemma}Let $K$ denote the principal isotropy group $\Sp{1}\times \unitary{1}$.
\label{lem:Sp2:Weyl}
\begin{enumerate}[left=0em]
\item
The normaliser of $K$ in $\Sp{2}$, $N_{\Sp{2}}(K) \cong \Sp{1} \times N_{\Sp{1}} (\unitary{1})$ and therefore the quotient $W:=N(K)/K$ is isomorphic to $N_{\Sp{1}} \unitary{1} \cong \Z_2$.
\item
The generator of $W$ fixes the invariant bilinear forms $g_1$, $g_2$, and the $3$-form $\beta$ and acts 
as $(-1)$ on $\omega_1$, $\omega_2$ and $\alpha$. In particular any invariant metric $g_f$ on $\CP^3$ possesses an additional 
free isometric $\Z_2$-symmetry, but this $\Z_2$ does not preserve any invariant $\sunitary{3}$-structure.
\item
$W$ acts on invariant $\sunitary{3}$-structures and preserves the subset of structures satisfying the static constraint $d \Real{\Omega}=0$.
\end{enumerate}
\end{lemma}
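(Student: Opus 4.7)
I treat the three parts in order. Parts~(i) and~(iii) are essentially formal consequences, and the main obstacle is the explicit but elementary quaternionic computation of $\Ad(n)$ in part~(ii), where the signs in the off-diagonal generators $E_3,\ldots,E_6$ require care.

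For part~(i), I would first note that the standard $\Sp{2}$-representation on $\HH^2$, restricted to the diagonal $\Sp{1}\times\Sp{1} \supset K$, decomposes as two inequivalent irreducible summands (each factor acts as the standard representation on one copy of $\HH$ and trivially on the other). Hence any $g\in N_{\Sp{2}}(K)$ must preserve this block decomposition and therefore lie in $\Sp{1}\times\Sp{1}$. Combined with the standard facts $N_{\Sp{1}}(\Sp{1}) = \Sp{1}$ and $N_{\Sp{1}}(\unitary{1}) = \unitary{1}\sqcup j\,\unitary{1}$, this yields $N_{\Sp{2}}(K) = \Sp{1}\times N_{\Sp{1}}(\unitary{1})$ and $W\cong \Z_2$ with nontrivial coset represented by $j \in \Sp{1}$.

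For part~(ii), I would take $n := \diag(1,j) \in \Sp{1}\times\Sp{1}\subset \Sp{2}$ as an explicit representative of the generator of $W$; its action on $\CP^3 = \Sp{2}/K$ is $gK\mapsto gn^{-1}K$, with differential $\Ad(n)$ at the identity coset. A careful conjugation calculation using $jij^{-1}=-i$, $jkj^{-1}=-k$, and $j^{-1} = -j$ should yield $\Ad(n)E_1 = E_1$, $\Ad(n)E_2 = -E_2$, $\Ad(n)E_3 = E_5$, $\Ad(n)E_5 = -E_3$, $\Ad(n)E_4 = -E_6$, $\Ad(n)E_6 = E_4$. Dualising and substituting into the formulas of Section~\ref{ss:Sp2:symmetry} verifies by inspection that $g_1, g_2, \beta$ are preserved while $\omega_1, \omega_2, \alpha$ each acquire a sign. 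Freeness of the induced $W$-action on $\CP^3$ is automatic, since $gn^{-1}K = gK$ forces $n\in K$.

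Part~(iii) then follows formally. The $W$-action on $\CP^3$ is by diffeomorphism, so it commutes with $d$ and preserves $\Sp{2}$-invariance; hence it acts on invariant \sunitary{3}-structures and automatically preserves the static closure condition $d\Real\Omega=0$. Substituting the signs from part~(ii) into the parametrisation of Lemma~\ref{lem:Sp2:invt:SU3:strs} yields $(\omega_f,\Omega_{f,\theta}) \mapsto \tbar(\omega_f, \Omega_{f,\pi-\theta})$, confirming that the image lies in the $\tbar$-component of the set of invariant \sunitary{3}-structures; the condition $\sin\theta=0$ is trivially invariant under $\theta\mapsto\pi-\theta$, giving a second check of the preservation of the static constraint. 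The final claim that $W$ preserves no invariant \sunitary{3}-structure is immediate from $\omega_f\ne -\omega_f$.
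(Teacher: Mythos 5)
Your proof is correct, but the route differs from the paper's in an interesting way: the paper simply cites Cleyton--Swann \cite[p.~216]{Cleyton:Swann} and Cleyton's thesis for parts~(i) and~(ii), with a caveat that the cited source has a sign error in the action on invariant $3$-forms and uses opposite conventions for $\alpha$ and $\beta$; it then says part~(iii) follows easily. You instead carry out the computation from scratch, which is arguably more useful here precisely because the paper had to flag a sign error in the reference. I have verified your claimed $\Ad(n)$ for $n=\diag(1,j)$ and the resulting action on $e_1,\ldots,e_6$ (namely $e_1\mapsto e_1$, $e_2\mapsto -e_2$, $e_3\mapsto -e_5$, $e_4\mapsto e_6$, $e_5\mapsto e_3$, $e_6\mapsto -e_4$), and this does give $g_1,g_2,\beta$ fixed and $\omega_1,\omega_2,\alpha$ negated, matching the lemma. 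One small precision worth tightening in part~(i): you argue via the decomposition of $\HH^2$ under $\Sp{1}\times\Sp{1}$, but since you are computing $N_{\Sp{2}}(K)$ for $K=\Sp{1}\times\unitary{1}$, the relevant fact is that $\HH_1\oplus\HH_2$ are the two $K$-isotypic blocks, and any $g\in N_{\Sp{2}}(K)$ must preserve them because no automorphism of $K$ can swap ``$\Sp{1}$ acts nontrivially'' with ``$\Sp{1}$ acts trivially''; this gives $g\in\Sp{1}\times\Sp{1}$ as you want. Your part~(iii), including the identification $(\omega_f,\Omega_{f,\theta})\mapsto\tbar(\omega_f,\Omega_{f,\pi-\theta})$ and the observation that $\sin(\pi-\theta)=\sin\theta$, is a clean independent check.
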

\begin{proof}
Parts (i) and (ii): The normaliser and the Weyl group $W$ are described
explicitly in \mbox{\cite[p. 216]{Cleyton:Swann}} (but note that there is
a sign error there in describing its action on invariant $3$-forms); see
also \mbox{\cite[pp. 120--121]{Cleyton:thesis}}, but notice that
Cleyton's choice of $\alpha$ and $\beta$ is the opposite to ours and to
that in~\cite{Cleyton:Swann}, namely his form $\beta$ is closed.
Part (iii) follows easily from part (ii) and the previous lemma.
\end{proof}

By appealing to the method we used in the $\sunitary{3}$-invariant case but using 
Lemmas~\ref{lem:Sp2:invt:SU3:strs} and~\ref{lem:Sp2:Weyl} we deduce the following result. 
\begin{prop}
\label{prop:closed:gtstr:Sp2}
Up to the action of the discrete symmetries $\Z_2 \times \Z_2$ of Remark~\ref{rmk:discrete:sym:G2} any smooth closed $\Sp{2}$-invariant \gtstr~ on $I \times \Sp{2}/\Sp{1} \times \unitary{1} = I \times \CP^3$ can be written in the form
\begin{subequations}
\label{eq:sp2}
\begin{align}
\varphi & =\omega_f \wedge dt +  \Real{\Omega_f} = (f_1^2 \omega_1 + f_2^2 \omega_2)\wedge dt + f_1f_2^2\, \alpha\\
\ast\varphi & =  \tfrac{1}{2} \omega_f^2 + \Imag{\Omega_f} \wedge dt =  f_1^2f_2^2\, \omega_1\wedge \omega_2 + \tfrac{1}{2} f_2^4\, \omega_2 \wedge \omega_2 + f_1f_2^2\, \beta  \wedge dt,\\
g_\varphi &= dt^2 + g_f = dt^2 + f_1^2 g_1 + f_2^2 g_2,\\
\vol{g_\varphi} &= f_1^2f_2^4\, dt \wedge \vol_0,
\intertext{where $t \in I \subset \R$ is the arclength parameter of an orthogonal geodesic and  $f=(f_1,f_2): I \to \R^2$ is a pair of positive smooth real functions satisfying}
\label{eq:closed:sp2}
2 (f_1f_2^2)' & =  f_1^2 + 2f_2^2.
\end{align}
\end{subequations}
\end{prop}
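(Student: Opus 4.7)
The plan is to follow exactly the template used in the proof of Proposition \ref{prop:closed:gtstr:SU3}, replacing the \sunitary{3}-invariant ingredients by the \Sp{2}-invariant ingredients established in Lemmas \ref{lem:Sp2:invt:SU3:strs} and \ref{lem:Sp2:Weyl}. By the general framework of Section \ref{ss:G2:evolve:closed}, any cohomogeneity-one \Sp{2}-invariant~\gtstr~on $I \times \CP^3$ can be written in the form $\varphi = dt \wedge \omega(t) + \Real{\Omega(t)}$, where $(\omega(t),\Omega(t))$ is a smooth $1$-parameter family of \Sp{2}-invariant \suthreestr s on $\CP^3$, and $\varphi$ is closed iff the pair satisfies the static and dynamic closure conditions \eqref{eq:closure:static}--\eqref{eq:closure:dynamic}.

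The static closure condition restricts $(\omega(t),\Omega(t))$ to lie, at every $t$, in the subset of \Sp{2}-invariant~\suthreestr s satisfying $d \Real{\Omega}=0$. By part (iv) of Lemma \ref{lem:Sp2:invt:SU3:strs}, this subset has four connected components, and the $\Z_2 \times \Z_2$ of Remark \ref{rmk:discrete:sym:G2} acts simply transitively on these components. Restricting $\varphi$ to the connected component containing the standard nearly K\"ahler structure on $\CP^3$ (absorbing the discrete action into the conclusion), I may write
\[
(\omega(t),\Omega(t)) = (\omega_f,\Omega_f) = \bigl(f_1^2\omega_1 + f_2^2\omega_2,\ f_1 f_2^2\,\Omega\bigr)
\]
for a unique smooth positive pair $f=(f_1,f_2):I \to \R^2_>$. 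The formulas for $\varphi$, $g_\varphi$ and $\vol_\varphi$ claimed in \eqref{eq:sp2} then follow immediately from this and from \eqref{eq:phi:evolve}; the expression for $\ast\varphi$ follows by computing $\tfrac12\omega_f^2 = f_1^2 f_2^2\,\omega_1\wedge\omega_2 + f_2^4\,\omega_2\wedge\omega_2$ and $\Imag{\Omega_f} = f_1 f_2^2\,\beta$.

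It remains to translate the dynamic closure condition \eqref{eq:closure:dynamic} into the scalar ODE \eqref{eq:closed:sp2}. Using the structure equations \eqref{eq:str:eqns:sp2}, I compute
\[
d\omega_f = f_1^2\,d\omega_1 + f_2^2\,d\omega_2 = \bigl(\tfrac{1}{2}f_1^2 + f_2^2\bigr)\alpha,
\qquad \partial_t \Real{\Omega_f} = (f_1 f_2^2)'\,\alpha.
\]
Since $\alpha$ is a nonzero invariant $3$-form, the condition $\partial_t\Real{\Omega_f} = d\omega_f$ is equivalent to
\[
2(f_1 f_2^2)' = f_1^2 + 2f_2^2,
\]
which is precisely \eqref{eq:closed:sp2}. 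Conversely, any positive smooth solution $(f_1,f_2)$ of this ODE produces a closed \Sp{2}-invariant~\gtstr~on $I \times \CP^3$ via the formulas in \eqref{eq:sp2}. I don't foresee any real obstacle: the whole argument is a mechanical specialisation of the \sunitary{3}-invariant case, and the only computation of substance is the elementary verification above using \eqref{eq:str:eqns:sp2}.
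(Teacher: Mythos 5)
Your proposal is correct and follows exactly the route the paper indicates: the paper itself does not write out a proof of Proposition~\ref{prop:closed:gtstr:Sp2} but simply states that it follows ``by appealing to the method we used in the $\sunitary{3}$-invariant case but using Lemmas~\ref{lem:Sp2:invt:SU3:strs} and~\ref{lem:Sp2:Weyl},'' which is precisely the template you carry out, with the correct specialisation of the static/dynamic closure conditions and the elementary computation $d\omega_f = (\tfrac{1}{2}f_1^2 + f_2^2)\alpha = \partial_t\Real\Omega_f$ from the structure equations \eqref{eq:str:eqns:sp2}. Your argument is a faithful and complete implementation of what the paper leaves implicit.
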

\begin{remark}
\label{rmk:involution}
An important observation (due to Cleyton--Swann) is that if we set $f_2=f_3$ 
in Proposition~\ref{prop:closed:gtstr:SU3} then all the statements there reduce to those in Proposition~\ref{prop:closed:gtstr:Sp2} above, and
we can treat $\Sp{2}$-invariant \gtstr s as if they were a special case
of $\sunitary{3}$-invariant ones (namely the ones with an extra $\Z_2$ symmetry as in Remark \ref{rmk:involution_asd}).
The one difference to note is that because in the $\Sp{2}$-invariant setting the Weyl group $W \cong \Z_2$ preserves all invariant bilinear forms 
(whereas in the $\sunitary{3}$ setting $W\cong S_3$ permutes $g_1$, $g_2$ and $g_3$) we cannot a priori assume 
that $f_1 \le f_2$.

Another point of view is that for any self-dual positive Einstein
4-manifold $X$ and $\Sigma$ the unit sphere
bundle in $\Lambda^2_- X$, the structure equations for
a suitably normalised vertical form $\omega_1 \in \Omega^2(\Sigma)$
and the tautological 2-form $\omega_2 \in \Omega^2(\Sigma)$ coincide
with \eqref{eq:str:eqns:sp2}
(\cf Remarks \ref{rmk:su3_taut} and \ref{rmk:sp2_taut}).
\end{remark}

As in the $\sunitary{3}$-invariant setting the closed condition~\eqref{eq:closed:sp2} already implies significant restrictions on the possible orbit structure of a closed \Sp{2}-invariant~\gtstr. 
\begin{lemma}Assume that $\varphi_f$ is a closed $\Sp{2}$-invariant~\gtstr~on $I \times \CP^3$ in the form specified in Proposition \ref{prop:closed:gtstr:Sp2}, \ie
$\varphi_f = dt \wedge \omega_f + \Real{\Omega}_f$ and $f=(f_1,f_2):I \to \R^2$ is a smooth positive pair satisfying~\eqref{eq:closed:sp2}.
\label{lem:closed:Sp2:vol}
\begin{enumerate}[left=0em]
\item
$f=(f_1,f_2)$ satisfies 
$\tfrac{d}{dt}(f_1 f_2^2)^{1/3}\ge \frac{1}{2}$
with equality if and only if $f_1=f_2$.  
\item 
No $6$-dimensional orbit is a critical point of the orbital volume. Hence there are no exceptional orbits and there is at most one singular orbit.
\item
If $g_\varphi$ is complete then there is a unique singular orbit; this singular orbit must be of the form 
$\Sph^4=\Sp{2}/\Sp{1} \times \Sp{1}$ and $g_\varphi$ defines a complete Riemannian metric on $\Lambda^2_- \Sph^4$ with at least Euclidean volume growth and with nonpositive scalar curvature. 
\end{enumerate}
\end{lemma}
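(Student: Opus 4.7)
This lemma is the $\Sp{2}$-analog of Lemma~\ref{lem:closed:su3:vol}, and my plan is to follow that proof's structure very closely, exploiting the observation (already noted after Proposition~\ref{prop:closed:gtstr:Sp2}) that the $\Sp{2}$-invariant system is obtained from the $\sunitary{3}$-invariant one by setting $f_2 = f_3$. The same power-means heuristic and orbital-volume monotonicity arguments should transfer with only cosmetic modifications. For part~(i), I would apply the power means inequality to the (degenerate) triple $(f_1,f_2,f_2)$: its geometric mean cubed is $f_1 f_2^2$ and its quadratic mean squared is $(f_1^2 + 2f_2^2)/3$, so the closure condition~\eqref{eq:closed:sp2} can be rewritten as $2(\mathcal{M}_0(f_1,f_2,f_2)^3)' = 3\,\mathcal{M}_2(f_1,f_2,f_2)^2$. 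The inequality $\mathcal{M}_0 \le \mathcal{M}_2$ then yields
\[
\frac{d}{dt}(f_1 f_2^2)^{1/3} \,=\, \frac{1}{2}\left(\frac{\mathcal{M}_2}{\mathcal{M}_0}\right)^{\!2} \,\ge\, \frac{1}{2},
\]
with equality precisely when all three entries of the triple coincide, i.e.\ $f_1 = f_2$.

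For part~(ii), the key observation is that the orbital volume at a principal orbit is proportional to $f_1^2 f_2^4 = (f_1 f_2^2)^2$, whose derivative $(f_1 f_2^2)(f_1^2 + 2f_2^2)$ is strictly positive throughout the interior of the orbit space. Hence orbital volume is strictly monotone: this rules out critical points among $6$-dimensional orbits (so no exceptional orbits) and, since orbital volume vanishes at any singular orbit but cannot attain an interior maximum, it allows at most one singular orbit.

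For part~(iii), I would first rule out the orbit space being $\R$: by part~(i) the quantity $(f_1 f_2^2)^{1/3}$ decreases at rate at least $\tfrac{1}{2}$ in backward time, so starting from a positive value at some $t_0$ it would have to vanish at finite backward $t$, contradicting positivity of $f$. So the orbit space must be $[0,\infty)$ with a singular orbit at $t=0$; Cleyton--Swann's classification of admissible singular orbits~\cite[Tables~2~\&~3]{Cleyton:Swann} then pins this orbit down as $\Sph^4 = \Sp{2}/\Sp{1}\times\Sp{1}$ and realises $M$ $G$-equivariantly as $\Lambda^2_- \Sph^4$. At least Euclidean volume growth follows from part~(i): the bound $(f_1 f_2^2)^{1/3} \ge t/2$ forces the orbital volume to dominate $t^6/64$, and integrating yields ball volume $\gtrsim r^7$. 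Finally, nonpositive scalar curvature follows directly from~\eqref{scalar:curv:closed}. I do not anticipate any serious obstacle beyond bookkeeping; the only step that needs some care is citing the right rows of the Cleyton--Swann classification tables to confirm that $\Sph^4$ is indeed the unique possibility for the singular orbit in the present setup.
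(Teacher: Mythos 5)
Your proof is correct and follows exactly the route the paper uses for the $\sunitary{3}$-analogue, Lemma~\ref{lem:closed:su3:vol}, suitably specialised to the degenerate triple $(f_1,f_2,f_2)$; the paper gives no separate proof for the $\Sp{2}$ case, relying on this specialisation. Your parts (i)--(iii) mirror the published argument step by step (power-means inequality, orbital-volume monotonicity, backward-in-time escape of the geometric mean plus the Cleyton--Swann classification), and the extra detail you supply on the Euclidean volume growth bound is consistent with the estimate $(f_1 f_2^2)^{1/3} \ge t/2$ used implicitly in the paper.
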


The \gtwo-type decomposition for $\Sp{2}$-invariant forms on $\varphi_f$ takes the following form.
\begin{lemma}
Assume that $\varphi_f$ is a closed $\Sp{2}$-invariant~\gtstr~on $I \times \CP^3$ in the form specified in Proposition \ref{prop:closed:gtstr:Sp2}.
\begin{enumerate}[left=0em]
\item
The invariant $2$-forms of type $7$ are generated by $\omega_f$; the invariant $2$-form $\beta = \beta_1 \omega_1 + \beta_2 \omega_2$ is of type $14$ if and only if its coefficients satisfy the constraint
\[
\frac{1}{f_1^2}\beta_1 + \frac{2}{f_2^2}\beta_2=0.
\]
Therefore the invariant $2$-forms of type $14$ are generated by $2f_1^2 \omega_1 - f^2_2 \omega_2$.
\item
The invariant $3$-forms of type $7$ are generated by the invariant $3$-form $\beta$. 

\item
The torsion $2$-form $\tau = \tau_1 \omega_1 + \tau_2 \omega_2$ of $\varphi_f$ is given in terms of $f_1$ and $f_2$ by
\begin{equation}
\label{eq:taui:Sp2}
\tau_1 = (f_1^2)' - 2f_1 + \frac{f_1^3}{f_2^2}, \quad \tau_2 = (f_2^2)' - f_1,
\end{equation}
and satisfies the type $14$ constraint
\begin{equation}
\label{eq:tau1:tau2:sp2}
\frac{1}{f_1^2}\tau_1  +\frac{2}{f_2^2} \tau_2=0.
\end{equation}
In particular the torsion coefficients $\tau_1$ and $\tau_2$ have opposite signs whenever they are nonzero.
\end{enumerate}
\end{lemma}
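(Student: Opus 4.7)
The cleanest strategy is to exploit the observation following Proposition \ref{prop:closed:gtstr:Sp2} that the \Sp{2}-invariant theory arises from the \sunitary{3}-invariant one by specialising $f_3 = f_2$, interpreting the \Sp{2}-invariant form $\omega_2$ as the sum $\omega_2 + \omega_3$ from the \sunitary{3} basis (and similarly $g_2 = g_2 + g_3$). Under this dictionary an \Sp{2}-invariant $2$-form $\beta_1 \omega_1 + \beta_2 \omega_2$ corresponds to the \sunitary{3}-invariant $2$-form with coefficients $(\beta_1,\beta_2,\beta_2)$; analogous identifications apply to $3$-forms and to the torsion coefficients.

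For part (i), I would apply the characterisation $\Lambda^2_7 = \{\eta \colon \eta \wedge \varphi_f = 2 \ast \eta\}$: the form $\omega_f$ is by construction the K\"ahler form of the induced \sunitary{3}-structure at each $t$, hence lies in $\Lambda^2_7$, and since the space of $\Sp{2}$-invariant $2$-forms is only two-dimensional $\omega_f$ must span the type $7$ part. The type $14$ constraint then follows either by direct computation of $(\beta_1 \omega_1 + \beta_2 \omega_2) \wedge \ast \varphi_f$ using the explicit formula for $\ast \varphi_f$ in Proposition \ref{prop:closed:gtstr:Sp2} together with $\omega_1^2 = 0$ and $\omega_2^3 = 0$, or more quickly by specialising Lemma \ref{lem:type:dec:invt}(i): the \sunitary{3} constraint $\sum \beta_i/f_i^2 = 0$ with $f_3=f_2$ and $\beta_3=\beta_2$ collapses to $\beta_1/f_1^2 + 2\beta_2/f_2^2 = 0$, and the stated generator $2f_1^2 \omega_1 - f_2^2 \omega_2$ satisfies this relation.

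For part (ii), the space of invariant vector fields on $I \times \CP^3$ is one-dimensional, spanned by $\partial_t$. Using $\Lambda^3_7 = \{X \lrcorner \ast \varphi_f\}$ and the expression for $\ast \varphi_f$, one computes $\partial_t \lrcorner \ast \varphi_f = \pm f_1 f_2^2\, \beta$, so $\beta$ generates the invariant $3$-forms of type $7$.

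For part (iii), I would specialise Lemma \ref{lem:torsion:SU3}: substituting $f_3 = f_2$ into \eqref{eq:tau:i:SU3a}, so that $f_1 f_2 f_3 = f_1 f_2^2$ and $\fb = f_1^2 + 2f_2^2$, yields
\[
\tau_1 = (f_1^2)' + \frac{f_1}{f_2^2}\bigl(f_1^2 - 2 f_2^2\bigr) = (f_1^2)' - 2 f_1 + \frac{f_1^3}{f_2^2}, \qquad \tau_2 = (f_2^2)' - f_1,
\]
which is exactly \eqref{eq:taui:Sp2}; alternatively one can compute $d(\ast \varphi_f)$ directly from the \Sp{2} structure equations \eqref{eq:str:eqns:sp2} and solve $\tau \wedge \varphi_f = d(\ast \varphi_f)$ for the coefficients of $\tau$. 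The type $14$ constraint \eqref{eq:tau1:tau2:sp2} then follows immediately from part (i) applied to $\tau$, and can independently be verified from the explicit expressions for $\tau_1, \tau_2$ combined with the closure equation \eqref{eq:closed:sp2}. Finally, since $f_1^2, f_2^2 > 0$, the linear relation $\tau_1/f_1^2 + 2\tau_2/f_2^2 = 0$ forces $\tau_1$ and $\tau_2$ to have opposite signs whenever either is nonzero (in which case both are). There is no substantive obstacle here; the whole lemma is a routine specialisation of the \sunitary{3} results, with the only bookkeeping point being to keep track of the factor of $2$ introduced by the identification $\omega_2^{\Sp{2}} \leftrightarrow \omega_2^{\sunitary{3}} + \omega_3^{\sunitary{3}}$.
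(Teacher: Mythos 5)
Your proposal is correct and takes exactly the approach the paper intends: the paper gives no explicit proof of this lemma, relying instead on the remark after Proposition \ref{prop:closed:gtstr:Sp2} that the $\Sp{2}$-invariant theory is obtained by setting $f_2 = f_3$ (and correspondingly $\omega_2^{\Sp{2}} = \omega_2^{\sunitary{3}} + \omega_3^{\sunitary{3}}$) in the $\sunitary{3}$-invariant statements. Your dictionary, the specialisation of the constraint $\sum \beta_i/f_i^2 = 0$ from Lemma \ref{lem:type:dec:invt}, the contraction $\partial_t \lrcorner \ast\varphi_f = -f_1 f_2^2\,\beta$, and the substitution into \eqref{eq:tau:i:SU3a} all check out and give \eqref{eq:taui:Sp2} and \eqref{eq:tau1:tau2:sp2} as stated.
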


\begin{remark}
\label{rmk:sp2torfree}
Since both types of torsion-free $\sunitary{3}$-invariant solution given in
Example \ref{ex:BS} had $f_2 = f_3$, they also give rise to solutions to the
torsion-free \Sp{2}-invariant ODE system. They correspond to the
unique \Sp{2}-invariant torsion-free cone over $\CP^3$
arising from the  \Sp{2}-invariant nearly K\"ahler structure on $\CP^3$,
and to the complete \Sp{2}-invariant solutions on $\Lambda^2_{-}\Sph^4$
asymptotic to that cone respectively.
In fact, Bryant and Salamon~\cite{Bryant:Salamon} defined torsion-free
AC \gtstr s like this on the anti-self-dual bundle of any closed self-dual positive Einstein
4-manifold $X$, \cf Remark \ref{rmk:involution}.
\end{remark}

\section{The ODE systems for cohomogeneity-one Laplacian solitons}
\label{s:ODE:cohom1:LS}
In this section we derive the ODE systems satisfied by $G$-invariant Laplacian 
solitons for the cohomogeneity-one actions of $G=\sunitary{3}$ and $G=\Sp{2}$ described 
in the previous section. 

Section~\ref{ss:rmks:closed:LS} makes some general remarks 
about closed Laplacian solitons that we use later. In Section~\ref{ss:SU3:soliton:ODEs} 
we derive the $G$-invariant Laplacian soliton equations. 
Section~\ref{ss:order1:su3} gives a reformulation of these equations as a real-analytic 
first-order system;  for most purposes the first-order reformulations turn out to be more convenient 
than the form of the systems derived in Section~\ref{ss:SU3:soliton:ODEs}. 
In particular, Theorem~\ref{mthm:local:solitons} follows immediately: see Corollaries~\ref{cor:su3:local} and~\ref{cor:sp2:local}.

Recall from \eqref{eq:lap:soliton:v2}~that the system \eqref{eq:LS} for a closed Laplacian soliton 
may be recast in the form
\[
d\varphi =0, \quad d(\tau - X\lrcorner\varphi) = \lambda \varphi,
\]
where $\tau$ is the intrinsic torsion of $\varphi$, \ie the unique $2$-form of type $14$ satisfying 
\[
d(\ast \varphi) = \tau \wedge \varphi.
\]
For the cohomogeneity-one actions of the groups \Sp{2} and \sunitary{3} considered in the previous section 
we now seek cohomogeneity-one Laplacian solitons, \ie the $3$-form $\varphi$ has the form specified in Proposition~\ref{prop:closed:gtstr:SU3} or
Proposition~\ref{prop:closed:gtstr:Sp2} respectively
and the vector field $X=u\, \partial_t$ for some function $u=u(t):I \to \R$.
In both cases the system of PDEs~\eqref{eq:lap:soliton:v2} 
reduces to a system of nonlinear ODEs that we will write down explicitly. Moreover, by Remark \ref{rmk:involution} 
the ODE system for \Sp{2}-invariant solitons can be obtained from the system for \sunitary{3}-invariant solitons 
by setting $f_2=f_3$, $\tau_2=\tau_3$ and considering the resulting ODE system in a smaller number of variables. 

\subsection{General remarks on closed Laplacian solitons}
\label{ss:rmks:closed:LS}
It is useful to decompose by type the $3$-forms appearing on both sides of the Laplacian soliton equation~\eqref{eq:LS}
and to consider the resulting equations on the components of type $1$, $7$ and $27$ separately. Recall from~\eqref{eq:d:tau2} that $d\tau$ has no type $7$ component and its type $1$ component is $\tfrac{1}{7} \abs{\tau}^2 \varphi$. 
Lotay--Wei \cite[\S9]{lotay:wei:shi} show that the type decomposition of $\mathcal{L}_X \varphi$ for any closed \gtstr~$\varphi$ has
components of type $1$ and $7$ %
respectively given by
\[
\tfrac{3}{7} \tu{div}(X)\varphi, \quad
\tfrac{1}{2} \left(d^*(\iota_X\varphi) \right)^\sharp \lrcorner \ast \varphi,
\]
while the type 27 component corresponds
to
\[ \tfrac{1}{2} \mathcal{L}_X g_\varphi - \tfrac{1}{7} \tu{div}(X) g_\varphi \]
under an isomorphism between $\Lambda^3_{27}$ and the trace-free symmetric
bilinear forms.
Hence the $1$ and $7$ components of the Laplacian soliton equation~\eqref{eq:LS} read
\begin{subequations}
\begin{gather}
\tag{LS$_1$}
\label{eq:LS:1}
\tu{div}\,X = \tfrac{1}{3} \abs{\tau}^2 - \tfrac{7}{3}\lambda, \\
\tag{LS$_7$}\label{eq:LS:7}
d^*(X \lrcorner \varphi)=0
\end{gather}
\end{subequations}
respectively. 
The 27 component of~\eqref{eq:LS} can be derived from the expression given in \cite[(9.12)]{lotay:wei:shi}.

\begin{remark}
Integrating both sides of~\eqref{eq:LS:1} over a compact manifold and applying the Divergence Theorem implies that compact shrinking Laplacian solitons cannot exist and that for compact steady Laplacian solitons the underlying~\gtstr~must be torsion free and therefore the vector field~$X$ must be an automorphism of $\varphi$, \ie $\mathcal{L}_X\varphi=0$. Hence also $X$ must also be a Killing field of $g_\varphi$: a torsion-free~\gtstr~admits nontrivial Killing fields only when the holonomy of $g_\varphi$ is reducible. 
If we have a gradient Laplacian soliton, \ie if $X$ is the gradient of a potential $f$, and~$\lambda \le 0$, 
then we have  $\Delta f \ge \frac{1}{3} \abs{\tau}^2 \ge 0$, \ie the potential function $f$ is subharmonic on any steady or shrinking gradient 
Laplacian soliton. This fact can be useful in the complete noncompact setting.
\end{remark}

In the \sunitary{3}-invariant setting note, that for any invariant
closed $3$-form $\varphi_f$ as in \eqref{eq:phi:su3} and invariant
vector field $X=u \, \partial_t$, we have
 $\ast(X \lrcorner \varphi_f) = u\,\left(\sum{f_j^2 f_k^2\, \omega_j \wedge \omega_k \wedge dt}\right)$.
Since $d\omega_i = \tfrac{1}{2}\alpha$ and $\omega_i \wedge \alpha =0$, this implies that $d\!\ast\!(X \lrcorner \varphi) =0$. Hence the $7$ component of the Laplacian soliton equation is automatically satisfied in this case.
The 1 component of the Laplacian soliton equation however yields useful information. Since $X$ is assumed to be invariant
we have
\[
\tu{div}X = \tu{div}(u\,\partial_t) = \frac{1}{(f_1f_2f_3)^2} (u f_1^2f_2^2 f_3^2)' = u' + 2u \,(\ln{f_1 f_2 f_3})' = u' + u \left(\frac{\fb}{ f_1f_2f_3}\right)
\]
where in the final equality we use~\eqref{eq:closed:su3}.
Hence in this setting~\eqref{eq:LS:1} reads
\begin{subequations}
\label{eq:LS:1:su3+}
\begin{equation}
\label{eq:LS:1:su3}
 u' + u \left(\frac{\fb}{ f_1f_2f_3}\right) = \frac{1}{3} \abs{\tau}^2 - \frac{7\lambda}{3},
\end{equation}
where by \eqref{eq:g_f} %
\begin{equation}
\label{eq:tau_square}
\abs{\tau}^2 = \sum \frac{\tau_i^2}{f_i^4} .
\end{equation}
\end{subequations}

As a immediate consequence of~\eqref{eq:LS:1} we deduce the following result about  \sunitary{3}-invariant shrinkers.
\begin{lemma}
\label{lem:u:shrinker:+}
Let $(\varphi_f,X=u \,\partial_t,\lambda)$ be any \sunitary{3}-invariant shrinking Laplacian soliton.  If the vector field $X=u\, \partial_t$ is positive at some $t_+$ then $X$ remains positive for all $t \ge t_+$  (within the lifetime of the soliton). Moreover 
if soliton is  forward-complete, \ie the solution exists for all $t$ sufficiently large, then $X$ is eventually positive.\end{lemma}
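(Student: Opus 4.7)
The plan is to exploit the scalar first-order linear ODE \eqref{eq:LS:1:su3} satisfied by $u$, which I would rewrite as
\[
u' + P(t)\, u = R(t), \qquad P(t) := \frac{\fb}{f_1f_2f_3}, \qquad R(t) := \tfrac{1}{3}|\tau|^2 - \tfrac{7\lambda}{3}.
\]
The key observation is that in the shrinking case $\lambda<0$, so $R(t) \ge -\tfrac{7\lambda}{3} > 0$ on the entire lifetime of the soliton, while positivity of the triple $f$ (built into the normal form of Proposition \ref{prop:closed:gtstr:SU3}) gives $P(t)>0$. Both claims will follow from this sign information alone.

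For the first assertion I would argue by contradiction. Suppose $u(t_+) > 0$ but that the set of later times at which $u \le 0$ is nonempty, and let $t_0$ be its infimum. By continuity $u(t_0)=0$ and $u \ge 0$ on $[t_+,t_0]$, so in particular $u'(t_0) \le 0$. Evaluating the ODE at $t_0$ gives $u'(t_0) = R(t_0) > 0$, a contradiction. Hence $u$ stays strictly positive for all $t \ge t_+$ within the lifetime.

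For the second assertion I would argue by contrapositive, using the first part. If $u$ is not eventually positive, then there is no time $t_+$ in the forward lifetime at which $u(t_+)>0$; otherwise the first part would force $u>0$ for all subsequent $t$, contradicting "not eventually positive". Consequently $u(t) \le 0$ for all $t$ in the (forward-infinite) existence interval. Then $-Pu \ge 0$ and the ODE gives $u'(t) \ge R(t) \ge -\tfrac{7\lambda}{3}$, so integrating yields
\[
u(t) \;\ge\; u(t_0) \,-\, \tfrac{7\lambda}{3}(t - t_0) \;\longrightarrow\; +\infty \quad \text{as } t\to\infty,
\]
contradicting $u \le 0$. The only point requiring any care is confirming that $P$ really is strictly positive throughout the forward lifetime, which is automatic since $f_1 f_2 f_3 > 0$ on principal orbits; otherwise the argument is a straightforward application of the sign of $R$ in the shrinking case, and applies verbatim to the \Sp{2}-invariant specialisation.
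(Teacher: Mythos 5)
Your proof is correct and takes essentially the same approach as the paper: the whole argument hinges on the observation that $u' = \tfrac{1}{3}|\tau|^2 - \tfrac{7\lambda}{3} - \tfrac{\fb}{f_1f_2f_3}\,u \ge -\tfrac{7\lambda}{3} > 0$ whenever $u \le 0$, which is precisely the one-line inequality the paper records. You have simply spelled out the barrier argument and the integration step that the paper leaves implicit.
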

\begin{proof}
\eqref{eq:LS:1:su3} implies that for any shrinker $u' \ge - \frac{7\lambda}{3}>0$ whenever $u \le 0$. 
\end{proof}
\begin{remark*}
In the same spirit, but with a little more work to understand the torsion term, one can also make deductions about the positivity of $u$ being preserved for steady solitons. 
\end{remark*}

\subsection{The ODE system for \texorpdfsunitary{3}-invariant closed Laplacian solitons on \texorpdfstring{$M^0$}{M\textasciicircum 0}}
\label{ss:SU3:soliton:ODEs}
In this section we derive the ODEs satisfied by  \sunitary{3}-invariant closed Laplacian solitons on $M^0$, the 
open dense set of principal points for the \sunitary{3}-action, and derive some of the basic properties of this ODE system.
The ODEs satisfied by \Sp{2}-invariant Laplacian solitons arise by specialisation of this system.

\enlargethispage{0.4\baselineskip}

\begin{lemma}
The triple $(\varphi_f,X,\lambda)$ is an \sunitary{3}-invariant closed Laplacian soliton on $I \times \flag$ if and only if up to the action 
of the Klein four-group defined in~\eqref{rmk:discrete:sym:G2} it can be written in the form
\[
\varphi_f = \omega_f \wedge dt + \Real{\Omega_f}, \qquad X=u(t) \partial_t
\]
where $(\omega_f,\Omega_f)$ is the invariant \suthreestr~defined in~\eqref{eq:omf:Omf}, 
$u$ is a real function on the interval $I$ and  $(f_1,f_2,f_3):I \to \R^3$ is a positive triple satisfying the equations
\begin{subequations}
\label{eq:ODEs:SU3}
\begin{align}
\label{eq:SU3:closure}
2(f_1 f_2 f_3)' &= f_1^2 + f_2^2 + f_3^2,\\
\label{eq:ODEs:SU3:taui}
(\tau_i - u f_i^2)' &= \lambda f_i^2, \quad \tu{for\ } i=1,2,3,\\
\label{eq:ODEs:SU3:tausum}
\tau_1 + \tau_2 + \tau_3 &= u(f_1^2+f_2^2+f_3^2) +2\lambda f_1 f_2 f_3,
\end{align}
\end{subequations}
where  $\tau_i$ are the components of the torsion $2$-form $\tau = \sum{\tau_i \omega_i}$ of $\varphi_f$ as determined by 
\eqref{eq:tau:i:SU3}.
\end{lemma}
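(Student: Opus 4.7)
The plan is to translate the coordinate-free soliton system \eqref{eq:lap:soliton:v2} into scalar ODEs by expanding in the basis of invariant forms on $\flag$ identified in Section~\ref{ss:closed:SU3}.

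First I would take care of the normalisation. By Proposition~\ref{prop:closed:gtstr:SU3}, any closed \sunitary{3}-invariant \gtstr{} on $I \times \flag$ is, up to the action of the Klein four-group of Remark~\ref{rmk:discrete:sym:G2}, of the form $\varphi_f = \omega_f \wedge dt + \Real\Omega_f$ for a positive triple $f$ satisfying the closure equation \eqref{eq:closed:su3}, which is \eqref{eq:SU3:closure}. Since the isotropy representation of $\sunitary{3}/\T^2$ decomposes into three nontrivial irreducible real $2$-planes (with no trivial summand), any \sunitary{3}-invariant vector field on $I \times \flag$ has trivial component tangent to the principal orbits and is therefore of the form $X = u(t)\,\partial_t$ for some real function $u$ on $I$. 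The torsion $\tau = \sum \tau_i \omega_i$ is then given in terms of $f$ and $f'$ by Lemma~\ref{lem:torsion:SU3}.

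Next I would reformulate the soliton equation. Using $\Delta_{\varphi_f}\varphi_f = d\tau$ (valid for any closed \gtstr{}, see \eqref{eq:Laplacian:dtau2}) the soliton system becomes \eqref{eq:lap:soliton:v2}: $d(\tau - X\lrcorner\varphi_f) = \lambda \varphi_f$. A quick interior product computation gives $X \lrcorner \varphi_f = u\,\omega_f$, since $\partial_t \lrcorner \Real\Omega_f = 0$ and $\partial_t \lrcorner (\omega_f \wedge dt) = \omega_f$. Hence
\[
\tau - X \lrcorner \varphi_f = \sum_{i=1}^3 (\tau_i - u f_i^2)\,\omega_i,
\]
a $t$-dependent family of invariant $2$-forms on $\flag$.

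The core step is then just expansion in the invariant basis. Since the coefficients $\tau_i - u f_i^2$ depend only on $t$ and the structure equations \eqref{eq:str:su3} give $d\omega_i = \tfrac{1}{2}\alpha$, I obtain
\[
d\bigl(\tau - X \lrcorner \varphi_f\bigr) \;=\; \sum_{i=1}^3 (\tau_i - u f_i^2)' \, dt \wedge \omega_i \;+\; \tfrac{1}{2}\Bigl(\sum_{i=1}^3 (\tau_i - u f_i^2)\Bigr)\,\alpha,
\]
while $\lambda \varphi_f = \lambda \sum_i f_i^2\, dt\wedge\omega_i + \lambda f_1 f_2 f_3\,\alpha$. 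The four invariant $3$-forms $dt \wedge \omega_1, dt\wedge\omega_2, dt\wedge\omega_3, \alpha$ are linearly independent, so equating coefficients is equivalent to the three equations \eqref{eq:ODEs:SU3:taui} together with the algebraic relation \eqref{eq:ODEs:SU3:tausum}. Conversely, any $(f,u)$ satisfying the closure equation \eqref{eq:SU3:closure} together with \eqref{eq:ODEs:SU3:taui}--\eqref{eq:ODEs:SU3:tausum} reconstructs an \sunitary{3}-invariant closed Laplacian soliton by running this argument in reverse. No step is especially delicate; the only slight subtlety is bookkeeping the soliton system on a $\Lambda^2_7 \oplus \Lambda^2_{14}$-type decomposition, but since there is a single invariant $2$-form of each type at each $t$, this reduces entirely to the coefficient matching above.
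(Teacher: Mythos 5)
Your proof is correct and follows essentially the same route as the paper: invoke Proposition~\ref{prop:closed:gtstr:SU3} for the closure equation, reduce $X$ to $u(t)\,\partial_t$, and derive the remaining equations by expanding $d(\tau - X\lrcorner\varphi_f) = \lambda\varphi_f$ in the invariant basis $\{dt\wedge\omega_i,\alpha\}$ and matching coefficients (the paper computes $d\tau$ and $d(X\lrcorner\varphi_f)$ separately, but this is the same calculation). One small slip in the final aside: the invariant $2$-forms of type $14$ form a \emph{two}-dimensional space at each $t$ (Lemma~\ref{lem:type:dec:invt}), not one-dimensional as you state; this does not affect the argument, which never actually relies on that decomposition.
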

\begin{proof}
Proposition~\ref{prop:closed:gtstr:SU3} already showed that up to the action of the Klein four-group any closed \sunitary{3}-invariant \gtstr~ on $I \times \flag$ 
can be written in the form above and must satisfy~\eqref{eq:SU3:closure}. So it remains to establish~\eqref{eq:ODEs:SU3:taui} 
and~\eqref{eq:ODEs:SU3:tausum}. Since $d \omega_i = \frac{1}{2}\alpha$ we calculate that
\[
d\tau = \frac{1}{2} \left(\sum{\tau_i}\right) \alpha + \sum{ \tau_i' dt \wedge \omega_i}
\]
and 
\[
d(X\lrcorner \varphi_f) = \left(\tfrac{1}{2} u \sum{f_i^2}\right) \alpha + \sum{ (uf_i^2)' dt \wedge \omega_i}.
\]
\eqref{eq:ODEs:SU3:taui} and~\eqref{eq:ODEs:SU3:tausum} now follow by equating the coefficients of $\omega_i \wedge dt$ and of $\alpha$ respectively
on both sides of the second equation in~\eqref{eq:LS}.
\end{proof}

\subsubsection*{The ODE system for \Sp{2}-invariant closed Laplacian solitons}
Recall from~\eqref{eq:sp2} that up to the action of discrete symmetries any  closed \Sp{2}-invariant \gtstr~ on $I \times \CP^3$ can be written in the form
\[
\varphi_f = \omega_f \wedge dt + \Real{\Omega_f}  =  (f_1^2 \omega_1 + f_2^2 \omega_2)\wedge dt + f_1f_2^2 \alpha,
\]
where $(f_1,f_2): I \to \R^2$ is a positive pair satisfying the first-order ODE~\eqref{eq:closed:sp2}.
The Laplacian soliton system for such a closed \Sp{2}-invariant \gtstr~reduces to
\begin{subequations}
\label{eq:ODEs:Sp2}
\begin{align}
\label{eq:ODE:closure}
(f_1f_2^2)' &= \tfrac{1}{2}f_1^2 + f_2^2,\\
\label{eq:ODE:tau1}
(\tau_1 - uf_1^2)' &= \lambda f_1^2,\\
\label{eq:ODE:tau2}
(\tau_2 - uf_2^2)' & = \lambda f_2^2,\\
\label{eq:ODE:conserve}
\tau_1+2 \tau_2 &= u(f_1^2+2f_2^2) + 2\lambda f_1f_2^2,
\end{align}
\end{subequations}
where the components $\tau_1$ and $\tau_2$ of the torsion $2$-form $\tau = \tau_1 \omega_1 + \tau_2 \omega_2$ are 
given by~\eqref{eq:taui:Sp2}.
This ODE system can be obtained from~\eqref{eq:ODEs:SU3} by 
setting $f_2=f_3$ and $\tau_2=\tau_3$, and as a result many of its properties are inherited from that larger ODE system.

\subsubsection*{Basic properties of the \sunitary{3}-invariant soliton system}
We make several observations about the structure of the ODE system~\eqref{eq:ODEs:SU3}
governing \sunitary{3}-invariant Laplacian solitons.
Define
\[
H:= H(f_1,f_2,f_3,f_1',f_2',f_3',u,\lambda) = \sum_{i=1}^3{(\tau_i - u f_i^2)} - 2 \lambda f_1 f_2 f_3 = \tb - u \fb -2 \lambda f_1 f_2 f_3,
\]
where as above we use~\eqref{eq:tau:i:SU3} to express the torsion components $\tau_i$ in terms of $f_i$ and $f_i'$ and 
where for a more compact expressions we introduce the notation
\begin{equation}
\label{eq:tb:fb}
\tb:= \tau_1 + \tau_2 + \tau_3, \qquad \fb:=f_1^2+f_2^2+f_3^2.
\end{equation}
Then the four equations in~\eqref{eq:SU3:closure} and~\eqref{eq:ODEs:SU3:taui} imply that $H'=0$
on any \sunitary{3}-invariant Laplacian soliton. However, the final equation~\eqref{eq:ODEs:SU3:tausum} 
implies that in fact $H \equiv 0$. For this reason we will sometimes 
refer to equation~\eqref{eq:ODEs:SU3:tausum} as the \emph{conservation law} for the system~\eqref{eq:ODEs:SU3},
since it fixes the value of the conserved quantity $H$ to be zero.

An immediate consequence of this observation is that if $f_i$, $\tau_i$ and $u$ satisfy 
equations~\eqref{eq:SU3:closure}, \eqref{eq:ODEs:SU3:tausum} and any two out of the three equations~\eqref{eq:ODEs:SU3:taui} then they necessarily satisfy the full system of ODEs. Also if $\lambda \neq 0$ then differentiating equation~\eqref{eq:ODEs:SU3:tausum} and subtracting the sum of the three equations in~\eqref{eq:ODEs:SU3:taui} 
shows that equation~\eqref{eq:SU3:closure} is a consequence of the other equations in~\eqref{eq:ODEs:SU3}.
Using~\eqref{eq:2form:14} and the fact that $\tau$ is of type $14$ we can also rewrite the left-hand side of the conservation law as
\[
\tb= \tau_1 + \tau_2 + \tau_3 = \frac{f_1^2-f_3^2}{f_1^2} \tau_1 + \frac{f_2^2-f_3^2}{f_2^2} \tau_2.
\]

Equations~\eqref{eq:SU3:closure} and~\eqref{eq:ODEs:SU3:tausum} involve at most first derivatives of the $f_i$ and no
derivatives of $u$. Moreover each term $f_i'$ appears linearly in both equations, $u$ does not appear in ~\eqref{eq:SU3:closure}
and appears linearly in~\eqref{eq:ODEs:SU3:tausum}.
The $i$th equation in~\eqref{eq:ODEs:SU3:taui} depends linearly on $f_i''$, does not depend on the other 2nd derivatives of the $f$, and also depends on all the first derivatives $f_1', f_2', f_3'$ and $u'$. 

\begin{remark}
\label{rmk:scaling}
It is often useful to keep in mind how solutions of the ODE system~\eqref{eq:ODEs:SU3} behave under rescaling:
any solution $(f_1,f_2,f_3, u)(t)$ of~\eqref{eq:ODEs:SU3} for a given value of $\lambda \in \R$ transforms into 
a solution of~\eqref{eq:ODEs:SU3} with $\hat{\lambda}= \mu^{-2} \lambda$ provided we make the following rescalings
\begin{equation}
\label{eq:scaling}
(t,f_i, u, \lambda, \tau, \varphi, g_\varphi, *\varphi) \mapsto (\mu t, \mu f_i,  \mu^{-1}u, \mu^{-2} \lambda,  \mu\tau, \mu^3 \varphi, \mu^2 g_\varphi, \mu^4\!\ast\!\varphi).
\end{equation}
We will call the powers of $\mu$ that appear above the scaling weights.
Since we also have $\partial_t \mapsto \mu^{-1}\partial_t$, each successive time-derivative of $f_i$ or $u$ decreases its scaling weight by one.
Note the elementary but important fact that re-scalings of steady solitons are steady solitons.
\end{remark}

\subsection{A first-order reformulation of the \texorpdfsunitary{3}-invariant Laplacian soliton system}\label{ss:order1:su3}

In this section we reformulate the mixed-order ODE system~\eqref{eq:ODEs:SU3}
for \sunitary{3}-invariant Laplacian solitons on $M^0$ as a nonsingular real
analytic first-order system in an alternative system of variables.
The proof is given in the next subsection, and exploits the conservation
law~\eqref{eq:ODEs:SU3:tausum}, %
the equation~\eqref{eq:LS:1} satisfied by the $1$ component of a soliton and the type $14$ constraint on the torsion $\tau$~\eqref{eq:2form:14}.

Some simple consequences of the reformulation are that for each fixed $\lambda$
there is a $4$-parameter family of local real analytic \sunitary{3}-invariant
Laplacian solitons (up to time translation), and constraints on how a solution
with finite lifetime must degenerate.
The reformulation will also enable us to apply a known
technique for proving the existence of solutions that extend smoothly 
over the singular orbit $\CP^2$: see Section~\ref{s:smooth:closure:solns}.

\begin{prop}
\label{prop:1st:order:ODE:SU3}
Let $f=(f_1, f_2, f_3): I \to \R^3$ be a triple of positive functions and let $\tau_1, \tau_2, \tau_3:I \to \R$ be functions defined on the connected interval $I$ satisfying the first-order ODE system
\begin{subequations}
\label{eq:SU3:order1}
\begin{align}
\label{eq:SU3:fi:dot}
(f_i^2)' & = \tau_i - \frac{f_i^2}{f_1 f_2 f_3}\left( 2f_i^2 - \fb\right),\\
\fb \,\tau_i' %
\label{eq:SU3:taui:dot}
&= \frac{4\lambda }{3}f_i^2 S_i + \tb \left( \tau_i - \frac{2f_i^4}{f_1 f_2 f_3} \right) + \frac{1}{3} \abs{\tau}^2 \fb f_i^2,
\end{align}
\end{subequations}
where the quantity $S_i$ is defined by 
\begin{equation}
\label{eq:Si:def}
S_i := 3f_i^2 - \fb - \frac{3 f_1 f_2 f_3 \tau_i}{2 f_i^2},
\end{equation}
and $\abs{\tau}^2 = \sum\frac{\tau_i^2}{f_i^4}$ by \eqref{eq:tau_square},
and the initial data satisfies $\sum{\frac{\tau_i}{f_i^2}}=0$ for some given $t_0 \in I$. 
Then for $\lambda \in \R$ the triple $(\varphi_f,X,\lambda)$ 
\begin{align*}
\varphi_f &:= \omega_f \wedge dt + \Real{\Omega_f} = (f_1^2 \omega_1 + f_2^2 \omega_2 + f_3^2 \omega_3) \wedge dt + f_1 f_2 f_3 \alpha, \\
&X:=u\, \partial_t \quad \text{with $u$ defined by\quad } u\fb:= -2\lambda f_1 f_2 f_3 + \tb
\end{align*}
is an \sunitary{3}-invariant closed Laplacian soliton on $I \times \flag$, \ie $(f_1,f_2,f_3,u)$ satisfies the ODE system~\eqref{eq:ODEs:SU3}.
In particular $\sum{\frac{\tau_i}{f_i^2}}=0$ holds on $I$. 
Conversely, (up to discrete symmetries) any \sunitary{3}-invariant closed Laplacian soliton on $I \times \flag$ arises from a solution of~\eqref{eq:SU3:order1} 
that satisfies $\sum{\frac{\tau_i}{f_i^2}}=0$ throughout its lifetime.
\end{prop}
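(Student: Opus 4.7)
The plan is to view \eqref{eq:SU3:fi:dot} as the algebraic inversion of the torsion formula \eqref{eq:tau:i:SU3a} from Lemma \ref{lem:torsion:SU3}, and \eqref{eq:SU3:taui:dot} as encoding the three soliton equations \eqref{eq:ODEs:SU3:taui} once $u$ is defined via the conservation law \eqref{eq:ODEs:SU3:tausum}. The crucial auxiliary ingredient, and the main obstacle, is that the algebraic type $14$ constraint $\sum \tau_i/f_i^2 = 0$ (equivalent to \eqref{eq:2form:14}) is preserved along solutions of \eqref{eq:SU3:order1}; this is why it need only be imposed at a single initial point $t_0$.

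First I would note that rearranging \eqref{eq:SU3:fi:dot} gives $\tau_i = (f_i^2)' + (f_i^2/P)(2f_i^2-\fb)$, where $P := f_1f_2f_3$. This is identical to \eqref{eq:tau:i:SU3a}, so $\tau_i$ will agree with the torsion components of $\varphi_f$ once the closure condition \eqref{eq:SU3:closure} is established. Dividing \eqref{eq:SU3:fi:dot} by $f_i^2$ and summing over $i$ gives
\[
2\frac{P'}{P} = \sum_i \frac{(f_i^2)'}{f_i^2} = \sum_i \frac{\tau_i}{f_i^2} + \frac{\fb}{P},
\]
so the closure condition \eqref{eq:SU3:closure} is equivalent, along the flow, to the type $14$ constraint. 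The definition of $u$ is verbatim the conservation law \eqref{eq:ODEs:SU3:tausum}.

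The main obstacle is to show that the quantity $I := \sum \tau_i/f_i^2$ vanishes throughout whenever it vanishes at $t_0$. I would compute $I' = \sum \tau_i'/f_i^2 - \sum \tau_i (f_i^2)'/f_i^4$, substitute \eqref{eq:SU3:taui:dot} for $\tau_i'$ and \eqref{eq:SU3:fi:dot} for $(f_i^2)'$, and then organise the resulting terms; after multiplying through by $\fb$, the contributions involving $\lambda$, $\tb$ and $|\tau|^2$ should collect into an expression of the form $(\text{smooth function})\cdot I$, so that $I$ satisfies a linear first-order ODE and hence propagates its zero initial value identically. The specific combination $S_i = 3f_i^2 - \fb - \frac{3 P \tau_i}{2f_i^2}$ in \eqref{eq:Si:def} is precisely what is required for this cancellation.

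Granting preservation of the constraint, the soliton equations \eqref{eq:ODEs:SU3:taui} follow by expanding $(\tau_i - uf_i^2)' = \tau_i' - u'f_i^2 - u(f_i^2)'$, using \eqref{eq:SU3:fi:dot} for $(f_i^2)'$, the $1$-component equation \eqref{eq:LS:1:su3} to eliminate $u'$, and the definition of $u$ from \eqref{eq:ODEs:SU3:tausum}; a short rearrangement, in which $S_i$ reappears, reduces the right-hand side to $\lambda f_i^2$ exactly when $\tau_i'$ satisfies \eqref{eq:SU3:taui:dot}. For the converse, given a \sunitary{3}-invariant closed Laplacian soliton $(\varphi_f, u\,\partial_t, \lambda)$, Lemma \ref{lem:torsion:SU3} provides $\tau_i$ obeying \eqref{eq:SU3:fi:dot} and the type $14$ constraint, and reversing the same calculation — with \eqref{eq:LS:1:su3} now obtained by summing the three soliton equations, using closure, and differentiating the conservation law — shows that $\tau_i$ satisfies \eqref{eq:SU3:taui:dot}.
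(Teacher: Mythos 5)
Your proposal tracks the paper's proof closely: read \eqref{eq:SU3:fi:dot} as the algebraic rearrangement of the torsion formula \eqref{eq:tau:i:SU3a}, show that the type $14$ constraint propagates by exhibiting a linear first-order ODE for $\sum\tau_i/f_i^2$, and then recover \eqref{eq:ODEs:SU3:taui} by combining the conservation law defining $u$, the equation for $u'+\lambda$, and \eqref{eq:SU3:taui:dot}. Your observation that dividing \eqref{eq:SU3:fi:dot} by $f_i^2$ and summing shows that, along solutions of \eqref{eq:SU3:order1}, the closure condition \eqref{eq:SU3:closure} is equivalent to $\sum\tau_i/f_i^2=0$ is a clean way to package a point the paper merely asserts; it makes transparent why the constraint need only be imposed at one $t_0$.

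The one place to be careful is the role of \eqref{eq:LS:1:su3} in the forward direction. You propose to verify \eqref{eq:ODEs:SU3:taui} by ``using \dots{} the $1$-component equation \eqref{eq:LS:1:su3} to eliminate $u'$'', but when passing from a solution of \eqref{eq:SU3:order1} to a soliton, \eqref{eq:LS:1:su3} is itself part of what must be established and cannot be taken as given. What is actually available is the algebraic definition $u\fb = \tb - 2\lambda f_1f_2f_3$, so $u'$ must be obtained by differentiating this, then substituting \eqref{eq:SU3:fi:dot}, \eqref{eq:SU3:taui:dot} and the propagated constraint. That computation produces
\[
u'+\lambda \;=\; -\tfrac{4\lambda}{3} + \tfrac{1}{3}\abs{\tau}^2 - \frac{u\,\fb}{f_1f_2f_3},
\]
which is precisely \eqref{eq:LS:1:su3}; only then may it be plugged into \eqref{eq:taui:tilde:dot} to conclude. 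This step is a genuine calculation (and the one where the conservation law, the constraint, and the summed ODE system all interact), so the sketch understates the work; aside from that imprecision the proposal is correct and follows the same route as the paper.
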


Before proving the result in the next subsection we note some consequences. 

\begin{remark}
\label{rmk:phase:space}
Define a $5$-dimensional smooth (in fact real analytic) noncompact manifold $\mathcal{P}$ by 
\[
\mathcal{P}:= \left\{(f,\tau) \in (\R^3_> \times \R^3) \, | \sum{\tau_i f_i^{-2}}=0 \right\}.
\]
Proposition~\ref{prop:1st:order:ODE:SU3} implies that $\mathcal{P}$ is invariant under the flow of the first-order ODE system~\eqref{eq:SU3:order1}, 
that the open dense set of principal orbits of any $\sunitary{3}$-invariant Laplacian soliton gives rise to an integral curve of~\eqref{eq:SU3:order1} 
that remains in $\mathcal{P}$ and conversely that any integral curve of~\eqref{eq:SU3:order1} that starts in $\mathcal{P}$ gives rise to an $\sunitary{3}$-invariant Laplacian soliton. In other words, we can view $\mathcal{P}$ as the phase space that parametrises all possible principal orbits of $\sunitary{3}$-invariant Laplacian solitons.

Note also that $\mathcal{P}$ has some additional structure:  it is naturally a real analytic $2$-plane subbundle $\xi$ of the trivial real $3$-plane bundle $\underline{\R}^3$ over the positive octant $\R^3_>$
with projection map $\pi: (f,\tau) \mapsto f$ and fibre $\xi_f:=\{(\tau_1,\tau_2,\tau_3) \in \R^3\, | \sum{\tau_i f_i^{-2}}=0\}$.
We note also that the fibre $\xi_f$ of $\xi$ depends 
only on the homothety class $[f_1^2,f_2^2,f_3^2] \in \Sph^2_>$. In other words, $\mathcal{P}$ is the radial extension of a real $2$-plane bundle 
over $\Sph^2_>$. \end{remark}

\begin{corollary}
\label{cor:su3:local}
For any $\lambda \neq 0$ there is a $4$-parameter family of distinct local real
analytic \sunitary{3}-invariant Laplacian solitons up to translation; 
there is a $3$-parameter family of distinct local real analytic \sunitary{3}-invariant steady Laplacian solitons up to translation and scale.
\end{corollary}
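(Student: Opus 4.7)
My plan is to apply Proposition~\ref{prop:1st:order:ODE:SU3} and reduce the question to standard local existence theory for the real-analytic first-order ODE system~\eqref{eq:SU3:order1} on the $5$-dimensional phase space $\mathcal{P}$ of Remark~\ref{rmk:phase:space}. The right-hand side of~\eqref{eq:SU3:order1} is manifestly real-analytic on $\R^3_> \times \R^3$, since each denominator is controlled by the positivity of the $f_i$, and Proposition~\ref{prop:1st:order:ODE:SU3} already verifies that the type-$14$ constraint $\sum \tau_i/f_i^2 = 0$ cuts out a flow-invariant submanifold. Thus through any point of $\mathcal{P}$ at any chosen initial time $t_0$, the real-analytic Picard--Lindel\"of theorem yields a unique real-analytic integral curve, and each such curve corresponds to a local \sunitary{3}-invariant Laplacian soliton with the prescribed $\lambda$, up to the action of the discrete Klein four-group of Remark~\ref{rmk:discrete:sym:G2}, which is finite and therefore does not affect parameter counts.

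Next I would count parameters. Fixing $t_0 = 0$, the local solutions are identified with admissible initial data in $\mathcal{P}$, a $5$-manifold. Two integral curves related by time translation give the same underlying soliton up to a trivial reparameterisation of $I$, so the space of \emph{distinct} local solitons is obtained by quotienting an open dense subset of $\mathcal{P}$ by the free $\R$-action of time translation, yielding a quotient of dimension $5 - 1 = 4$. This establishes the $4$-parameter family for any fixed $\lambda \neq 0$.

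In the steady case $\lambda = 0$, I would additionally appeal to the scaling symmetry of Remark~\ref{rmk:scaling}. Since rescaling sends $\lambda$ to $\mu^{-2}\lambda$, it preserves the value $\lambda = 0$; on the phase space it acts by $(f_i,\tau_i)\mapsto(\mu f_i, \mu \tau_i)$, which visibly preserves the type-$14$ constraint $\sum \tau_i/f_i^2 = 0$. Combined with time translation this gives a two-dimensional symmetry group acting on local steady solitons, and since scaling changes $\lvert f\rvert$ while time translation does not, the two actions are independent on an open dense subset of $\mathcal{P}$. The quotient thus has dimension $5 - 2 = 3$, giving the $3$-parameter family of distinct local steady solitons.

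The only delicate point is to verify that each of these symmetry actions is (generically) free, so that the dimension counts are genuine. This is essentially routine: the vector field defining~\eqref{eq:SU3:order1} vanishes only on a nowhere-dense real-analytic subset of $\mathcal{P}$ (so time-translation orbits are honestly one-dimensional on an open dense set), and scaling acts freely on $\mathcal{P}$ because the positivity condition $f_i > 0$ excludes the fixed point of the $\R_{>0}$-action. Hence both quotients have the dimension claimed.
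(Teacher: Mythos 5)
Your proposal is correct and follows essentially the same route as the paper's proof: both reduce to standard real-analytic Picard--Lindel\"of theory for the first-order system~\eqref{eq:SU3:order1} on the $5$-dimensional phase space $\mathcal{P}$, identify local solitons with integral curves (equivalently, quotient by the time-translation flow) to get a $4$-parameter family, and then quotient by the rescaling symmetry in the steady case to drop to $3$ parameters. The only addition over the paper's brisk argument is your explicit remark that the time-translation and scaling actions are generically free and independent, which the paper leaves implicit.
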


\begin{proof}
On any principal orbit the system~\eqref{eq:SU3:order1} is a nonsingular real analytic first-order system of ODEs. 
Hence standard ODE theory gives the existence of unique (local in $t$) real analytic solutions depending 
real analytically on the initial data, \ie on the choice of principal orbit or equivalently of a point in the 
phase space $\mathcal{P}$ described in Remark~\ref{rmk:phase:space}. Since $\mathcal{P}$ is $5$-dimensional 
and an \sunitary{3}-invariant Laplacian soliton corresponds to an integral curve of ~\eqref{eq:SU3:order1} that remains in  $\mathcal{P}$ 
there is a $4$-parameter family of local real analytic \sunitary{3}-invariant Laplacian solitons for any $\lambda \in \R$. 
However,  since rescalings take steady solitons to other steady solitons, geometrically this implies the existence of a $3$-parameter family of distinct local \sunitary{3}-invariant steady Laplacian solitons, but a $4$-parameter family of distinct local \sunitary{3}-invariant Laplacian solitons for any $\lambda \neq 0$.
\end{proof}

Another easy but important consequence of the first-order formulation of the soliton ODE system is the following
incompleteness criterion. 
\begin{prop}
\label{prop:lifetime}
Any forward-incomplete soliton
has some ratio $\frac{f_i}{f_j}$ unbounded when approaching its extinction time.
\end{prop}
\begin{proof}
Suppose for a contradiction that all the ratios $f_i/f_j$ remain bounded as $t \to t_*$, the extinction time.
First we observe that on any finite interval on which all ratios $f_i/f_j$ remain bounded above then
the $f_i$ are bounded away from zero and bounded above.
Indeed writing 
\[
f_i^3 = f_1 f_2 f_3 \left(\frac{f_i}{f_j}\right) \,\left(\frac{f_i}{f_k}\right)
\]
and using the fact that $f_1f_2 f_3$ is increasing implies that each $f_i$ is bounded away from $0$ as $t \to t_*$.
Moreover, all the $f_i$ are bounded above if and only if $f_1 f_2 f_3$ is. 
Since 
\[
\frac{d}{dt} \log{g} = \frac{1}{6} \left( \frac{f_1^2+f_2^2+f_3^2}{f_1 f_2 f_3} \right) 
= \frac{1}{6f _1} \left( \frac{f_1}{f_2} \right) \left(\frac{f_1}{f_3} \right) 
\left( 1 + \frac{f_2^2}{f_1^2} + \frac{f_3^2}{f_1^2}\right),
\]
$g^3=f_1 f_2 f_3$ is therefore also bounded above as $t \to t_*$ and hence so are all the $f_i$.
This also yields upper and lower bounds for all the $\tilde \tau_i$, because $\tilde \tau_i' = \lambda f_i^2$.
Using $\tilde{\tau}_i = \tau_i - u f_i^2$ , where $u$ is the coefficient of the soliton vector field $X = u\, \partial_t$, 
and the type $14$ condition on $\tau$ \eqref{eq:tau_14} we see that 
\[
\sum_i\, \frac{\tilde{\tau}_i}{f_i^2} = -3u.
\]
Hence $u$ is also bounded as $t \to t_*$ and therefore so are all of the
$\tau_i = \tilde \tau_i + u f_i^2$.  Using the first-order ODE system~\eqref{eq:SU3:order1} this also implies bounds for the first derivatives
of all the $f_i$ and $\tau_i$. Thus we have finite limiting values of all the $f_i$
(that are strictly positive) and all the $\tau_i$ as $t \to t^*$, and we
can use those limiting values as (regular) initial data for the first-order ODE
system~\eqref{eq:SU3:order1}. Then
local existence for this system allows us to extend the original solution beyond its extinction time $t_*$.
\end{proof}

One further consequence of the first-order ODE system is the following simple, but important, result about 
sign changes or preserved orderings for the coefficients $f_i$ and $\tilde \tau_i:= \tau_i - uf_i^2$ 
that holds for all non-shrinking solitons. 
\begin{lemma}
Suppose $\lambda \ge 0$. 
For $i \neq j$ define the quantity $T_{ij}:=(\tilde \tau_i - \tilde \tau_j)(f_i^2-f_j^2)$.  Then either $T_{ij}$ is identically zero 
or any zero of $T_{ij}$ is nondegenerate with $T_{ij}'>0$. 
Hence the sign of $T_{ij}$ changes at most once and if it changes sign then it must change from negative to positive.
If $\lambda=0$ and $T_{ij}$ vanishes identically then either $f_i-f_j$ vanishes identically or it has no zeros. 
\end{lemma}

\begin{proof}
Using~\eqref{eq:ODEs:SU3:taui} we see that the  evolution of $T_{ij}$ is given by
\[
T_{ij}' = (\tilde \tau_i - \tilde \tau_j)' (f_i^2-f_j^2) + (\tilde \tau_i - \tilde \tau_j)(f_i^2 - f_j^2)' = \lambda (f_i^2-f_j^2)^2 + (\tilde \tau_i - \tilde \tau_j)(f_i^2 - f_j^2)'.
\]
If $T_{ij}$ vanishes at a point then either $f_i-f_j$ or $\tilde \tau_i - \tilde \tau_j$ or both vanish at that point. 
If both $f_i-f_j$ and $\tilde \tau_i - \tilde \tau_j$ vanish at the same point then also $\tau_i=\tau_j$ there.
Then the unique solution of the first-order ODE system with these initial values has $f_i=f_j$ and $\tau_i=\tau_j$ throughout its lifetime
and so $T_{ij}$ vanishes identically. (Moreover, this solution has a $\Z_2$-symmetry given by exchanging $f_i$ and $f_j$.)

So we may now assume that  $f_i-f_j$ and $\tilde \tau_i - \tilde \tau_j$ cannot both vanish at the same point. 
In the case that $f_i-f_j$ vanishes at some point, 
then using~\eqref{eq:tau:i:SU3a} we see that
 $(f_i^2-f_j^2)' = \tau_i-\tau_j = \tilde \tau_i - \tilde \tau_j$ holds there and so $T_{ij}' = (\tilde \tau_i - \tilde \tau_j)^2 >0$. 
In the case that $\tilde \tau_i - \tilde \tau_j$ vanishes at some point and $\lambda>0$ then there $T_{ij}' = \lambda (f_i^2-f_j^2)^2 >0$.

If $\lambda =0$ then $\tilde \tau_i - \tilde \tau_j$ is constant and so if it vanishes at one point it vanishes identically
and therefore $T_{ij}$ vanishes identically. 
In that case then the derivative of $f_i^2- f_j^2$ at a point where $f_i=f_j$ is given by $(f_i^2-f_j^2)' = \tau_i - \tau_j = \tilde \tau_i - \tilde \tau_j=0$. Thus $\tau_i=\tau_j$ and $f_i=f_j$ hold at the same point and so again by uniqueness of solutions to the first-order ODE system both equalities hold throughout
the lifetime of the solution. So if $\lambda=0$ and $\tilde \tau_i = \tilde \tau_j$ then either $f_i^2-f_j^2$ is identically zero or it has no zeros.
\end{proof}

\begin{remark*}
A soliton possessing a $\Z_2$-symmetry that exchanges $f_i$ and $f_j$ for some pair $i \neq j$ has $T_{ij}$ vanishing identically, but the converse is not true. 
For instance, any~\sunitary{3}-invariant torsion-free~\gtstr~has all the $T_{ij}$ vanishing identically, but generically none of the different
$f_i$ will be equal.
\end{remark*}

\begin{corollary}
\label{cor:sign_changes}
If $\lambda \ge 0$ then the sign of $f_i-f_j$ can change at most once and if such a sign change occurs then $\tilde \tau_i - \tilde \tau_j$ has a fixed sign
throughout the lifetime of the solution. Moreover, if at some instant $\tilde \tau_i - \tilde \tau_j$ and $f_i-f_j$ are both positive 
then they remain so at all subsequent times. 
\end{corollary}

If we think of two variables as being ``aligned'' if
$T_{ij} = (\tilde\tau_i - \tilde\tau_j)(f_i^2-f_j^2) > 0$, then  the previous result says that when $\lambda \ge 0$ a pair of variables
can go from
non-aligned to aligned, but not vice versa. Thus any given pair of the three variables $f_i$ will swap
order at most once during the lifetime of a solution.

\subsection{Proof of Proposition~\ref{prop:1st:order:ODE:SU3}}

The equivalence stated in Proposition \ref{prop:1st:order:ODE:SU3}
of the soliton system \eqref{eq:ODEs:SU3} and the first-order system
\eqref{eq:SU3:order1} is proved in the next two lemmas.
 
\begin{lemma}
If $(f_1, f_2, f_3, u)$ is a solution of \eqref{eq:ODEs:SU3} and $\tau_i$
are defined in terms of $f_i$ by \eqref{eq:tau:i:SU3} then
$(f_1, f_2, f_3, \tau_1, \tau_2, \tau_3)$ satisfies \eqref{eq:SU3:order1}. 
\end{lemma} 

\begin{proof}
Since \eqref{eq:SU3:fi:dot} is a restatement of \eqref{eq:tau:i:SU3a}, so
we just need to derive~\eqref{eq:SU3:taui:dot}. 
To this end we now seek to eliminate $u$ from the ODE system~\eqref{eq:ODEs:SU3} and hence obtain expressions for $\tau_i'$ in terms of~$\lambda$, the coefficients $f_i$ (but not their derivatives) and the $\tau_i$. 
Recall the conservation law~\eqref{eq:ODEs:SU3:tausum} for the ODE system has the form
\begin{equation}
\tag{CL}
\label{eq:SU3:conserve}
u \fb =  \tb -2\lambda f_1 f_2 f_3.
\end{equation}
Substituting this expression for $u$ into~\eqref{eq:LS:1:su3+}, which recall was simply the $1$ component of the Laplacian soliton equation, yields
\begin{equation}
\label{eq:u':su3}
3(u' + \lambda) = 2\lambda -  \frac{3\tb}{f_1 f_2 f_3} + \sum{\frac{\tau_i^2}{f_i^4}} = 2\lambda + \abs{\tau}^2 -  \frac{3\tb}{f_1 f_2 f_3}.  
\end{equation}
Since we are assuming all $f_i$ to be positive the system of equations~\eqref{eq:ODEs:SU3:taui} is equivalent to
\begin{equation}
\label{eq:taui:tilde:dot}
\frac{\tau_i'}{f_i^2} = u \frac{(f_i^2)'}{f_i^2} + (u'+\lambda) \qquad \text{for\ } i=1, 2, 3.
\end{equation}
Substituting the expressions for $(f_i^2)'$, $u$ and $u' + \lambda$ given in~\eqref{eq:SU3:fi:dot},~\eqref{eq:SU3:conserve} and~\eqref{eq:u':su3} respectively into the right-hand side of these equations yields
\[
\frac{\tau_i'}{f_i^2} = \left(\frac{\tb-2\lambda f_1 f_2 f_3}{\fb}\right) \left( \frac{\tau_i}{f_i^2} - \frac{(2f_i^2-\fb)}{f_1 f_2 f_3}  \right)
+
 \frac{1}{3} \left( 2\lambda + \abs{\tau}^2 -  \frac{3\tb}{f_1 f_2 f_3} \right).
\]
Multiplying both sides by $f_1 f_2 f_3 \fb$ and rearranging yields
\[
\begin{split}
\label{eq:taui':fi2}
\frac{f_j f_k}{f_i} \fb \,\tau_i' &= \left( \tb - 2 \lambda f_1 f_2 f_3 \right) \left(  \tfrac{f_j f_k}{f_i} \tau_i - (2f_i^2 - \fb) \right) + 
\tfrac{1}{3}\fb \left( f_1 f_2 f_3 \left(2\lambda  + \abs{\tau}^2\right) - 3 \tb \right)  \\
&=  \tfrac{4\lambda}{3} f_1 f_2 f_3 (3f_i^2 - \fb)  - \left(2 \tb f_i^2 + 2 \lambda f_j^2 f_k^2 \tau_i \right) 
 + \left(\tb \tfrac{f_j f_k}{f_i} \tau_i  + \tfrac{1}{3} \fb f_1 f_2 f_3 \abs{\tau}^2 \right). 
\end{split}
\]
In the last line we have first grouped the terms not involving the coefficients of $\tau$, then 
those linear in its coefficients and finally those quadratic in its coefficients.
The previous equation is equivalent to 
\[
\fb \tau_i' = \frac{4\lambda}{3} f_i^2(3f_i^2-\fb) - 2\tb \frac{f_i^4}{f_1 f_2 f_3}  +(\tb  -2\lambda f_1 f_2 f_3) \tau_i + \frac{1}{3}\abs{\tau}^2 \fb f_i^2.
\]
Grouping the terms containing $\lambda$ together gives the form of the equation stated in the Proposition.
\end{proof}

\begin{lemma}
Suppose that $(f_1,f_2,f_3, \tau_1,\tau_2,\tau_3)$ satisfy~\eqref{eq:SU3:order1}
and that initially the type 14 constraint \eqref{eq:tau_14} is satisfied,
\ie $\sum{\tfrac{\tau_i}{f_i^2}}=0$ at some initial $t_0 \in I$.
Define a function $u$ in terms of $f_i$ and $\tau_i$ via the conservation
law~\eqref{eq:SU3:conserve}. Then $(f_1, f_2, f_3, u)$
solves \eqref{eq:ODEs:SU3}. 
\end{lemma}

\begin{proof}
First we want to prove that the resulting \sunitary{3}-invariant~\gtstr~$\varphi$ is closed and
has the invariant $2$-form $\tau:= \sum{\tau_i \omega_i}$ as its torsion $2$-form (and therefore satisfies $\sum{\tfrac{\tau_i}{f_i^2}} =0$ 
throughout its lifetime). 
It is straightforward to verify that the invariant $3$-form $\varphi$ is closed if and only if the condition $\sum{\tfrac{\tau_i}{f_i^2}}=0$ holds 
for all $t\in I$.
Since we have defined $u$ via the conservation law~\eqref{eq:SU3:conserve} we can rewrite~\eqref{eq:SU3:taui:dot} as 
\[
\frac{\tau_i'}{f_i^2} = \frac{4 \lambda E_i}{3\fb} + \frac{u \tau_i}{f_i^2} - \frac{2 f_i^2 \tb}{f_1 f_2 f_3 \fb} + \frac{1}{3}\abs{\tau}^2,
\]
where $E_i:= 3f_i^2 - \fb$. 
Summing these equations over $i$ yields
\[
\left(\sum{\frac{\tau_i'}{f_i^2}} \right)= -\frac{2\tb}{f_1f_2f_3} + \abs{\tau}^2  + u \left(\sum{\frac{\tau_i}{f_i^2}}\right).
\]
Using~\eqref{eq:SU3:fi:dot} yields 
\[
\frac{\tau_i}{f_i^2} \frac{(f_i^2)'}{f_i^2} = \frac{\tau_i^2}{f_i^4} - \frac{2 \tau_i}{f_1 f_2 f_3} + \frac{\fb}{f_1 f_2 f_3} \frac{\tau_i}{f_i^2},
\] 
and therefore summing over $i$ (and using \eqref{eq:tau_square}) gives
\[
\sum{\frac{\tau_i}{f_i^2} \frac{(f_i^2)'}{f_i^2}} = \abs{\tau}^2 - \frac{2\tb}{f_1 f_2 f_3} + \frac{\fb}{f_1f_2f_3} \left(\sum{\frac{\tau_i}{f_i^2}}\right).
\]
Hence we find that 
\[
\frac{d}{dt} \left(\sum{\frac{\tau_i}{f_i^2} }\right)= \left(\sum{\frac{\tau_i'}{f_i^2}} \right) - \sum{\frac{\tau_i}{f_i^2} \frac{(f_i^2)'}{f_i^2}} = 
\left(\sum{\frac{\tau_i}{f_i^2}}\right) \left(u - \frac{\fb}{f_1f_2f_3} \right).
\]
In particular, this equation implies that if a solution to~\eqref{eq:SU3:order1} has $\sum{\tfrac{\tau_i}{f_i^2}}=0$ at some $t_0\in I$ 
then it continues to vanish for all $t \in I$. Hence $\varphi$ defines a \emph{closed} \sunitary{3}-invariant~\gtstr~whose
torsion 2-form $\tau \in \Omega^2_{14}$ is equal to the invariant $2$-form $\tau: = \sum{\tau_i \omega_i}$. 

It remains to prove that $(f_1,f_2,f_3,u)$ satisfies the \sunitary{3}-invariant Laplacian soliton equations~\eqref{eq:ODEs:SU3}. 
Clearly~\eqref{eq:SU3:closure} is satisfied (because we just proved that $\varphi$ is closed)
and since we are defining $u$ to satisfy the conservation law our solution automatically satisfies~\eqref{eq:ODEs:SU3:tausum}.
Therefore it remains only to check that $u$ satisfies~\eqref{eq:ODEs:SU3:taui} for all $i$.
 Earlier in the proof we  saw that~\eqref{eq:ODEs:SU3:taui} holds if and only if~\eqref{eq:taui:tilde:dot} does.
Differentiating the conservation law~\eqref{eq:SU3:conserve} that defines $u$ yields
\[
u'  \fb  = \tb'  -u (\fb)' -2 \lambda (f_1f_2f_3)' = \tb'  -u (\fb)' - \lambda \fb - \frac{2\lambda f_1 f_2 f_3}{\fb} \left(\sum{\frac{\tau_i}{f_i^2}}\right)
\]
and hence
\[
(u'+\lambda) = \frac{\tb'}{\fb} - \frac{u (\fb)'}{\fb} - \frac{2\lambda f_1 f_2 f_3}{\fb} \left(\sum{\frac{\tau_i}{f_i^2}}\right) =  \frac{\tb'}{\fb} - \frac{u (\fb)'}{\fb},
\]
where the final equality holds because by our assumptions the term  $\sum{\tfrac{\tau_i}{f_i^2}}$ vanishes (initially and therefore throughout the lifetime of the solution).
Letting
\begin{equation}
\label{eq:D}
D(f):= \sum_{i<j}{(f_i^2-f_j^2)^2},
\end{equation}
using~\eqref{eq:SU3:taui:dot} and summing over $i$ yields
\[
\frac{\tb'}{\fb} = \left(\frac{4\lambda D(f)}{3 (\fb)^2} - \frac{2\tb \fbf}{f_1 f_2 f_3 \, (\fb)^2}\right)+ \frac{1}{3} \abs{\tau}^2 + \frac{u \tb}{\fb} =  \left(-\frac{4\lambda}{3} - \frac{2u \fbf}{f_1 f_2 f_3 \fb} \right)+ \frac{1}{3} \abs{\tau}^2 + \frac{u \tb}{\fb},
\]
while using~\eqref{eq:SU3:fi:dot} and summing over $i$ yields
\[
\frac{u(\fb)'}{\fb} = \frac{u\,\tb}{\fb} + \frac{u\fb}{f_1 f_2 f_3} - \frac{2u\fbf}{f_1 f_2 f_3\, \fb}.
\]
Hence
\[
(u'+\lambda) = \frac{\tb'}{\fb} - \frac{u(\fb)'}{\fb} = - \frac{4\lambda}{3}+  \frac{1}{3}\abs{\tau}^2 - \frac{u \fb}{f_1 f_2 f_3}.
\]
Therefore the right-hand side of~\eqref{eq:taui:tilde:dot} is equal to 
\[
\left(\frac{u\tau_i}{f_i^2} - \frac{2uf_i^2}{f_1f_2f_3} + \frac{u \fb}{f_1f_2f_3} \right) + \left( -\frac{4\lambda}{3}+ \frac{1}{3}\abs{\tau}^2 - \frac{u \fb}{f_1 f_2 f_3} \right) =  \frac{u\tau_i}{f_i^2} - \frac{2u f_i^2}{f_1f_2f_3} - \frac{4\lambda}{3} + \frac{1}{3}\abs{\tau}^2.
\]
Hence the difference between the two sides of~\eqref{eq:taui:tilde:dot} is
\[
\frac{\tau_i'}{f_i^2} - \left(\frac{u(f_i^2)'}{f_i^2} + (u'+\lambda) \right) = \frac{4 \lambda (E_i+\fb)}{3\fb}  - \frac{2 f_i^2 \tb}{f_1 f_2 f_3 \fb} + \frac{2u f_i^2}{f_1f_2f_3} = 2\left(\frac{2\lambda f_1 f_1 f_3 + u\fb-\tb}{ f_1f_2 f_3\fb}\right) f_i^2 =0
\]
as required (the final equality follows because of the conservation law~\eqref{eq:SU3:conserve} used to define $u$).
\end{proof}

\subsection{Specialisation to the \texorpdfSp{2}-invariant case}
By Remark \ref{rmk:involution}, we can treat \Sp{2}-invariant
\gtstr s as a special case of \sunitary{3}-invariant ones.
For future purposes we record the form of the first-order system that arises from specialising the system~\eqref{eq:SU3:order1} to 
the \Sp{2}-invariant case, \ie by setting $f_2=f_3$ and $\tau_2=\tau_3$. 

\begin{prop}
\label{prop:1st:order:tau2}
If $(f_1, f_2,  \tau_1, \tau_2, u)$ is any solution to the \Sp{2}-invariant soliton system 
\eqref{eq:ODEs:Sp2} then $(f_1,f_2,\tau_2)$ satisfies the
first-order ODE system
\begin{equation}
\label{eq:ODE:tau2'}
(f_1^2)' = 2f_1 - \frac{f_1^2}{f_2^2} (f_1+ 2\tau_2), \qquad  (f_2^2)' = f_1 + \tau_2, \qquad \tau_2' = \frac{4R_1 S}{3f_1(f_1^2+2f_2^2)},
\end{equation}
where 
\[R_1:=\lambda f_1f_2^2-3\tau_2,  \quad \ S:=f_2^2-f_1^2-\tfrac{3}{2}f_1\tau_2.
\]
Conversely, if $(f_1,f_2, \tau_2)$ is a solution to the first-order system \eqref{eq:ODE:tau2'} with $f_1,f_2>0$ 
and if we define $\tau_1$ and $u$ in terms of $(f_1,f_2, \tau_2)$ by
\begin{equation}
\label{eq:tau1:u:tau2}
\tau_1 := -\frac{2f_1^2}{f_2^2}\tau_2, \qquad u := \frac{2(f_2^2-f_1^2)\tau_2 -2 \lambda f_1f_2^4}{f_2^2(f_1^2+2f_2^2)},
\end{equation}
then $(f_1,f_2,\tau_1,\tau_2,u)$ solves the \Sp{2}-invariant soliton system \eqref{eq:ODEs:Sp2}.
\end{prop}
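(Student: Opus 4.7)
The plan is to deduce this from Proposition \ref{prop:1st:order:ODE:SU3} by specialising from $\sunitary{3}$ to $\Sp{2}$, treating the $\Sp{2}$-invariant system as the restriction $f_2=f_3$, $\tau_2=\tau_3$ of the $\sunitary{3}$-invariant system, as was observed after Proposition \ref{prop:closed:gtstr:Sp2}. First I would note that the type $14$ constraint $\sum \tau_i/f_i^2 = 0$, which cuts out the phase space $\mathcal{P}$ of Remark \ref{rmk:phase:space}, reduces under $f_2=f_3$, $\tau_2=\tau_3$ to $\tau_1/f_1^2 + 2\tau_2/f_2^2 = 0$, immediately giving the formula $\tau_1 = -2f_1^2\tau_2/f_2^2$ in \eqref{eq:tau1:u:tau2}. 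Similarly the conservation law \eqref{eq:SU3:conserve}, $u\fb = \tb - 2\lambda f_1 f_2 f_3$, becomes $u(f_1^2 + 2f_2^2) = \tau_1 + 2\tau_2 - 2\lambda f_1 f_2^2$, and after eliminating $\tau_1$ this gives the claimed expression for $u$.

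Next I would obtain the $(f_1^2)'$ and $(f_2^2)'$ equations in \eqref{eq:ODE:tau2'} by specialising \eqref{eq:SU3:fi:dot} for $i=1,2$ and substituting in $\tau_1 = -2f_1^2\tau_2/f_2^2$. For $i=2$ one gets $(f_2^2)' = \tau_2 + f_1$ directly; for $i=1$ a short computation yields $(f_1^2)' = 2f_1 - f_1^2(f_1+2\tau_2)/f_2^2$. The $\tau_2'$ equation is the main bookkeeping step: I would specialise \eqref{eq:SU3:taui:dot} with $i=2$, noting that in this case $S_2$ defined in \eqref{eq:Si:def} reduces to $S = f_2^2 - f_1^2 - \tfrac{3}{2} f_1\tau_2$, that $\abs{\tau}^2 = 6\tau_2^2/f_2^4$ (using $|\omega_i|_{g_f}^2 = 1/f_i^4$), and that the expressions $\tb$ and $\tau_2 - 2f_2^4/(f_1f_2^2)$ simplify using the formula for $\tau_1$. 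After collecting terms, the right-hand side of \eqref{eq:SU3:taui:dot} becomes $\tfrac{4}{3} S(\lambda f_1 f_2^2 - 3\tau_2)/f_1 \cdot (f_1^2+2f_2^2)/(f_1^2+2f_2^2)$, matching \eqref{eq:ODE:tau2'} after dividing by $\fb = f_1^2+2f_2^2$.

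For the converse, the key point is that the first-order $\sunitary{3}$-system \eqref{eq:SU3:order1}, when restricted to the $4$-dimensional locus $\{f_2 = f_3,\ \tau_2 = \tau_3\}$, is tangent to that locus (equations for $i=2$ and $i=3$ coincide once we identify the variables). If we start with a solution $(f_1,f_2,\tau_2)$ of \eqref{eq:ODE:tau2'} and define $\tau_1$ and $u$ by \eqref{eq:tau1:u:tau2}, then the resulting quintuple $(f_1,f_2,f_2,\tau_1,\tau_2,\tau_2)$ satisfies the first-order system \eqref{eq:SU3:order1} and by construction the type 14 constraint holds identically. Proposition \ref{prop:1st:order:ODE:SU3} then guarantees that we obtain an $\sunitary{3}$-invariant closed Laplacian soliton, which is in fact $\Sp{2}$-invariant since $f_2=f_3$, and hence satisfies the $\Sp{2}$-invariant soliton equations \eqref{eq:ODEs:Sp2}.

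The main obstacle is purely computational: verifying that the specialisation of \eqref{eq:SU3:taui:dot} at $i=2$ produces exactly the cleanly factored form $4R_1 S/(3f_1\fb)$ in \eqref{eq:ODE:tau2'}. This requires carefully expanding $\tb(\tau_2 - 2f_2^2/f_1)$, recognising that the combination $2\tau_2^2(f_2^2-f_1^2)/f_2^2 + 2\tau_2^2(f_1^2+2f_2^2)/f_2^2 = 6\tau_2^2$ produces the $-3\tau_2$ contribution to $R_1$, and isolating the term $\tfrac{4}{3}\lambda f_2^2 S$ from the $\lambda$ piece. No subtlety beyond algebra arises, and the structure of the computation is dictated by the factorisations already present in Proposition \ref{prop:1st:order:ODE:SU3}.
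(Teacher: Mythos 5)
Your proposal is correct and takes essentially the same approach as the paper's own (very terse) proof: the paper simply states that the Proposition follows by a routine calculation specialising the $\sunitary{3}$-invariant formulae to $f_2=f_3$, $\tau_2=\tau_3$. You supply the details that the paper omits, and these details check out — in particular, the verification that the specialisation of \eqref{eq:SU3:taui:dot} at $i=2$ yields the cleanly factored $4R_1 S/(3f_1\fb)$ is correct, and the converse argument via the invariance of the locus $\{f_2=f_3,\ \tau_2=\tau_3\}$ under \eqref{eq:SU3:order1} is sound.
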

\begin{proof}
The proof is a routine calculation obtained by 
specialising the formulae from the \sunitary{3}-invariant case to the \Sp{2}-invariant case where $f_2=f_3$ and $\tau_2= \tau_3$.
Below we give a few of the more important formulae since some of them are needed in our subsequent analysis of \Sp{2}-invariant shrinkers and expanders
in the sequel~\cite{HJN}.

First we note
\[
\tb = \tau_1 + 2 \tau_2= \frac{2\tau_2}{f_2^2}(f_2^2-f_1^2) \quad \text{and} \quad |\tau|^2 = 
 \sum{\frac{\tau_i^2}{f_i^4}} = \frac{\tau_1^2}{f_1^4} +  \frac{2\tau_2^2}{f_2^4} =  \frac{6\tau_2^2}{f_2^4}.
\]
Hence~\eqref{eq:u':su3} specialises to
\begin{equation}
\label{eq:udot:Sp2}
u' =- \frac{\lambda}{3} - \frac{2 \tau_2}{f_1 f_2^4}(f_2^2-f_1^2) + \frac{2 \tau_2^2}{f_2^4} = - \frac{\lambda}{3} - \frac{\tau_2}{f_1f_2^4}(2S+f_1\tau_2).
\end{equation}
Note also that $S_2=S_3= S=(f_2^2-f_1^2) - \tfrac{3}{2}f_1 \tau_2$ and $S_1=-2S_2$. 
The second and third summands on the right-hand side of the equation for $\tau_2'$ in~\eqref{eq:SU3:taui:dot} reduce to
\begin{align*}
\tb \left(\tau_2 - \frac{2f_2^4}{f_1f_2 f_3} \right) &= \tb \left(\tau_2 - \frac{2f_2^4}{f_1f_2^2} \right) = \frac{2\tau_2}{f_1f_2^2} (f_2^2-f_1^2)(\tau_2 f_1 - 2f_2^2)
\intertext{and}
\frac{1}{3}\abs{\tau}^2 \fb f_2^2 &= \frac{2 \tau_2^2 (f_1^2+2f_2^2)}{f_2^2}
\end{align*}
respectively. Their sum is therefore
\[
\frac{2\tau_2}{f_1f_2^2} \left((f_2^2-f_1^2)(\tau_2f_1-2f_2^2) + \tau_2 f_1 (f_1^2+2f_2^2) \right) = - \frac{4\tau_2 S}{f_1}.
\]
Hence the right-hand side of~\eqref{eq:SU3:taui:dot} specialises to 
\[
4S\left(\frac{\lambda f_2^2}{3} -  \frac{\tau_2}{f_1}\right) = \frac{4S}{3f_1} \left(\lambda f_1f_2^2 - 3\tau_2 \right) = \frac{4R_1S}{3f_1}. 
\]
Therefore the ODE for $\tau_2'$ reduces to $f_1(f_1^2+2f_2^2) \tau_2' = \tfrac{4}{3}R_1 S$ as claimed.
\end{proof}

\begin{corollary}
\label{cor:sp2:local}
For any $\lambda \neq 0$ there is a $2$-parameter family of distinct local real analytic $\Sp{2}$-invariant Laplacian solitons up to translation; 
there is a $1$-parameter family of distinct local real analytic $\Sp{2}$-invariant steady Laplacian solitons up to translation and scale.
\end{corollary}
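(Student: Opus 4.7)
The plan is to adapt verbatim the argument just given in the \sunitary{3}-invariant case, with Proposition \ref{prop:1st:order:tau2} now playing the role of Proposition \ref{prop:1st:order:ODE:SU3}. The system \eqref{eq:ODE:tau2'} is a nonsingular real-analytic first-order ODE system on the $3$-dimensional open phase space
\[
\mathcal{P}^3_{\Sp{2}} := \{(f_1, f_2, \tau_2) \in \R^3 : f_1, f_2 > 0 \},
\]
and by Proposition \ref{prop:1st:order:tau2} its integral curves are in bijection with local $\Sp{2}$-invariant Laplacian solitons for a given $\lambda$, since the remaining functions $\tau_1$ and $u$ are recovered algebraically via \eqref{eq:tau1:u:tau2}. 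Note that there is no analogue of the type-$14$ constraint $\sum \tau_i/f_i^2 = 0$ to cut down the phase space here, because in the \Sp{2}-invariant setting that constraint is solved explicitly by the first equation of \eqref{eq:tau1:u:tau2}.

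First I would appeal to the standard existence-and-uniqueness theorem for real-analytic ODEs: through every point of $\mathcal{P}^3_{\Sp{2}}$ there passes a unique local real-analytic integral curve, depending real-analytically on the initial data. Since distinct solitons correspond to distinct integral curves modulo the trivial time-translation reparametrisation, the space of distinct local $\Sp{2}$-invariant Laplacian solitons has dimension $3 - 1 = 2$ for each fixed $\lambda \in \R$, yielding the claimed $2$-parameter family.

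For the steady case I would then invoke the scaling symmetry from Remark \ref{rmk:scaling}: under a $\mu$-rescaling one has $\hat{\lambda} = \mu^{-2}\lambda$, so the $1$-parameter group of rescalings preserves the condition $\lambda = 0$ and acts on the $2$-parameter family of local steady solitons. Quotienting by this extra geometric symmetry reduces the count by one, giving a $1$-parameter family of geometrically distinct local $\Sp{2}$-invariant steady Laplacian solitons.

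I do not anticipate any substantive obstacle, as the whole argument is a direct specialisation of the \sunitary{3}-invariant case. The only point warranting a brief check is that the rescaling action has generically trivial stabiliser on the space of steady solitons; this is immediate from the fact that $f_1$, $f_2$ and $\tau_2$ all carry strictly positive scaling weights in \eqref{eq:scaling}, so no nontrivial local steady soliton can be fixed by a nontrivial homothety.
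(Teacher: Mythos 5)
Your proposal is correct and matches the paper's intended argument: the paper states this corollary without a separate proof precisely because it is the direct specialisation of the preceding \sunitary{3}-invariant corollary, with Proposition~\ref{prop:1st:order:tau2} and the $3$-dimensional phase space $\mathcal{P}^3 \subset \mathcal{P}^5$ (equivalently your $\mathcal{P}^3_{\Sp{2}}$, the two being identified via~\eqref{eq:tau1:u:tau2}) replacing Proposition~\ref{prop:1st:order:ODE:SU3} and $\mathcal{P}^5$. Your observation that the type-$14$ constraint is already absorbed into the first equation of~\eqref{eq:tau1:u:tau2}, and your remark on the free scaling action, are both accurate and merely make explicit what the paper leaves implicit.
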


\section{Smooth extension over a singular orbit and finite extinction time}
\label{s:smooth:closure:solns}
The main result of this section is Theorem \ref{thm:SU3:smooth:closure},
a more technical version of
Theorem \ref{mthm:smooth:closure}, concerning
 the existence for any $\lambda\in \R$ 
of a $2$-parameter family of local \sunitary{3}-invariant \gtwo-solitons that extend smoothly over some neighbourhood of the singular orbit $\CP^2 \subset \Lambda^2_-\CP^2$; the same method also yields the existence of a $1$-parameter family of $\textup{Sp}(2)$-invariant \gtwo-solitons 
that extend smoothly over the singular orbit $\Sph^4 \subset \Lambda^2_-\Sph^4$. 

The general approach is by now relatively standard. It originates in the setting of 
cohomogeneity-one Einstein metrics  in the work of Eschenburg--Wang \cite{eschenburg:wang};
see also~\cites{FHN:JEMS, Foscolo:Haskins:NK} where this approach was adapted
to solve the same problem for cohomogeneity-one torsion-free\mbox{~\gtstr s}
and nearly K\"ahler structures respectively and 
(Maria)~Buzano's work~\cite{Buzano:IVP:GRS} in the setting of cohomogeneity-one gradient Ricci solitons.
Given the first-order reformulation of the \sunitary{3}-invariant \gtwo-soliton equations derived in the previous section it is straightforward 
to adapt this approach to our setting:
the conditions that a smooth invariant soliton defined on the principal set $M^0$ extends smoothly over the singular orbit lead to a singular initial value problem for the first-order real analytic ODE system~\eqref{eq:SU3:order1}. 
The singularities turn out to be of regular type: for suitable initial data we prove the existence of a unique formal power series solution to this system; then a general result 
guarantees the existence of a real analytic solution whose Taylor series is equal to the formal series solution (which is therefore convergent). 

The formal power series solutions can in principle be computed symbolically (in terms of $\lambda$ and two additional real parameters) using a computer algebra package: a 
number of terms for these power series solutions are detailed in Appendix~\ref{app:sc:power:series}. Explicit knowledge of
some of
the lowest-order terms proves to be important in several later arguments and some general features of the series 
also turn out to be suggestive when trying to understand limiting behaviour of solutions in certain regimes. In this section consideration of these power series solutions 
leads to Theorem~\ref{thm:explicit:AC:shrinker}, which details the explicit
complete (asymptotically conical) shrinkers on $\Lambda^2_-\Sph^4$ and
$\Lambda^2_-\CP^2$ from Theorem~\ref{mthm:shrinkers}. 

Rather less standard is Theorem~\ref{thm:fwd:incomplete} (a more technical
version of Theorem \ref{mthm:extinction}) in which we construct solitons with
finite forward-extinction time (so these solitons are necessarily incomplete). 
It follows from this result that the flow-lines of these incomplete solutions fill up an open subset of phase space, 
so that one expects that this type of incompleteness is a stable property of a soliton. 
Our construction is perhaps best motivated by a Proposition~\ref{prop:steady_extinct} later in the paper, which proves 
that steady solitons with finite future extinction time must satisfy the conditions assumed in our result. 
However, the construction does not rely on that later result and indeed works for any value of the dilation constant. 
Its proof relies on the analysis of a singular initial value problem of regular type (as in the case of smoothly-closing solitons). 

\enlargethispage{0.1\baselineskip}

\subsection{The initial conditions for closing smoothly over a \texorpdfstring{$\CP^2$}{CP\textasciicircum 2} singular orbit.}
\label{ss:IC:CP2}
Table 2 in Cleyton--Swann~\cite{Cleyton:Swann} tells us that for $G/K=\sunitary{3}/\T^2$, complete closed $\sunitary{3}$-invariant
\mbox{\gtstr s} exist only when the singular orbit is $G/H=\sunitary{3}/\unitary{2} \cong \CP^2$.
Without loss of generality (recall we can always arrange this by acting with the Weyl group $W =S_3$) we will take the isotropy group of the singular orbit to be the subgroup $\unitary{2}_1$ defined as the centraliser 
of the matrix $I_1$ given in~\eqref{eq:U2:i}, \ie 
as we approach the singular orbit the size $f_1^2$ of the fibre $\Sph^2_1$ goes to zero. 

The conditions for a smooth $\sunitary{3}$-invariant \gtstr~defined on on $(0,\epsilon) \times\CP^3$ to extend smoothly across a singular orbit $\CP^2 = \sunitary{3}/\unitary{2}$ at $t=0$ were already determined in \cite[\S 9]{Cleyton:Swann}. 
Here we recall their result without proof, referring the reader to \cite{Cleyton:Swann} or Cleyton's thesis for further details as needed. Alternatively some readers may prefer the approach taken by Chi to prove \cite[Prop. 2.8]{Chi:AGAG}: Chi applies the representation-theoretic approach of Eschenburg--Wang~\cite{eschenburg:wang} to study the smooth extension problem for an $\sunitary{3}$-invariant Riemannian metric to extend smoothly over the zero section of $\Lambda^2_-\CP^2$. (Clearly
if the $\sunitary{3}$-invariant~\gtstr~$\varphi$ extends smoothly over $\CP^2$ then the invariant Riemannian metric $g_\varphi$ extends smoothly over 
$\CP^2$, so that Chi's stated conditions are certainly necessary conditions for smooth extension. See \cite[\S 10]{Cleyton:Swann} for a converse.)

For a closed \sunitary{3}-invariant \gtstr~expressed in the form
\[
\varphi_f = \omega_f \wedge dt + \Real{\Omega_f} = (f_1^2 \omega_1 + f_2^2 \omega_2 + f_3^2 \omega_3) \wedge dt +  f_1 f_2 f_3 \alpha
\]
the conditions that $\varphi_f$ extend smoothly over the singular orbit $\CP^2_1 = \sunitary{3}/\unitary{2}_1$ at $t=0$ are:
\begin{itemize}
\item
$f_1$ is odd in $t$ and $\abs{f_1'(0)}=1$,
\item
$f_2^2(t) = f_3^2(-t)$ and $f_2(0) \neq 0$.
\end{itemize}
In particular these conditions force that the product $f_2 f_3$ is even in $t$.
Recall also that by acting with Klein four-group we are able to assume that $(f_1,f_2,f_3)$ is a positive triple for all $t>0$. 
In particular we must have that $f_1'(0)=+1$ and $f_2(0)=f_3(0)>0$.
With these sign assumptions understood then the smooth extension conditions above then imply that $f_2+f_3$ is even and $f_2-f_3$ is odd. In particular, we can write the coefficients $f_i$ as
\begin{equation}
\label{eq:fi:su3:singular:ic}
f_1 = t + t^3 \hat{f}_1, \quad f_2 = b+ t \hat{f}_2, \quad f_3 = b+ t\hat{f}_3,
\end{equation}
for some $b>0$ and 
new functions $\hat{f}_i$ with the following symmetries: 
\begin{itemize}
\item $\hat{f}_1$, $\hat{f}_2 \hat{f}_3$ and $\hat{f}_2 - \hat{f}_3$ are even in $t$;
\item $\hat{f}_2+\hat{f}_3$ is odd in $t$. 
\end{itemize}
In particular, $\hat{f}_2(0)+\hat{f}_3(0)=0$, so if $\hat{f}_2(0) \neq 0$ then the solutions will not satisfy $f_2=f_3$. 

\pagebreak[2]

\subsection{\texorpdfsunitary{3}-invariant \texorpdfgtwo-solitons extending smoothly over \texorpdfstring{$\CP^2$}{CP\textasciicircum 2}}
\label{ss:su3:sc}
The main result for smoothly-closing~\sunitary{3}-invariant solitons is the
following more technical version of Theorem \ref{mthm:smooth:closure}.
\begin{theorem}
\label{thm:SU3:smooth:closure}
Fix any $\lambda \in \R$ and $b>0$ and $c\in \R$. Then there exists a unique local \sunitary{3}-invariant \gtwo-soliton which closes smoothly on 
the singular orbit $\CP^2_1 \subset \Lambda^2_-\CP^2$ and satisfies
\begin{equation}
\label{eq:smooth:close}
f_1 = t + t^3 \hat{f}_1, \ f_2 = b+ t\hat{f}_2, \ f_3 = b + t \hat{f}_3, \ \tau_1 = t^3 \hat{\tau}_1, \ \tau_2 = c+ t \hat{\tau}_2, \ \tau_3 = -c + t \hat{\tau}_3,
\end{equation}
where the leading-order terms of the $\hat{f}_i$ and $\hat{\tau}_i$ satisfy
\[
\hat{f}_1(0) = -\frac{1}{6b^2} - \frac{2\lambda}{27} + \frac{c^2}{18 b^4}, \quad \hat{f}_2(0) = -\hat{f}_3(0) = \frac{c}{6b},
\]
and 
\[
\hat{\tau}_1(0) = \frac{2(2\lambda b^4-c^2)}{9b^4}, \quad 
\hat{\tau}_2(0)=\hat{\tau}_3(0) = \frac{2}{9} \left(\lambda b^2 + \frac{c^2}{b^2}\right)
\]
respectively. The solution is  real analytic on $[0,\epsilon) \times \sunitary{3}/\T^2$ for some $\epsilon >0$.

Moreover, up to the $\Z_2 \times \Z_2$-action, any \sunitary{3}-invariant \gtwo-soliton which closes smoothly on the singular orbit $\CP^2_1 \subset \Lambda^2_-\CP^2$ belongs to this $2$-parameter family of solutions.
\end{theorem}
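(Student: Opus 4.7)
The plan is to encode the smooth-extension conditions \eqref{eq:fi:su3:singular:ic} as initial data for the first-order system \eqref{eq:SU3:order1} and recognise the resulting problem as a \emph{singular initial value problem of regular type}, in the sense of Eschenburg--Wang \cite{eschenburg:wang} and its subsequent adaptations to cohomogeneity-one \gtstr s and to Ricci solitons in \cite{FHN:JEMS, Foscolo:Haskins:NK, Buzano:IVP:GRS}. Once this framework is in place, the existence of a unique real-analytic solution follows from a standard convergence theorem for such IVPs, and the claims of the theorem reduce to verifying the compatibility and nondegeneracy conditions that underlie it.

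Substituting the ansatz \eqref{eq:smooth:close} into \eqref{eq:SU3:order1} and using that $f_1 = t + O(t^3)$ while $f_2, f_3 = b + O(t)$, the $1/(f_1 f_2 f_3)$ factors in \eqref{eq:SU3:fi:dot}--\eqref{eq:SU3:taui:dot} contribute simple poles at $t = 0$. Collecting powers of $t$, the system then recasts as
\[
t\,\frac{d\hat{\mathbf v}}{dt} \;=\; F(t,\hat{\mathbf v}),
\]
where $\hat{\mathbf v}=(\hat f_1,\hat f_2,\hat f_3,\hat\tau_1,\hat\tau_2,\hat\tau_3)$ and $F$ is real analytic near $(0, \hat{\mathbf v}_0)$. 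Setting $t = 0$ in both sides is the algebraic constraint $F(0, \hat{\mathbf v}_0) = 0$, which I expect to admit a $2$-parameter family of solutions parametrised by $b > 0$ and $c \in \R$, yielding exactly the explicit formulas for $\hat f_i(0)$ and $\hat\tau_i(0)$ claimed in the theorem. The dimension count matches Theorem \ref{mthm:smooth:closure}(ii), since the type $14$ constraint $\sum \tau_i/f_i^2 = 0$ required by Proposition \ref{prop:1st:order:ODE:SU3} is automatically respected at leading order---the contributions $\tau_2/f_2^2$ and $\tau_3/f_3^2$ cancel because $\tau_3(0) = -\tau_2(0)$, while $\tau_1/f_1^2 = O(t)$---and Proposition \ref{prop:1st:order:ODE:SU3} then propagates it along the flow.

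Shifting $\hat{\mathbf v} = \hat{\mathbf v}_0 + \mathbf w$ with $\mathbf w(0) = 0$ linearises the system around the initial data as $t \dot{\mathbf w} = D_{\mathbf v} F(0, \hat{\mathbf v}_0)\, \mathbf w + O(t + |\mathbf w|^2)$. The regular-type theorem then yields a unique real-analytic solution on some $[0, \epsilon)$ whose Taylor coefficients are determined recursively, provided that no positive integer lies in the spectrum of $D_{\mathbf v} F(0, \hat{\mathbf v}_0)$ except on a subspace corresponding to genuine free parameters in the moduli; the $2$-dimensional family of admissible $\hat{\mathbf v}_0$ accounts for exactly such a subspace, namely the choice of $(b, c)$. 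Having constructed the pair $(f_i, \tau_i)$, the soliton vector field $X = u\,\partial_t$ is then recovered algebraically from the conservation law of Proposition \ref{prop:1st:order:ODE:SU3}. Conversely, any smoothly-closing \sunitary{3}-invariant Laplacian soliton must satisfy \eqref{eq:fi:su3:singular:ic}, hence the same singular IVP; after quotienting by the $\Z_2 \times \Z_2$-action of Remark \ref{rmk:discrete:sym:G2} (which absorbs the sign of $f_1'(0)$ and the swap between $f_2$ and $f_3$), the uniqueness half of the singular IVP theorem identifies it with the solution we construct.

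The main obstacle is the spectral verification of $D_{\mathbf v} F(0, \hat{\mathbf v}_0)$: one must compute its eigenvalues explicitly and confirm both that the kernel (or positive-integer eigenspace) is exactly $2$-dimensional, matching the $(b, c)$-freedom, and that no other resonant eigenvalue appears that would obstruct solvability at a higher order in the Taylor recursion. This is the only nontrivial step---essentially a finite but delicate fixed-point computation at the singular orbit---and is the one place where the construction of the entire parameter family could in principle break down; the explicit leading-order formulas in the theorem and the first several terms in Appendix \ref{app:sc:power:series} already suggest that the recursion proceeds without obstruction, so the step amounts to confirming what the lowest-order expansion foretells.
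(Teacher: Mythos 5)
Your high-level plan does match the paper's: encode the smooth-closure conditions as initial data for the first-order system \eqref{eq:SU3:order1}, obtain a regular singular IVP of the form covered by a Malgrange-type theorem (the paper's Theorem~\ref{thm:Singular:IVP}), and invoke the unique real-analytic solution. However, there is a conceptual error in how you state the nondegeneracy requirement, and you explicitly defer the one computation that makes the theorem true.

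You write that the regular-type theorem applies ``provided that no positive integer lies in the spectrum of $D_{\mathbf v}F(0,\hat{\mathbf v}_0)$ \emph{except on a subspace corresponding to genuine free parameters in the moduli}'' and identify the $(b,c)$-freedom with such a subspace. This is not the structure of Theorem~\ref{thm:Singular:IVP}. The free parameters come from solving the algebraic constraint $M_{-1}(y_0)=0$ for the initial data, not from resonant eigenvalues of the linearisation. The hypotheses are: (i) $M_{-1}(y_0)=0$ and (ii) $h\,\tu{Id} - d_{y_0}M_{-1}$ is invertible for \emph{every} integer $h\ge 1$, with no exceptions. If some positive integer $h$ were an eigenvalue of $d_{y_0}M_{-1}$, the Taylor recursion would generically be obstructed at order $h$, not enriched by a free parameter. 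So the $(b,c)$-freedom corresponds to the zero set of $M_{-1}$ (equivalently to $\ker d_{y_0}M_{-1}$, which carries eigenvalue $0$, not a positive integer), and one must separately check that no positive integer occurs in the spectrum.

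This verification is the heart of the proof and cannot be deferred. The paper carries it out by working not with your six-tuple $(\hat f_1,\hat f_2,\hat f_3,\hat\tau_1,\hat\tau_2,\hat\tau_3)$, but with the five-tuple $(\hat f_1,\hat f_2,\hat f_3,\hat\tau_2,\hat\tau_3)$, eliminating $\hat\tau_1$ via the type-$14$ constraint $\tau_1 = -f_1^2\bigl(\tau_2 f_2^{-2}+\tau_3 f_3^{-2}\bigr)$. That choice lets one compute $M_{-1}$ explicitly, derive the initial conditions you quote, and exhibit
\[
n\,\tu{Id} - dM_{-1} =
\begin{pmatrix}
n+3 & -\tfrac{4c}{3b^3} & \tfrac{4c}{3b^3} & \tfrac{1}{2b^2} & \tfrac{1}{2b^2}\\
0 & n+2 & -1 & 0 & 0\\
0 & -1 & n+2 & 0 & 0\\
0 & 0 & 0 & n+2 & 1\\
0 & 0 & 0 & 1 & n+2
\end{pmatrix},
\]
with determinant $(n+3)^3(n+1)^2 \neq 0$ for all $n\ge 1$, which clears hypothesis (ii). Retaining $\hat\tau_1$ as a sixth unknown makes the singular term harder to extract (the quantity $\tau_1/f_1^2$ appearing in the constraint is a formal $0/0$ at $t=0$, resolved only after substituting the smooth-closing ansatz), so in practice you would still have to perform essentially the same elimination before the spectral analysis becomes tractable. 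Either way, the explicit invertibility check for every $n\ge 1$ is required; without it, and with the role of the spectral condition misstated, your argument is a plausibility sketch rather than a proof.
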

We have performed a computer-assisted symbolic computation of the terms of the power series solutions of the $2$-parameter family of smoothly-closing solitons constructed in Theorem~\ref{thm:SU3:smooth:closure} in terms of the real parameters $b$, $c$ and $\lambda$. The first several terms of these solutions are listed in 
Appendix~\ref{app:sc:power:series}.

\begin{remark}
\label{rmk:non_isom}
Any isometry between two of these smoothly-closing solutions
must map the singular orbit to itself and induces an isometry of it. Because $\sunitary{3}$
surjects onto the orientation-preserving isometries of $\CP^2$
equipped with the Fubini--Study metric (the restriction of the metric of a smoothly-closing solution to its singular orbit is a multiple 
of the Fubini--Study metric), if there is an orientation-preserving
isometry between two smoothly-closing solutions, then we can assume
without loss of generality that it acts trivially on the singular orbit and on its
normal bundle and so it must be the identity map. 
Hence the elements of the two-parameter family are all distinct
up to orientation-preserving isometry. However, the involution
defined by multiplying the fibres of $\Lambda^2_-\CP^2$ by $-1$, which acts by fixing $f_1$ and exchanging $f_2$ and $f_3$ 
(recall Remarks \ref{rmk:weyl:asd} and  \ref{rmk:involution_asd})
defines an orientation-reversing isometry
between the solutions with parameters $(b, c)$ and $(b,-c)$.
\end{remark}

\begin{remark}
\label{rmk:scaling_sc}
For $\mu > 0$, the $\lambda$-soliton with parameters $b$ and $c$ is related
to the $\mu^{-2}\lambda$-soliton with parameters $\mu b$ and $\mu c$ by
the scaling symmetry from Remark \ref{rmk:scaling}.
\end{remark}

\begin{remark}
\label{rmk:MZ:singular}
The mean curvature of the singular orbit is zero. In terms of our parameters,
this can be seen as a consequence of the smooth closure conditions implying
that $\lim_{t \to 0}\left(\frac{f_2'}{f_2} + \frac{f_3'}{f_3}\right)=0$. 
The referee pointed out to the authors that a singular orbit in a smoothly-closing cohomogeneity-one space is in fact minimal more generally: 
the mean curvature vector $H$ is a normal vector to the singular orbit that is fixed under the normal isotropy representation, 
and since by standard facts about cohomogeneity-one actions the latter always acts transitively on the unit sphere we must have~$H=0$.
However, when $c \neq 0$ the singular orbit is not totally geodesic.
\end{remark}

The main technical tool for proving the previous theorem is the following general result about regular first-order singular initial value problems. 
(Here regular refers to the fact that the singular term is of order $t^{-1}$ rather than $t^{-k}$ for some $k >1$.)
It appears in B\"ohm's work on complete noncompact cohomogeneity-one Einstein metrics~\cite[Theorem 7.1]{Bohm:BSMF}.
\begin{theorem}
\label{thm:Singular:IVP}
Consider the singular initial value problem
\begin{equation}\label{eq:Singular:IVP}
y'=\frac{1}{t}M_{-1}(y)+M(t,y), \qquad y(0)=y_0,
\end{equation}
where $y$ takes values in $\R^k$, $M_{-1}\co \R^k\ra \R^k$ is a (real) analytic function of $y$ in a neighbourhood of $y_0$ and $M\co\R\times\R^k\ra\R^k$ is analytic in $t,y$ in a neighbourhood of $(0,y_0)$. Assume that
\begin{enumerate}
\item $M_{-1}(y_0)=0$;
\item $h\text{Id}-d_{y_0}M_{-1}$ is invertible for all $h \in \N$, $h \geq 1$.
\end{enumerate}
Then there exists a unique solution $y(t)$ of \eqref{eq:Singular:IVP} which moreover is analytic on $[0,\epsilon)$ for some $\epsilon>0$. Furthermore $y$ depends continuously on $y_0$ satisfying \textup{(i)} and \textup{(ii)}.
\end{theorem}
Condition (ii) guarantees the existence of a unique formal power series solution $y(t)$ to \eqref{eq:Singular:IVP}. 
(In the context of constructing smoothly-closing solutions to the cohomogeneity-one Einstein equations Eschenburg--Wang~\cite[\S 5]{eschenburg:wang}
also constructed formal power series solutions as a first step; in that context an additional difficulty is that such formal power series solutions are not necessarily unique). 
Once a formal power series solution has been shown to exist, one can then follow the arguments of \cite[Theorem 7.1]{malgrange1974}, \cite[\S 6]{eschenburg:wang} or \cite[\S 4]{Ferus:Karcher}: use a truncation of the power series of sufficiently high degree as an approximate solution to \eqref{eq:Singular:IVP} and deform it to a genuine solution by applying a contraction mapping fixed point argument. 
In our setting the fact that any formal power series solution to~\eqref{eq:Singular:IVP} actually converges is due to Malgrange~\cite{malgrange1989}.
As for the continuous dependence on the initial conditions (an issue discussed in~\cite[Theorem 7.1]{Bohm:BSMF} but not in~\cite{eschenburg:wang}): the coefficients of the unique formal power series solution $y(t)$ depend differentiably on $y_0$ satisfying (i) and (ii) and the operator used in the fixed point argument is uniformly contracting with respect to the initial conditions.

\begin{proof}
The structure of the proof is to rewrite the (autonomous) first-order ODE system~\eqref{eq:SU3:order1} for the pair of triples $(f,\tau)$ as a $t$-dependent ODE system for the pair of triples $(\hat{f},\hat{\tau})$
and to show that the hypotheses of Theorem~\ref{thm:Singular:IVP} apply to the latter ODE system. It is therefore particularly important to identify the leading-order 
term $M_{-1}$ in order to be able to understand condition (i) and verify that the hypothesis required in (ii) is satisfied.
\\[0.2em]
\emph{Warmup: the torsion-free case.}
For \sunitary{3}-invariant~torsion-free \gtstr s we already know from Cleyton--Swann that the only solutions (described in Example \ref{ex:BS}) 
that extend smoothly across the $\CP^2$ singular orbit have $\hat{f}_2(0)=\hat{f}_3(0)=0$ and so must have $f_2 \equiv f_3$. 
Cleyton--Swann proved this by finding an explicit parametrisation of any local \sunitary{3}-invariant~torsion-free \gtstr~\cite[p.~217]{Cleyton:Swann} and observing that the conditions on the singular orbit force $f_2^2 \equiv f_3^2$. 
As a warmup for the general \sunitary{3}-invariant soliton case we want to obtain this result in the 
torsion-free by an application of Theorem~\ref{thm:Singular:IVP},  since that method will apply 
even when we are not able to find explicit parametrisations of all the local \sunitary{3}-invariant~\gtwo-solitons.
Moreover, all the calculations presented here are also needed in the more general soliton analysis.
\\[0.2em]
If we assume that all the torsion coefficients $\tau_i$ vanish identically then the first-order ODE system~\eqref{eq:SU3:order1}
reduces to the following first-order ODE system for the triple $f$
\begin{equation}
\label{eq:SU3:tf}
(f_i^2)' = \frac{f_j^2+f_k^2-f_i^2}{f_i f_j f_k}, \quad (ijk) \text{\ a cyclic permutation of\ } (123).
\end{equation}
Now we rewrite~\eqref{eq:SU3:tf} as a (singular) ODE system for $\hat{f}_1$, $\hat{f}_2$, $\hat{f}_3$ as defined above in~\eqref{eq:fi:su3:singular:ic}.
It will also be useful to record the ODE system satisfied by the sums and differences of pairs 
of the components~$f_i$. A straightforward calculation shows that
\[
(f_j + f_k)' = \left(\frac{f_i^2 - (f_j-f_k)^2}{2f_1 f_2 f_3}\right) (f_j+f_k), 
\qquad
(f_j-f_k)' = \left( \frac{f_i^2-(f_j+f_k)^2}{2f_1f_2f_3}\right) (f_j-f_k),
\]
where as above $(ijk)$ is any cyclic permutation of  $(123)$.
In particular, if on some principal orbit we have $f_j=f_k$ then we will have $f_j \equiv f_k$ up to the maximal 
existence time of the solution. 

For later purposes we record some intermediate calculations used to calculate the recast ODE system:
\begin{align*}
f_2^2+f_3^2 - f_1^2 &= 2b^2 + 2b t (\hat{f}_2+\hat{f}_3) - t^2 (1 -\hat{f}_2^2 - \hat{f}_3^2 + 2t^2 \hat{f}_1 + t^4 \hat{f}_1^2),\\
f_1^2+f_3^2-f_2^2 &= \phantom{ 2b^2 +\ } 2b t(\hat{f}_3 - \hat{f}_2) + t^2( 1 + \hat{f}_3^2 - \hat{f}_2^2 + 2t^2 \hat{f}_1 + t^4 \hat{f}_1^2)
\end{align*}
while for $f_1^2+f_2^2-f_3^2$ we simply exchange $\hat{f}_2$ and $\hat{f}_3$ in the previous expression. For the mixed quadratic terms we find
\begin{align*}
2f_2f_3 &= 2\left(b^2 + t(\hat{f}_2+\hat{f_3}) + t^2 \hat{f}_2 \hat{f}_3\right), \\
2f_1 f_3 &= 2t(b + t\hat{f}_3)(1+t^2 \hat{f}_1), \\
2f_1 f_2 &= 2t(b + t\hat{f}_2)(1+t^2 \hat{f}_1).
\end{align*}
For $i=1$ rewriting~\eqref{eq:SU3:tf} yields
\begin{align*}
 t^3 \hat{f}_1' +1 + 3t^2 \hat{f}_1  &= \frac{ 2b^2 + 2b t (\hat{f}_2+\hat{f}_3) - t^2 (1 -\hat{f}_2^2 - \hat{f}_3^2 + 2t^2 \hat{f}_1 + t^4 \hat{f}_1^2)}{2\left(b^2 + t(\hat{f}_2+\hat{f_3}) + t^2 \hat{f}_2 \hat{f}_3\right)} \\ 
 &= 1 - t^2 \left( \frac{1-(\hat{f}_2-\hat{f}_3)^2 + 2t^2 \hat{f}_1 + t^4 \hat{f}_1^2)}{2\left(b^2 + t(\hat{f}_2+\hat{f_3}) + t^2 \hat{f}_2 \hat{f}_3\right)}\right).
\end{align*}
Hence we have
\begin{subequations}
\label{eq:fihat:tf}
\begin{gather}
\label{eq:f1hat:tf}
t \hat{f}_1' = -3 \hat{f}_1 + \frac{1}{2b^2} (\hat{f}_2-\hat{f}_3)^2 - \frac{1}{2b^2}
+ t M_1(t,\hat{\bf{f}}).\\
\intertext{Similarly for $i=2$ and $i=3$~\eqref{eq:SU3:tf} is equivalent to}
t\hat{f}_2' = -\hat{f}_2 + \frac{2b (\hat{f}_3 - \hat{f}_2) + t( 1 + \hat{f}_3^2 - \hat{f}_2^2 + 2t^2 \hat{f}_1 + t^4 \hat{f}_1^2)}
{2(b + t\hat{f}_3)(1+t^2 \hat{f}_1)} =  -2\hat{f}_2 + \hat{f}_3 + t M_2(t,\hat{\bf{f}}),\\
\intertext{and}
t\hat{f}_3'  =  \hat{f}_2 -2\hat{f}_3 + t M_3(t,\hat{\bf{f}}),\\
\intertext{respectively. Note also that the sum and difference of $\hat{f}_2$ and $\hat{f}_3$ satisfy the ODEs}
t (\hat{f}_2 + \hat{f}_3)' = - (\hat{f}_2 + \hat{f}_3) + O(t), \qquad t (\hat{f_2}- \hat{f}_3) = -3 (\hat{f}_2 - \hat{f}_3) + O(t).
\end{gather}
\end{subequations}
The ODE system~\eqref{eq:fihat:tf} is now in the form~\eqref{eq:Singular:IVP} and so 
Theorem~\eqref{thm:Singular:IVP} tells us that to have a real analytic solution around $t=0$ first we must impose that the singular term $M_{-1}$
given by
\begin{equation}
\label{eq:Msing:tf}
M_{-1}(\hat{f}_1,\hat{f}_2,\hat{f}_3) = \left( -3 \hat{f}_1 + \frac{1}{2b^2} (\hat{f}_2-\hat{f}_3)^2 - \frac{1}{2b^2}, \,
2\hat{f}_2 + \hat{f}_3 ,\, \hat{f}_2 -2\hat{f}_3  \right) 
\end{equation}
vanishes initially. In our case this means that the initial conditions for $\hat{f}_1$, $\hat{f}_2$ and 
$\hat{f}_3$ at $t=0$ must satisfy
\[
-3 \hat{f}_1 + \frac{(\hat{f}_2-\hat{f}_3)^2 - 1}{2b^2} = 0, \quad   2\hat{f}_2 + \hat{f}_3=0, \quad
\hat{f}_2 -2\hat{f}_3=0.
\]
These equations force the initial conditions to satisfy
\begin{equation}
\label{eq:fi:IC:tf}
\hat{f_1}(0) = -\frac{1}{6b^2}, \qquad \hat{f}_2(0) = \hat{f}_3(0)=0.
\end{equation}

To guarantee a unique formal series solution for all $b \neq 0$ it remains to verify that $n \tu{Id} - dM_{-1}$ evaluated at the initial conditions~\eqref{eq:fi:IC:tf}
is invertible for all positive integers $n$. 
Evaluating the differential of $M_{-1}$ at the initial conditions~\eqref{eq:fi:IC:tf} yields
\[
n\, \tu{Id} - dM_{-1} = 
\left(
\begin{matrix}
n+3 & 0 & 0\\
0 & n+2 & -1\\
0 & -1 & n+2
\end{matrix}
\right),
\]
which having determinant $(n+3)^2(n+1)$ is indeed invertible for all positive integers $n$.
Hence Theorem~\ref{thm:Singular:IVP} applies. 
However we already know that there are solutions of the ODE system with any such initial values that have $f_2 \equiv f_3$, namely 
the classical AC Bryant--Salamon solutions on $\Lambda^2_- \CP^2$
from Example \ref{ex:BS}.
So by local uniqueness these solutions must coincide.
\\[0.2em]
\emph{The general case.}
First we must determine what constraints the initial values of an invariant soliton must satisfy to extend smoothly over 
a singular orbit $\CP^2$ at $t=0$. The conditions that the $3$-form $\varphi$ must satisfy have already been described. 
The coefficient $u$ determining the vector field $X=u \,\partial_t$ must be odd in $t$, 
and in particular $u(0)=0$. We now use these constraints to understand the behaviour of the components 
of the torsion $\tau$ close to the singular orbit.

Since $u$ is odd, using the initial conditions for $f_i$ given in~\eqref{eq:fi:su3:singular:ic}
and the conservation law~\eqref{eq:ODEs:SU3:tausum} we find that $\tb$ must be odd 
and in particular $\tb(0)=0$.
Since $f_1$ is odd and $f_2 f_3$ is even, \eqref{eq:tau:i:SU3} implies that $\tau_1$ is odd
and therefore also $\tau_2+\tau_3$ is odd. 
In particular $\tau_1=0$ and $\tau_2+\tau_3=0$ at $t=0$. 

To proceed further we choose to single out the 5 variables $(f_1,f_2,f_3,\tau_2, \tau_3)$ and to recover $\tau_1$ by defining 
\begin{equation}
\label{eq:tau1}
\tau_1:= - f_1^2\left(\frac{\tau_2}{f_2^2}+\frac{\tau_3}{f_3^2}\right).
\end{equation}
Then we write
\[
\tau_2 = c + t \hat{\tau}_2, \quad \tau_3 = -c + t \hat{\tau}_3,
\]
where $\hat{\tau}_2+\hat{\tau}_3$ is even (since $\tau_2+\tau_3$ was odd).

The key point is to determine the potentially singular terms $M_{-1}$ in the first-order ODE system rewritten in terms of the variables $(\hat{f}_1,\hat{f}_2,\hat{f}_3,\hat{\tau}_2,\hat{\tau}_3)$.
For $\hat{f}_i'$ we can do this by understanding what corrections to the singular terms $M_{-1}$ that we calculated in the torsion-free case in~\eqref{eq:Msing:tf} will appear due to the presence of the torsion terms $\tfrac{\tau_i}{2f_i}$. 

For $i=2,\,3$ this is straightforward: the terms in $M_{-1}$ change only by the addition/subtraction respectively
of the term $\frac{c}{2b}$. In other words we have
\[
t \hat{f}_2' = -2\hat{f}_2 + \hat{f}_3 + \frac{c}{2b} + O(t), \qquad t \hat{f}_3' = \hat{f}_2 -2\hat{f}_3-\frac{c}{2b}+ O(t).
\]
The ODEs satisfied by the sum $\hat{f}_2+\hat{f}_3$ and the 
difference $\hat{f}_2-\hat{f}_3$ are
\begin{align*}
t(\hat{f}_2+\hat{f}_3)' &= - (\hat{f}_2+\hat{f}_3) + O(t), \\
t (\hat{f}_2 -\hat{f}_3)'&= -3(\hat{f}_2-\hat{f}_3) + \frac{c}{b} + O(t).
\end{align*}
The vanishing of these two components of $M_{-1}$ on the initial data therefore forces
\begin{equation}
\label{eq:hatf2:f3:ic}
\hat{f}_2(0) = -\hat{f}_3(0) = \frac{c}{6b}.
\end{equation}
The computation for $i=1$ requires a little more effort and we return to this at the end.

Instead next we consider the equations for $\tau_2$ and $\tau_3$, \ie \eqref{eq:SU3:taui:dot} rewritten
in terms of $\hat{\tau}_2$ and $\hat{\tau}_3$.
For $i=2, 3$ the coefficients $\fb f_j f_k f_i^{-1}$ appearing on the left-hand side of~\eqref{eq:SU3:taui:dot} 
are both equal to 
\[2tb^2 + O(t^2).\]
Hence to determine the terms that will contribute to $M_{-1}$  we need only look at terms 
on the right-hand side of~\eqref{eq:SU3:taui:dot} that give rise to terms linear in $t$. Two of the five terms, namely $-2\lambda f_j^2f_k^2 \tau_i$ and $\tb f_j f_k f_i^{-1}\tau_i$, contain only terms quadratic in $t$ and higher: 
for the first term this is immediate from the fact that it contains $f_1^2$ and for the second recall that $\tb$ is odd in $t$.
So we have to consider the remaining three terms: one not involving the coefficients of $\tau$, one linear and one quadratic in those coefficients. The first term $\tfrac{4}{3}\lambda f_1 f_2 f_3 (2f_i^2-f_j^2-f_k^2)$ is easily seen to contribute
\[
\frac{4\lambda b^4}{3}t.
\]
To see the $t$ contribution made by the second term first note that $\tb = t(\hat{\tau}_2+\hat{\tau}_3) + O(t^2)$. 
This is because $\tb$ and $\tau_1$ are both odd and \eqref{eq:tau1}
combined with the leading-order behaviour of $f_2$, $f_3$, $\tau_2$ and $\tau_3$ force that the leading-order behaviour of $\tau_1$ is $t^3$. (Below we will need to determine this term).
It is therefore clear that the term linear in the coefficients of $\tau$, namely $-2\tb f_i^2$, contributes
\[
-2t b^2 (\hat{\tau}_2+\hat{\tau}_3).
\]
It remains to compute the $t$ contribution made by the term $\frac{1}{3} \fb f_1 f_2 f_3 \abs{\tau}^2$. Using~\eqref{eq:tau1} again, we can replace the term $\tau_1^2 f_1^{-4}$
and obtain 
\[\abs{\tau}^2 = \left(\sum{\tau_l^2 f_l^{-4}} \right) = 2(\tau_2^2f_2^{-4}+\tau_3^2f_3^{-4} + \tau_2\tau_3 f_2^{-2} f_3^{-2}).\]
Using the leading-order behaviour of those coefficients we find that  $\left(\sum{\tau_l^2 f_l^{-4}} \right) = 2c^2 b^{-4} + O(t)$. Therefore the $t$ contribution of this term is
\[
\frac{4}{3}t c^2.
\]
Combining all this we find
\begin{align*}
t \hat{\tau}_2' &= -2 \hat{\tau}_2 - \hat{\tau}_3 + \frac{2}{3}\lambda b^2 + \frac{2c^2}{3b^2} + O(t), \\
t \hat{\tau}_3' &= -\hat{\tau}_2 - 2\hat{\tau}_3 + \frac{2}{3}\lambda b^2 + \frac{2c^2}{3b^2} + O(t).
\end{align*}
We also note that the ODEs satisfied by the sum $\hat{\tau}_2+\hat{\tau}_3$ and the 
difference $\hat{\tau}_2-\hat{\tau}_3$ are
\begin{align*}
t(\hat{\tau}_2+\hat{\tau}_3)' &= -3 (\hat{\tau}_2+\hat{\tau}_3) + \frac{4}{3}\left(\lambda b^2 + \frac{c^2}{b^2} \right) + O(t), \\ t (\hat{\tau}_2 -\hat{\tau}_3)' &= -(\hat{\tau}_2-\hat{\tau}_3) + O(t).
\end{align*}
Requiring these two components of $M_{-1}$ to vanish on the initial data therefore forces
\begin{equation}
\label{eq:hat:tau2:3:ic}
\hat{\tau}_2(0)=\hat{\tau}_3(0) = \frac{2}{9} \left(\lambda b^2 + \frac{c^2}{b^2}\right).
\end{equation}

Finally we return to the change in the contribution to~\eqref{eq:f1hat:tf},  the equation for $\hat{f}_1'$ in the torsion-free case, due to the presence 
of the additional torsion term $\tfrac{\tau_1}{2f_1}$. To evaluate this we need to find the term linear in $t$ 
in the expansion of $(\tau_2 f_2^{-2} + \tau_3 f_3^{-2})$. A short calculation shows
\begin{equation}
\label{eqn:tau1:su3:expand}
\frac{\tau_1}{f_1^2}:= -
\left(\frac{\tau_2}{f_2^2}+ \frac{\tau_3}{f_3^2}\right) = -\left( \frac{2c}{b^3} (\hat{f}_3 - \hat{f}_2) +  \frac{1}{b^2} (\hat{\tau}_2+\hat{\tau}_3)\right) t + O(t^2).
\end{equation}
Substituting for the initial values given in \eqref{eq:hatf2:f3:ic} and \eqref{eq:hat:tau2:3:ic} we find 
\[
\frac{\tau_1}{f_1^2} = \frac{2}{9} \left( -2\lambda + \frac{c^2}{b^4} \right) t + O(t^2).
\]
Combining~\eqref{eq:f1hat:tf}~and~\eqref{eqn:tau1:su3:expand} we conclude that
\[
t \hat{f}_1' = \left(-3 \hat{f}_1 + \frac{1}{2b^2} (\hat{f}_2-\hat{f}_3)^2 - \frac{1}{2b^2}\right) + 
\frac{c}{b^3} (\hat{f}_2-\hat{f}_3) - \frac{1}{2b^2} (\hat{\tau}_2+\hat{\tau}_3) + O(t).
\]
Hence the constraint that the corresponding component of $M_{-1}$ evaluated at the initial condition must vanish also yields
\begin{equation}
\label{eq:hatf1:ic}
\hat{f}_1(0) = -\frac{1}{6b^2} - \frac{2\lambda}{27} + \frac{c^2}{18 b^4},
\end{equation}
where to obtain this we also used the previously determined constraints on the initial conditions given by \eqref{eq:hatf2:f3:ic} and \eqref{eq:hat:tau2:3:ic}.
Evaluating the differential of $M_{-1}$ at the initial conditions determined by~\eqref{eq:hatf2:f3:ic} at~\eqref{eq:hatf1:ic} yields
\[
n\, \tu{Id} - dM_{-1} = 
\left(
\begin{matrix}
n+3 & -\frac{4c}{3b^3} & \frac{4c}{3b^3} & \frac{1}{2b^2} & \frac{1}{2b^2}\\
0 & n+2 & -1 & 0 & 0\\
0 & -1 & n+2 & 0 & 0\\
0 & 0 & 0 & n+2 & 1\\
0 & 0 & 0 & 1 & n+2
\end{matrix}
\right),
\]
which having determinant $(n+3)^3(n+1)^2$ is indeed invertible for all positive integers $n$.
Hence Theorem~\eqref{thm:Singular:IVP} applies and our result follows.
\end{proof}

\subsection{\texorpdfSp{2}-invariant \texorpdfgtwo-solitons extending smoothly over \texorpdfstring{$\Sph^4$}{S\textasciicircum 4}}
\label{ss:sp2:sc}

We can also deduce results about \Sp{2}-invariant \gtwo-solitons that close smoothly on the singular orbit $\Sph^4 \subset \Lambda^2_-\Sph^4$ by 
specialising to the case where $f_2=f_3$ and $\tau_2=\tau_3$. We now give further details of the results obtained this way.

For a closed \Sp{2}-invariant \gtstr~$\varphi_f$ expressed in the form
\[
\varphi_f = \omega_f \wedge dt + \Real{\Omega_f} = (f_1^2 \omega_1 + f_2^2 \omega_2) \wedge dt +  f_1 f_2^2 \alpha
\]
the conditions that $\varphi_f$ extend smoothly over the singular orbit $\Sph^4 = \Sp{2}/\Sp{1}\times \Sp{1}$ at $t=0$ are:
$f_1$ is odd in $t$ and $\abs{f_1'(0)}=1$; 
$f_2$ is even in $t$. 

Recall also that by acting with Klein four-group we may assume that $(f_1,f_2)$ is a positive pair for all $t>0$. 
In particular, we must have that $f_1'(0)=+1$ and $f_2(0)>0$.
Hence we can write the coefficients $f_1$ and $f_2$ as
\begin{equation}
\label{eq:fi:sp2:singular:ic}
f_1 = t + t^3 \hat{f}_1, \quad f_2 = b+ t^2 \hat{f}_2, 
\end{equation}
for some $b>0$ and 
new functions $\hat{f}_1$, $\hat{f}_2$ that are both even in $t$. 
Similarly, we write the torsion coefficients $\tau_1$ and $\tau_2$ as 
\[
\tau_1 = t^3 \hat{\tau}_i, \quad \tau_2 = t \hat{\tau}_2,
\]
for functions $\hat{\tau}_1$ and $\hat{\tau}_2$ that are both even in $t$.

The main theorem for smoothly-closing solitons in the \Sp{2}-invariant setting
can be obtained by setting $c = 0$ in Theorem \ref{thm:SU3:smooth:closure}.

\begin{theorem}
\label{thm:Sp2:smooth:closure}
Fix any $\lambda \in \R$ and $b>0$. Then there exists a unique local \Sp{2}-invariant \gtwo-soliton which closes smoothly on 
the singular orbit $\Sph^4 \subset \Lambda^2_-\Sph^4$ and which satisfies
\[
f_1= t + t^3 \hat{f}_1, \quad f_2 = b + t^2 \hat{f}_2, \quad \tau_1 = t^3 \hat{\tau}_1, \quad \tau_2 = t \hat{\tau}_2,
\]
where the terms $\hat{f}_i$ and $\hat{\tau}_i$ satisfy
\[
\hat{f}_1(0) = -\frac{1}{6b^2} - \frac{2\lambda}{27},
\quad \hat{f}_2(0) = \frac{\left( 2\lambda b^2+9 \right)} {36b}, \quad
\hat{\tau}_1(0) = -\frac{4\lambda}{9}, \quad 
\hat{\tau}_2(0) = \frac{2}{9} \lambda b^2.
\]
 The solution is real analytic on $[0,\epsilon) \times \CP^3$ for some $\epsilon >0$ (depending on $b$ and $\lambda$).
Moreover, up to the $\Z_2 \times \Z_2$-action, any \Sp{2}-invariant \gtwo-soliton which closes smoothly on the singular orbit $\Sph^4 \subset \Lambda^2_-\Sph^4$ belongs to this $1$-parameter family of solutions.
\end{theorem}
Taking into account the scaling behaviour of steady solitons immediately gives us the following:

\begin{corollary}
\label{Cor:Sp2:steady}
Any local \Sp{2}-invariant steady~\gtwo-soliton which closes smoothly on the singular orbit $\Sph^4 \subset \Lambda^2_-\Sph^4$ is torsion-free and has vector field $X \equiv 0$. 
Hence the underlying~\gtstr~is the (unique up to scale) standard asymptotically conical torsion-free
~\gtstr~on $\Lambda^2_-\Sph^4$ constructed by Bryant--Salamon. In particular, its asymptotic cone is the cone over the unique \Sp{2}-invariant 
nearly K\"ahler structure on $\CP^3$.
\end{corollary}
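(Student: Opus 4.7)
The plan is to combine Theorem~\ref{thm:Sp2:smooth:closure} with the scaling symmetry of steady solitons (Remark~\ref{rmk:scaling}) to reduce the full 1-parameter family of smoothly-closing steady solitons to a single orbit under rescaling, and then identify that orbit with the Bryant--Salamon trivial soliton.

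First I would specialize Theorem~\ref{thm:Sp2:smooth:closure} to $\lambda=0$. The initial conditions at the singular orbit collapse to $\hat{f}_1(0)=-\tfrac{1}{6b^2}$, $\hat{f}_2(0)=\tfrac{1}{4b}$, $\hat{\tau}_1(0)=0$, $\hat{\tau}_2(0)=0$. In particular, all torsion initial data vanish at $t=0$, which is suggestive but not yet sufficient because the torsion could in principle become nonzero immediately as $t$ leaves $0$. The uniqueness statement in Theorem~\ref{thm:Sp2:smooth:closure} asserts that for each fixed $b>0$ there is exactly one such smoothly-closing soliton, so the space of smoothly-closing $\Sp{2}$-invariant steady solitons (up to discrete symmetries) is parametrised by $b \in \R_{>0}$.

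Next I would invoke the scaling symmetry \eqref{eq:scaling}: under $(t,f_i,u,\tau)\mapsto(\mu t,\mu f_i,\mu^{-1}u,\mu\tau)$ with $\mu>0$, a smoothly-closing steady soliton with parameter $b$ is sent to a smoothly-closing steady soliton with parameter $\mu b$. Since $\mu$ ranges over all of $\R_{>0}$, the entire 1-parameter family is a single orbit of the rescaling action. Thus, up to scale, there is at most one smoothly-closing $\Sp{2}$-invariant steady soliton.

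Finally, I would exhibit the Bryant--Salamon asymptotically conical torsion-free $\gtwo$-structure on $\Lambda^2_-\Sph^4$ (recalled in Example~\ref{ex:BS}) as a concrete representative of this orbit. It satisfies $\tau\equiv 0$ and (with $X\equiv 0$) is a (trivial) steady Laplacian soliton closing smoothly on $\Sph^4$, hence it must coincide, up to rescaling and discrete symmetries, with the unique member of the family for each $b$. Consequently every smoothly-closing $\Sp{2}$-invariant steady soliton is torsion-free with $X\equiv 0$, and its asymptotic cone is the Bryant--Salamon one, namely the cone over the unique $\Sp{2}$-invariant nearly K\"ahler structure on $\CP^3$ described in Lemma~\ref{lem:Sp2:invt:SU3:strs}(iii). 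There is no serious obstacle here since both ingredients (the uniqueness in Theorem~\ref{thm:Sp2:smooth:closure} and the existence of the Bryant--Salamon solution as a smoothly-closing trivial steady soliton) are already in hand; the only point requiring mild care is verifying that Bryant--Salamon indeed matches the normal form of Theorem~\ref{thm:Sp2:smooth:closure} at $t=0$, which follows from Example~\ref{ex:BS} by the choice of parameter $\mu$.
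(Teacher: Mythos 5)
Your proposal is correct and follows essentially the same route the paper intends (the paper simply says ``Taking into account the scaling behaviour of steady solitons immediately gives us the following''): specialize Theorem~\ref{thm:Sp2:smooth:closure} to $\lambda=0$, observe that the remaining one-parameter family indexed by $b$ is a single orbit of the rescaling action~\eqref{eq:scaling}, and identify that orbit with the Bryant--Salamon trivial steady soliton via uniqueness. You were also right to flag that vanishing of the torsion initial data alone is suggestive but not conclusive, and to close the gap by exhibiting Bryant--Salamon as a representative and invoking the uniqueness clause.
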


\subsection{Explicit complete asymptotically conical shrinkers}
\label{ss:shrinker:explicit}
In this subsection we prove Theorem~\ref{mthm:shrinkers}, observing that
properties of the power series expansions of \Sp{2}-invariant smoothly-closing
Laplacian solitons close to the singular orbit
lead naturally to a $1$-parameter family of explicit complete asymptotically conical shrinking solitons. Up to the action of scaling this yields a unique explicit asymptotically conical shrinker on $\Lambda^2_-\Sph^4$ and also on $\Lambda^2_-\CP^2$. For notational simplicity it will be convenient to set $x=f_1$ and $y=f_2=f_3$ in the discussion below.

The starting point is to notice the following property of the power series expansions 
close to the singular orbit for the smoothly-closing shrinkers constructed in Theorem~\ref{thm:Sp2:smooth:closure} and described in 
Appendix~\ref{app:sc:power:series}: when we write
\[
x = t \left(1+ \sum_{i=0}{x_i t^{2i}}\right), \quad  \tau_2 = t\left(\sum_{i=0}{T_i t^{2i}}\right)
\]
then the coefficients $x_1$, $x_2$, $T_1$ and $T_2$ all contain $x_0$ as a factor. Moreover, $x_0$ vanishes if and only if  $\lambda b^2 = -\tfrac{9}{4}$. 
In other words, when the latter equality holds we have 
\[
x=t + O(t^9), \quad \tau_2 = -\tfrac{1}{2}t + O(t^9).
\] 
This suggests  that $x+2\tau_2 \equiv 0$ and $x=t$ when $\lambda b^2 = -\tfrac{9}{4}$.

We will see that this is indeed the case and then find explicit solutions also for $y$ and $u$. 
First recall from \eqref{eq:ODE:tau2'} that the ODE for $x^2$ can be written as
$
(x^2)' = 2x - \frac{x^2}{y^2}(x+2\tau_2).
$
So if we impose the condition $x+2\tau_2\equiv 0$ then we are forced to have
$x'=1$ everywhere and hence $x = t$ by
the initial conditions. Then for $y$ we have
$
(y^2)' = x+ \tau_2 = \frac{1}{2}x = \frac{1}{2}t, 
$
and therefore $y^2 = b^2 + \frac{1}{4}t^2$.
The quantity $S= y^2 - x^2 - \tfrac{3}{2}x \tau_2$ also has a particularly simple behaviour:
$S = b^2 + \tfrac{1}{4}t^2 - t^2 + \tfrac{3}{4}t^2  \equiv b^2.$
Now we compute that $4R_1 S = 4(\lambda xy^2 - 3\tau_2)S = -\frac{3t}{4}(4b^2+3t^2)$ and $3x(x^2+2y^2) = \frac{3t}{2}(4b^2+3t^2)$
and therefore the right-hand side of the ODE for $\tau_2'$ is identically equal to $-\frac{1}{2}$. 
Hence we do have a consistent solution to the ODE system \eqref{eq:ODE:tau2'}.
Finally using the second equation in~\eqref{eq:tau1:u:tau2} we calculate that 
\[
u = \frac{3t}{4b^2} + \frac{4t}{4b^2+t^2}.
\]

In other words,  the unique solution of the initial value problem for the \Sp{2}-invariant shrinker equation with 
initial conditions as in the statement of Theorem~\ref{thm:Sp2:smooth:closure} and  $\lambda = - \frac{9}{4b^2}<0$ is
\[
x = t, \quad y^2 = b^2 + \tfrac{1}{4}t^2, \quad \tau_2 = -\tfrac{1}{2}t, \quad u = \frac{3t}{4b^2} + \frac{4t}{4b^2+t^2}.
\]
Note that $u + \tfrac{\lambda}{3}t = \tfrac{4t}{4b^2+t^2} \ge 0 $ and that the latter tends to zero as $t \to \infty$.
For $t \gg 1$ this shrinker satisfies $y \simeq \frac{1}{2}t$, so that it is asymptotic to the closed non-torsion-free \Sp{2}-invariant \gtwo--cone 
$x=c_1 t,\, y=c_2 t$ with $c_1=1$, $c_2=\frac{1}{2}$. 

\begin{remark}
\label{rmk:x:eq:y}
Note also that in this case asymptotically we have $x>y$, whereas initially $y>x$, and moreover
there is a unique time $t_0 = \sqrt{\frac{4b^2}{3}}$ at which $x=y$.
\end{remark}

We summarise the situation in the following more precise version of
Theorem \ref{mthm:shrinkers}.
 
\begin{theorem}
\label{thm:explicit:AC:shrinker}
For any $b>0$ let $\lambda = - \frac{9}{4b^2}<0$ and define the following functions 
\[
x = t, \quad y^2 = b^2 + \tfrac{1}{4}t^2, \quad \tau_2 = -\tfrac{1}{2}t, \quad u = \frac{3t}{4b^2} + \frac{4t}{4b^2+t^2}, \quad\  t \ge 0.
\]
\begin{enumerate}[left=0em]
\item
The closed \Sp{2}-invariant \gtstr~$\varphi  =  xy^2 \alpha  + (x^2 \omega_1 + y^2 \omega_2)\wedge dt$, together with the vector field $X=u \,\partial_t$
determines a complete asymptotically conical Laplacian shrinker on $\Lambda^2_-\Sph^4$ asymptotic to the 
closed but non-torsion-free \Sp{2}-invariant \gtwo--cone defined by $x=t$, $y= \frac{1}{2}t$.
\item
The closed $\sunitary{3}$-invariant \gtstr~$\varphi  =  xy^2 \alpha  + (x^2 \omega_1 + y^2 \omega_2 + y^2 \omega_3)\wedge dt$
together with the vector field $X=u \,\partial_t$ determines
a complete asymptotically conical Laplacian shrinker on $\Lambda^2_-\CP^2$ 
that closes smoothly on $\CP^2_1$ and is asymptotic to the closed but non-torsion-free $\sunitary{3}$-invariant 
\gtwo--cone defined by $f_1=t, f_2=f_3 = \frac{1}{2}t$. 
\end{enumerate}
\end{theorem}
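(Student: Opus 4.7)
The preceding subsection has already derived the explicit quadruple $(f_1, f_2, \tau_2, u)$ by an ansatz-driven calculation, starting from the observation that the smoothly-closing power series expansions simplify drastically when $\lambda b^2 = -\tfrac{9}{4}$. What remains is to verify four essentially computational assertions: (a) that the quadruple solves both the \Sp{2} and the \sunitary{3} invariant soliton systems; (b) that it satisfies the smooth-closure conditions for the respective singular orbits; (c) that the resulting Riemannian manifold is forward-complete; and (d) that its end is asymptotically conical with the claimed cone geometry.

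For (a) I would substitute $f_1 = t$, $f_2^2 = b^2 + \tfrac{1}{4} t^2$, $\tau_2 = -\tfrac{1}{2} t$ into the \Sp{2}-invariant first-order system \eqref{eq:ODE:tau2'}. The key algebraic simplification, already highlighted in the preceding discussion, is that $S := f_2^2 - f_1^2 - \tfrac{3}{2} f_1 \tau_2 \equiv b^2$, so with $\lambda = -\tfrac{9}{4b^2}$ the right-hand side of the $\tau_2'$-equation collapses to $-\tfrac{1}{2}$, consistent with $\tau_2' = -\tfrac{1}{2}$. The formulas for $\tau_1$ and $u$ then read off directly from \eqref{eq:tau1:u:tau2}. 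Part (ii) requires no further ODE work: by the remark after Proposition~\ref{prop:closed:gtstr:Sp2}, any \Sp{2}-invariant solution with $f_2 = f_3$ is automatically an \sunitary{3}-invariant solution of the larger system \eqref{eq:ODEs:SU3}.

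Step (b) reduces to matching the explicit formulas against the smooth-closure conditions of Section~\ref{ss:IC:CP2}. Over $\Sph^4$ one needs $f_1$ odd with $f_1'(0) = 1$, $f_2$ even and positive at $0$, and $u$ odd with $u(0) = 0$; all four conditions are manifest. Over $\CP^2_1$ one needs $f_1$ odd with $f_1'(0) = 1$ and $f_2^2(t) = f_3^2(-t)$ with $f_2(0) \ne 0$, both of which are immediate since $f_2 = f_3$ is even in $t$. For (c), the coefficients $f_1, f_2$ are positive and smooth for $t > 0$, and smoothness at $t = 0$ together with the standard metric $dt^2 + g_f$ extending across the zero-section yield completeness of the induced metric on $\Lambda^2_- \Sph^4$ or $\Lambda^2_- \CP^2$ over $t \in [0, \infty)$.

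For (d), as $t \to \infty$ we have $f_1/t \to 1$ and $f_2/t \to \tfrac{1}{2}$, so the end is asymptotic to the conical metric with $f_1 = t$, $f_2 = f_3 = \tfrac{1}{2} t$. A direct substitution of this linear triple into \eqref{eq:closed:su3} verifies the closure identity, while evaluating \eqref{eq:tau:i:SU3a} on the cone yields $\tau_1 = 4t$ and $\tau_2 = \tau_3 = -\tfrac{1}{2} t$, which is not identically zero, so the cone is closed but not torsion-free. The main conceptual hurdle would be identifying the correct ansatz from the power-series data, but that step is carried out in the discussion immediately preceding the statement, so the proof itself amounts to a routine verification that an explicit integral curve has the claimed properties.
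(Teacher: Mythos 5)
Your proposal is correct and takes essentially the same approach as the paper: the explicit formulas are obtained by the ansatz $x + 2\tau_2 \equiv 0$ motivated by the power series, and what remains is the direct verification that the quadruple solves \eqref{eq:ODE:tau2'} (via the key simplification $S \equiv b^2$ and $4R_1 S = -\tfrac{3t}{4}(4b^2+3t^2)$), satisfies the parity conditions for smooth closure, and has $f_1/t \to 1$, $f_2/t \to \tfrac12$ giving the claimed asymptotic cone. The paper carries this out in the discussion immediately preceding the theorem statement rather than in a displayed proof; your computation of the non-vanishing torsion $(\tau_1,\tau_2,\tau_3) = (4t, -\tfrac{t}{2}, -\tfrac{t}{2})$ on the limit cone is a useful elaboration of the paper's bare assertion that the cone is ``closed but non-torsion-free,'' but it does not change the overall argument.
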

\begin{remark}
\label{rmk:SU3:shrinker:variants}
By Remark \ref{rmk:involution_asd} the metric induced by the AC shrinking $\sunitary{3}$-invariant soliton on~$\Lambda^2_-\CP^2$ is invariant under the $\Z_2$-action
given by multiplying the fibres by $-1$ (which acts on the
\mbox{\gtstr{}} as multiplication by $-1$).
By acting with the nontrivial elements of ${A_3 \subset W = S_3}$
we also get two other variants of this AC shrinking soliton; 
these variants have different free~$\Z_2$ actions, different asymptotic
cones and close smoothly not on the singular orbit $\CP^2_1$ but
on ${\CP^2_2 = \sunitary{3}/\unitary{2}_2}$
or~$\CP^2_3 = \sunitary{3}/\unitary{2}_3$ instead.
\end{remark}

\begin{remark*}
Fowdar~\cite[Section 5.2]{Fowdar:S1:invariant:LF}  found explicit
complete (usually inhomogeneous) gradient shrinking Laplacian solitons on generalised cylinders $\R \times P^6$ where $P^6$ 
is the total space of certain
$T^2$-bundles over~\hk~4-manifolds $B^4$. 
These examples arise from a $1$-parameter family of special~\sunitary{3}-structures $(\omega_t,\Omega_t)$ on $P^6$ as described in~\S\ref{ss:G2:evolve:closed}
and also come from solving a fixed ODE system that is independent of the choice of $B$. 
As $t$ varies along the cylinder the induced metric $g_t$ on the base~$B$
and the fibre $T^2$ both change by homotheties, but the base and the fibre scale differently. 
In the simplest case where $B=T^4$, $P^6$ is a compact nilmanifold $N^6$, the complex $3$-dimensional Iwasawa manifold, \ie the compact quotient 
of the complex $3$-dimensional Heisenberg group by the lattice generated by the Gaussian integers. 

Apostolov--Salamon~\cite[Example 1 p55]{Apostolov:Salamon} and Gibbons et al \cite{CLPS:domain:walls} 
explain how the analogous construction of an incomplete
torsion-free~\gtstr~on $(b,\infty) \times N^6$, where $N^6$ is the Iwasawa manifold, 
arises from the Bryant--Salamon \Sp{2}-invariant complete AC torsion-free~\gtstr{} on~$\Lambda^2_-\Sph^4$ by a `contraction' of its isometry group~\Sp{2}.
In a similar spirit, the functions describing Fowdar's shrinker can also
be viewed as defining an approximate end solution to the
\Sp{2}-invariant soliton system \eqref{eq:ODE:tau2'} on $(b,\infty) \times \CP^3$. 
This approximate end solution turns out to serve as a model for a genuine forward-complete
shrinking solution of \eqref{eq:ODE:tau2'} that is not asymptotically conical \cite[Proposition 8.6]{HJN}. 
\end{remark*}

\subsection{Finite extinction solutions}
\label{ss:finite:extinction}
In order to understand which of the smoothly-closing solitons constructed in Sections~\ref{ss:su3:sc} and~\ref{ss:sp2:sc}
give rise to complete solitons we need to be able to determine which of them exist on an infinite $t$-interval 
and to be able to distinguish these from the solutions with finite extinction time. 
Proposition \ref{prop:steady_extinct} below suggests the existence of steady
soliton solutions with finite future lifetime, such that close to the
extinction time $t_*$ we have the following asymptotics
\[ f_1, f_2 = O\left((t_*-t)^{-\frac14}\right),
\quad f_3 = O\left(\sqrt{t_*-t}\right) . \]
Motivated by these asymptotics 
we find that we can make an ansatz in terms of $v := \sqrt{t_*-t}$ that solves
the ODE system~\eqref{eq:SU3:order1} to leading order. Moreover, the terms that
involve $\lambda$ are actually of lower order, so the ansatz works equally well
for non-steady solitons.
We can then go ahead and solve by power series, and
for each fixed $\lambda$ we find a 4-parameter family of finite-extinction time solutions (so their
flow lines fill an open region in the phase space).
We include the result in this section because (a) the method of proof 
is similar to that used in the construction of the smoothly-closing solitons
and (b) like the construction of smoothly-closing solitons the construction is insensitive to the value (or sign) of the dilation constant. 

One of the free parameters appears in terms of higher order than the other
three parameters.
That there is no obstruction to solving for the
last free parameter is
related to the ansatz forcing $\frac{\tau_1}{f_1^2}$ and
$\frac{\tau_2}{f_2^2}$, which both have leading term
$\frac{1}{2(t_*-t)} = \frac{1}{2v^2}$, to have difference $O(v)$.
This fact is due to the derivative of
$\frac{\tau_1}{f_1^2} - \frac{\tau_2}{f_2^2}$ (expanded from
\eqref{eq:SU3:order1}) being a sum of terms containing a factor of
$\frac{\tau_1}{f_1^2} - \frac{\tau_2}{f_2^2}$ itself and lower
order terms.
Incorporating this fact into the ansatz avoids the need to
iteratively identify several terms in the
power series in order to prove that there is no obstruction.
 
\begin{theorem}
\label{thm:fwd:incomplete}
Given any $a, b, c > 0$ and $m\not = 0$, there is a unique solution
to \eqref{eq:SU3:order1} of the form
\begin{equation}
\label{eq:extinction_ansatz}
\begin{aligned}
f_1^2 &= \frac{2a^2}{v} (1 + v g_1), \quad
f_2^2 = \frac{2b^2}{v} (1 + v g_2), \quad
f_3^2 = c^2 v^2(1+v g_3), \\
\tau_1 &= \frac{f_1^2}{2v^2}(1 + v h + v^3 \delta), \quad
\tau_2 = \frac{f_2^2}{2v^2}(1 + v h - v^3 \delta)
\end{aligned}
\end{equation}
where $g_i, h, \delta : [0,\epsilon) \to \R$ are real analytic,
and $\delta(0) = m$.

If $a = b$ and $m = 0$ then $f_1 = f_2$ and $\tau_1 = \tau_2$.
\end{theorem}

\begin{proof}
Taking the ansatz \eqref{eq:extinction_ansatz}, we can rewrite
\eqref{eq:SU3:order1} as an ODE for $x = (g_1, g_2, g_3, h, \delta)$.
It can be written as
\[ x' = \frac{x_0 + Lx}{v} + H(x,v) \]
for an analytic function $H$, constant $x_0$ and linear map $L$ that
a computer-aided calculation shows to be represented by
\[
\setstretch{1.2}
x_0 = \frac{1}{abc}\left(\begin{array}{c}
2 a^2-2 b^2 \\
-2 a^2+2 b^2 \\
-2 a^2-2 b^2 \\
2 a^2+2 b^2 \\
0 %
\end{array}\right) \quad
\textrm{and} \quad
L = \left(\begin{array}{ccccc}
-1 & 0 & 0 & -1 & 0
\\
 0 & -1 & 0 & -1 & 0 
\\
0 & 0 & -1 & 2 & 0 
\\
0 & 0 & 0 & -3 & 0 
\\
0 & 0 & 0 & 0 & 0
\end{array}\right) . \]
Once one fixes $\delta(0) = m$, there is clearly a unique solution for the
other initial terms $g_i(0)$ and~$h(0)$.
$L$ is diagonalisable with eigenvalues $-1$ (of multiplicity 3,
``caused'' by the fact that changing the parameters $a$, $b$, and $c$
still gives a valid ansatz), 0 and $-3$
(the latter with eigenvector $(1,1,-2,2,0)$). 
Because there are no positive eigenvalues there is a unique formal power
series solution for each $m$, which corresponds to a unique,
analytic solution.

\pagebreak[2]
If $a = b$ then setting $g_1 = g_2$ and $\delta = 0$ defines an ODE in three
variables that has a unique solution for the same reasons, so the solutions
with $\delta(0) = 0$ stay in that subset.
\end{proof}
Theorem \ref{mthm:extinction} is a less detailed version of the previous theorem.

\section{Scale decoupling and AC steady ends}
\label{sec:scale_eqs}
In this section we introduce a scale-normalised version of the first-order soliton ODE system
that is particularly convenient for analysing the  behaviour of asymptotically conical (AC) ~\sunitary{3}-invariant Laplacian solitons.
In the steady case this scale-normalised system exhibits a special property: a decoupling between the scale 
and the remaining scale-normalised quantities. 
We use this system in our proof of Theorem~\ref{mthm:steady_ac_end} in \S\ref{ss:AC:ends}: in particular, we prove
that the property that an~\sunitary{3}-invariant steady soliton has an AC
end is an open condition and moreover that any~\sunitary{3}-invariant steady soliton for which all the ratios $f_i/f_j$ remain
bounded is necessarily AC, generically with rate $-1$, with asymptotic cone the unique invariant torsion-free cone.

\subsection{A scale-normalised version of the first-order soliton system}
\label{ss:si:ODEs}
To analyse the behaviour of asymptotically conical ~\sunitary{3}-invariant Laplacian solitons 
it will prove useful to consider the equations satisfied by the following set of variables:
\begin{equation}
\label{eq:g:si:var}
g^3:= f_1 f_2 f_3, \qquad \sif_i: = \frac{f_i}{g}, \qquad \sit_i:= \frac{\tau_i}{g} \qquad i=1,2,3.
\end{equation}
Note that the geometric mean $g$ of the positive triple $f=(f_1,f_2,f_3)$ has scaling weight one under dilations of $f$ whereas all the variables $\sif_i$ and $\sit_i$ are scale invariant. Notice also that by definition the $\sif_i$ satisfy $\sif_1 \sif_2 \sif_3=1$.
It is also convenient to introduce scale-invariant versions of the quantities $S_i$ introduced in~\eqref{eq:Si:def}. 
More specifically we define
\begin{equation}
\label{eq:sis:def}
\sis_i:= \frac{S_i}{g^2} = (2\sif^2_i-\sif^2_j-\sif^2_k) -\frac{3\sit_i }{2\sif_i^2} =  \sie_i - \frac{3\sit_i}{2\sif_i^2},
\end{equation}
where we define the \emph{excesses} $E_i$  and their scale-invariant analogues $\sie_i$ by
\begin{subequations}
\begin{align}
E_i:&= 3 f_i^2 - \fb,\\
\label{eq:sie:def}	
\sie_i:&= \frac{E_i}{g^2} = 3\sif_i^2 - \overline{\sif^2}.
\end{align}
\end{subequations}
The excess $\sie_i$ gives a scale-invariant measure of the deviation (or excess) of $f_i^2$ from the average $\tfrac{1}{3}\sum_j{f_j^2}$. 
We note some elementary properties of various sums related to these excesses:
\[
\sum_i{E_i}=0, \quad \sum_i{E_i^2}=3D(f),  \quad \sum_i{E_i f_i^2}=D(f),
\]
where $D(f):= \sum_{i<j}{(f_i^2-f_j^2)^2}$ as in \eqref{eq:D}.
Obvious analogues also hold for sums of their scale-invariant versions.
The scale-invariant version of the final equality also yields  
\begin{equation}
\label{eq:sif2:sis}
\sum{\sif_i^2 \sis_i} = D(\sif) - \tfrac{3}{2} \overline{\sit}.
\end{equation}
Since 
\begin{equation}
\label{eq:sif}
3\sif_i^2  = \sie_i + \overline{\sif^2}>0
\end{equation} 
and $\sif_1 \sif_2 \sif_3=1$ we also have 
\begin{equation}
\label{eq:cubic:sie}
\prod_{i=1}^3 (\sie_i + \overline{\sif^2}) = 27.
\end{equation}
Rearranging~\eqref{eq:sis:def} gives
\[
\sit_i = \frac{2}{3} \sif_i^2 ( 3\sif_i^2 - \overline{\sif^2} - \sis_i) = \frac{2}{3} \sif_i^2 (\sie_i - \sis_i).
\]
Hence we can also express $\abs{\tau}^2$ in terms of $g$ and scale-invariant variables as
\begin{equation}
\label{eq:abs:tau:si}
\abs{\tau}^2 = \sum{\frac{\tau_i^2}{f_i^4}} = \frac{1}{g^2} \sum{\left(\tfrac{\sit_i}{\sif_i^2}\right)^2} = \frac{4}{9g^2} \sum{(\sie_i - \sis_i)^2} = \frac{4}{9g^2}\left(3D(\sif) + \sum{(\sis_i^2 - 2 \sie_i \sis_i)} \right).
\end{equation} 
Similarly using the conservation law~\eqref{eq:SU3:conserve} yields the following expression for $u$ in terms of $g$ and scale-invariant variables
\begin{equation}
\label{eq:u:si:g}
u\, \overline{\sif^2} = -2 \lambda g + \overline{\sit}g^{-1}.
\end{equation}

A direct calculation using the previous first-order system~\eqref{eq:SU3:order1} shows that if $(f,\tau)$ is a solution of the first-order 
version of the invariant Laplacian soliton system then the ODE system satisfied by the variables $(g,\sif_i,\sit_i)$ defined in~\eqref{eq:g:si:var} is
\begin{subequations}
\label{eq:SU3:normal}
\begin{align}
\label{eq:g:dot}
g' &= \frac{1}{6}\overline{\sif^2},\\
\label{eq:sif:dot}
g \sif_i' & = \frac{1}{2\sif_i}\left(\sit_i  - 2\sif_i^4 + \frac{2}{3} \overline{\sif^2} \sif^2_i \right) = -\frac{1}{3} \sif_i \sis_i,\\
\label{eq:sit:dot}
g \sit_i' & = \frac{4\lambda g^2}{3}\frac{\sif_i^2\sis_i}{ \overline{\sif^2}} + \frac{\overline{\sit}}{\overline{\sif^2}} (\sit_i - 2\sif_i^4) +  \frac{1}{3}\left(\sum{\frac{\sit_i^2}{\sif_i^4}}\right)\sif^2_i - \frac{1}{6} \sit_i \overline{\sif^2}.
\end{align}
\end{subequations}
In particular~\eqref{eq:sif:dot} implies that $\sif_i$ is monotone in $t$ if and only if $\sis_i$ has a definite sign (since $\sum{\sis_i}=0$ the $\sis_i$ cannot all have the same sign).

\begin{remark}
\label{rmk:siti:dot:sis0}
Using~\eqref{eq:sis:def} and~\eqref{eq:sif2:sis} the right-hand side of~\eqref{eq:sit:dot} can be rewritten as
\[
\frac{4\sif_i^2 \sis_i}{9\overline{\sif^2}} \left( 3 \lambda g^2 - D(\sif) + (\sum{\sif_i^2 \sis_i}) \right) 
+ \frac{4}{9} \sif_i^2 \sum{\sif_i^2 \sis_i}
+ \frac{4}{9} \left(\sum{(\sis_i^2 - 2 \sie_i \sis_i)}\right) - \frac{1}{6} \sit_i \overline{\sif^2}.
\]
Note that when all $\sis_i$ vanish the only (potentially) nonvanishing term in the expression above is~$ -\frac{1}{6} \sit_i \overline{\sif^2}$.
\end{remark}

\begin{remark}
\label{rmk:si:other}
The evolution of other scale-invariant variables can be computed from~\eqref{eq:SU3:normal}. For instance we have
\begin{equation}
\label{eq:si:fb}
g(\overline{\sif^2})' = -\frac{2}{3} \sum_i{\sif_i^2 \sis_i} = \overline{\sit} - \frac{2}{3} D(\sif),
\end{equation}
\begin{equation}
\label{eq:si:tb}
g \overline{\sit}' = \frac{2\lambda g^2}{3\overline{\sif^2}} \left( 2 D(\sif) - 3 \overline{\sit} \right) + \frac{\overline{\sit}}{\overline{\sif^2}} \left( \overline{\sit} - 2 \overline{\sif^4} - \frac{1}{6} (\overline{\sif^2})^2 \right) + \frac{1}{3} \left(\sum{\frac{\sit_i^2}{\sif_i^4}} \right) \overline{\sif^2},
\end{equation}
\begin{equation}
\label{eq:si:excess}
g \sie_i'  = -2 \sif_i^2 \sis_i - \overline{\sit} + \frac{2}{3} D(\sif) = (3 \sit_i - \overline{\sit}) - \frac{2}{3} \left( 3\sif_i^2 \sie_i - (\sum{\sie_j\sif_j^2}) \right).
\end{equation} 
\end{remark}

\subsubsection*{Characterisations of the Gaussian solitons}Using the scale-normalised version of the Laplacian soliton ODE 
system~\eqref{eq:SU3:normal} we now prove two simple results that provide characterisations of the 
Gaussian solitons over the unique torsion-free $\sunitary{3}$-invariant~\gtstr~on the cone over $\sunitary{3}/\T^2$. The first result says
that these Gaussian solitons are the only conical $\sunitary{3}$-invariant Laplacian solitons; the second says that 
they are the only nontrivial invariant solitons for which the underlying~\gtstr~is torsion-free. 
\begin{lemma}
\label{lem:conical:implies:tf}
For any value of $\lambda \in \R$ let $(\varphi_f,X,\lambda) $ be an $\sunitary{3}$-invariant closed Laplacian soliton for which the induced metric $g_f$ is conical, then $\varphi_f$ coincides with a portion of the 
Gaussian soliton over the unique torsion-free $\sunitary{3}$-invariant~\gtstr~on the cone over $\sunitary{3}/\T^2$, \ie $f_1 = f_2 = f_3 = \tfrac{1}{2}t$ and $X= -\tfrac{1}{3} \lambda t \,\partial_t$. 
\end{lemma}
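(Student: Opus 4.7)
The plan is to exploit the scale-invariant reformulation~\eqref{eq:SU3:normal} of the soliton ODEs. First I would rephrase the hypothesis: up to a shift of $t$ placing the cone apex at $t = 0$, conicality of $g_f$ is equivalent to $f_i(t) = c_i t$ for positive constants $c_i$, which in turn is equivalent to the scale-invariant variables $\sif_i = f_i/g$ (where $g = (f_1f_2f_3)^{1/3}$) being constant in $t$. Thus conical solitons correspond precisely to fixed points of the scale-invariant flow in the $(\sif, \sit)$ variables, and the lemma reduces to showing that this fixed point is unique and corresponds to $\sif_i \equiv 1$, $\sit_i \equiv 0$.

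From $\sif_i' \equiv 0$ and equation~\eqref{eq:sif:dot}, I would immediately conclude $\sis_i = 0$ for all $i$, which via~\eqref{eq:sis:def} gives the algebraic relation $\sit_i = \tfrac{2}{3}\sif_i^2 \sie_i$; note that the type-$14$ constraint $\sum \tau_i/f_i^2 = 0$ is then automatic, since it amounts to $\sum \sie_i = 0$. Being algebraic functions of constants, the $\sit_i$ are constant too, so the remaining fixed-point condition $\sit_i' = 0$ must also hold. The main calculation is to substitute $\sis_i = 0$ together with $\sit_i = \tfrac{2}{3}\sif_i^2\sie_i$ into~\eqref{eq:sit:dot}, and I expect this to be the main obstacle. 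The $\lambda$-dependent term vanishes at once because it is proportional to $\sis_i$, so the surviving identity is independent of $\lambda$. Writing $a_i = \sif_i^2$, $A = \overline{\sif^2}$, $P = \sum a_i^2$, a short computation yields $\overline{\sit} = \tfrac{2}{3}(3P - A^2)$ and $\sum \sit_k^2/\sif_k^4 = \tfrac{4}{3}(3P - A^2)$, and the two terms in~\eqref{eq:sit:dot} involving these expressions should cancel exactly, leaving
\[
g\sit_i' \;=\; -\frac{A\,\sif_i^2\,\sie_i}{9}.
\]

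Once this reduction is achieved, positivity of $\sif_i$ and $A = \overline{\sif^2}$ forces $\sie_i = 0$ for all $i$, i.e.\ $\sif_1^2 = \sif_2^2 = \sif_3^2$; combined with $\sif_1\sif_2\sif_3 = 1$, this gives $\sif_i = 1$, equivalently $f_1 = f_2 = f_3 = g$. Equation~\eqref{eq:g:dot} then reads $g' = \tfrac{1}{2}$, so after the shift that places the apex at $t = 0$ we obtain $g = \tfrac{t}{2}$ and $f_i = \tfrac{t}{2}$. Since $\sit_i = 0$ the torsion vanishes, so $\varphi_f$ is the unique $\sunitary{3}$-invariant torsion-free cone structure of Example~\ref{ex:BS}. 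Finally the conservation law~\eqref{eq:SU3:conserve} reduces to $u \cdot \tfrac{3t^2}{4} = -2\lambda \cdot \tfrac{t^3}{8}$, yielding $u = -\tfrac{\lambda t}{3}$ as claimed.
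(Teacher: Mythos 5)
Your proof is correct and follows essentially the same strategy as the paper's: pass to the scale-normalised variables, read off $\sis_i\equiv 0$ from constancy of $\sif_i$, use the vanishing of $\sit_i'$ to force the remaining term in~\eqref{eq:sit:dot} to be zero, and conclude $\sif_i\equiv 1$, $\sit_i\equiv 0$. The only cosmetic difference is that you carry out the simplification of~\eqref{eq:sit:dot} under $\sis_i=0$ by hand (your asserted cancellation does check out: after substituting $\sit_i=\tfrac{2}{3}\sif_i^2\sie_i$ one uses $\sie_i-3\sif_i^2=-\overline{\sif^2}$ to see the two middle terms kill each other), whereas the paper quotes the pre-packaged form of the right-hand side from Remark~\ref{rmk:siti:dot:sis0} and deduces $\sit_i=0$ first, then $D(\sif)=0$ via~\eqref{eq:sif2:sis}. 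These are the same conclusion expressed through $\sit_i$ versus $\sie_i$.
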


\begin{proof}
Since $g_f$ is conical the scale-invariant variables $\sif_i$ must all be constant and therefore $\sis_i=0$ for all $i$
by~\eqref{eq:sif:dot}. Similarly the scale-invariant variables $\sit_i$ must also all be constant and therefore  $\dot{\sit}_i=0$ 
for all $i$. Since $\sis_i=0$ for all $i$ then by Remark~\ref{rmk:siti:dot:sis0} the vanishing of the right-hand side of~\eqref{eq:sit:dot}
implies that $\sit_i=0$ for all $i$. Therefore 
$\overline{\sit}$ vanishes and hence by~\eqref{eq:sif2:sis} so does $D(\sif)$ and this implies that $\sif_i=1$ for all $i$.
Therefore by~\eqref{eq:g:dot} $g'= \tfrac{1}{2}$ and hence (up to a time-translation) $f_i=\tfrac{1}{2}t$ for all $i$. 
\end{proof}

Another easy consequence of the equations derived in Remark~\ref{rmk:si:other} is the following
\begin{lemma}
\label{lem:torsion:free:solitons}
An \sunitary{3}-invariant Laplacian soliton $(\varphi,X=u\, \partial_t,\lambda)$ whose underlying closed~\gtstr~$\varphi$ is torsion free is either a trivial soliton, \ie $\lambda=0$, $u \equiv 0$, 
or a Gaussian soliton over the torsion-free cone over $\flag$ for any $\lambda \in \R$.
\end{lemma}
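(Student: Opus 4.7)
The plan is to exploit the scale-normalised first-order system \eqref{eq:SU3:normal} together with the conservation law \eqref{eq:SU3:conserve}, as in the proof of Lemma \ref{lem:conical:implies:tf}.

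First I would unpack the torsion-free assumption in the variables of our reformulation. Since the torsion $2$-form of $\varphi$ vanishes identically, the coefficients $\tau_i$ all vanish on $I$, and hence so do the scale-invariant variables $\sit_i$ and their sum $\overline{\sit}$. In particular, from the definition \eqref{eq:sis:def} we deduce that $\sis_i = \sie_i$ for each $i$. Moreover, substituting $\overline{\sit} = 0$ into the conservation law \eqref{eq:SU3:conserve} yields the algebraic identity
\[
u\,\fb = -2\lambda\,f_1 f_2 f_3,
\]
so that $u$ is determined explicitly by $\lambda$ and $f$.

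If $\lambda = 0$, this identity immediately forces $u \equiv 0$ (since the $f_i$ are positive), giving a trivial soliton. So we may assume $\lambda \neq 0$, and we must show we land on the Gaussian soliton over the torsion-free $\sunitary{3}$-invariant cone. The key step is to differentiate: since $\sit_i \equiv 0$, we must have $\sit_i' = 0$, and the right-hand side of \eqref{eq:sit:dot} reduces (after setting all $\sit_j = 0$ and $\sis_i = \sie_i$) to
\[
g\,\sit_i' = \frac{4\lambda g^2}{3}\,\frac{\sif_i^2\,\sie_i}{\overline{\sif^2}}.
\]
Since $\lambda \neq 0$, $g > 0$ and $\sif_i > 0$, this forces $\sie_i = 0$ for every $i$. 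From the definition \eqref{eq:sie:def} this means $3\sif_i^2 = \overline{\sif^2}$ for all $i$, so all three $\sif_i$ coincide, and the constraint $\sif_1\sif_2\sif_3 = 1$ then gives $\sif_1 = \sif_2 = \sif_3 = 1$.

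With all $\sif_i = 1$, equation \eqref{eq:g:dot} reduces to $g' = \tfrac{1}{2}$, so up to a translation of $t$ we obtain $g = \tfrac{1}{2}t$ and hence $f_1 = f_2 = f_3 = \tfrac{1}{2}t$; this recovers precisely the torsion-free cone described in Example \ref{ex:BS}. Substituting back into the conservation law gives $u = -2\lambda (f_1 f_2 f_3)/\fb = -\lambda t/3$, which is the soliton vector field of the Gaussian soliton. No step is particularly delicate here: the only point to be careful about is the dichotomy on $\lambda$ in the differentiated equation \eqref{eq:sit:dot}, which is what allows both the trivial and the Gaussian conclusions to emerge uniformly from the same computation.
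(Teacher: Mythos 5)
Your proof is correct and takes essentially the same approach as the paper. The only cosmetic difference is that you use the component-wise equation \eqref{eq:sit:dot} to conclude $\sie_i \equiv 0$ for each $i$, while the paper uses the summed equation \eqref{eq:si:tb} (and the equivalent scale-invariant form \eqref{eq:u:si:g} of the conservation law) to conclude $D(\sif) \equiv 0$; both conclusions immediately force $\sif_i \equiv 1$.
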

\begin{proof}
If $\lambda=0$ and the torsion $2$-form $\tau$ vanishes, then~\eqref{eq:u:si:g} implies that $u \equiv 0$. 
Now suppose that $\lambda \neq 0$ and the torsion $2$-form $\tau$ vanishes.  Then the only potentially nonvanishing term on the right-hand side of~\eqref{eq:si:tb}
is 
\[
\frac{4\lambda g^2 D(\sif)}{3\overline{\sif^2}}.
\]
Since we must have $\overline{\sit}' \equiv 0$ this implies that $D(\sif)\equiv 0$ and hence that $\sif_i \equiv 1$ for all $i$. 
\end{proof}

\subsection{The purely scale-normalised steady ODE system} 
\label{ss:si:steady}

In this subsection we observe that in the steady case,  after a suitable reparametrisation of $t$, the equations for $\sif_i'$ and $\sit_i'$ can be rewritten entirely in terms of the scale-invariant variables $\sif_i$ and $\sit_i$, 
\ie with no appearance of the scale-factor~$g$. More specifically, using the fact 
that when $\lambda=0$ the terms on the right-hand side of the ODE system~\eqref{eq:SU3:normal} involving the scale-factor $g$ vanish, the ODE system reduces to
\begin{subequations}
\label{eq:steady:normal}
\begin{align}
\label{eq:sifi:dot}
\dot{\sif}_i & = \frac{1}{2\sif_i}\left(\sit_i  - 2\sif_i^4 + \frac{2}{3} \overline{\sif^2} \sif^2_i\right)=-\frac{1}{3} \sif_i \sis_i,\\
\label{eq:siti:dot}
\dot{\sit}_i & =  \frac{\overline{\sit}}{\overline{\sif^2}} (\sit_i - 2\sif_i^4) +  \frac{1}{3}\left(\sum{\frac{\sit_i^2}{\sif_i^4}}\right)\sif^2_i - \frac{1}{6} \sit_i \overline{\sif^2},
\end{align}
\end{subequations}
where $\dot{}$ denotes differentiation with respect to a new variable $s$ defined (up to a constant) by
\begin{equation}
\label{eq:s:t:var:change}
\frac{dt}{ds} = g.
\end{equation}
The phase space of this system %
is the smooth $4$-manifold $\mathcal{P}\subset 
\R^3_> \times \R^3$ defined by 
\[
\mathcal{P}:= \left\{ (\sif_1,\sif_2,\sif_3,\sit_1,\sit_2, \sit_3) \, | \, \sif_1 \sif_2 \sif_3 = 1, \ \tfrac{\sit_1}{\sif_2^2} +  \tfrac{\sit_2}{\sif_2^2} +  \tfrac{\sit_3}{\sif_3^2} = 0\right\}. 
\]
$\mathcal{P}$ has the structure of a real $2$-plane bundle over the smooth connected surface of $\R^3_>$ 
cut out by the equation $\sif_1 \sif_2 \sif_3=1$. Radial projection shows that the latter surface is diffeomorphic 
to~$\Sph^2_>:=\Sph^2 \cap \R^3_>$.

Note also that an immediate consequence of~\eqref{eq:g:dot} and~\eqref{eq:s:t:var:change} is that $g(s)$ 
satisfies
\begin{equation}
\label{eq:dg:ds}
\frac{d}{ds} \ln{g} = \frac{1}{6} \overline{\sif^2}.
\end{equation}
Equation~\eqref{eq:dg:ds} clearly implies that given a solution $(\sif,\sit)$ to the purely scale-invariant steady soliton 
system~\eqref{eq:steady:normal} we can recover the scale-factor $g$ 
via quadrature. 
\begin{remark*}
The fact that in the steady case the scale-invariant variables $\sif_i$ and $\sit_i$
satisfy a self-contained ODE system and that the scale-factor $g$ is then determined by the scale-invariant variables 
is strongly reminiscent of an analogous decoupling that occurs in B\"ohm's work on noncompact cohomogeneity-one Ricci-flat metrics~\cite[Section 3]{Bohm:BSMF}.
\end{remark*}

\begin{remark}
\label{rmk:scale_invt_tf}
The 2-dimensional submanifold $\mathcal{P}_{\tu{tf}} \subset \mathcal{P}$ where
all $\sit_i = 0$ is invariant.
By Lemma~\ref{lem:torsion:free:solitons}, the solitons corresponding to flow
lines in that submanifold are static solitons on torsion-free \gtstr s.
The restriction $\dot{\sif}_i  = \frac{1}{2\sif_i}
\left(- 2\sif_i^4 + \frac{2}{3} \overline{\sif^2} \sif^2_i\right)$ of
\eqref{eq:steady:normal} to $\mathcal{P}_{\tu{tf}}$ is simply a scale-invariant
reformulation of the torsion-free system \eqref{eq:SU3:tf}.
\end{remark}
 
A simple, but key, observation about the ODE system~\eqref{eq:steady:normal}
is that it has a unique critical point~$c_{\tu{tf}}$; this critical point corresponds to the torsion-free~\gtstr~on the cone 
over the flag variety $F_{1,2}=\sunitary{3}/\T^2$; it will also be crucial that $c_{\tu{tf}}$ is a stable critical point of~\eqref{eq:steady:normal}.
\pagebreak[2]
\begin{lemma}\hfill
\label{lem:steady:crit:pt}
\begin{enumerate}[left=0pt]
\item
The purely scale-invariant steady soliton ODE system~\eqref{eq:steady:normal}
has a unique critical point given by $\sif_1=\sif_2=\sif_3=1$, $\sit_1=\sit_2=\sit_3=0$; this critical point corresponds to the 
scale-invariant description of the~\sunitary{3}-invariant torsion-free \gtstr~on the cone 
over the flag variety $F_{1,2}=\sunitary{3}/\T^2$. We therefore we denote this critical point by $c_{\tu{tf}}$. 
\item
The linearisation of~\eqref{eq:steady:normal} at $c_{\tu{tf}}$ has eigenvalues $-\tfrac{1}{2}$ and $-2$, each with multiplicity two. Therefore $c_{\tu{tf}}$ is an exponentially stable critical point of~\eqref{eq:steady:normal}
with (exponential) rate of convergence at worst $\frac{1}{2}$. The $-2$ eigenspace is the tangent space to $\mathcal{P}_{\tu{tf}}$.
\end{enumerate}
\end{lemma}
\begin{proof}Let $(\sif_1, \sif_2,\sif_3,\sit_1,\sit_2,\sit_3)$ be any critical point of~\eqref{eq:steady:normal}. Equation~\eqref{eq:sifi:dot} implies that $\sis_i=0$ for all $i$. 
Hence by Remark~\ref{rmk:siti:dot:sis0} the condition $\dot{\sit}_i=0$  for all $i$ forces $\sit_i=0$ for all $i$. 
As we observed previously in the proof of Lemma~\ref{lem:conical:implies:tf}, the vanishing 
of all $\sis_i$ and all $\sit_i$ implies that $\sif_i=1$ for all~$i$. 
Denote this unique critical point $(1,1,1,0,0,0)$ by $c_{\tu{tf}}$.

The tangent space to the phase space $\mathcal{P}$ at $c_{\tu{tf}}$ 
is $\{(\zeta, \eta) \in \R^6 :  \overline{\zeta}=0=\overline{\eta}\}$,
where $\overline{\zeta}$ means the sum of its three coefficients.
The linearisation of~\eqref{eq:steady:normal} about its unique critical point $c_{\tu{tf}}$ is given by %
\begin{subequations}
\begin{align}
\frac{d \zeta_i}{ds} &= -2 \zeta_i + \frac{1}{2} \eta_i,\\
\frac{d \eta_i}{ds} & = \phantom{ -2 \zeta_i } - \frac{1}{2} \eta_i,
\end{align}
\end{subequations} 
This vector field has eigenvalues $-2$ and $-\tfrac{1}{2}$ each with multiplicity two and
\begin{align*}
E_{-2} &= \{ (\zeta,\eta) \in \R^3 \times \R^3\, |\, \overline{\zeta}=0, \,\eta=0\}, \\
 E_{-\tfrac{1}{2}} & = \{ (\zeta,\eta) \in \R^3 \times \R^3 \, | \, 3\zeta=\eta, \,\overline{\zeta} = \overline{\eta}=0\},
\end{align*}
where $E_\mu$ denotes the eigenspace of eigenvalue $\mu$.
\end{proof}

\subsection{AC steady ends}
\label{ss:AC:ends}

To conclude the section, we explain how Lemma \ref{lem:steady:crit:pt} 
readily implies Theorem~\ref{mthm:steady_ac_end}, and moreover helps us improve
Proposition \ref{prop:lifetime} in the steady
case to say that boundedness of the~$\frac{f_i}{f_j}$ implies not just
forward-completeness but also that the resulting end must be AC. 

Firstly, the uniqueness part of Theorem \ref{mthm:steady_ac_end}(i) is
immediate from the uniqueness of $c_{\tu{tf}}$, while the rest of
Theorem \ref{mthm:steady_ac_end}(i--ii) is captured by the following
consequence of Lemma \ref{lem:steady:crit:pt}. 

\begin{corollary}
\label{cor:exist_AC_end}
There exists a 3-dimensional family (up to time translation and scale)
of steady ends that are asymptotic to the torsion-free cone.
All such AC steady ends have asymptotic rate $-1$, except for a 1-parameter
subfamily of static solutions on torsion-free ends with rate $-4$.
\end{corollary}

\begin{proof}
It is immediate from Lemma \ref{lem:steady:crit:pt} that there exists a
3-parameter family of flow lines approaching $c_{\tu{tf}}$ at least
as fast as $e^{-\frac{s}{2}}$. The only ones with faster rate of approach
are those within the invariant submanifold $\mathcal{P}_{\tu{tf}}$,
which approach like $e^{-2s}$. There is a 1-parameter subfamily of such flow
lines, and as noted in Remark \ref{rmk:scale_invt_tf} those correspond
to static solutions on torsion-free ends.

To translate the conclusion into the rates in terms of $t$, note that
asymptotically $t$ and $s$ are related by $ t^{-2} \simeq e^{-s}$.
\end{proof}

\begin{remark}
Among the 1-parameter family of torsion-free \sunitary{3}-invariant ends (up to
scale), there are precisely three where two of the variables $f_i$ agree.
Those are the ends of the (three variants of the) Bryant--Salamon AC manifold from Example \ref{ex:BS}.
\end{remark}

The proof of Theorem \ref{mthm:steady_ac_end} is completed by noting the
following consequence of the stability of $c_{\tu{tf}}$.

\begin{corollary}
\label{cor:AC_stab}
AC steady ends are stable in the sense that if a non-singular initial
condition for~$(f_i, \tau_i)$ (\ie $f_i > 0$) yields a forward-complete
AC solution, then so does any sufficiently close initial condition.
\end{corollary}

We moreover obtain the following criterion for solutions to
approach $c_{\tu{tf}}$.

\begin{theorem}
\label{thm:AC_basin}
A steady soliton with all $\frac{f_i}{f_j}$ bounded is AC, generically with rate $-1$,
to the torsion-free cone.
\end{theorem}

\begin{proof}
The solution is forward-complete by Proposition \ref{prop:lifetime}.

It remains to prove that in fact the solution must be asymptotic to the unique~\sunitary{3}-invariant torsion-free \gtwo-cone.
To prove this it suffices to show that the corresponding solution of the purely scale-invariant ODE system~\eqref{eq:steady:normal} must eventually enter the basin of attraction of the stable fixed point $c_{\tu{tf}}$. To see this, 
first we prove that all the scale-invariant torsion components $\sit_i$ have vanishing limits as $t \to \infty$. 

The hypothesis implies that there is a constant $C \ge 1$ such that
each $f_i \leq Cg$. Then $\frac{1}{2} \leq g' \leq \frac{1}{2}C^2$,
so $g$ grows linearly (and hence so does each $f_i$).
An immediate consequence of~\eqref{eq:ODEs:SU3:taui} is that the three quantities $\tilde \tau_i:= \tau_i - uf_i^2$ are each constant in $t$. 
We denote those constant values by $c_i$. 
Using the type 14 condition \eqref{eq:tau_14}
(see Section~\ref{ss:steady:special} for further details, in particular equation~\eqref{eq:taui:fi:steady}).
shows that $\sit_i$
can be written in terms of $g$, the constants $c_i$ and the $\sif_i$ as
\[
\sit_i = \frac{1}{3g} \left (2c_i - c_j \frac{\sif_i^2}{\sif_j^2} - c_k \frac{\sif_i^2}{\sif_k^2} \right).
\]
The assumed boundedness of all the ratios $f_i/f_j$ is of course equivalent to the boundedness of all the ratios $\sif_i/\sif_j$ 
and hence the linear growth of $g$ implies that $\sit_i \to 0$ as $t \to \infty$.

Note also that since $g \ge \tfrac{1}{2}t$ it follows from~\eqref{eq:s:t:var:change} and the infinite-$t$ lifespan of the solutions that their $s$-lifetimes 
are also infinite. In terms of the variable $s$ the equation satisfied by the scale-invariant average $\overline{\sif^2}$ is
\[
\frac{d}{ds} \overline{\sif^2} = - \frac{2}{3} D(\sif)  +\overline{\sit}.
\]
Suppose that for a contradiction that $\overline{\sif^2}$ remains bounded away from $3$ as $s \to \infty$. This is equivalent to supposing that there exists a constant $\wt C>0$ so that $D(\sif) > \wt C$ holds for all $s$ sufficiently large. The term $\overline{\sit}$ 
on the right-hand side of the previous equation tends to $0$ as $s \to \infty$ 
(because  $\lim_{t \to \infty}\sit_i=0$ for all $i$). Hence for $s$ sufficiently large we have $\frac{d}{ds} \overline{\sif^2} < -\tfrac{1}{2}\wt C$, but since the solution has infinite $s$-lifetime this contradicts the fact that (by its definition) $\overline{\sif^2}$ is bounded below by $3$.
The fact that  as $s \to \infty$, $\overline{\sif^2}$ has $3$ as a limit point and that $\sit_i \to 0$ for all $i$ implies that the solution 
must eventually enter the basin of attraction of the stable fixed point $c_{\tu{tf}}$. 
\end{proof}

\section{Forward-time behaviour of steady solitons}
\label{s:forward:steady}

Above (in Sections~\ref{ss:finite:extinction} and~\ref{ss:AC:ends})
we have encountered the following two types of behaviour for the evolution of an
\sunitary{3}-invariant soliton forward in $t$:
\begin{itemize}[left=0pt]
\item Forward-complete and asymptotically conical;
\item Forward-incomplete, with its smallest variable $f_k= O(\sqrt{t_*-t})$
near the extinction time $t_*$.
\end{itemize}
In fact, both of these types of end solutions exist for any dilation constant
$\lambda$. The incomplete solutions come in a 4-parameter family up to time translation. 
In the steady case this amounts to a 3-parameter family up to scale and time translation,
as does the family of AC end solutions (Corollary \ref{cor:exist_AC_end}).

\begin{samepage}
In this section we establish the existence of a 2-parameter family (up to scale and time translation) of steady solutions of a third type:
\begin{itemize}[left=0pt]
\item Forward-complete, with one variable converging to a finite positive  limit and the
other two variables growing exponentially (and becoming close to each other) as $t \to \infty$.
\end{itemize}
\end{samepage}

\begin{remark*}
There are non-AC forward-complete end solutions, but with qualitatively
different asymptotics, also for expanders and shrinkers \cite[\S7.3 \& \S9.2]{HJN}.
\end{remark*}

The main result of this section is Theorem~\ref{thm:trichotomy}, a more
detailed version of Theorem \ref{mthm:trichotomy},
establishing that for any~\sunitary{3}-invariant steady soliton, the
forward-evolution falls into one of these three classes.

To show the existence of the exponentially-growing complete solutions we find a coordinate change that
transforms the steady soliton equations into a polynomial ODE system of degree 3. These alternative
coordinates will also be a key ingredient in the next section: there
we will use them to identify to which of the three classes a given
smoothly-closing steady soliton belongs.

\subsection{Exploiting the conserved quantities of the steady ODE system}
\label{ss:steady:special}
We have already seen in the previous section that the ODE system for steady solitons has special features. 
However, so far we have not exploited fully what is arguably the single most important feature: the existence of three conserved quantities
that are not present for shrinkers or expanders. 

To this end, observe from~\eqref{eq:ODEs:SU3:taui} that when $\lambda=0$ the three
quantities $\tau_i - uf_i^2$ are conserved, say 
\begin{equation}
\label{eq:taui:steady}
\tau_i=uf_i^2 + c_i
\end{equation}
for some triple $(c_1,c_2,c_3) \in \R^3$ satisfying $c_1+c_2+c_3 = 0$.
Using~\eqref{eq:taui:steady} one can verify that the type $14$ condition \eqref{eq:tau_14} on $\tau$ is equivalent to 
\begin{equation}
\label{eq:u:steady}
u = -\frac{1}{3} \left( \frac{c_1}{f_1^2} + \frac{c_2}{f_2^2} + \frac{c_3}{f_3^2}\right).
\end{equation}
Substituting the expression for $u$ from~\eqref{eq:u:steady} into~\eqref{eq:taui:steady} we obtain the following expressions for the components of $\tau$ in terms of the triple $f$
\begin{equation}
\label{eq:taui:fi:steady}
\tau_i = \frac{2c_i}{3} - \frac{c_j f_i^2}{3f_j^2} - \frac{c_kf_i^2}{3f_k^2} .
\end{equation}
Similarly,
\begin{equation}
\label{eq:normtau:steady}
\begin{aligned}
\abs{\tau}^2 = \sum{\tau_i^2 f_i^{-4}} &= \frac{1}{9}\sum_{cyc} \left(\frac{2c_i}{f_i^2}
- \frac{c_j}{f_j^2} - \frac{c_k}{f_k^2}\right)^2 \\
&= \frac{2}{3}\left(\frac{c_1^2}{f_1^4}+\frac{c_2^2}{f_2^4} + \frac{c_3^2}{f_3^4}\right)
- \frac{1}{3}\left(\frac{c_1c_2}{f_1^2f_2^2}+\frac{c_2c_3}{f_2^2f_3^2} + \frac{c_3c_1}{f_3^2f_1^2}\right)
\end{aligned}
\end{equation}
where the final equality uses~\eqref{eq:u:steady}.

An immediate consequence of formulae~\eqref{eq:taui:steady} and~\eqref{eq:u:steady}
is that in the steady case the first-order ODE system~\eqref{eq:SU3:order1} for $(f_i,\tau_i)$ can be rewritten as a self-contained first-order system involving only the triple $(f_i)$ and the zero-sum triple $(c_1, c_2, c_3)$ 
that specifies the values of the three conserved quantities $\tau_i - u f_i^2$ 
(for any steady soliton that closes smoothly on the singular orbit $\CP^2_1$ 
it follows from~\eqref{eq:smooth:close} that this triple must have the form $(0,c,-c)$ for some $c \in \R$).

\begin{lemma}
A steady soliton %
satisfies the first-order ODE system
\begin{equation}
\label{eq:fi:steady}
\begin{aligned}
\frac{d}{dt} \ln{f_i^2} &=
-\frac{1}{3} \left( \frac{c_1}{f_1^2} + \frac{c_2}{f_2^2} +
\frac{c_3}{f_3^2}\right) + \frac{c_i}{f_i^2} +
\frac{1}{f_1f_2f_3} (\fb - 2f_i^2) 
\\ &=
\frac{2c_i}{3f_i^2} - \frac{c_j}{3f_j^2} - \frac{c_k}{3f_k^2} +
\frac{1}{f_1f_2f_3} (-f_i^2+f_j^2+f_k^2) 
\end{aligned}
\end{equation}
where $c_i$ are the values (with sum 0) of the three conserved quantities
$\tau_i - uf_i^2$ and $(ijk)$ is any permutation of  $(123)$.

Conversely, for any zero-sum triple $(c_1,c_2,c_3) \in \R^3$, given any solution $f=(f_1,f_2,f_3)$ to the ODE system~\eqref{eq:fi:steady}, then
defining $u$ via~\eqref{eq:u:steady} and $\tau_i$ via~\eqref{eq:taui:steady} makes 
$(f_1,f_2,f_3,u)$ a solution of the mixed-order soliton ODE system~\eqref{eq:ODEs:SU3} and $(f_1,f_2,f_3,\tau_1,\tau_2,\tau_3)$ a solution of the first-order soliton ODE system~\eqref{eq:SU3:order1}. 
\end{lemma}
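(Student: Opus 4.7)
The plan is to exploit the three conserved quantities $C_i := \tau_i - u f_i^2$, which are constant along any steady ($\lambda = 0$) soliton by \eqref{eq:ODEs:SU3:taui}. For a smoothly-closing solution the initial conditions recorded in Theorem~\ref{thm:SU3:smooth:closure} (applied with $\lambda = 0$) give $C_1(0) = 0$, $C_2(0) = c$, $C_3(0) = -c$, so $(c_1, c_2, c_3) = (0, c, -c)$ throughout the lifetime of the solution. The strategy is then simply to substitute $\tau_i = u f_i^2 + c_i$ into the previously-derived systems and track what that substitution buys us.

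For the forward direction, I would start from \eqref{eq:SU3:fi:dot}, divide by $f_i^2$, and replace $\tau_i/f_i^2$ by $u + c_i/f_i^2$ to obtain the first equality in \eqref{eq:fi:steady}. The second equality is just the explicit form of $u$, which follows by combining $\tau_i = u f_i^2 + c_i$ with the type-$14$ constraint $\sum_i \tau_i / f_i^2 = 0$: this forces $3u + \sum_i c_i / f_i^2 = 0$, and specialising to $(c_1, c_2, c_3) = (0, c, -c)$ recovers \eqref{eq:u:steady}.

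For the converse, suppose $f = (f_1, f_2, f_3)$ is a positive solution of \eqref{eq:fi:steady} with $(c_1, c_2, c_3) = (0, c, -c)$, and define $u$ by \eqref{eq:u:steady} and $\tau_i$ by \eqref{eq:taui:steady}. I would check the four conditions defining a solution of the mixed-order soliton system \eqref{eq:ODEs:SU3} (with $\lambda = 0$). First, the type-$14$ constraint $\sum \tau_i/f_i^2 = 0$ is precisely the definition of $u$ in view of $\tau_i/f_i^2 = u + c_i/f_i^2$. Second, the closure equation \eqref{eq:SU3:closure} follows by summing \eqref{eq:fi:steady} over $i$: the right-hand side contributes $3u + \sum_i c_i/f_i^2 + \fb/(f_1 f_2 f_3)$, the first two terms cancel by the identity just verified, and the left-hand side equals $2(f_1 f_2 f_3)'/(f_1 f_2 f_3)$, giving $2(f_1 f_2 f_3)' = \fb$. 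Third, the torsion evolution \eqref{eq:ODEs:SU3:taui} reads $(\tau_i - u f_i^2)' = 0$, which is immediate because $\tau_i - u f_i^2 = c_i$ is constant by construction. Fourth, the conservation law \eqref{eq:ODEs:SU3:tausum} reduces to $\bar{\tau} = u \fb + (c_1 + c_2 + c_3)$, and $c_1 + c_2 + c_3 = 0$ in our case. With \eqref{eq:ODEs:SU3} verified, the equivalence with the first-order system \eqref{eq:SU3:order1} is then supplied by Proposition~\ref{prop:1st:order:ODE:SU3}.

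There is no real obstacle here beyond careful bookkeeping; the two algebraic inputs doing all the work are that the type-$14$ constraint pins down $u$ as a symmetric function of the $c_i/f_i^2$, and that $c_1 + c_2 + c_3 = 0$ is exactly what is needed to make the conservation law \eqref{eq:ODEs:SU3:tausum} and the closure equation \eqref{eq:SU3:closure} consistent after reduction. Both hold automatically in the smoothly-closing case, so the reduced system \eqref{eq:fi:steady} captures smoothly-closing steady solitons with no loss of information.
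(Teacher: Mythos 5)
Your proof is correct and follows the same reasoning the paper uses, though the paper leaves the verification implicit (it really just consists of the discussion immediately preceding the lemma). The only thing worth noting is that you slightly over-structure the converse: the ``type-$14$ constraint'' is not one of the equations in \eqref{eq:ODEs:SU3}, but rather the algebraic identity guaranteeing that the geometric torsion of $\varphi_f$---computed from \eqref{eq:tau:i:SU3a}---coincides with the $\tau_i$ you define via \eqref{eq:taui:steady}; that identification is what makes the remaining three checks meaningful, and it does hold tautologically given your definition of $u$. With that small reframing, your bookkeeping is exactly the paper's.
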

\begin{remark*}
Note that when $c_1 = c_2 = c_3 = 0$ the system~\eqref{eq:fi:steady} reduces to
the torsion-free ODE system~\eqref{eq:SU3:tf}.
\end{remark*}

We note that~\eqref{eq:fi:steady} implies that the ratio $f_i/f_j$ satisfies the differential equation
\begin{equation}
\label{eq:fi:fj:steady}
\frac{d}{dt} \log{\left(\frac{f_i^2}{f_j^2}\right)} = \frac{c_i}{f_i^2} - \frac{c_j}{f_j^2} + \frac{2(f_j^2-f_i^2)}{f_1 f_2 f_3}.
\end{equation}
These differential equations for the ratios $f_i/f_j$ will play an important role in controlling the 
behaviour of steady solitons.

\subsection{Cubic system and exponential growth ends}
\label{ss:cubic_exp}

The ODE system~\eqref{eq:fi:steady} that the triple~$f$ satisfies is a rational ODE system. The next result shows that a suitable change 
of variables transforms it into a polynomial ODE system. 
This polynomial reformulation gives us some important insights into the
possible asymptotic behaviours of steady solitons not readily apparent
from~\eqref{eq:fi:steady}.

\begin{lemma}
For any~\sunitary{3}-invariant soliton $(f_1,f_2,f_3)$
the triple $F=(F_i)$ defined by \begin{equation}
\label{def:Fi}
F_i:= \frac{f_i}{f_j f_k} = \frac{f_i^2}{g^3},
\end{equation}
where as usual $(ijk)$ is a cyclic permutation of $(123)$, satisfies the polynomial ODE system
\begin{equation}
\label{eq:steady:poly}
F_i' = \frac{c_i}{3} F_1 F_2 F_3 -\frac{F_i^2}{6}(c_jF_k+c_kF_j) + \frac{1}{4} F_i(F_j + F_k - 3F_i)
\end{equation}
for the zero-sum triple $(c_1,c_2,c_3)$ determined by~\eqref{eq:taui:steady}.
\end{lemma}

\begin{proof}
Noting that $F_i^2 = \frac{f_i^2}{g^3}$ and $F_i F_j= f_k^{-2}$, we deduce
from~\eqref{eq:fi:steady}
that
\begin{align*}
\label{eq:log:der:Fi}
\frac{d}{dt} \log{F_i^2} &= \frac{d}{dt} \log{f_i^2} - \frac{d}{dt} \log{g^3} \\
&=
\frac{2}{3} c_i F_j F_k - \frac{1}{3} c_j F_i F_k - \frac{1}{3} c_k F_i F_j + (F_j+F_k-F_i) - \frac{1}{2} (F_1+F_2+F_3),
\end{align*}
from which the claim follows directly.
\end{proof}
\noindent
Note that~\eqref{eq:steady:poly} is an inhomogeneous (for $c \neq 0$) cubic system with no constant or linear terms. 
The next result determines the critical points of this cubic ODE system.
\begin{lemma} 
\label{lem:poly:fixed}
Fix any any values $c_i$, with $c_1 +c_2+ c_3 = 0$, for the constants
appearing in~\eqref{eq:steady:poly}.
\begin{enumerate}[left=0em]
\item
Any solution $(F_1,F_2,F_3)$ of~\eqref{eq:steady:poly} defined for all positive $t$ satisfies
\begin{equation}
\label{eq:F123:large:t}
\lim_{t \to \infty} F_1 F_2 F_3=0.
\end{equation}

\item
There are no critical points of~\eqref{eq:steady:poly} within the positive octant.
\item
The origin is a totally degenerate critical point, \ie the linearisation of~\eqref{eq:steady:poly} at the origin is the zero map.
\item
If $c_i < 0$ then setting $F_i = 0$ and the other two variables to
$F_j = F_k = -3c_i^{-1}$ defines a critical point $p_i$ of the
system~\eqref{eq:steady:poly}.
These and the origin are the only critical points within the closure of
the positive octant. 
\item
The critical point $p_i$ %
is a hyperbolic fixed point 
of the system: it has two negative eigenvalues $-k_i$ and $-2k_i$ (where $k_i:= -\tfrac{c_i}{3}>0$) and a single positive one. It therefore has a $2$-dimensional local stable manifold which at the fixed point has normal vector $(1,1,1)$.
\end{enumerate}
\end{lemma}

\begin{proof}
(i) The system of equations~\eqref{eq:steady:poly} implies that 
\begin{equation}
\label{eq:F123:dot}
\frac{d}{dt} \log |F_1F_2F_3| = -\tfrac{1}{2} (F_1+F_2+F_3),
\end{equation}
and hence
\[
\frac{d}{dt} \left|F_1 F_2 F_3\right|^{-1/3} \ge \frac{1}{2}.
\]
(These are essentially equivalent to the closure condition
$2(f_1f_2f_3)'=\fb$ and the inequality~\eqref{eq:g:dot:lower:bound} respectively.)
In particular, if $F_1F_2F_3 \not= 0$ then $|F_1F_2 F_3|$ is a
strictly decreasing function of $t$ and
any solution of~\eqref{eq:steady:poly} defined for all positive $t$ satisfies
\begin{equation*}
\lim_{t \to \infty} F_1 F_2 F_3=0.
\end{equation*}
(ii) A critical point of~\eqref{eq:steady:poly} within the positive octant would have
$\tfrac{d}{dt} \log{F_1F_2F_3}=0$ and hence $F_1+F_2+F_3=0$ by~\eqref{eq:F123:dot}.
\\[0.5em]
(iii) This is immediate from the absence of linear terms on the right-hand side of \eqref{eq:steady:poly}.
\\[0.5em]
(iv)
By the permutation symmetry of the equations, without loss of generality, we can consider 
only critical points of the form $(F_1,F_2,0)$ within the nonnegative octant. 
The condition for $(F_1, F_2,0)$ to be a critical point is given by the pair of algebraic equations
\begin{align*}
-\frac{c_3}{3}F_1^2F_2 + \frac{1}{2}F_1(F_2-3F_1) &= 0, \\
-\frac{c_3}{3}F_2^2F_1 + \frac{1}{2}F_2(F_1-3F_2) &= 0,
\end{align*}
whose unique non-zero solution is $F_1 = F_2 = -\frac{3}{c_3}$.
Apart from the origin, since $\sum{c_i}=0$, we thus get one or two further fixed points within the closure of the positive octant, depending
on whether one or two of the $c_i$ are negative. 
\\[0.5em]
(v)
Without loss of generality suppose $c_3 < 0$. Let $k := -\frac{3}{c_3} > 0$ and consider the fixed point
$p_3 = (k, k, 0)$.
The derivative of the right-hand side of
\eqref{eq:steady:poly} at $p_3$ is represented by
\[ k \begin{pmatrix}
-\tfrac{1}{2} & \tfrac{3}{2} & kc_1 - \tfrac{1}{2} \\
\tfrac32 & -\tfrac12 & kc_2 - \tfrac{1}{2} \\
0 & 0 & -1
\end{pmatrix} . \]
(Note that to write the linearisation in this form we have used the facts that $\sum{c_i}=0$ and that $-k c_3 =3$.)
Its eigenvalues are $k$, $-k$ and $-2k$, with corresponding eigenvectors
$(1,1,0)$, $(-c_1 {+}2c_2, 2c_1{-}c_2, c_3)$ and $(1,-1,0)$ respectively.
\end{proof}

Moreover, if $F$ is a solution of~\eqref{eq:steady:poly} asymptotic to a critical point $p_i$ as $t \to \infty$, then 
by~\eqref{eq:F123:dot} $F_1 F_2 F_3$ decays exponentially to $0$ with rate at least $k_i=3c_i^{-1}$. 
This is equivalent to the fact that $f_1 f_2 f_3$ grows exponentially in $t$ with rate at least $k_i$, \ie that the corresponding steady soliton has a forward-complete end with 
exponential volume growth. 

As an immediate consequence of the existence of a $2$-dimensional stable manifold 
for the fixed point~$p_i$ we have the following:

\begin{corollary}
\label{cor:exist_exp_end}
For each zero-sum triple $(c_1, c_2, c_3)$ with $c_i<0$
there is a $1$-parameter family of distinct forward-complete steady soliton
ends all with exponential volume growth and 
with $f_i$ converging to a finite limit and $f_j/f_k$ converging to $1$.
More specifically, the generic solution converging to $p_i$ has $g^3 \simeq \exp{k_i t}$ where $k_i=-\tfrac{3}{c_i}>0$.
\end{corollary}

Up to scale, the zero-sum triple $(c_1, c_2, c_3)$ involves a single parameter,
so all in all there is a 2-parameter family of such exponential-growth ends.

\subsection{Trichotomy for forward-evolution of steady solitons}
\label{ss:trichotomy}

Here is a detailed statement of the trichotomy for the possible 
forward-evolution behaviours of any steady soliton claimed at the start of
this section. This is a (slightly) more detailed version of Theorem~\ref{mthm:trichotomy}.

\begin{theorem}
\label{thm:trichotomy}
Any~\sunitary{3}-invariant steady soliton satisfies exactly one of the following:
\begin{enumerate}[left=0em]
\item 
It has infinite forward existence time (and therefore gives rise to a metrically-complete end),  and all ratios $f_i/f_j$ remain uniformly bounded as $t \to \infty$. 
Then the steady soliton is AC (generically with rate $-1$) with asymptotic cone the torsion-free cone over the standard~\sunitary{3}-invariant nearly K\"ahler structure on $\sunitary{3}/\T^2$.
\item
It has infinite forward existence time (and therefore it gives rise to metrically-complete end), and as $t \to \infty$ the ratio of the two largest variables $f_i/f_j$ 
tends to $1$ while
the ratio of the largest to the smallest variable $f_i/f_k$ is unbounded. Then
$f_k \to -\tfrac{c_k}{3}>0$ as $t \to \infty$ (in particular, $c_k$ must be
negative) while $f_i$ and $f_j$ both grow exponentially in $t$. 
The volume of large geodesic balls of radius $r$ grows exponentially in $r$; the scalar curvature decays exponentially fast to a negative constant as $r \to \infty$. 
\item
It has a finite forward maximal existence time $t_*$ (and therefore it is metrically incomplete). 
As $t \to t_*$, its smallest variable $f_k \to 0$ with $(f_k^2)' \to \tfrac{2 c_k}{3}<0$ (in particular, $c_k$ must be negative)
while the ratio of its two largest variables $f_i/f_j$ remains bounded. 
\end{enumerate}
\end{theorem}

\begin{remark}
\label{rmk:trichotomy:v2}
It follows immediately from~\eqref{eq:u:steady} and~\eqref{eq:taui:fi:steady} that the previous theorem implies that only three eventual forward-time behaviours
for the soliton vector field coefficient $u$ or the torsion coefficients $\tau_i$ are possible.
\begin{itemize}[left=0em]
\item 
In case (i) of Theorem~\ref{thm:trichotomy}, $u \to 0$ and $\tau_i \to c_i$ for all $i$ as $t \to \infty$.
\item
In case (ii) of Theorem~\ref{thm:trichotomy}, $u \to -\tfrac{3}{c_3}>0$, $\tau_3 \to \tfrac{2c_3}{3}<0$, while $\tau_1 \simeq \tau_2  \simeq - \frac{c_3 f_1^2}{3f_3^2} \to + \infty$ as~$t \to \infty$.
\item
In case (iii) of Theorem~\ref{thm:trichotomy}, $u \simeq \frac{1}{2(t_*-t)} \to + \infty$, $\tau_3 \to \tfrac{2c_3}{3} <0$, 
with $\tau_1 \simeq - \frac{c_3 f_1^2}{3 f_3^2} \to + \infty$ and~$\tau_2 \simeq  \frac{c_3 f_2^2}{3 f_3^2} \to + \infty$, as $t \to t_*$.
\end{itemize}
\end{remark}

The proof of Theorem~\ref{thm:trichotomy} will be established in the rest of this section. 
To make the proof more digestible we have chosen to 
present it in terms of several separate lemmas and propositions, that together immediately imply the theorem.

First, we recall that according to Corollary~\ref{cor:sign_changes} for any non-shrinking soliton and distinct pair $i \neq j$ the quantity $T_{ij} = (\tilde\tau_i - \tilde\tau_j)(f_i^2-f_j^2)$
can go from negative to positive, but not vice versa. Moreover, for a steady soliton the value of $(\tilde\tau_i - \tilde\tau_j)= c_i - c_j$ is constant. 
Thus any given pair of the three variables $f_i$ will swap
order at most once during the lifetime of a solution. In particular,  when analysing end behaviour of solutions without loss of generality we can assume that
$f_1 \geq f_2 \geq f_3$ for all future time. 

For the rest of this section, unless otherwise stated, we will therefore 
make this assumption about the eventual ordering of the coefficients $f_i$. 

Our first result shows that the ratio of the two largest variables $f_1/f_2$ always remains bounded forward in time. 
\begin{lemma}
\label{lem:big_pair}
For any steady soliton with regular initial data, \ie all the $f_i$ are initially positive, the ratio of the two largest variables
$\frac{f_1}{f_2}$ is bounded for all future time.
\end{lemma}

\begin{proof} 
For any given steady soliton we have the fixed zero-sum triple $(c_1,c_2,c_3)$ determined by~\eqref{eq:taui:steady} 
and by assumption initially $g^3=f_1 f_2 f_3 >0$. Hence we can choose 
$\alpha > 0$ large enough so that 
\[
2(1-\alpha^{-2}) \alpha^{2/3} g >  \left(\abs{c_1}\alpha^{-2} - c_2\right)
\]
holds initially and then since $g$ is increasing this remains true for all future $t$.

At any time where $f_1 \ge  \alpha f_2$, \eqref{eq:fi:fj:steady} gives that
\[
\left(\log \frac{f_1^2}{f_2^2}\right)' \le \frac{|c_1|}{\alpha^2 f_2^2} - \frac{c_2}{f_2^2} + 2 \left( \frac{f_2^2}{f_1^2}-1 \right) \frac{f_1}{f_2 f_3}
\le  \frac{|c_1|}{\alpha^2 f_2^2} - \frac{c_2}{f_2^2} - 2(1-\alpha^{-2}) \frac{f_1}{f_2 f_3}
\]
and this is negative whenever additionally 
\begin{equation}
\label{eq:squeeze}
2(1-\alpha^{-2}) \frac{f_1 f_2}{f_3} > \left( \abs{c_1}\alpha^{-2} - c_2\right).
\end{equation}
But because $\frac{\alpha f_3^2}{f_1 f_2} \le \frac{f_3^2}{f_2^2} \le 1$ 
\[
\left(\frac{f_1f_2}{f_3}\right)^3 > \alpha^2 \left(\frac{f_1f_2}{f_3}\right)^3
\left(\frac{f_3^2}{f_1f_2}\right)^2 = \alpha^2 f_1 f_2 f_3,
\]
so \eqref{eq:squeeze} holds by our choice of $\alpha$. 
Thus
if $f_1/f_2 \ge \alpha$ for all time then $f_1/f_2$ is decreasing,
while if $f_1/f_2 \le \alpha$ at some time then that condition is
maintained at all future times. Either way, $f_1/f_2$ is bounded.
\end{proof}

The next result describes the asymptotic behaviour of a forward-incomplete steady soliton close to its extinction time. 
\begin{prop}
\label{prop:steady_extinct}
If a steady soliton is forward-incomplete with extinction time $t_*$ then its smallest variable $f_3 \to 0$
as~$t \to t_*$, while $(f_3^2)' \to \frac{2c_3}{3} < 0$. (In particular, this can happen only if $c_3<0$).
\end{prop}

\begin{proof}
First we show that $f_3$ cannot be bounded away from 0
(where as previously we have arranged that $f_1 \geq f_2 \geq f_3$).
If it were, then \eqref{eq:fi:steady} implies that 
\[\frac{d}{dt}\log (f_1^2f_2^2) = 
- \frac{2c_3}{3f_3^2} + \frac{c_1}{3f_1^2} + \frac{c_2}{3f_2^2} +
\frac{2f_3}{f_1f_2} \]
is bounded above.
Thus $f_1f_2$ remains bounded in finite time. Since $f_2 \ge f_3$ is bounded
below, this implies that $f_1$ is bounded above. Therefore $\frac{f_1}{f_3}$ 
remains bounded for finite time (and hence so do the other ratios
$\frac{f_i}{f_j}$), contradicting Proposition \ref{prop:lifetime}.

Next we argue that there exists an $\epsilon > 0$ such that once $f_3 < \epsilon$,
then $f_3' < 0$.
When $f_3$ is small, then (since $f_1f_2f_3$ is increasing and
$\frac{f_1}{f_2}$ is bounded by Lemma \ref{lem:big_pair}) $f_1$ and $f_2$ must
both be large, and so the right-hand side of
\begin{equation}
\label{eq:f3'}
(f_3^2)' =
\frac{2c_3}{3} + f_3\left(\frac{f_1}{f_2}+\frac{f_2}{f_1}\right)
+ f_3^2\left(-\frac{c_1}{3f_1^2} - \frac{c_2}{3f_2^2} -
\frac{f_3}{f_1f_2}\right)
\end{equation}
is dominated by the first two terms. Therefore if $c_3 \geq 0$ then, using again the fact that $f_1/f_2$ is bounded, we see that
$f_3' > 0$ whenever $f_3$ is small enough, implying that $f_3$ is bounded
away from 0 in contradiction of the above.
So we must have $c_3 < 0$, and hence $f_3' < 0$ when $f_3$ is small enough. Thus $f_3 \to 0$, and
\eqref{eq:f3'} gives $(f_3^2)' \to \frac{2c_3}{3}$.
\end{proof}

The next result will be used in the proof of Proposition~\ref{prop:fc:steady:exp:vol} where we 
determine the asymptotic behaviour of any forward-complete solution where the ratio of the largest variable
to the smallest variable is unbounded as $t\to \infty$. 
\begin{lemma}
\label{lem:min_force_AC}
If at any instant a steady soliton satisfies $\min_i f_i > 7\max_j |c_j|$ then it is forward-complete and all $\frac{f_i}{f_j}$ are bounded.
\end{lemma}

\begin{proof}
For the smallest component $f_i$ we clearly have
\[
\frac{f_j^2+f_k^2-f_i^2}{f_1 f_2 f_3} = \frac{f_j^2+f_k^2-2f_i^2}{2 f_i f_j f_k} + \frac{f_j^2+f_k^2}{2 f_j f_k} \ge  \frac{f_j^2+f_k^2}{2 f_j f_k}. 
\]
Hence once the condition assumed in the statement happens, we can bound the derivative of the smallest $f_i$ by
\[ f_i' = \frac{f_i}{2}(\log f_i^2)' > f_i\left(
-\sum_j\frac{|c_j|}{f_j^2}\right) + \frac{f_j^2+f_k^2}{4f_jf_k}
> -\frac{3\max|c_j|}{f_i} + \frac{1}{2} > \frac{1}{14}\]
to see that $\min f_i$ is increasing thenceforth.
Thus by Proposition~\ref{prop:steady_extinct} the lifetime is infinite, and the derivative lower bound on  $\min f_i$ implies that all $f_i \to \infty$.

To simplify the notation, assume now with loss of generality that we have passed the last time
when the ordering of the $f_i$ changes, and that $f_1 \geq f_2 \geq f_3$. 
It now suffices to prove that $f_1/f_3$ is bounded above. 
If $f_1 > 2f_3$ then by~\eqref{eq:fi:fj:steady}
\[ \left(\log\frac{f_1^2}{f_3^2}\right)'
= \frac{c_1}{f_1^2} - \frac{c_3}{f_3^2} + \frac{2(f_3^2 -f_1^2)}{f_1f_2f_3}
<  \frac{|c_1| + |c_3|}{f_3^2} + \frac{2f_1^2}{f_1 f_2f_3} \left( \frac{f_3^2}{f_1^2}-1 \right) 
< \frac{|c_1| + |c_3|}{f_3^2}  - \frac{3}{2f_3} \]
is negative for $f_3$ large enough.
Thus $\frac{f_1}{f_3}$ is bounded above.
\end{proof}
\begin{remark*} 
The importance of the previous result is not the particular constant $7$ (which is not sharp) that appears in the statement, but rather that there is a threshold (depending on the initial conditions, \ie on the values of the constants $c_i$)
such that if the smallest $f_i$ ever exceeds that threshold then the solution must be forward-complete with all ratios $f_i/f_j$ bounded. 
\end{remark*}

\begin{prop}
\label{prop:fc:steady:exp:vol}
Any forward-complete steady soliton with $\frac{f_1}{f_3}$ unbounded has
$f_3 \to -\tfrac{c_3}{3} > 0$ while~$\frac{f_1}{f_2} \to 1$ as $t \to \infty$. 
In fact, $f_1$ and $f_2$ both grow exponentially as $t \to \infty$. 
(Again, this can happen only if $c_3<0$).
\end{prop}

\begin{proof}
By the previous lemma, if $f_1/f_3$ is unbounded then the smallest variable $f_3$ must remain bounded above.
Then the fact that $g \to \infty$ as $t \to \infty$ together with
Lemma \ref{lem:big_pair} implies that both $f_1$ and $f_2 \to \infty$.

As previously we can assume without loss of generality that $f_1 \geq f_2$. Now given any $C > 1$, if $f_1 > Cf_2$ then
\eqref{eq:fi:fj:steady} implies that
\[ f_3\left(\log\frac{f_1^2}{f_2^2}\right)'
< \frac{|c_1|}{f_1} + \frac{|c_2|}{f_2} -2\left(\frac{C^2-1}{C}\right) < 
 \frac{|c_1|}{f_1} + \frac{|c_2|}{f_2} -2(C-1)< -(C-1) \]
for $t$ large enough, since $f_1$ and $f_2$ both tend to $\infty$. 
Because $f_3$ is bounded above,
$\frac{f_1}{f_2}$ will thus eventually decrease past $C$ and then remain below $C$.
Hence $\frac{f_1}{f_2} \to 1$.

Then \eqref{eq:f3'} gives
\[ (f_3^2)' - 2\left(\frac{c_3}{3} + f_3\right)  \to 0 . \]
We must have $c_3 \not= 0$, since otherwise $f_3' \to 1$, contradicting the fact
that $f_3$ is bounded.

Now for any $\epsilon > 0$ there is $t_1$ such that if
$|f_3(t) + \frac{c_3}{3}| > \epsilon$ for any $t > t_1$
then $f_3'$ has the same sign as $f_3+\frac{c_3}{3}$. In other words,
$\frac{d}{dt}|f_3+\frac{c_3}{3}| > 0$, and hence the condition $|f_3 + \frac{c_3}{3}| > \epsilon$ persists from then on. If $f_3$ is moreover bounded then
$\frac{d}{dt}|f_3+\frac{c_3}{3}|$ is bounded away from 0, so $f_3$ is either unbounded
above or reaches 0 in finite time, either of which is a contradiction.

Thus $|f_3(t) + \frac{c_3}{3}| < \epsilon$ for all $t > t_1$.
Since $\epsilon$ was arbitrary, that means that $f_3 \to -\frac{c_3}{3}$.
\end{proof}

\pagebreak[2]

\begin{corollary}
Viewed in the $(F_1, F_2, F_3)$ system, a forward-complete steady soliton with
$\frac{f_1}{f_3}$ unbounded converges to the limit point $p_3$.
\end{corollary}
\nopagebreak
This completes the proof of Theorem \ref{thm:trichotomy}, and hence of Theorem \ref{mthm:trichotomy}.

\pagebreak[2]
\section{Complete \texorpdfsunitary{3}-invariant steady solitons}
\label{S:steady}

To find complete cohomogeneity-one solitons %
we need solutions that both close smoothly
on a singular orbit and are forward-complete.
In this section in the steady case we are able to resolve the completeness question definitively.  
In particular, Theorem~\ref{thm:steady:complete} establishes our final main Theorem~\ref{mthm:complete:steady}.
This result determines precisely which of the smoothly-closing steady solitons are forward-complete 
and also what is the geometric behaviour of the resulting end. 

Recall that by Corollary~\ref{Cor:Sp2:steady}, any 
smoothly-closing \Sp{2}-invariant steady soliton is necessarily a trivial
soliton, \ie has vanishing vector field and the underlying \gtstr~is torsion
free. 
Throughout this section we will therefore \emph{only} consider
\sunitary{3}-invariant steady solitons. 

In Theorem \ref{thm:SU3:smooth:closure} we identified, for each fixed
$\lambda$, all the local \sunitary{3}-invariant solitons that close smoothly
on $\CP^2$ (\ie are defined on a neighbourhood of the zero section in
$\Lambda^2_- \CP^2$).
In the case~$\lambda = 0$, let us denote the 2-parameter family of
locally-defined steady solitons by $\mathcal{S}_{b,c}$.
Recall from Remark \ref{rmk:scaling_sc} that $\mathcal{S}_{\mu b, \mu c}$ is
a rescaling of $\mathcal{S}_{b,c}$, so we have a 1-parameter family up
to scale.

Theorem \ref{thm:trichotomy} identified the three possible types of
behaviour of a steady soliton under forward evolution: AC, forward-complete with exponential volume growth, 
or finite extinction. We now wish to identify to which of the
three classes each element of the family $\mathcal{S}_{b,c}$ belongs. 

\begin{remark}
\label{rmk:AC:steady:stability}
The stability of AC steady ends established in Corollary~\ref{cor:AC_stab},
together with the standard continuous dependence of $\mathcal{S}_{b,c}$ on the initial data
$(b,c)$, immediately implies that for fixed~$b>0$, the set of $c \in \R$ such
that $\mathcal{S}_{b,c}$ is AC is open. Since $\mathcal{S}_{b,0}$ is the
static solution given by the AC Bryant--Salamon torsion-free~\gtstr~on $\Lambda^2_- \CP^2$, 
$\mathcal{S}_{b,c}$ defines an AC steady soliton on~$\Lambda^2_- \CP^2$
for sufficiently small $c$ (which is a non-trivial steady soliton for $c \not= 0$).
\end{remark}

However, the line of reasoning in the previous remark does not help much with establishing whether
there is any threshold for the parameter where the long-time behaviour
transitions away from being AC.
Numerical investigation suggested that such a threshold does in fact occur
at $\frac{c^2}{b^2} = \frac{9}{2}$; this parameter value turns out to
correspond to an explicit complete solution with exponential volume growth
which we describe in Section \ref{ss:explicit}. By comparing the solutions
for other parameter values with this explicit solution we obtain the following decisive result.

\begin{samepage}
\begin{theorem}\hfill
\label{thm:steady:complete}
\begin{enumerate}[left=0em]
\item
For any $b > \sqrt{2}$, %
$\mathcal{S}_{b,3}$  is asymptotic with rate $-1$ to the torsion-free \gtwo-cone on $\sunitary{3}/\T^2$ (case \textup{(i)} in Theorem \ref{thm:trichotomy}).
\item $\mathcal{S}_{\sqrt{2}, 3}$ is complete with exponential volume growth
(case \textup{(ii)} in Theorem \ref{thm:trichotomy}).
\item
For any $b < \sqrt{2}$ the smoothly-closing steady soliton $\mathcal{S}_{b,3}$
is incomplete  (case \textup{(iii)} in Theorem \ref{thm:trichotomy}).
\end{enumerate}
\end{theorem}
\end{samepage}

In this statement we have used the scale freedom to normalise $c = 3$, rather
than $b = 1$ as in the statement of Theorem~\ref{mthm:complete:steady} in the introduction. Keeping $c$ fixed is convenient in the
proof because this normalisation allows us to treat all the solitons considered as solutions
of the same cubic system from Section \ref{ss:cubic_exp}.
The proof heavily exploits the fact that the explicit solution
$\mathcal{S}_{\sqrt{2}, 3}$ has a simple description in this system.
The complete cases (i)
and (ii) are proved in Section \ref{ss:explicit}, and the incomplete case
(iii) in Section \ref{ss:incomplete}. 

\subsection{Smoothly-closing steady solitons}

As above, we denote the smoothly-closing steady solitons by $\mathcal{S}_{b,c}$ where $b>0$ 
and $c \in \R$ and recall that this gives rise to a $1$-parameter family
of smoothly-closing steady solitons distinct up to scale.
By acting with the element of the Weyl group $W$ that exchanges $f_2$ and $f_3$, recall Lemma~\ref{lem:Weyl:SU3}, we can 
also transform a solution with $c<0$ into one with~$c>0$. Therefore, for simplicity of exposition, in most of this section we will make the assumption that $c>0$;
results about solutions with $c<0$ follow easily by minor modifications of those obtained for solutions with $c>0$.

We have noted before that \eqref{eq:ODEs:SU3:taui} implies that for a steady
soliton, the quantities $\tilde \tau_i = \tau_i - uf_i^2$ take constant values
$c_i$.
The small-$t$ power series for the smoothly-closing $\lambda$-soliton with initial conditions $b$ and $c$ given in
Appendix~\ref{app:sc:power:series} imply that $u(0) = 0$, and
$\tilde\tau_1(0) = 0$, $\tilde\tau_2(0) = c$ and $\tilde\tau_3(0) = -c$.
In particular, on the smoothly-closing steady soliton $\mathcal{S}_{b,c}$ the values of the conserved quantities are
\begin{equation}
c_1 = 0, \qquad c_2 = c > 0, \quad \textrm{ and } \quad c_3 = -c < 0.
\end{equation}

\begin{lemma}
\label{lem:steady:ordering}
For any smoothly-closing steady soliton $\mathcal{S}_{b,c}$ with $c>0$ the component $f_2$ is dominant, 
\ie for all positive times within the lifetime of the solution the triple $f$ satisfies the ordering properties
\[
f_2 > f_3, \qquad f_2 >f_1.
\]
Hence 
\[
u>0, \quad f_2 > \frac{1}{2}t
\]
also hold for all positive times within the lifetime of the solution.
\end{lemma}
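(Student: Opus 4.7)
The plan is to establish the orderings $f_2 > f_3$ and $f_2 > f_1$ by a barrier-type argument applied to equation \eqref{eq:fi:fj:steady}, after first using the leading-order expansions from Theorem \ref{thm:SU3:smooth:closure} to initialise the ordering for small $t > 0$.

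First, I would verify the initial ordering. Since $\lambda = 0$ and $c > 0$, the power series given in \eqref{eq:smooth:close} yield $f_1(t) = t + O(t^3)$, $f_2(t) = b + \tfrac{c}{6b}t + O(t^2)$, and $f_3(t) = b - \tfrac{c}{6b}t + O(t^2)$. Hence $f_2 - f_3 = \tfrac{c}{3b}t + O(t^2) > 0$ for sufficiently small $t > 0$, and $f_2 - f_1 = b + O(t) > 0$ near $t = 0$.

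Next, the forward invariance of these inequalities will follow from \eqref{eq:fi:fj:steady} with $(c_1,c_2,c_3) = (0,c,-c)$. Suppose for contradiction that $t^* > 0$ is the first positive time at which $f_2(t^*) = f_3(t^*)$. Evaluating \eqref{eq:fi:fj:steady} at $t^*$ with $(i,j) = (2,3)$, the transport term $\tfrac{2(f_3^2 - f_2^2)}{f_1f_2f_3}$ vanishes and we get
\[
\frac{d}{dt}\log\!\left(\frac{f_2^2}{f_3^2}\right)\bigg|_{t^*} = \frac{c}{f_2^2(t^*)} - \frac{-c}{f_3^2(t^*)} = \frac{2c}{f_2^2(t^*)} > 0.
\]
But $\log(f_2^2/f_3^2) > 0$ just before $t^*$ and equals $0$ at $t^*$, forcing the derivative there to be $\le 0$, a contradiction. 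The same device handles the ordering $f_2 > f_1$: at a hypothetical first coincidence time $t^{**}$ with $f_2(t^{**}) = f_1(t^{**})$, equation \eqref{eq:fi:fj:steady} with $(i,j) = (2,1)$ gives
\[
\frac{d}{dt}\log\!\left(\frac{f_2^2}{f_1^2}\right)\bigg|_{t^{**}} = \frac{c}{f_2^2(t^{**})} > 0,
\]
again contradicting the required sign. (Because $f_1(0) = 0$, care is needed at the initial instant; but the local expansion shows $\log(f_2^2/f_1^2) \to +\infty$ as $t \searrow 0$, so any first failure of the inequality would occur at $t^{**} > 0$ where the argument applies.)

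With the orderings established, the remaining conclusions are immediate. From \eqref{eq:u:steady},
\[
u = \frac{c(f_2^2 - f_3^2)}{3f_2^2 f_3^2} > 0
\]
since $c > 0$ and $f_2 > f_3$. Finally, the dominance of $f_2$ gives $f_2^3 > f_1 f_2 f_3$, while the closure estimate \eqref{eq:g:dot:lower:bound} (with $g(0) = 0$) yields $(f_1f_2f_3)^{1/3} \ge \tfrac{1}{2}t$. Combining these, $f_2 > (f_1f_2f_3)^{1/3} \ge \tfrac{1}{2}t$, completing the proof. The only delicate point in the entire argument is the behaviour at $t = 0$, where $f_1$ vanishes and \eqref{eq:fi:fj:steady} is singular; this is handled by the explicit leading-order asymptotics rather than the ODE itself.
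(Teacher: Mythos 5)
Your proof is correct and uses the same approach as the paper: establishing the initial ordering from the small-$t$ power series, preserving it via a barrier argument based on the sign of the right-hand side of \eqref{eq:fi:fj:steady} at a first coincidence time, and then deducing $u>0$ from \eqref{eq:u:steady} and $f_2>\tfrac12 t$ from $f_2^3 > f_1f_2f_3$ together with \eqref{eq:g:dot:lower:bound}. The only cosmetic difference is that you phrase the preservation step explicitly as a contradiction at a first coincidence time, whereas the paper records the equivalent fact $(f_i-f_j)' = \tfrac{c_i-c_j}{2f_i}>0$ at coincidences and concludes the ordering is preserved forward in time.
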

\begin{proof}
The small-$t$ power series for the smoothly-closing solutions given in Appendix~\ref{app:sc:power:series} imply that
\[
f_2 - f_1 = b + \frac{c-6b}{6b}t + \hot, \quad f_2 - f_3 = \frac{c}{3b}t + \hot
\]
Hence $f_2-f_1$ and $f_2-f_3$ are both positive (since we assumed $c>0$) for $t>0$ sufficiently small. 
Since $\tilde \tau_2 - \tilde \tau_1 = c>0$,
Corollary~\ref{cor:sign_changes} implies that the condition
$f_2 > f_1$ is preserved.
Similarly $\tilde \tau_2 - \tilde \tau_3=2c>0$ implies that $f_2 > f_3$
is preserved.

Positivity of $u$ now follows immediately from the dominance of $f_2$ and~\eqref{eq:u:steady}.
\end{proof}

\begin{remark}
On the other hand, any smoothly-closing steady soliton satisfies $f_3 > f_1$ initially, but if the soliton is not AC,
then $f_3$ must eventually be the smallest variable, because $c_3 = -c$
is the only negative $c_i$.
This is because in both the other cases of the steady trichotomy (Theorem~\ref{thm:trichotomy}) exactly
one $f_i$ remains bounded, and the corresponding $c_i$ is negative.
Thus in those cases we would be forced to have $f_1 = f_3$ at some $t_0$, such a $t_0$ is unique by Corollary
\ref{cor:sign_changes}, so then $f_1 > f_3$ holds for $t > t_0$. In fact, 
one can prove that any smoothly-closing steady soliton (including the AC ones) with $c>0$ has this property, but because we do not need this fact we omit its proof.
\end{remark}

\begin{remark}
In this section we have chosen to keep the labelling of the variables $f_i$
consistent with Section \ref{s:smooth:closure:solns}. 
As a consequence, the ``eventual'' ordering $f_2 > f_1 > f_3$ is not consistent
with the ordering $f_1 > f_2 > f_3$ we assumed in Section \ref{ss:trichotomy},
but at least the smallest variable (which has distinct behaviour from the other
two) has the same label $f_3$ in both cases.
\end{remark}

\subsection{Complete asymptotically conical and exponentially growing steady
solitons}
\label{ss:explicit}

Observing numerically a transition in the long-time behaviour of smoothly-closing steady solutions led us to investigate solitons at the critical ratio
$\tfrac{c^2}{b^2}=\tfrac{9}{2}$.
By scaling and use of the discrete symmetries we can suppose that $b= \sqrt{2}$ and $c=3$.
In this case the small-$t$ power series expansion for the coefficient $f_1$ given in Appendix~\ref{app:sc:power:series} specialises to
\[
f_1 =  t+\frac{t^3}{24}+\frac{t^5}{1920}+\frac{t^7}{322560}+ \frac{t^9}{92897280} + \frac{t^{13}}{25505877196800} + \cdots = \sum_{n=0}{ \frac{t^{2n+1}}{4^n (2n+1)!}}.
\]
We recognise this as the beginning of the Taylor series for the function $2 \sinh{\tfrac{1}{2}t}$ centred at $t=0$. Inspection of the expansions 
for $f_2^2$ and $f_3^2$ also reveal expansions that are consistent with being the Taylor series for other hyperbolic trigonometric functions. 
This leads us to the following result.

\begin{theorem}
\label{thm:explicit:steady}
The smoothly-closing steady soliton $\mathcal{S}_{\sqrt{2},3}$ is given explicitly by 
\begin{equation}
\label{eq:fi:explicit:steady}
f_1^2 = 2 (\cosh{t}-1) = 4 \sinh^2{\tfrac{t}{2}}, \qquad  f_2^2 = 1 + e^t, \qquad f_3^2  = 1 + e^{-t}.
\end{equation}
It then follows from~\eqref{eq:u:steady},~\eqref{eq:taui:fi:steady} and~\eqref{eq:normtau:steady} that 
$u$ and $f_1 f_2 f_3$ are given by 
\[
\ f_1 f_2 f_3 = 2 \sinh{t}, \qquad u = \frac{e^t -1}{e^t+1}  = \tanh{\tfrac{t}{2}},
\]
that the $\tau_i$ are given by 
\[
\tau_1 = \frac{(e^t -1)^3}{e^t(1+e^t)} = 4 \tanh{\tfrac{t}{2}} \sinh^2{\tfrac{t}{2}}, \qquad 
\tau_2 = 2 + e^t, \qquad 
\tau_3 = - (2+ e^{-t}),
\]
and that
\[
\abs{\tau}^2 =  \frac{6(e^{2t}+e^{t}+1)}{(e^t+1)^2} = 6 - \tfrac{3}{2}\sech^2{\tfrac{t}{2}}.
\]
In particular, as a special case of Theorem \ref{thm:trichotomy} (ii), $\mathcal{S}_{\sqrt{2},3}$ is complete and has exponential volume growth. 
Its scalar curvature decays exponentially fast and is asymptotic to $-3$.
The vector field $X= u \,\partial_t$ decays exponentially fast to the constant vector field $\partial_t$.
\end{theorem}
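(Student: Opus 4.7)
The plan is direct verification combined with the uniqueness statement in Theorem~\ref{thm:SU3:smooth:closure}. The explicit formulas were discovered by inspecting the small-$t$ Taylor series at the critical ratio $c^2/b^2 = 9/2$ (normalised here to $b = \sqrt{2}$, $c = 3$), so no separate existence argument is needed: it suffices to check that the proposed quadruple $(f_1, f_2, f_3, u)$ satisfies the correct smoothly-closing initial conditions and solves the steady soliton system, and then all remaining formulas in the statement follow by substitution into the identities of Section~\ref{S:steady}.

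First, evaluating the explicit formulas at $t = 0$ gives $f_1(0) = 0$ with $f_1'(0) = 1$, $f_2(0) = f_3(0) = \sqrt{2}$, and the symmetry $f_2^2(t) = f_3^2(-t)$, exactly matching the smoothly-closing ansatz \eqref{eq:smooth:close} with $b = \sqrt{2}$. By \eqref{eq:taui:steady} the conserved quantities $\tau_i - u f_i^2$ of the steady system evaluate on the proposed solution to $(0, 3, -3)$, identifying the candidate initial data as precisely that of $\mathcal{S}_{\sqrt{2}, 3}$.

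Second, I verify the reduced first-order system \eqref{eq:fi:steady} with $(c_1, c_2, c_3) = (0, 3, -3)$. Half-angle identities yield the compact auxiliary formulas
\[
\fb = 4\cosh t, \qquad f_1 f_2 f_3 = 2\sinh t, \qquad u = \tfrac{c}{3}\bigl(f_3^{-2} - f_2^{-2}\bigr) = \tanh\tfrac{t}{2},
\]
after which each of the three equations in \eqref{eq:fi:steady} collapses to an elementary hyperbolic identity: the $i = 1$ equation reduces to $\coth(t/2) - \tanh(t/2) = 2/\sinh t$, which follows from $\cosh^2 - \sinh^2 = 1$, and the $i = 2, 3$ cases reduce to $e^t/(1+e^t)$ and $-e^{-t}/(1+e^{-t})$ on both sides respectively. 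Uniqueness in Theorem~\ref{thm:SU3:smooth:closure} then identifies the triple as $\mathcal{S}_{\sqrt{2}, 3}$, and the claimed formulas for $u$, $\tau_i$ and $|\tau|^2$ are obtained by direct substitution into \eqref{eq:u:steady}, \eqref{eq:taui:fi:steady} and \eqref{eq:normtau:steady}.

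Finally, completeness is immediate since all $f_i$ are smooth and strictly positive on $(0, \infty)$ while smooth closure at $t = 0$ is already built into the ansatz; combined with the fact that $t$ is arclength along the orthogonal geodesic, this yields a complete Riemannian metric on $\Lambda^2_-\CP^2$. Exponential volume growth follows from $\vol_{\varphi_f} \propto f_1^2 f_2^2 f_3^2 = 4\sinh^2 t$. The scalar curvature formula \eqref{scalar:curv:closed} combined with $|\tau|^2 = 6 - \tfrac{3}{2}\sech^2(t/2)$ gives asymptotically constant scalar curvature equal to $-3$ with exponential rate of convergence, and $1 - \tanh(t/2) = 2/(e^t + 1)$ exhibits the exponential decay of $X$ to $\partial_t$. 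There is no serious obstacle in the proof: the substantive step is the guess of the closed form from the Taylor expansion at the critical ratio, and the rest is bookkeeping with hyperbolic identities.
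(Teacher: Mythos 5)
Your proposal is correct and takes the same approach as the paper: verify by direct computation that the explicit triple $f$ satisfies the reduced steady ODE system~\eqref{eq:fi:steady} with conserved quantities $(0,3,-3)$ together with the smoothly-closing initial conditions, and then invoke the uniqueness guaranteed by Theorem~\ref{thm:SU3:smooth:closure}. You merely fill in the hyperbolic-identity bookkeeping that the paper leaves to the reader, and the concluding deductions about volume growth, scalar curvature and the soliton vector field are the same immediate consequences noted in the paper.
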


\begin{proof}
By uniqueness of solutions to the initial value problem it suffices to 
verify by direct computation that the explicit triple $f$ given in~\eqref{eq:fi:explicit:steady} satisfies both the ODE system~\eqref{eq:fi:steady} 
and the initial conditions needed for $\mathcal{S}_{\sqrt{2},3}$.  The claim about scalar curvature follows immediately from~\eqref{scalar:curv:closed}.
\end{proof}

\begin{remark}
The solution of the polynomial version of the ODE system~\eqref{eq:steady:poly} that corresponds to the explicit steady soliton given in~\eqref{eq:fi:explicit:steady} is 
\begin{equation}
\label{eq:Fi:explicit:steady}
F_1 = \tanh{\tfrac{t}{2}}, \quad F_2 = \frac{e^t}{e^t-1}, \quad F_3 = \frac{1}{e^t - 1}.
\end{equation}
Note that $(F_1,F_2,F_3) \to (1,1,0)$ as $t \to + \infty$ and that  the point $(1,1,0)$ is precisely the hyperbolic fixed point $p_3$ of~\eqref{eq:steady:poly} that we identified in Lemma~\ref{lem:poly:fixed} when $(c_1,c_2,c_3)=(0,3,-3)$. 
Also, since only $c_3$ is negative, in this case there is a unique such hyperbolic fixed point within the closure of the positive octant. 
In other words, the positive triple $F$ given in~\eqref{eq:Fi:explicit:steady} that corresponds to the solution 
$\mathcal{S}_{\sqrt{2},3}$ yields a curve that belongs to the $2$-dimensional local stable manifold of the (unique) hyperbolic fixed point $p_3$.

Observe also that the functions $F_i$ defined in~\eqref{eq:Fi:explicit:steady} satisfy the following algebraic relations
\begin{subequations}
\label{eq:explicit:steady:Fi}
\begin{align}
\label{eq:explicit:steady:D}
F_2 - F_3 &= 1, \\ 
\label{eq:explicit:steady:L}
F_1(F_2 +F_3) &= 1.
\end{align}
\end{subequations}
We note also that imposing the condition~\eqref{eq:explicit:steady:L} on a nonnegative triple $F$ defines a smooth surface within the nonnegative octant; this surface contains the hyperbolic fixed point $p_3=(1,1,0)$ and its tangent space at $p_3$ coincides with the stable eigenspace of $p_3$.
\end{remark}

Case (ii) of Theorem \ref{thm:steady:complete} is immediate from 
Theorem~\ref{thm:explicit:steady}.
The rest of this subsection is devoted to the proof of case (i), \ie that $\mathcal{S}_{b,3}$ is AC for $b > \sqrt{2}$.

Since in our setup for the smoothly-closing case $c_3=-3$ is the only negative conserved quantity, 
it follows from Theorem~\ref{thm:trichotomy} that the asymptotic behaviour of $f_3$ completely determines
which alternative within that trichotomy a solution belongs to.
Moreover, since we have normalised $c_3=-3$, the only possible asymptotic behaviours for $f_3$ are:
$f_3 \to \infty$ as $t \to \infty$ in the AC case;
$f_3 \to 1$ as~$t \to \infty$ in the exponentially-growing end case; or,
$f_3 \to 0$ as $t \to t_*$ in the incomplete case.

Motivated by the relation~\eqref{eq:explicit:steady:L}, for any  smoothly-closing steady soliton we define the following positive function of $t$
\begin{equation}
\label{eq:L:defn}
\Lambda:= F_1 (F_2 + F_3) = \frac{1}{f_2^2} + \frac{1}{f_3^2},
\end{equation}
and observe that $\Lambda$ can also be used to distinguish the cases of the
trichotomy in Theorem \ref{thm:trichotomy}. 

\begin{lemma}
\label{lem:Lambda_trich}
A steady soliton with $c_1 = 0$, $c_2 = 3$ and $c_3 = -3$ is
\begin{enumerate}
\item AC if and only if $\Lambda \to 0$ as $t \to \infty$;
\item forward-complete with exponential volume growth if and only if $\Lambda \to 1$
as $t \to \infty$;
\item incomplete with extinction time $t_*$ if and only if $\Lambda \to \infty$
as $t \to t_*$.
\end{enumerate}
\end{lemma}

\begin{proof}[Proof of Theorem~\ref{thm:steady:complete}(i)]
Using the polynomial form of the steady ODE system~\eqref{eq:steady:poly} we compute (using the normalisation $c=3$) that $\Lambda$ 
satisfies the differential equation
\begin{equation}
\label{eq:Lambda:dot}
\Lambda' = -F_1 \left( \Lambda (1-\Lambda) + (F_2-F_3 - \Lambda)^2 \right).
\end{equation}
In particular,~\eqref{eq:Lambda:dot} implies that $\Lambda' < 0$ whenever
$\Lambda < 1$ (indeed, $\Lambda' \leq 0$ whenever $\Lambda \leq 1$, with
equality if and only if the equations \eqref{eq:explicit:steady:Fi} defining
the path traced by $\mathcal{S}_{\sqrt{2},3}$ hold).

Note that $f_2(0) = f_3(0) = b$, so the smoothly-closing steady soliton
$\mathcal{S}_{b,3}$ satisfies
\begin{equation*}
\Lambda(0) = \frac{2}{b^2} .
\end{equation*}
Hence for $b > \sqrt{2}$ the inequality $\Lambda < 1$ holds at $t=0$, and is
then preserved for all time. 
Therefore the only possible case in Lemma \ref{lem:Lambda_trich} is (i).
\end{proof}

\subsection{Incomplete steady solitons}
\label{ss:incomplete}
It now remains to prove case (iii) of Theorem \ref{thm:steady:complete},
\ie that~$\mathcal{S}_{b,3}$ is forward-incomplete for $b < \sqrt{2}$.
By Lemma \ref{lem:Lambda_trich} it suffices to prove that $\Lambda \ge \frac{2}{b^2} >1$ 
holds throughout the lifetime of any solution with $b < \sqrt{2}$.

Proving that $\Lambda$ is increasing in this situation is a little more
involved than the argument that $\Lambda$ was decreasing in the proof of
Theorem \ref{thm:steady:complete}(i).
First we rewrite the polynomial ODE system~\eqref{eq:steady:poly} entirely in terms of the quantities $\Lambda$, $F_2-F_3$ and $F_1$.
A calculation shows that~\eqref{eq:steady:poly} is equivalent to the system
\begin{subequations}
\label{eq:steady:poly:v2}
\begin{align}
D' &= -\frac{1}{2} F_1 D(D-1) + \frac{3\Lambda}{2F_1} (\Lambda - D),\\
\label{eq:L'}
\Lambda ' &= F_1 \left( (\Lambda-1) D^2 -\Lambda (D-1)^2 \right),\\
\label{eq:F1'}
F_1' & = F_1^2 \left(D- \frac{3}{2}\right) + \frac{1}{2}\Lambda,
\end{align}
\end{subequations}
where to achieve a more compact presentation we have introduced the notation $D:=F_2 - F_3$.
(The first two equations of~\eqref{eq:steady:poly:v2} are satisfied
automatically when $D=F_2-F_3=1 = \Lambda$; integrating the third equation then gives an alternative
derivation of the explicit steady solution $\mathcal{S}_{\sqrt{2},3}$.)

\begin{lemma}
\label{lem:L>D>1}
For a steady soliton with $c_1 =0$, $c_2 = 3$ and $c_3 = -3$ the condition
\begin{equation}
\label{eq:Lambda:D:pos}
\Lambda>D>1
\end{equation}
\begin{enumerate}
\item implies that $\Lambda$ is increasing, and
\item %
is preserved forward in time.
\end{enumerate}
\end{lemma}

\begin{proof}
(i)
First rewrite~\eqref{eq:L'} as
\[ \Lambda' = \left((\Lambda-1)D^2 - \Lambda(D-1)^2\right) F_1 = (-D^2 + 2D\Lambda - \Lambda) F_1. 
\]
Since the quadratic $q(D)=-D^2 + 2D\Lambda - \Lambda$ is an increasing function of $D \in [1,\Lambda]$,
\eqref{eq:Lambda:D:pos} implies 
\begin{equation}
\label{eq:L':ineq}
\Lambda' > q(1) F_1 = (-1 + 2\Lambda - \Lambda) F_1 = (\Lambda - 1) F_1.
\end{equation}
In particular, this implies that $\Lambda$ is strictly increasing on any
connected interval $I$ on which~\eqref{eq:Lambda:D:pos} holds.

(ii) Suppose for a contradiction that~\eqref{eq:Lambda:D:pos} eventually fails
and that $t>t_0$ is the first instant at which it fails. By (i),
$\Lambda$ must be increasing on the interval $[t_0,t)$ and hence
$\Lambda(t) > 1$. 
So $t$ must satisfy one of the following two conditions:
\begin{enumerate}
\item
$D(t)=1$, or
\item
$\Lambda(t) = D(t)$.
\end{enumerate}
In each case we will derive a contradiction. In the first case we have $D > 1$ on $[t_0,t)$ and $D(t)=1$, 
so $D'(t) \le 0$. 
On the other hand, it follows from~\eqref{eq:steady:poly:v2} that at any point where $D=1$ we have
\[
D' = \frac{3}{2F_1} \Lambda (\Lambda -1).
\]
Hence $D'(t)>0$ since $\Lambda(t)>1$. 

In the second case we have $\Lambda-D>0$ on $[t_0,t)$ and $(\Lambda-D)(t)=0$, so $(\Lambda-D)'(t) \le 0$. 
Again using~\eqref{eq:steady:poly:v2} we find that at any point where $D=\Lambda$
\[
(\Lambda-D)' = \frac{3}{2}  \Lambda (\Lambda-1) F_1.
\]
Hence $(\Lambda-D)'(t)>0$ because $\Lambda(t)>1$. 
\end{proof}

\begin{proof}[Proof of Theorem \ref{thm:steady:complete}(iii)]
The small-$t$ expansions for $\mathcal{S}_{b,3}$ given in Appendix~\ref{app:sc:power:series}
imply that 
\[
\Lambda(0) = D (0) = \frac{2}{b^2}, \quad \Lambda - D   = \frac{3(2-b^2)}{5b^6} t^2 + \hot
\]
Hence for any $b^2<2$, the condition \eqref{eq:Lambda:D:pos} holds for $t>0$
sufficiently small. By Lemma \ref{lem:L>D>1} that condition is preserved
and $\Lambda$ is increasing throughout the lifetime of the solution.
Thus $\Lambda \geq \frac{2}{b^2}>1$ for all time, and Lemma \ref{lem:Lambda_trich} implies that $\mathcal{S}_{b,3}$ is incomplete. 
\end{proof}

\begin{remark}
The same type of argument proves the following analogue of Lemma~\ref{lem:L>D>1}(ii): 
\\[0.3em]
\emph{If a solution of~\eqref{eq:steady:poly:v2} satisfies $\Lambda<D<1$ at some instant $t_0$ then 
it continues to satisfy the same inequalities for the remainder of its lifetime. In particular, any solution of~\eqref{eq:steady:poly:v2} 
that arises from a smoothly-closing steady soliton $\mathcal{S}_{b,3}$ 
with $b>\sqrt{2}$ satisfies $\Lambda<D<1$ for all $t>0$.}
\\[0.2em]
Our proof that smoothly-closing steady solitons $\mathcal{S}_{b,3}$ with $b>\sqrt{2}$ are all asymptotically conical already used the 
fact that the condition $\Lambda<1$ is preserved;  we did not need to use that the remaining inequalities $D<1$ and $\Lambda<D$
hold for all $t$.
\end{remark}

\section{Comparisons with 7-dimensional Ricci solitons}
\label{S:comparison}

\noindent
\textbf{Cohomogeneity-one steady Ricci solitons on $\Lambda^2_-\Sph^4$ and $\Lambda^2_-\CP^2$.}
In order to compare the behaviour of Laplacian flow with  Ricci flow in dimension $7$ it is natural
to ask what one can say about cohomogeneity-one Ricci solitons in dimension $7$. 
Given the examples we have studied in this paper, it is particularly natural to compare 
$G$-invariant Ricci solitons on $\Lambda^2_-\Sph^4$ and $\Lambda^2_-\CP^2$ 
for~$G=\Sp{2}$ and $G=\sunitary{3}$ respectively to the complete Laplacian solitons we have found on those spaces. 
We describe known results in this direction.

We begin with results in the \Sp{2}-invariant setting. We are not aware of any results or numerical evidence either for or against the existence of 
complete $\Sp{2}$-invariant gradient AC Ricci shrinkers on $\Lambda^2_-\Sph^4$. However, for complete~\Sp{2}-invariant steady solitons, Wink proved 
the following result.
\begin{theorem}[{\cite[Theorem 3.1]{Wink:IMRN}}]
\label{thm:wink}
There exists a $1$-parameter family of complete \Sp{2}-invariant gradient steady Ricci solitons on $\Lambda^2_-\Sph^4$. 
The metric coefficients $f_1$ and $f_2$ both grow asymptotically like $\sqrt{t}$ where $t$ is the 
arclength parameter along a unit-speed geodesic normal to every principal orbit and the potential for the soliton 
vector field has linear growth with asymptotic slope $-1$. 
\end{theorem}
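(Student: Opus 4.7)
The plan is to adapt the general framework developed in the Laplacian soliton case, in particular the smooth-closure analysis of Section~\ref{s:smooth:closure:solns} and the scale-invariant reformulation of Section~\ref{ss:si:ODEs}, to the cohomogeneity-one gradient steady Ricci soliton equation on $\Lambda^2_{-}\Sph^4$. The first step is to write down the ODE system explicitly. Using the invariant metric $g = dt^2 + f_1^2 g_1 + f_2^2 g_2$ from Proposition~\ref{prop:closed:gtstr:Sp2} and the ansatz $\nabla \phi = u(t)\,\partial_t$ for the soliton potential, the gradient steady Ricci soliton equation $\Ric(g) + \nabla^2 \phi = 0$ reduces to a mixed-order system: two second-order equations for $f_1$ and $f_2$ (one involving the principal-orbit curvature of the fibration $\Sph^2 \to \CP^3 \to \Sph^4$), a first-order equation $\Ric_{tt} + u' = 0$, and the standard conservation identity $R + |\nabla \phi|^2 = C$ which is the Ricci-soliton analogue of~\eqref{eq:SU3:conserve}. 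After eliminating $u' = -\Ric_{tt}$ using the first-order equation, one obtains a self-contained first-order system for $(f_1, f_2, f_1', f_2', u)$ on the open set where $f_1, f_2 > 0$.

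The second step is the local existence of smoothly-closing solutions, parametrised by $b := f_2(0) > 0$. The smooth-extension conditions at the singular orbit $\Sph^4$ force $f_1$ to be odd with $f_1'(0) = 1$, $f_2$ even with $f_2(0) = b$, and $u$ odd with $u(0) = 0$. Writing $f_1 = t + t^3 \hat{f}_1$, $f_2 = b + t^2 \hat{f}_2$, $u = t \hat{u}$, the system becomes a regular singular initial value problem of the form~\eqref{eq:Singular:IVP}. As in the proof of Theorem~\ref{thm:SU3:smooth:closure}, imposing $M_{-1}(y_0) = 0$ at $t = 0$ determines the leading coefficients $\hat{f}_1(0), \hat{f}_2(0), \hat{u}(0)$ uniquely as rational functions of $b$; checking that $n\,\Id - d M_{-1}$ is invertible for all $n \geq 1$ is a routine $3 \times 3$ determinant computation. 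Theorem~\ref{thm:Singular:IVP} then produces a unique real-analytic local solution for each $b > 0$, and since steady Ricci solitons are scale-invariant, the 1-parameter family is genuine after quotienting.

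The main obstacle is global existence and asymptotic analysis, and here the argument departs most sharply from the Laplacian case. The key input is the conservation law $R + |\nabla \phi|^2 \equiv C$. Combined with $R \geq 0$ (which holds for any complete steady Ricci soliton by Chen's theorem, or can be proven directly from the ODE using maximum-principle arguments on $R$), this yields $u^2 \leq C$; normalising the scaling so $C = 1$, one has $|u| \leq 1$ throughout the lifetime. To rule out finite-time blow-up I would introduce scale-normalised variables along the lines of Section~\ref{ss:si:ODEs}, setting $g = (f_1 f_2^2)^{1/3}$ and $\sif_i = f_i/g$, reducing to an autonomous system in $(\sif_1, \sif_2, u)$ with a reparametrised ``time'' variable $s$ defined by $dt/ds = g$. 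The bound $|u| \leq 1$ and the monotonicity $(\log g)'>0$ (inherited from the analogue of Lemma~\ref{lem:closed:Sp2:vol}) prevent the $\sif_i$ from degenerating, giving forward completeness.

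For the asymptotic behaviour $f_1, f_2 \sim \sqrt{t}$ and $\phi \sim -t$ I would analyse the fixed points of the scale-invariant system and show the solution is attracted to the unique fixed point at which $u = -1$ and the $\sif_i$ balance according to the algebraic relations forced by the Ricci soliton equation (this fixed point encodes the ``paraboloid end'' characteristic of steady Ricci solitons, cf.\ the Bryant soliton). The linear-growth claim for $\phi$ follows immediately from $u = \phi' \to -1$. The delicate part will be establishing the convergence rate and verifying that the attracting fixed point is indeed hyperbolic with stable manifold meeting the 1-parameter smoothly-closing family transversally; I expect this to be the technical crux, parallelling the analysis carried out in Section~\ref{S:steady} for the complete \sunitary{3}-invariant steady Laplacian solitons, but without the luxury of an explicit solution like~\eqref{eq:exp_steady} to guide the phase-space picture.
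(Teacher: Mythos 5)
The statement you were asked to prove is not proved anywhere in this paper: it appears in the concluding comparison section (Section~\ref{S:comparison}) and is explicitly attributed to Wink, \cite[Theorem 3.1]{Wink:IMRN}, following a conjecture and numerics by Buzano--Dancer--Gallaugher--Wang. The authors state it purely as context for contrasting Ricci solitons with their Laplacian solitons. There is therefore no ``paper's own proof'' to compare against, and a correct response to the prompt should recognise that the task amounts to reconstructing an external theorem, not an internal lemma.

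Beyond that mismatch, your sketch is largely programmatic rather than a proof. The genuinely hard steps --- forward completeness and the paraboloid asymptotics $f_1, f_2 \sim \sqrt{t}$ with $u \to -1$ --- are signposted but not carried out, and you yourself flag the fixed-point/convergence analysis as ``the technical crux.'' There is also a real circularity risk in your completeness argument: you invoke $R \ge 0$ ``by Chen's theorem,'' but Chen's theorem concerns \emph{complete} (ancient) solutions, which is precisely what you are trying to establish. Either you must prove $R \ge 0$ directly on the possibly incomplete cohomogeneity-one solution via a genuine ODE argument (you only gesture at this), or you need another route to the a priori bound $|u| \le \sqrt{C}$. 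It is also worth knowing that Wink's actual proof in \cite{Wink:IMRN} proceeds through a priori estimates on the soliton potential rather than a scale-normalised dynamical-systems analysis, while his earlier proof of the $m \ge 3$ cases used a different argument again; so your outline, if completed, would be a third method rather than a reconstruction of either known proof.
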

For comparison, recall that any~\Sp{2}-invariant complete steady Laplacian soliton is trivial.

Theorem \ref{thm:wink} was conjectured by Buzano--Dancer--Wang~\cite{BDGW:steady},  based on numerical investigations they conducted.
In fact, they considered a higher-dimensional generalisation, namely to cohomogeneity-one manifolds of dimension $4m+3$
that are $3$-dimensional disc bundles over $\mathbb{HP}^m$ for $m \ge 1$. In this case the defining  triple of groups $K \subset H \subset G$
is $G=\Sp{m+1}$, $H=\Sp{m} \times \Sp{1}$ and $K=\Sp{m} \times \unitary{1}$. The principal orbit is therefore diffeomorphic to $\CP^{2m+1}$ and the singular orbit is $\mathbb{HP}^m$. 
They conjectured that the same results as stated above hold for any $m\ge 1$.
Wink proved this conjecture for $m \ge 3$~\cite[Theorem A]{Wink:cohom1:Ricci:solitons}, 
and subsequently he proved the conjecture for all $m\ge 1$~\cite[Theorem 3.1]{Wink:IMRN} by a different method.

\bigskip
In the \sunitary{3}-invariant case we are  aware only of the following recent result of H. Chi for Ricci-flat metrics 
(with generic holonomy). 
In particular, nontrivial~\sunitary{3}-invariant steady Ricci solitons do not yet seem to have been studied.
\begin{theorem}\cite[Theorems 1.2 \& 1.5]{Chi:AGAG}
\label{thm:Chi:AC:RF:SU3}
For any $c \in \R$ there is a unique (up to scale) smoothly-closing \sunitary{3}-invariant Ricci-flat metric
$\mathcal{RF}_c$ defined on a neighbourhood of the zero-section of $\Lambda^2_-\CP^2$ satisfying the initial conditions
\[ 
f_1 = t + O(t^3), \quad f_2 = 1 + c t + O(t^2), \quad f_3 = 1 - c t + O(t)^2.
\]
There exists $\epsilon>0$ so that for $\abs{c} < \epsilon$ the smoothly-closing metric $\mathcal{RF}_c$
extends to a complete~\sunitary{3}-invariant AC Ricci-flat metric on $\Lambda^2_-\CP^2$ 
asymptotic to the unique \sunitary{3}-invariant torsion-free~\gtwo-cone.
The metric $\mathcal{RF}_0$ is the Bryant--Salamon~\gtmetric~on  $\Lambda^2_-\CP^2$, while for any
$c\neq 0$ the Ricci-flat metric $\mathcal{RF}_c$ has generic holonomy. 
\end{theorem}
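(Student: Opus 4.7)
The plan is to adapt the framework developed in this paper for cohomogeneity-one Laplacian solitons to the Ricci-flat setting. First, using the standard formulas for the Ricci curvature of a cohomogeneity-one metric $g_f = dt^2 + f_1^2 g_1 + f_2^2 g_2 + f_3^2 g_3$ on $I \times \sunitary{3}/\T^2$, the equation $\Ric(g_f)=0$ becomes a second-order ODE system in the triple $(f_1,f_2,f_3)$ admitting a first integral of Bianchi type. One recasts this as a first-order system on a suitable phase space, entirely analogous to Proposition \ref{prop:1st:order:ODE:SU3}. Next, imposing the smooth closure conditions over $\CP^2 \cong \sunitary{3}/\unitary{2}_1$ from Section \ref{ss:IC:CP2}, namely $f_1$ odd with $f_1'(0)=1$, $f_2+f_3$ even and $f_2-f_3$ odd, transforms this system into a regular singular initial value problem for analytic functions $(\hat f_1,\hat f_2,\hat f_3)$. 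Applying Theorem \ref{thm:Singular:IVP} one computes the singular term $M_{-1}$ and verifies that $n\,\Id - dM_{-1}$ is invertible for all $n\ge 1$; the vanishing of $M_{-1}$ on admissible initial data pins down the symmetric combinations $\hat f_1(0)$ and $\hat f_2(0)+\hat f_3(0)$, while leaving the antisymmetric combination $c := \tfrac{1}{2}(\hat f_2(0)-\hat f_3(0))$ as a free real parameter. This yields the claimed unique real-analytic smoothly-closing solution $\mathcal{RF}_c$ for each $c\in\R$.

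For completeness when $|c|$ is small, I would pass to scale-invariant variables $g=(f_1f_2f_3)^{1/3}$ and $\sif_i=f_i/g$ as in Section \ref{ss:si:ODEs}. The unique $\sunitary{3}$-invariant torsion-free cone over $\sunitary{3}/\T^2$ corresponds to the critical point $\sif_1=\sif_2=\sif_3=1$, and the Bryant--Salamon solution $\mathcal{RF}_0$ is the complete trajectory that enters this critical point with rate $-4$. A linearisation analogous to Lemma \ref{lem:steady:crit:pt} should show that this is a stable critical point for the Ricci-flat scale-invariant flow, with generic approach rate $-1$. Continuous dependence on initial data (part of Theorem \ref{thm:Singular:IVP}) then gives $\epsilon>0$ such that for $|c|<\epsilon$ the trajectory associated to $\mathcal{RF}_c$ remains in the basin of attraction of this critical point for all $t$, yielding both completeness and asymptotic convergence to $C_{\tu{tf}}$. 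The main obstacle here is establishing the global-in-$t$ existence rigorously: one must rule out a finite-time collapse $f_i\to 0$ away from $t=0$, which is exactly the difficulty that made the completeness analysis for steady Laplacian solitons in Section \ref{S:steady} delicate; a perturbative argument from $\mathcal{RF}_0$ should suffice for small $|c|$, even though one cannot expect such an approach to capture the full completeness range.

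The generic holonomy statement for $c\neq 0$ follows from the structure of torsion-free solutions described in Example \ref{ex:BS}: any smoothly-closing $\sunitary{3}$-invariant torsion-free \gtmetric{} on $\Lambda^2_-\CP^2$ must have two of the $f_i$ identically equal, and in particular must satisfy $f_2'(0)=f_3'(0)$. The initial data for $\mathcal{RF}_c$ with $c\neq 0$ forces $f_2'(0)-f_3'(0)=2c\neq 0$, so $\mathcal{RF}_c$ cannot be torsion-free. By Berger's classification, the only proper subgroup of $\sorth{7}$ compatible with Ricci-flatness on an oriented spin $7$-manifold is $G_2$, so the holonomy of $\mathcal{RF}_c$ must be the full $\sorth{7}$.
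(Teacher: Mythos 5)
First, a point of comparison: the paper does not prove this statement at all --- Theorem~\ref{thm:Chi:AC:RF:SU3} is quoted from Chi \cite{Chi:AGAG}, and the surrounding text only records that Chi's smooth-extension analysis follows the Eschenburg--Wang method, which is the same circle of ideas as your first paragraph, while your small-$|c|$ completeness argument mirrors the paper's own Proposition~\ref{prop:complete:AC:steady:c:small} (whose details the authors themselves leave to the reader). So there is no internal proof to compare against, and your outline must be judged on its own terms. Your first two paragraphs are plausible in structure, but their two load-bearing claims are asserted rather than established: (a) that for the \emph{second-order} Ricci-flat system the vanishing of $M_{-1}$ in Theorem~\ref{thm:Singular:IVP} leaves exactly the single essential parameter $c$ (besides scale), with no further free data hiding in the derivative variables; and (b) that the cone point is a \emph{stable} critical point of the Ricci-flat scale-invariant flow. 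Point (b) does not follow from Lemma~\ref{lem:steady:crit:pt}: the Ricci-flat phase space strictly contains the torsion-free one, the extra directions could a priori carry growing indicial modes, and verifying that they do not is precisely where the content of Chi's Theorem 1.5 lies.

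The genuine gap is the holonomy step. From $c\neq 0$ and Example~\ref{ex:BS} you only conclude that $\mathcal{RF}_c$ is not induced by an \emph{\sunitary{3}-invariant} torsion-free \gtstr. To rule out $\tu{Hol}\subseteq\gtwo$ you must in addition show that a parallel \gtstr{} compatible with the \sunitary{3}-invariant metric would itself have to be \sunitary{3}-invariant (for instance via uniqueness up to sign of the parallel positive $3$-form, equivalently of the parallel spinor line, when the holonomy is exactly $\gtwo$, combined with connectedness of \sunitary{3}), and you must separately exclude holonomy \emph{properly} contained in $\gtwo$, i.e.\ the locally reducible possibilities, where parallel $3$-forms are far from unique. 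Your appeal to Berger is incorrect as stated: Berger's list constrains only irreducible, non-symmetric holonomy, and reducible groups such as $\sunitary{2}$, $\sunitary{3}\subset\sorth{6}$, or the trivial group are perfectly compatible with Ricci-flatness in dimension $7$. These reducible cases are exactly what must be eliminated to pass from ``not torsion-free'' to ``holonomy equal to $\sorth{7}$'', so as written the final paragraph does not prove the generic-holonomy claim.
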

We note that Chi's proof of Theorem~\ref{thm:Chi:AC:RF:SU3} does not give any information 
about the completeness (or otherwise) 
of $\mathcal{RF}_c$ for $\abs{c}$ large; nor are any numerical results presented in \cite{Chi:AGAG} 
that shed any light on this question.
In this sense his result is closely analogous to the result we stated in Remark~\ref{rmk:AC:steady:stability}
for nontrivial steady solitons close to the Bryant--Salamon torsion-free AC\mbox{~\gtstr}.
Recall, however, that in our case Theorem~\ref{thm:steady:complete}
provided a complete understanding of which parameter values of our~\sunitary{3}-invariant smoothly-closing steady solitons lead to
AC solitons and also what type of degeneration occurs at the boundary of the space of smoothly-closing AC solutions. 
Since complete Ricci-flat metrics have at most Euclidean volume growth, any degeneration that could occur in Chi's family 
of smoothly-closing~\sunitary{3}-invariant Ricci-flat metrics would necessarily be quite different from what we encountered in the steady soliton case.

\appendix
\section{Formal power series solutions for smoothly-closing invariant solitons}
\label{app:sc:power:series}

\noindent
\textbf{\texorpdfsunitary{3}-invariant solitons.}
A computer-assisted symbolic computation of the power series expansions around $t=0$ of the $2$-parameter family of smoothly-closing solitons constructed in Theorem~\ref{thm:SU3:smooth:closure} in terms of the real parameters $b$, $c$ and $\lambda$ was performed up to high order. The first several terms are listed below
{\allowdisplaybreaks
\begin{align*}
\small
f_1 &= t - \frac{t^3}{54b^4} \left(4\lambda b^4+9b^2-3c^2\right) + \frac{t^5}{48600 b^8}\left( 464b^8 \lambda^2 + 2844 b^6 \lambda -972 b^4c^2 \lambda + 4050b^4 -2322b^2c^2 +321c^4\right) + \cdots \\
f_2 &= b + \frac{c}{6b}t + \frac{t^2}{72b^3} \left( 4\lambda b^4+18b^2-c^2 \right) - \frac{ct^3}{6480b^5} \left( 152\lambda b^4 + 126b^2-63c^2 
\right) + \cdots \\
f_3 &= b - \frac{c}{6b}t + \frac{t^2}{72b^3} \left( 4\lambda b^4+18b^2-c^2 \right) + \frac{ct^3}{6480b^5} \left( 152\lambda b^4 + 126b^2-63c^2 \right) + \cdots \\
\tau_1 & = -\frac{2(2\lambda b^4-c^2)}{9b^4} t^3 + \frac{2t^5}{405b^8} \left(26b^8\lambda^2 + 81b^6 \lambda - 40b^2c^2\lambda - 54b^2c^2 + 12c^4\right) + \cdots\\
\tau_2 &= c + \frac{2(\lambda b^4+c^2)}{9b^2} t - \frac{ct^2}{54b^4} \left(5\lambda b^4-4c^2\right) - \frac{t^3}{1215b^6} \left( 8b^8\lambda^2+18b^6\lambda + 92b^4c^2\lambda + 99b^2c^2 - 42c^4 \right) + \cdots\\
\tau_3 &= -c + \frac{2(\lambda b^4+c^2)}{9b^2} t +  \frac{ct^2}{54b^4} \left(5\lambda b^4-4c^2\right) 
- \frac{t^3}{1215b^6} \left( 8b^8\lambda^2+18b^6\lambda + 92b^4c^2\lambda + 99b^2c^2 - 42c^4 \right) + \cdots\\
\tb & = \left(\frac{4(\lambda b^4+c^2)}{9b^2}\right)t  - \frac{4t^3}{1215 b^6} \left( 4b^8 \lambda^2 + 144 b^6 \lambda + 46b^4c^2\lambda - 18b^2c^2 -21c^4 \right) + \cdots \\
u & = - \left(\frac{7\lambda b^4 - 2c^2}{9b^4}\right)t + \frac{2t^3}{1215b^8} \left(26b^8 \lambda^2 + 126 b^6 \lambda - 61b^4 c^2 \lambda - 117 b^2 c^2 + 21c^4\right) + \cdots 
\end{align*}}

\noindent
Note the following hold for $t>0$ sufficiently small
\begin{itemize}[left=0pt]
\item
If $b$ and $c$ are assumed to be positive then we have the ordering $f_1<f_3<f_2$.
\item
If $\lambda\le 0$ then $u$ and $\tau_1$ are positive. 
\item
If $\lambda \ge 0$ then $\tb$ is positive.
\end{itemize}

\subsubsection*{Steady solitons}
In the steady case $\lambda=0$ these power series specialise to 

{\allowdisplaybreaks
\begin{align*}
\small
f_1 &= t - \frac{t^3}{18b^4} \left(3b^2-c^2\right) + \frac{\left(  1350b^4 -774b^2c^2 +107c^4\right)t^5}{16200 b^8} + 
\cdots \\
f_2 &= b + \frac{c}{6b}t + \frac{t^2}{72b^3} \left(18b^2-c^2 \right) -\frac{7ct^3}{720b^5} \left( 2b^2-c^2 \right) + \frac{(- 2700b^4 + 636b^2c^2 + 7c^4)t^4}{51840b^7} + \cdots \\
f_3 &= b - \frac{c}{6b}t + \frac{t^2}{72b^3} \left( 18b^2-c^2 \right) + \frac{7ct^3}{720b^5} \left( 2b^2-c^2 \right) + \frac{(- 2700b^4 + 636b^2c^2 + 7c^4)t^4}{51840b^7} + \cdots \\
\tau_1 & = \frac{2c^2}{9b^4} t^3 + \frac{4c^2t^5}{135b^8} \left(- 9b^2 + 2c^2\right) + \cdots\\
\tau_2 &=\phantom{-} c + \frac{2c^2}{9b^2} t + \frac{2c^3t^2}{27b^4} + \frac{c^2 t^3}{405b^6} \left( -33b^2 +14c^2 \right) - \frac{2c^3t^4}{405b^8}(11b^2-3c^2) 
+ \cdots\\
\tau_3 &= -c + \frac{2c^2}{9b^2} t -  \frac{2c^3t^2}{27b^4} + \frac{c^2 t^3}{405b^6} \left( -33b^2 + 14c^2 \right) + \frac{2c^3t^4}{405b^8}(11b^2-3c^2) 
+ \cdots\\
\tb & = \left(\frac{4c^2}{9b^2}\right)t  + \frac{4c^2t^3}{405 b^6} \left( 6b^2 +7c^4 \right) + \cdots \\
u & = \left(\frac{2c^2}{9b^4}\right)t + \frac{2c^2t^3}{405b^8} \left( - 39 b^2 + 7c^2\right) + \cdots .
\end{align*}}
\noindent
Note that the coefficient of $t^k$ in any of the $f_i$ or $\tau_i$ has the general structure
$
t^k b^{1-k} P_k(b/c)
$
for some 1-variable polynomial $P_k$  in the variable $b/c$ with rational coefficients. This is consistent with the scaling behaviour of solitons and the fact that rescaling a steady soliton yields another steady soliton.

\bibliography{g2soliton}

\end{document}